\title[Universal Liouville action as a Renormalized volume]{Universal Liouville action as a renormalized volume and its gradient flow}
\author[Bridgeman]{Martin Bridgeman}
\address{Boston College, Chestnut Hill, MA, USA}
\email{\protect\url{bridgem@bc.edu}}
\author[Bromberg]{Kenneth Bromberg}
\address{University of Utah, Salt Lake City, UT, USA}
\email{\protect\url{bromberg@math.utah.edu}}
\author[Vargas Pallete]{Franco Vargas Pallete}
\address{Institut des Hautes \'Etudes Scientifiques, Bures-sur-Yvette, France}
\email{\protect\url{vargaspallete@ihes.fr}}
\author[Wang]{Yilin Wang}
\address{Institut des Hautes \'Etudes Scientifiques, Bures-sur-Yvette, France}
\email{\protect\url{yilin@ihes.fr}}
\newcolumntype{P}[1]{>{\centering\arraybackslash}p{#1}}
\newtheorem{thm}{Theorem}[section]
\newtheorem{cor}[thm]{Corollary}
\newtheorem{lemma}[thm]{Lemma}
\newtheorem{prop}[thm]{Proposition}
\theoremstyle{definition} 
\newtheorem{df}[thm]{Definition}
\newtheorem{ex}[thm]{Example}
\newtheorem{remark}[thm]{Remark}
\numberwithin{equation}{section}
\global\long\def\ii{\mathfrak{i}}
\global\long\def\jj{\mathfrak{j}}
\newcommand{\abs}[1]{\left\lvert #1 \right \rvert}
\newcommand{\brac}[1]{\left \langle #1 \right \rangle}
\newcommand{\norm}[1]{\lVert #1 \rVert}
\newcommand{\mc}[1]{\mathcal{#1}}
\newcommand{\m}[1]{\mathbb{#1}}
\newcommand{\mf}[1]{\mathfrak{#1}}
\def\ie{i.e., }
\renewcommand\Re{\operatorname{Re}}
\def\WP{\operatorname{WP}}
\def\I{\mathrm{I}}
\def\II{\mathrm{I\!I}}
\def\III{\mathrm{I\!I\!I}}
\def\a{\alpha}
\def\b{\beta}
\def\g{\gamma}
\def\G{\Gamma}
\def\d{\delta}
\def\z{\zeta}
\def\t{\theta}
\def\S{\Sigma}
\def\O{\Omega}
\def\vare{\varepsilon}
\def\Chat{\hat{\m{C}}}
\def\dd{\mathrm{d}}
\def\1{\mathbf{1}}
\renewcommand{\bold}[1]{\medskip \noindent {\bf \boldmath #1
                        }\nopagebreak[4]}
\newcommand{\zbar}{{\overline{z}}}
\newcommand{\normal}{\overrightarrow{\eta}}
\newcommand{\Diff}{\operatorname{Diff}}
\newcommand{\dist}{\operatorname{dist}}
\newcommand{\id}{\operatorname{id}}
\newcommand{\Isom}{\operatorname{Isom}}
\renewcommand{\Re}{\operatorname{Re}}
\newcommand{\supp}{\operatorname{supp}}
\newcommand{\tr}{\operatorname{tr}}
\newcommand{\vol}{\operatorname{vol}}
\newcommand{\calC}{{\mathcal C}}
\newcommand{\calP}{{\mathcal P}}
\renewcommand{\hbar}{\bar{{\mathbb H}}^3}
\newcommand{\CC}{\mathbb C}
\newcommand{\Sph}{\mathbb S}
\newcommand{\R}{\mathbb R}
\newcommand{\D}{\mathbb D}
\newcommand{\Hs}{{\mathbb H}^3}
\newcommand{\mob}{\mbox{M\"ob}}
\newcommand{\qs}{\mbox{QS}({\mathbb S}^1)}
\newcommand{\pslt}{\mathsf{PSL}_2(\mathbb C)}
\newcommand{\pslr}{\mathsf{PSL}_2(\mathbb R)}
\newcommand{\psu}{\mathsf{PSU}_{1,1}}
\newcommand{\Ep}{\operatorname{Ep}}
 \newcommand{\splus}{{\scriptstyle +}}
 \newcommand{\sminus}{{\scriptstyle -}}
 \def \1{\mathbf{1}}
\def\Id{\operatorname{Id}}
\def\Liouville{\mathbf{S}}
\begin{document}

\maketitle
\begin{abstract}
The universal Liouville action (also known as the Loewner energy for Jordan curves) is a K\"ahler potential on the Weil--Petersson universal Teichm\"uller space, which is identified with the family of Weil--Petersson quasicircles via conformal welding. Our main result shows that, under regularity assumptions, the universal Liouville action equals the renormalized volume of the
hyperbolic $3$-manifold bounded by the two Epstein--Poincar\'e surfaces associated with the quasicircle. 
We also study the gradient descent flow of the universal Liouville action for the Weil--Petersson metric and show that the flow always converges to the origin (the circle). This provides a bound of the Weil--Petersson distance to the origin by the universal Liouville action.
\end{abstract}

\tableofcontents

\section{Introduction}

The Riemann sphere $\Chat$ is the conformal boundary of the hyperbolic $3$-space $\m H^3$. In \cite{epstein-envelopes} 
C.~Epstein gave a natural way to associate with each conformal metric on $\Chat$ a surface in $\m H^3$. In more recent work, these Epstein surfaces have been used to define the renormalized volume of hyperbolic $3$-manifolds, which has deep connections to Teichm\"uller theory of Riemann surfaces and Liouville theory in mathematical physics \cite{KrasnovSchlenker_CMP,krasnov2000holography,TT_Liouville}. We will recall the basics on Epstein surfaces in Section~\ref{sec:epstein}. 

In this work, we define and study the renormalized volume for the universal Teichm\"uller space, which can be identified with the set of quasicircles on the Riemann sphere up to conformal automorphisms.

For this, consider a Jordan curve $\g \subset \Chat$. We let $\O$ and $\O^*$ be the two connected components of $\Chat \smallsetminus \g$.  
Let $\Ep_{\O} : \O \to \m H^3$ be the Epstein map associated with the Poincar\'e  (hyperbolic) metric $\rho_{\O}$ in $\O$,  similarly for $\Ep_{\O^*} : \O^* \to \m H^3$. The maps $\Ep_{\O}$, $\Ep_{\O^*}$ are smooth, extend continuously to the identity map on $\g$, and are immersions almost everywhere.  
We call their images as the Epstein--Poincar\'e surfaces $\S_\O$ and  $\S_{\O^*}$. 
In particular, we note that, unlike in the cases previously considered (see \cite{KrasnovSchlenker_CMP,bridgeman2021weilpetersson}), these Epstein--Poincar\'e surfaces are non-compact and not necessarily embedded and have an infinite area. 
We show the following results.
\begin{prop}[See Proposition~\ref{prop:disjoint}]\label{prop:intro_disjoint}
   If $\g$ is not a circle, then the two Epstein--Poincar\'e surfaces $\S_{\O}$ and $\S_{\O^*}$ are disjoint.
\end{prop}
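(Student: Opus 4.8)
The plan is to reduce the disjointness of the two surfaces to a statement about the horospheres that the Epstein map envelopes, and then to a sharp inequality between the two Poincar\'e metrics. Recall from Section~\ref{sec:epstein} that $\Ep_\O(z)$ lies on the horosphere $H_z$ based at $z\in\partial\m H^3$ determined by $\rho_\O$; in the upper half-space model, writing the Poincar\'e metric as $\rho_\O(z)\,|dz|$, the sphere $H_z$ is tangent to $\Chat$ at $z$ with Euclidean diameter $h_\O(z)=2/\rho_\O(z)$ (this normalization is M\"obius covariant and is pinned down by the round disk, where $\rho_\D(z)=2/(1-|z|^2)$ gives $h=1-|z|^2$), and similarly for $\O^*$. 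It therefore suffices to show that for every $z\in\O$ and $w\in\O^*$ the horospheres $H_z$ and $H_w^*$ are disjoint: if $\Ep_\O(z)=\Ep_{\O^*}(w)$ for some pair, that common point would lie on both $H_z$ and $H_w^*$.

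Since $z\in\O$ and $w\in\O^*$ are distinct, the elementary computation for two spheres tangent to $\Chat$ (with centers $(z,h_\O/2)$, $(w,h_{\O^*}/2)$ and radii $h_\O/2$, $h_{\O^*}/2$) shows that $H_z$ and $H_w^*$ are disjoint precisely when
$$|z-w|^2 > h_\O(z)\,h_{\O^*}(w) = \frac{4}{\rho_\O(z)\,\rho_{\O^*}(w)},$$
are tangent at equality, and cross under the reverse strict inequality. Hence the Proposition follows once we establish the sharp bound
$$J(z,w):=\rho_\O(z)\,\rho_{\O^*}(w)\,|z-w|^2\ \ge\ 4\qquad(z\in\O,\ w\in\O^*),$$
with equality for some pair only when $\g$ is a circle. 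A direct check using $\rho_\D$ and $\rho_{\D^*}$ confirms $J\equiv 4$ exactly along inverse pairs $w=1/\bar z$ when $\g$ is the unit circle, matching the fact that the two surfaces then coincide with the geodesic plane bounded by $\g$.

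The quantity $J$ is M\"obius invariant, since for $M\in\pslt$ one has $|Mz-Mw|^2=|M'(z)|\,|M'(w)|\,|z-w|^2$ while $\rho_{M\O}(Mz)=\rho_\O(z)/|M'(z)|$. Using this, I would fix a pair $(z,w)$, normalize $z\mapsto 0$, $w\mapsto\infty$ (so $0\in\O$, $\infty\in\O^*$), and rescale so that the Riemann map $f\colon\D\to\O$ with $f(0)=0$ has $f'(0)=1$. Writing the univalent map $g\colon\D\to\O^*$ with $g(0)=\infty$ and expansion $g(\xi)=b_{-1}/\xi+b_0+\cdots$, a short computation turns $J(z,w)\ge 4$ into $|b_{-1}|\ge 1=|f'(0)|$. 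In invariant terms this is exactly the assertion that the conformal radius of $\O$ at an interior point is at most the logarithmic capacity (transfinite diameter) of $\overline\O$, with equality iff $\O$ is a round disk; pulling the normalization back through $M$, the equality case says precisely that $\g$ is a circle.

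The main obstacle is this sharp inequality together with its rigidity. The naive estimates (Koebe's theorem, or the area theorem applied to $g$) only yield $J\ge 1/4$, which is too weak to separate the horospheres: the lost factor is exactly the gap between the area/Koebe bound and the sharp capacity bound. The sharp statement ``conformal radius $\le$ capacity, with equality only for the disk'' should be reached by a symmetrization or Grunsky-type argument, and it is the equality analysis---promoting a single tangency of horospheres into the rigidity $\g=$ circle---that requires the most care. Once it is in hand, $J>4$ for every pair whenever $\g$ is not a circle, so every $H_z$ is disjoint from every $H_w^*$ and consequently $\S_\O$ and $\S_{\O^*}$ cannot meet in $\m H^3$ (they remain tangent only at $\g\subset\partial\m H^3$).
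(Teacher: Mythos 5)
Your proposal is correct and follows essentially the same route as the paper's proof: reduce disjointness of the surfaces to pairwise disjointness of the horospheres they envelope, normalize a given pair of points to $0$ and $\infty$ by M\"obius transformations, and reduce to the sharp bound $|g'(\infty)| \ge |f'(0)|$ (conformal radius at most capacity), with equality only when $\g$ is a circle --- exactly what the paper extracts from the Grunsky inequality (Lemma~\ref{lem:Grunsky_inequality}). Note that the quantitative form cited there, with the nonnegative integrals $\int_{\m D}\left|f'/f - 1/z\right|^2 \dd^2 z$ and $\int_{\m D^*}\left|g'/g - 1/z\right|^2 \dd^2 z$ bounding $2\pi\log\left|g'(\infty)/f'(0)\right|$ from below, settles for free the rigidity step you flagged as delicate: equality forces both integrals to vanish, hence $f(z)=cz$, so $\O$ is a round disk and $\g$ is a circle.
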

It follows directly from the definition of Epstein--Poincar\'e map that if $\g$ is a circle, then both $\S_\O$ and $\S_{\O^*}$ are the totally geodesic plane bounded by $\g$ with opposite orientation. See Lemma~\ref{lem:example_disk}.

\begin{prop}[See Corollary~\ref{cor:embedding_wp}] \label{prop:intro_AC}
    When $\g$ is asymptotically conformal (see Theorem~\ref{thm:AC} for several equivalent definitions), there is a neighborhood of $\g$ in $\Chat$ on which the Epstein--Poincar\'e maps $\Ep_\O$ and $\Ep_{\O^*}$ are immersions and embeddings which extend to the identity map on $\g$.
\end{prop}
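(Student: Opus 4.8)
The plan is to reduce the assertion to a pointwise estimate on the differential of the Epstein map, read off from the Schwarzian derivative of a uniformizing map. Fix a conformal map $f\colon \D\to\O$, so that $\rho_\O$ is the pushforward under $f$ of the hyperbolic metric of $\D$, and set $q=\mathcal{S}f$; treat $\O^*$ symmetrically with a map $f^*\colon \D\to\O^*$. The Epstein map is natural under M\"obius transformations, and its second-order data at a point deviates from this M\"obius-natural behavior exactly by the Schwarzian $q$. Consequently the differential $d\Ep_\O$, written in a suitable frame, takes the form $\Id+A$, where the symmetric endomorphism $A$ is linear in the rescaled Schwarzian $\rho^{-2}q$ and involves no derivatives of $q$ (the trace part of the second-order data is pinned down by the Liouville equation for $\rho_\O$, leaving only the conformally invariant traceless part $q$). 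The first step is therefore to record this formula; the model case $q\equiv0$ gives $A=0$ and recovers the totally geodesic plane of Lemma~\ref{lem:example_disk}, which is a global embedding and fixes the normalization.

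For the immersion statement I would show that $\Id+A$ is invertible as soon as $\abs{\rho^{-2}q}$ is below an explicit universal constant $c_0$ — geometrically, the condition that no principal curvature of $\S_\O$ reaches the focal value $1$. By Theorem~\ref{thm:AC}, $\g$ being asymptotically conformal is equivalent to $\rho^{-2}q\to0$ as $z\to\partial\D$ (i.e. $q$ lies in the little Bers space). Hence there is a neighborhood $U$ of $\g$ in $\Chat$ on which $\abs{\rho^{-2}q}<c_0$, so $\Ep_\O$ is an immersion on $U\cap\O$, and the same argument with $f^*$ handles $\Ep_{\O^*}$. That $\Ep_\O$ extends continuously to the identity on $\g$ is the general boundary behavior of Epstein maps recalled in Section~\ref{sec:epstein}.

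It remains to promote immersion to embedding on a possibly smaller neighborhood, which I expect to be the main obstacle. The pointwise smallness of $\rho^{-2}q$ says that, after pulling back by $f$, the Epstein surface is a small perturbation of the geodesic plane $\Ep_{\rho_\D}$; near $\g$ I would describe it as a normal graph of small slope over that plane in Fermi coordinates, giving a local embedding of $\overline{U\cap\O}$ near each point of $\g$. Crucially, although asymptotic conformality controls $q$ but not a priori its derivatives, $q$ is holomorphic, so Cauchy estimates on balls of size comparable to the distance to $\partial\D$ upgrade the smallness of $\rho^{-2}q$ to smallness of the rescaled $\nabla q$; this bounds the slope of the graph and makes the description uniform up to the boundary despite the non-compactness of $\S_\O$.

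Global injectivity on a neighborhood then follows by contradiction. If $\Ep_\O$ failed to be injective near $\g$, there would be $z_n\neq w_n$ in $\O$ with $z_n,w_n\to\g$ and $\Ep_\O(z_n)=\Ep_\O(w_n)$; passing to subsequences, $z_n\to p$ and $w_n\to p'$ in $\g$. The continuous extension to the identity gives $\Ep_\O(z_n)\to p$ and $\Ep_\O(w_n)\to p'$, whence $p=p'$; but then $z_n,w_n\to p$, contradicting the local embedding near $p$ established above. The same reasoning applies verbatim to $\Ep_{\O^*}$, which completes the proof.
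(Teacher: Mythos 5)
Your immersion step and the continuous extension to the identity on $\g$ match the paper's argument: smallness of $\norm{\mc S(f)}_{\m D}$ near $\partial\m D$ (condition (AC3)) combined with the immersion criterion of Theorem~\ref{thm:PE}, and the boundary estimates of Corollary~\ref{cor:bdyvals}. Your contradiction skeleton for injectivity (two sequences with equal images must converge to the same boundary point) is also the paper's. The gap is in the one ingredient that skeleton needs: local injectivity near a fixed boundary point $p\in\g$. You claim it by describing the surface, ``after pulling back by $f$,'' as a normal graph of small slope over the geodesic plane $\Ep_{\m D}(\m D)$. Taken literally this is false: $f$ is not an isometry of $\m H^3$ and does not transport $\S_\O$ to a surface over $\m D$; the surface $\S_\O$ limits on $\g$ while the hemisphere limits on $\Sph^1$, so the two are not close in any sense. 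The correct pointwise statement is osculation (Lemma~\ref{lemma:osc-mob}): $\Ep_\O\circ f(z_0)=M_{f,z_0}\circ\Ep_{\m D}(z_0)$, where the osculating M\"obius map $M_{f,z_0}$ \emph{varies with} $z_0$.

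Read charitably as ``graph over the osculating plane at $p$,'' the step still fails in the stated generality: an asymptotically conformal curve need not have a tangent line at any point (the local flatness in Pommerenke's condition is scale-by-scale, and the approximating line may rotate slowly and unboundedly as the scale shrinks, i.e.\ the curve may spiral). At such a point there is no single geodesic plane over which $\S_\O$ is a graph in any full neighborhood of $p$, which is exactly what your contradiction argument requires, since $z_n,w_n\to p$ through all scales. Your Cauchy-estimate observation (holomorphy of $\mc S(f)$ upgrades pointwise smallness to smallness of the rescaled derivative) is correct and is morally the right tool, but it only yields a graph description over the osculating plane \emph{at a fixed scale}, i.e.\ on a hyperbolic ball of bounded radius around a point near the boundary. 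To close the argument you would additionally need to reduce non-injectivity to a single scale --- e.g.\ using Corollary~\ref{cor:bdyvals}, which forces any pair with $\Ep_\O(z)=\Ep_\O(w)$ to satisfy $d(z,\g)\asymp d(w,\g)$ and $|z-w|\lesssim d(z,\g)$ --- and you never make this reduction. The paper avoids the issue entirely: it joins $x_i$ to $y_i$ by a hyperbolic geodesic arc lying, together with a fixed-radius neighborhood, in the region where $\|\mc S(f^{-1})\|_\O<\epsilon$, invokes \cite[Lemma 3.5]{BBvariation} to bound the geodesic curvature of the image arc by $3\epsilon/2r_0(1-\epsilon)^2\le 1$, and uses that curves in $\m H^3$ with geodesic curvature at most $1$ are embedded, so the endpoints cannot coincide. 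Some substitute for that curvature lemma (or the scale reduction above) is needed; as written, your local embedding claim is the missing piece.
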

Quasicircles are in natural correspondence with points in the universal Teichm\"uller space $T(1)$, where we identify a quasicircle with its conformal welding homeomorphism. 
We are interested in a special class of quasicircles, \ie Weil--Petersson quasicircles, which corresponds to the Weil--Petersson universal Teichm\"uller space $T_0(1)$. This space has been studied extensively for it is the connected component of the \emph{unique} homogeneous K\"ahler metric on $T(1)$ (\ie the Weil--Petersson metric) \cite{TT06}, and has a large number of equivalent descriptions from very different perspectives, see, e.g., \cite{bishop-wp,shen13,W2,W3,cui00,johansson2021strong,michelat2021loewner}.

Weil--Petersson quasicircles are asymptotically conformal, so Propositions~\ref{prop:intro_disjoint} and \ref{prop:intro_AC} allow us to define the signed volume between $\S_\O$ and $\S_{\O^*}$. A priori, this volume takes value in $(-\infty, \infty]$ (see Section~\ref{sec:vol_between} for more details). However, we show the following result.
\begin{thm}
    If $\g$ is a Weil--Petersson quasicircle, then the signed volume between the two Epstein--Poincar\'e surfaces,  denoted as $V(\g)$, is finite. 
\end{thm}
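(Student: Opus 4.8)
The plan is to localize the only possible divergence of $V(\g)$ to an arbitrarily small collar of $\g$ and to control it quantitatively using the Weil--Petersson (finite Loewner energy) regularity of $\g$. By Proposition~\ref{prop:intro_AC} there is a neighborhood $U$ of $\g$ in $\Chat$ on which both Epstein--Poincar\'e maps are embeddings extending to the identity on $\g$, while over $\Chat\smallsetminus U$ the two surfaces are disjoint (Proposition~\ref{prop:intro_disjoint}), immersed, and confined to a compact part of $\m H^3$, so the signed volume they enclose there is automatically finite. Hence it suffices to show that the contribution to $V(\g)$ (in the sense of Section~\ref{sec:vol_between}) coming from the part of the enclosed region lying over $U$ is finite; this is the only place where the hyperbolic volume form $t^{-3}|dz|^2\,dt$ blows up as one approaches the conformal boundary.

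For the collar estimate I would use the explicit Epstein--Poincar\'e map recalled in Section~\ref{sec:epstein}. Writing the Poincar\'e metrics of $\O$ and $\O^*$ as $e^{2\phi_\O}|dz|^2$ and $e^{2\phi_{\O^*}}|dz|^2$ in a conformal chart near $\g$, the Epstein surface over a point $z$ sits at Euclidean height comparable to $e^{-\phi}$, with horizontal displacement and tilt governed by $\nabla\phi$. Since both conformal factors blow up at the same rate $\phi\sim-\log\dist(z,\g)$ as $z\to\g$, the enclosed region pinches to zero Euclidean thickness along $\g$. I would then compute the enclosed volume by Stokes' theorem, foliating the region by the equidistant surfaces of $\S_\O$ (equivalently by the Epstein surfaces of the rescaled metrics $e^{2s}\rho_\O$), which converts the bulk integral into surface integrals of the area element and mean curvature over $\S_\O$ and $\S_{\O^*}$. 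The divergent parts of these two surface integrals are determined only by the common asymptotic boundary $\g$ and cancel, while the surviving finite part is controlled by the difference $\phi_\O-\phi_{\O^*}$ together with $\nabla\phi_\O$ and $\nabla\phi_{\O^*}$.

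The final step is to recognize the surviving integrand as, up to lower-order terms, a Dirichlet-type energy density of the conformal factors in the collar, whose integrability is exactly equivalent to $\g\in T_0(1)$: the finiteness of the universal Liouville action controls the relevant $\int|\nabla\phi|^2$-type and Schwarzian-norm quantities, and these dominate the collar integral and force convergence. For the round circle all such terms vanish, giving $V=0$, consistent with Lemma~\ref{lem:example_disk}.

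I expect the main obstacle to be this near-$\g$ analysis, where one must simultaneously handle the blow-up of the hyperbolic volume form, the fact that the Epstein--Poincar\'e maps are only immersions almost everywhere (so folds must be counted with the correct sign in the definition of $V(\g)$), and the precise matching of the two divergent surface integrals. Turning the cancellation of the divergent boundary terms into a rigorous statement, and bounding the remaining error uniformly by the Weil--Petersson energy, is the crux; this is precisely where asymptotic conformality (guaranteeing embeddedness and identity boundary values) and the quantitative $T_0(1)$ estimates must be used in an essential way.
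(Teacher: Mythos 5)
Your overall plan---localize the divergence to a collar around $\g$, convert the bulk integral to surface terms, cancel the divergences between the two sides, and control the remainder by Weil--Petersson energy---captures the right intuition, but it takes a genuinely different route from the paper, and its central step is an assertion rather than an argument. The paper proves finiteness for $C^{5,\a}$ curves by showing that the two Epstein--Poincar\'e surfaces osculate to second order along $\g$ (Propositions~\ref{prop:secondorderplane} and~\ref{prop:finitevolume}), so the thickness of the pinching region is $O(b^3)$ pointwise. For a general Weil--Petersson quasicircle this pointwise control is unavailable (such curves need not even be $C^1$, so the arclength/curvature collar coordinates used there do not exist), and the paper instead approximates $\g$ by analytic equipotentials and combines the smooth-case identity $\tilde \Liouville(\g_n) = 4V_R(\g_n)$ (Corollary~\ref{cor:smooth_identity}, which requires the full Schl\"afli machinery of Theorem~\ref{thm:Schl\"afliformula}) with monotone convergence of $\tilde \Liouville(\g_n)$ (Theorem~\ref{thm:equipotential}), convergence of the mean curvature integrals (Lemma~\ref{lem:conv_HdA}), and convergence of the truncated volumes, to obtain a uniform upper bound on $V_2(\g)(\vare)$ (Corollary~\ref{cor:finite_v_general}).

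The gap in your proposal is the claim that ``the divergent parts of these two surface integrals \ldots cancel, while the surviving finite part is controlled by'' a Dirichlet-type energy that the Weil--Petersson condition dominates. First, quantitatively, any such cancellation must leave a remainder that is \emph{quadratic} in the Schwarzian: a bound linear in the Schwarzian cannot close the argument, because the Weil--Petersson condition \eqref{eq:wp_schwarzian} gives only $\int_{\m D} \norm{\mc S(f)}_{\m D}^2\, \rho_{\m D}\,\dd^2 z < \infty$, i.e.\ $L^2$ control against the infinite-mass hyperbolic area, and this does not imply the corresponding $L^1$ bound. Extracting the quadratic structure of the finite part is exactly the content of the identity $\tilde\Liouville = 4 V_R$, so your final step implicitly assumes (a form of) the theorem being proved. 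Second, the foliation/Stokes step is not available at this level of generality: the equidistant Epstein surfaces of $e^{2s}\rho_\O$ do not match up with $\S_{\O^*}$, and making the exchange of bulk volume for surface integrals rigorous in the pinching region is precisely what forces the paper to introduce the truncation annulus $A(r)$, the dihedral-angle boundary terms, and the $C^{5,\a}$ hypothesis. Indeed, the paper itself could not establish the ``tightness'' of the near-boundary volume that your cancellation would provide---this is exactly why Theorem~\ref{thm:general_ineq} yields only the inequality $\tilde \Liouville(\g) \ge 4V_R(\g)$ for general Weil--Petersson quasicircles, rather than the equality one would get if your direct collar estimate could be carried out.
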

See Proposition~\ref{prop:secondorderplane} for the proof for smooth Jordan curves. The result for general Weil--Petersson quasicircles is obtained via an approximation argument, see Corollary~\ref{cor:finite_v_general}.

Since $T_0(1)$ has a \emph{unique} homogeneous K\"ahler structure, its K\"ahler potential is of critical importance.
Takhtajan and Teo defined the \emph{universal Liouville action} $\Liouville$ on $T_0(1)$ and showed it to be such a K\"ahler potential \cite{TT06} (the same functional also appears in probability theory that we will discuss later). 

In this work, we will consider the universal Liouville action as defined for Jordan curves (see Section~\ref{sec:action}) and denote 
 it as $\tilde \Liouville$ for clarity. 
 The functional $\tilde \Liouville (\g)$ can actually be defined for arbitrary Jordan curves, but it is finite if and only if $\g$ is a Weil--Petersson quasicircle. Moreover, $\tilde \Liouville $ is invariant under the action of M\"obius transformations on $\Chat$ (\ie under the $\pslt$ action). As the $\pslt$ action extends to orientation preserving isometries of $\m H^3$, it is very natural to search for a characterization of the class of Weil--Petersson quasicircles and an expression of $\tilde \Liouville$ in terms of geometric quantities in $\m H^3$.

 A pioneering work of C.~Bishop \cite{bishop-wp} shows that the family of Weil--Petersson quasicircles can be characterized as Jordan curves bounding minimal surfaces in $\m H^3$ with finite total curvature. 
We obtain the following similar characterization in terms of Epstein--Poincar\'e surfaces. See also Section~\ref{sec:comments} where we compare Epstein--Poincar\'e surfaces to minimal surfaces and the convex hull, answering a question of Bishop~\cite{BishopQ}.

In fact, the Epstein maps come with a well-defined unit normal $\vec n$ pointing away from $\O$ and from $\O^*$, respectively.  
The mean curvature $H := \tr (B)/2$ is defined using the shape operator $B (v) := -\nabla_v \vec n$. 
\begin{thm}[See Corollary~\ref{cor:WP_HdA}]
    For all Jordan curves,
    $$\int_{\S_\O} H \,\dd a  = \int_{\S_\O} |H \,\dd a |= \int_{\S_\O} |\det B \,\dd a| = \int_{\m D} |\mc S(f) (z)|^2 \frac{(1- |z|^2)^2}{4} \dd^2 z $$
    where $f : \m D \to \O$ is any conformal map, $\mc S(f) = f'''/f' - (3/2) (f''/f')^2$ is the Schwarzian derivative of $f$, $\dd a$ is the area form induced from $\m H^3$ and the Epstein maps, and $\dd^2 z$ is the Euclidean area form.
    
    In particular, $\S_\O$ has finite total mean curvature \textnormal(and finite total curvature\textnormal) \emph{if and only if} $\g$ is a  Weil--Petersson quasicircle. 
\end{thm}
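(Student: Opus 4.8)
The plan is to reduce the three displayed equalities to a single pointwise computation on the unit disk and then to recognize the resulting integrand as the weighted $L^2$-density of the Schwarzian, whose finiteness is one of the standard descriptions of $T_0(1)$. Throughout I fix a conformal map $f:\m D\to\O$, parametrize $\S_\O$ by $\Ep_\O\circ f$, and read off every integral over $\S_\O$ from the density pulled back to $\m D$. First I would record the fundamental forms of the Epstein--Poincar\'e surface in terms of its data at infinity, as set up in Section~\ref{sec:epstein}: the conformal metric at infinity is the Poincar\'e metric $\rho_\O$, which pulls back to $\tfrac{4|\dd z|^2}{(1-|z|^2)^2}$ on $\m D$, and the trace-free part $\II^*_0$ of the second fundamental form at infinity is the real part of a holomorphic quadratic differential $q$. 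I would identify $q$ with the Schwarzian: writing the $\O$-Poincar\'e factor as $e^{2\psi}$ in the $w$-coordinate and the $\m D$-hyperbolic factor as $e^{2\phi}$ in $z$, the transformation law under $w=f(z)$ gives $\phi_{zz}-\phi_z^2=(\psi_{ww}-\psi_w^2)(f')^2+\tfrac12\mc S(f)$, and since $\phi_{zz}-\phi_z^2\equiv 0$ for the hyperbolic metric of $\m D$ one gets $(\psi_{ww}-\psi_w^2)(f')^2=-\tfrac12\mc S(f)$; hence $q$ is a fixed constant multiple of $\mc S(f)\,\dd z^2$.

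The heart of the argument is a pointwise identity where the curvature $-1$ of $\rho_\O$ enters decisively. Because $\rho_\O$ has constant curvature $-1$, the shape operator at infinity has identity trace-part, so $B^*=\Id+b$ with $b$ trace-free and symmetric representing $q$. Diagonalizing $b=\operatorname{diag}(\beta,-\beta)$ in a $\rho_\O$-orthonormal frame and feeding this into the Epstein formula of Section~\ref{sec:epstein} (of the type $B=(\Id-B^*)(\Id+B^*)^{-1}$) yields principal curvatures $k_1=-\beta/(2+\beta)$ and $k_2=\beta/(2-\beta)$, whence
\[
\det B = k_1k_2 = -\frac{\beta^2}{4-\beta^2} = -\frac{k_1+k_2}{2} = -H,
\]
equivalently $\det(\Id+2B)=1$. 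Moreover the induced area form satisfies $\dd a=\tfrac14\det(2\,\Id+b)\,\dd A_{\mathrm{hyp}}=\tfrac{4-\beta^2}{4}\,\dd A_{\mathrm{hyp}}$, where $\dd A_{\mathrm{hyp}}$ is the hyperbolic area form. Multiplying through, the degenerate factor $4-\beta^2$ cancels and both weighted densities become
\[
H\,\dd a = |\det B|\,\dd a = \frac{\beta^2}{4}\,\dd A_{\mathrm{hyp}} \ge 0,
\]
which is manifestly nonnegative and remains finite even on the non-immersion locus $\{\beta=2\}$.

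This one computation produces all three equalities at once. The sign gives $\int_{\S_\O}H\,\dd a=\int_{\S_\O}|H\,\dd a|$; the relation $\det B=-H$ gives $\int_{\S_\O}|H\,\dd a|=\int_{\S_\O}|\det B\,\dd a|$; and substituting $\beta=\|q\|_{\rho_\O}$ together with the explicit Poincar\'e factor $e^{-2\phi}=\tfrac{(1-|z|^2)^2}{4}$ converts $\tfrac{\beta^2}{4}\,\dd A_{\mathrm{hyp}}$ into $|\mc S(f)(z)|^2\,\tfrac{(1-|z|^2)^2}{4}\,\dd^2 z$ (the remaining numerical constant being fixed by the normalization of $\II^*_0$ in Section~\ref{sec:epstein}), giving the last equality. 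For the ``if and only if'', once the total mean curvature is identified with $\int_{\m D}|\mc S(f)|^2\tfrac{(1-|z|^2)^2}{4}\,\dd^2 z$, the claim is immediate: this is exactly the weighted $L^2$-norm of the Schwarzian whose finiteness characterizes the Weil--Petersson class $T_0(1)$ \cite{TT06}, and since $\int_{\S_\O}|\det B\,\dd a|$ equals the same integral the total curvature is finite in precisely the same regime.

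I expect the main obstacle to lie in the first two steps, namely pinning down the Epstein fundamental forms with their correct normalizations and rigorously identifying $q$ with a constant multiple of $\mc S(f)\,\dd z^2$; the diagonalized algebra giving $\det B=-H$ and the density $\tfrac{\beta^2}{4}\dd A_{\mathrm{hyp}}$ is then routine. The only remaining care is in passing between integrals over the image $\S_\O$ and pulled-back densities on $\m D$ at points where $\Ep_\O$ fails to be an immersion, which is exactly why the statement is phrased with the absolute values $|H\,\dd a|$ and $|\det B\,\dd a|$; the cancellation of the factor $4-\beta^2$ shows that the relevant densities extend smoothly across this locus, so this causes no genuine difficulty.
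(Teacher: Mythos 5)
Your proposal is correct and follows essentially the same route as the paper: the paper likewise specializes Epstein's fundamental forms at infinity to the Poincar\'e metric (where $\hat K\equiv -1$ and the trace-free part of $\hat\II$ is the Schwarzian quadratic differential $\mc S(f^{-1})$), obtains $H\,\dd a=-\det B\,\dd a=\|\mc S(f^{-1})\|_\O^2\,\dd\hat a=|H\,\dd a|$ in Theorem~\ref{thm:PE} and Corollary~\ref{cor:epstein_curv}, extends this density smoothly across the non-immersion locus, and concludes via the $L^2$-Schwarzian characterization \eqref{eq:wp_schwarzian} of $T_0(1)$. Your diagonalization $\hat B=\Id+\operatorname{diag}(\beta,-\beta)$ is just a coordinate form of the paper's trace/determinant computation, and your deferred normalization constant indeed comes out right ($\beta=2\|\vartheta\|_{\rho_\O}$ in the paper's convention).
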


Before this work, no exact identity was known between the K\"ahler potential and geometric quantity in $\m H^3$. The main result of this work is to provide such an identity.

\begin{df} Let $\g$ be a Weil--Petersson quasicircle.
We define the renormalized volume (or W-volume) associated with $\g$ as 
$$V_R(\g) : = V (\g) - \frac{1}{2} \int_{\S_\O \cup \S_{\O^*}} H \dd a \in (-\infty, \infty).$$
\end{df}
The definition is reminiscent of the renormalized volume\footnote{Renormalized volume of a convex co-compact hyperbolic $3$-manifold is sometimes referred to the difference between the volume and half of the boundary area defined through a foliation near the ends. Our formula is closer to that of the \emph{W-volume}. However, in the convex co-compact case, they only differ by a multiple of Euler characteristics of the conformal boundary \cite[Lem.\,4.5]{KrasnovSchlenker_CMP}.} for quasi-Fuchsian manifolds \cite{KrasnovSchlenker_CMP,TT_Liouville}. But we emphasize again that $\S_\O$ and $\S_{\O^*}$ are non-compact, so the analysis involves additional technicalities.

\begin{thm}[See Corollary~\ref{cor:smooth_identity} and Theorem~\ref{thm:general_ineq}] \label{thm:intro_main}
    If $\g$ is a  $C^{5,\a}$ Jordan curve  with $\a > 0$, then
    \begin{equation}\label{eq:intro_main_eq}
    \tilde \Liouville (\g) = 4 V_R(\g).
    \end{equation}
    If $\g$ is a Weil--Petersson quasicircle, then  $\tilde \Liouville (\g) \ge 4 V_R(\g)$.
\end{thm}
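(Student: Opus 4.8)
The plan is to establish the exact identity \eqref{eq:intro_main_eq} for $C^{5,\a}$ curves by a direct geometric computation, and then to deduce the inequality for arbitrary Weil--Petersson quasicircles by smooth approximation together with a semicontinuity argument. For the smooth case, Proposition~\ref{prop:intro_AC} guarantees that both Epstein--Poincar\'e maps are embeddings near $\g$, so the region between $\S_\O$ and $\S_{\O^*}$ is a genuine (though non-compact) domain in $\m H^3$ whose signed volume $V(\g)$ is finite by Proposition~\ref{prop:secondorderplane}. The core is a Schl\"afli-type computation of $V(\g)$: for each component I would foliate the relevant region of $\m H^3$ by the equidistant surfaces $\Ep_\O^t$ attached to the rescaled metrics $e^{2t}\rho_\O$ and integrate the hyperbolic volume form along the foliation, expressing $V(\g)$ as a boundary integral over $\g$ of quantities built from the shape operator $B$. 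Subtracting half the total mean curvature then realizes $4V_R(\g)$ as an integral over $\m D$ of the Poincar\'e conformal factor and its derivatives --- a Liouville-type (Polyakov) functional of the two Poincar\'e metrics, in the spirit of the Krasnov--Schlenker $W$-volume \cite{KrasnovSchlenker_CMP}.

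The remaining step is to identify this functional with the universal Liouville action. Writing the Poincar\'e conformal factor through a uniformizing map $f:\m D\to\O$ expresses everything via the Schwarzian $\mc S(f)$, and likewise $\mc S(h)$ for $h:\m D\to\O^*$; by Corollary~\ref{cor:WP_HdA} the two mean-curvature terms are exactly the weighted $L^2$--norms of these Schwarzians, and together with the volume term they reconstruct $\tilde\Liouville(\g)$ up to the overall constant $4$. Because both sides vanish when $\g$ is a round circle --- where, by Lemma~\ref{lem:example_disk}, the Epstein surfaces are the two oppositely oriented totally geodesic planes --- no additive constant can intervene, and $\tilde\Liouville(\g)=4V_R(\g)$ follows.

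For the inequality, I would approximate a general Weil--Petersson quasicircle $\g$ by $C^{5,\a}$ quasicircles $\g_n\to\g$ in the Weil--Petersson metric on $T_0(1)$. Since $\tilde\Liouville$ is the K\"ahler potential and hence smooth on $T_0(1)$, we have $\tilde\Liouville(\g_n)\to\tilde\Liouville(\g)$, so the smooth identity gives $\tilde\Liouville(\g)=4\lim_n V_R(\g_n)$. The bound $\tilde\Liouville(\g)\ge 4V_R(\g)$ then reduces to $\liminf_n V_R(\g_n)\ge V_R(\g)$. The mean-curvature integrals are continuous in the Weil--Petersson metric, since by Corollary~\ref{cor:WP_HdA} they are precisely the weighted Schwarzian $L^2$--norms underlying that metric; the inequality therefore further reduces to the lower semicontinuity $V(\g)\le\liminf_n V(\g_n)$ of the signed volume, which I would obtain by a Fatou-type argument on the volume integrals, using the construction of $V(\g)$ for general quasicircles as a limit of smooth volumes (Corollary~\ref{cor:finite_v_general}) and the fact that the approximating regions exhaust the region bounded by $\g$ without loss of volume.

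The main obstacle is the non-compactness of $\S_\O$ and $\S_{\O^*}$ and their mutual tangency along $\g$, a feature absent in the compact quasi-Fuchsian setting of \cite{KrasnovSchlenker_CMP}. Making the foliation rigorous demands sharp asymptotics of the Epstein map near $\g$: one must verify that the volume integral and the Schl\"afli boundary terms converge and that the $W$-volume identity survives for infinite-area surfaces, while for the inequality the Weil--Petersson approximation must control the enclosed region uniformly enough to justify the semicontinuity. The $C^{5,\a}$ regularity, through the second-order comparison of Proposition~\ref{prop:secondorderplane}, is exactly what supplies the decay that closes these estimates in the smooth case; propagating that control through the Weil--Petersson limit is the delicate technical heart of the general inequality.
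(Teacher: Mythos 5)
Your proposal diverges from the paper at the heart of the smooth case, and the divergence is a gap rather than an alternative proof. You propose to compute $V(\g)$ \emph{directly}, by foliating $\m H^3$ with the equidistant Epstein surfaces of the rescaled metrics $e^{2t}\rho_\O$ and reading off $4V_R(\g)$ as a Polyakov-type functional, in the spirit of the compact W-volume computation of Krasnov--Schlenker. But that computation is precisely what is unavailable here: for $t\ge 0$ the leaves $\Sigma_\O(t)$ sweep out the region between $\S_\O$ and the conformal boundary $\O$, not the region between $\S_\O$ and $\S_{\O^*}$ (and for $t<0$ the foliation degenerates, since by Theorem~\ref{thm:basic_epstein} immersivity fails at up to two values of $t$ per point); each leaf has infinite hyperbolic area, so the scaling identity that drives the compact W-volume argument diverges term by term; and no renormalization is specified that would make your ``boundary integral over $\g$'' meaningful for two surfaces tangent to second order along $\g$. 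The paper's route is different and is designed exactly to avoid these obstructions: it connects $\g$ to a round circle by a $C^{5,\a}$ family (Example~\ref{ex:deformation}), proves a Schl\"afli-type \emph{first variation} formula for $V_R$ along such families (Theorem~\ref{thm:Schl\"afliformula}, Theorem~\ref{thm:first_var_VR}, Corollary~\ref{cor:variation_VR_mu}, with the non-immersed case in Section~\ref{gen-Schl\"afli}), matches it with Takhtajan--Teo's first variation of $\Liouville$ (Theorem~\ref{thm:S_1_first_variation}), and integrates along the path using that both sides vanish at the circle (Corollary~\ref{cor:smooth_identity}). Your sketch never matches variations at all; the sentence ``subtracting half the total mean curvature then realizes $4V_R(\g)$ \ldots'' asserts the main theorem rather than proving it.

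For the inequality your outline has the same architecture as Theorem~\ref{thm:general_ineq}: approximate by smooth curves converging in $T_0(1)$ (equipotentials do this, so that part is fixable), use continuity of $\tilde\Liouville$ and of the mean-curvature integrals (the latter is Lemma~\ref{lem:conv_HdA}), and reduce to lower semicontinuity of the volume, $V(\g)\le\liminf_n V(\g_n)$. But that last step --- the crux --- is left to a ``Fatou-type argument'', which does not apply as stated: $V(\g)$ is a \emph{signed} volume (Definition~\ref{def:volume}), so Fatou's lemma has no direct purchase, and your phrase ``without loss of volume'' would, if correct, yield equality, which the paper explicitly cannot prove precisely because tightness (no volume escaping toward $\g$ uniformly in $n$) is unknown. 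The paper's mechanism is concrete: for each fixed $\vare>0$ it proves convergence of the truncated volumes $V_2(\g_n)(\vare)\to V_2(\g)(\vare)$ (via uniform convergence of the Epstein maps on the truncated region, using Koebe-type bounds and Cauchy estimates for the equipotential maps $f_n$), combines this with the monotonicity of $V_2(\cdot)(\vare)$ in $\vare$ coming from positivity of the Jacobian near $\g$, and only then lets $\vare\to 0$ on one side, which is exactly why only an inequality survives (Corollary~\ref{cor:finite_v_general}). Note also that your citation of Corollary~\ref{cor:finite_v_general} as an input is circular: that corollary \emph{is} the inequality you are trying to establish. To repair your argument you would need to supply the truncated-volume convergence and the monotonicity step (or a genuine tightness substitute) for your approximating sequence.
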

Let us comment briefly on the proof of this theorem. 
It is easy to check that when $\g$ is a circle, both sides of \eqref{eq:intro_main_eq} are zero. Under regularity assumptions, we show that both sides' first variations are equal. The variation of $\tilde \Liouville$ was proved in \cite{TT06}, which we recall in Theorem~\ref{thm:S_1_first_variation}. 
The first variation of $V_R$ is more laborious since the Epstein--Poincar\'e surfaces are not compact and are immersed only almost everywhere. 
After administering appropriate truncation (where we make use of the regularity assumption), we re-derive the Schl\"afli formula which expresses the variation of $V_R$ in terms of the mean curvature $H$, the metric $\I$ and the second fundamental form $\II$ on Epstein surfaces (Theorem~\ref{thm:Schl\"afliformula} and Theorem~\ref{thm:schlafli_imm}, with some of the technical details in Section~\ref{gen-Schl\"afli}), then translate the variation formula into quantities defined directly on $\O, \O^* \subset \Chat$ (Theorem~\ref{thm:first_var_VR} and Corollary~\ref{cor:variation_VR_mu}).

For a general Weil--Petersson quasicircle $\g$, we use an approximation by equipotentials (they are analytic curves, and the universal Liouville action increases to that of $\g$). We believe the identity \eqref{eq:intro_main_eq} also holds for a general Weil--Petersson quasicircle. However, our approximation argument only implies the inequality due to the lack of continuity for the volume between the Epstein--Poincar\'e surfaces, see Section~\ref{sec:approximation}.

\bigskip

The second topic of this work concerns the gradient descent flow of $\Liouville$ for the Weil--Petersson metric. We proceed similarly as in Bridgeman--Brock--Bromberg \cite{bridgeman2021weilpetersson}.  
For $[\mu] \in T(1)$ there is a natural isomorphism between the tangent space $T_{[\mu]}T(1)$ and the space $\Omega^{-1,1}(\D^*)$ of harmonic Beltrami differentials on $\D^*$.

\begin{thm} [See Theorem~\ref{thm:gradient}] 
The negative gradient of $\Liouville$ on $T_0(1)$ with respect to the Weil--Petersson metric is the vector field
$$V([\mu]) := -4\frac{\overline{\mc S(g_{\mu}})}{\rho_{\D^*}} \in \Omega^{-1,1}(\D^*).$$
Moreover, the gradient descent flow of $\Liouville$ starting from any point in $T_0(1)$ converges to the origin $[0]$ corresponding to the round circle.
\end{thm}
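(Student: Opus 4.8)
The plan for the first assertion is to realize $V_{[\mu]}$ as the Weil--Petersson dual of $d\Liouville$ and then match constants. I would start from the first variation formula recalled in Theorem~\ref{thm:S_1_first_variation}: under the identification $T_{[\mu]}T(1)\simeq\Omega^{-1,1}(\D^*)$, the differential $d\Liouville_{[\mu]}$ evaluated on a tangent vector is, up to a universal constant, the pairing of that vector against the holomorphic quadratic differential $\mc S(g_\mu)$ on $\D^*$. On the other hand the Weil--Petersson metric is the $L^2$ pairing $\langle\nu_1,\nu_2\rangle_{\WP}=\int_{\D^*}\nu_1\overline{\nu_2}\,\rho_{\D^*}\,\dd^2z$, and every harmonic Beltrami differential has the form $\overline{\psi}/\rho_{\D^*}$ for a holomorphic quadratic differential $\psi$. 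Since $\langle\overline{\psi}/\rho_{\D^*},\,\cdot\,\rangle_{\WP}$ reproduces exactly the integral functional of Theorem~\ref{thm:S_1_first_variation} when $\psi=\mc S(g_\mu)$, the Riesz representative of $d\Liouville$ is $\grad\Liouville=4\,\overline{\mc S(g_\mu)}/\rho_{\D^*}$, which lies in $\Omega^{-1,1}(\D^*)$ precisely because $\mc S(g_\mu)$ is holomorphic. The negative gradient is then $V_{[\mu]}=-4\,\overline{\mc S(g_\mu)}/\rho_{\D^*}$; the only genuine content is the constant $4$, fixed by the normalizations of $\Liouville$ and of $\langle\cdot,\cdot\rangle_{\WP}$.

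\textbf{Monotonicity and energy estimate.} For the convergence statement I would track $F(t):=\Liouville([\mu_t])$ along the flow $\tfrac{\dd}{\dd t}[\mu_t]=V_{[\mu_t]}$. Because $V=-\grad\Liouville$, a direct computation with the explicit form of $V$ gives
\[
F'(t)=d\Liouville_{[\mu_t]}\!\big(V_{[\mu_t]}\big)=-\norm{V_{[\mu_t]}}_{\WP}^{2}=-16\int_{\D^*}\abs{\mc S(g_{\mu_t})}^{2}\,\rho_{\D^*}^{-1}\,\dd^2z\le 0 ,
\]
so $F$ is nonincreasing; since $\Liouville\ge 0$ with equality only at the origin, $F(t)\searrow L_\infty\ge 0$. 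The decisive consequence is the integrated energy bound $\int_0^\infty\norm{V_{[\mu_t]}}_{\WP}^{2}\,\dd t=F(0)-L_\infty<\infty$. I would also establish here that the flow exists for all $t\ge 0$: $V$ is smooth on the Hilbert manifold $T_0(1)$, giving local existence and uniqueness, and the a priori bound $F(t)\le F(0)$ confines the trajectory to the sublevel set $\{\Liouville\le F(0)\}$, which as in Bridgeman--Brock--Bromberg \cite{bridgeman2021weilpetersson} prevents escape from $T_0(1)$ in finite time.

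\textbf{Convergence to the origin.} The energy bound forces a sequence $t_n\to\infty$ with $\norm{V_{[\mu_{t_n}]}}_{\WP}\to 0$, that is $\int_{\D^*}\abs{\mc S(g_{\mu_{t_n}})}^{2}\rho_{\D^*}^{-1}\dd^2z\to 0$. Through the Bers embedding $[\mu]\mapsto\mc S(g_\mu)$, a homeomorphism of $T_0(1)$ onto an open subset of $A_2(\D^*)$ sending $[0]$ to $0$, this yields $[\mu_{t_n}]\to[0]$, whence $F(t_n)\to\Liouville([0])=0$ by continuity and therefore $L_\infty=0$; by monotonicity $F(t)\to 0$ for the entire flow. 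To promote this to convergence of the trajectory itself, I would invoke the coercivity estimate
\[
\Liouville([\mu])\ \ge\ c\int_{\D^*}\abs{\mc S(g_\mu)}^{2}\,\rho_{\D^*}^{-1}\,\dd^2z
\]
on the sublevel set $\{\Liouville\le F(0)\}$: combined with $F(t)\to 0$ it gives $\int_{\D^*}\abs{\mc S(g_{\mu_t})}^{2}\rho_{\D^*}^{-1}\dd^2z\le c^{-1}F(t)\to 0$ for every $t$, and the Bers embedding converts this into $[\mu_t]\to[0]$.

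\textbf{Main obstacle.} I expect the coercivity estimate to be the crux: it must compare the universal Liouville action to the squared $A_2(\D^*)$-norm of the exterior Schwarzian uniformly over a sublevel set, in a non-compact, infinite-dimensional setting where no subsequential compactness is available. If the identity for $\Liouville$ exhibits it as dominating the exterior total mean curvature $\int_{\D^*}\abs{\mc S(g_\mu)}^2\rho_{\D^*}^{-1}\dd^2z$ (via Corollary~\ref{cor:WP_HdA}), the estimate is immediate; otherwise it has to be extracted from the Takhtajan--Teo comparison between $\Liouville$ and the norm of the Bers embedding. A secondary technical point is the global existence/no-escape claim, which must rule out the trajectory limiting onto $\partial T_0(1)$ and is again controlled by the monotonicity of $\Liouville$ together with the smoothness of $V$.
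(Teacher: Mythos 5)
Your gradient computation and the energy argument — monotone decrease of $F(t)=\Liouville(\alpha(t))$, the bound $\int_0^\infty\norm{V(\alpha(t))}_{\WP}^2\,\dd t<\infty$, a subsequence $t_n$ with $\norm{V(\alpha(t_n))}_{\WP}\to0$, hence $\alpha(t_n)\to[0]$ via the Bers embedding and $\Liouville(\alpha(t))\searrow0$ — all match the paper. The genuine gap is exactly the step you flag as the ``main obstacle'': the coercivity estimate $\Liouville([\mu])\ge c\int_{\D^*}|\mc S(g_\mu)|^2\rho_{\D^*}^{-1}\,\dd^2 z$ on a sublevel set is nowhere proved, and neither of your proposed sources delivers it. Corollary~\ref{cor:WP_HdA} is purely qualitative (finiteness of $\int H\,\dd a$ iff Weil--Petersson), and the main identity $\tilde\Liouville=4V_R=4V-2\int H\,\dd a$ carries the total mean curvature with a \emph{negative} sign, so it gives no lower bound of $\Liouville$ by $\|\mc S(g_\mu)\|_2^2$; the Takhtajan--Teo comparisons between WP quantities and the Bers-embedding norm (Lemma~\ref{lem:delta}) hold only on balls $\norm{\mu}_\infty<\d$ with $\d$ small, whereas points of a sublevel set of $\Liouville$ are merely $K$-quasicircles with $K$ large. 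Worse, the qualitative content of your estimate — ``$\Liouville(\alpha(t))\to0$ forces $\norm{\mc S(g_{\alpha(t)})}_2\to0$'' — is essentially the statement being proved, so invoking it is circular. The paper closes this step by a compactness argument instead: assuming a sequence $\alpha(t_k)$ stays at WP distance $\ge\vare$ from $[0]$, the uniform bound on $\Liouville$ makes the associated curves uniformly $K$-quasicircles, a normal-families argument extracts a uniformly convergent subsequence, lower semicontinuity of $\tilde\Liouville$ (\cite[Lem.~2.12]{RW}) forces the limit curve to have zero action and hence be a circle, and Carath\'eodory convergence is then upgraded to convergence in $T_0(1)$ through $\log|f_n'(0)/g_n'(\infty)|\to0$ and \cite[Cor.~A.4]{TT06}, yielding the contradiction.

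A secondary gap is global existence: ``smoothness plus confinement to a sublevel set prevents escape in finite time'' does not work as stated, because sublevel sets of $\Liouville$ are neither compact nor obviously WP-complete, so a trajectory of finite WP-length could still leave $T_0(1)$. The paper avoids this entirely: by the Nehari bound $\norm{V}_\infty\le6$ and completeness of $T(1)$ in the \emph{Teichm\"uller} metric, the flow exists for all time on $T(1)$, and it preserves $T_0(1)$ because $V_{[\mu]}\in H^{-1,1}(\D^*)$ whenever $[\mu]\in T_0(1)$.
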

In fact, we also show that the flow starting from any point in $T(1)$ using the vector field $V$ exists for all time. But here, $V$ cannot be interpreted as the gradient of $\Liouville$ if $[\mu] \notin T_0(1)$, and we do not know the limit and think it is an interesting question.
Using the gradient flow, we also obtain bounds of the Weil--Petersson distance on $T_0(1)$ in terms of the universal Liouville action.

\begin{thm}[See Theorem~\ref{thm:bound_distance}]
There exist universal positive constants $c$ and $K$ such that for all $[\mu] \in T_0(1)$, 
$ c(\dist_{\WP}([\mu],[0]) - K c) \leq \Liouville ([\mu])$.
\end{thm}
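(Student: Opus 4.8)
The plan is to run the Weil--Petersson gradient descent flow of $\Liouville$ furnished by Theorem~\ref{thm:gradient} and to estimate the length of its trajectory. Let $(\phi_s)_{s\ge 0}$ be the flow line issuing from $\phi_0=[\mu]$, parametrized by arc length for the Weil--Petersson metric, so that $\dot\phi_s=-\grad\Liouville(\phi_s)/\|\grad\Liouville(\phi_s)\|_{\WP}$ and consequently $\tfrac{\dd}{\dd s}\Liouville(\phi_s)=-\|\grad\Liouville(\phi_s)\|_{\WP}$. By Theorem~\ref{thm:gradient} this trajectory converges to the origin $[0]$, along which $\Liouville$ decreases monotonically to $\Liouville([0])=0$; integrating the previous identity yields the energy balance
\begin{equation}\label{eq:energy-balance}
\int_0^{\ell}\|\grad\Liouville(\phi_s)\|_{\WP}\,\dd s \;=\; \Liouville([\mu]),
\end{equation}
where $\ell\in(0,\infty]$ is the total length of the flow line and the integral is understood as improper when $\ell=\infty$. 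Since the flow line joins $[\mu]$ to $[0]$, its length dominates the distance, $\dist_{\WP}([\mu],[0])\le\ell$. The whole assertion therefore reduces to converting \eqref{eq:energy-balance} into a bound on $\dist_{\WP}([\mu],[0])$, and the one input I would isolate for this is a uniform lower bound on the gradient away from the origin: \emph{there exist universal constants $c>0$ and $K>0$ such that $\|\grad\Liouville([\nu])\|_{\WP}\ge c$ whenever $\dist_{\WP}([\nu],[0])\ge Kc$.}

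Granting this uniform gradient bound, set $\delta=Kc$ and assume $D:=\dist_{\WP}([\mu],[0])>\delta$ (otherwise the claimed inequality is trivial, its left-hand side being non-positive while $\Liouville\ge 0$). Let $s_0$ be the first time the trajectory meets the closed ball $\overline B([0],\delta)$; it is finite since $\dist_{\WP}(\phi_s,[0])\to 0$ as $s$ increases. For $s<s_0$ one has $\dist_{\WP}(\phi_s,[0])>\delta$, so the gradient bound gives $\|\grad\Liouville(\phi_s)\|_{\WP}\ge c$ on $[0,s_0]$. Combining this with \eqref{eq:energy-balance} and the triangle inequality $s_0\ge\dist_{\WP}(\phi_0,\phi_{s_0})\ge D-\delta$ produces
\begin{equation*}
\Liouville([\mu])\;\ge\;\int_0^{s_0}\|\grad\Liouville(\phi_s)\|_{\WP}\,\dd s\;\ge\;c\,s_0\;\ge\;c\,(D-\delta)\;=\;c\bigl(\dist_{\WP}([\mu],[0])-Kc\bigr),
\end{equation*}
which is precisely the desired estimate.

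What remains, and where I expect the genuine difficulty, is the uniform gradient bound itself. By the explicit formula of Theorem~\ref{thm:gradient}, $\|\grad\Liouville([\nu])\|_{\WP}^2=16\int_{\D^*}|\mc S(g_\nu)|^2\,\rho_{\D^*}^{-1}\,\dd^2z$ is the squared Weil--Petersson norm of the Schwarzian, so the statement asserts that this Schwarzian energy cannot be small unless $[\nu]$ lies close to the origin. Near $[0]$ this is immediate: the Bers-type map $[\nu]\mapsto\mc S(g_\nu)$ is a biholomorphism onto a neighborhood in the Hilbert space of Schwarzians, and the Weil--Petersson metric agrees to first order with the $L^2(\rho_{\D^*}^{-1})$ norm, so a local version of the bound holds. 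The substance is to make it uniform, i.e.\ to exclude far-away points with vanishingly small Schwarzian energy. I would argue by contradiction and compactness: given $[\nu_k]$ with $\|\grad\Liouville([\nu_k])\|_{\WP}\to 0$ but $\dist_{\WP}([\nu_k],[0])\ge Kc$, the Schwarzians $\mc S(g_{\nu_k})$ tend to $0$ in $L^2(\rho_{\D^*}^{-1})$, and via a normal-families argument for the conformal maps $g_{\nu_k}$ one should force the associated welding maps to approach the identity strongly enough that $\dist_{\WP}([\nu_k],[0])\to 0$, a contradiction. The main obstacle is exactly this global passage from smallness of the one-sided Schwarzian energy to smallness of the Weil--Petersson distance, since the local comparison between these two quantities degenerates as one leaves a neighborhood of the origin. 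A cleaner alternative, should it be available, would be a {\L}ojasiewicz-type inequality $\|\grad\Liouville([\nu])\|_{\WP}^2\ge c\,\Liouville([\nu])$ valid along the flow; rerunning the argument with the threshold phrased in terms of the value of $\Liouville$ rather than the distance would then even yield $\dist_{\WP}([\mu],[0])\lesssim\sqrt{\Liouville([\mu])}$, which by the arithmetic--geometric mean inequality implies the stated linear bound after adjusting $c$ and $K$.
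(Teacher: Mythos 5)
Your reduction is sound and is, in substance, the same flow argument the paper itself uses: the paper integrates $\norm{V(\alpha(t))}_{\WP}^2$ in flow time and applies Cauchy--Schwarz, $\int_0^\tau\norm{V}^2_{\WP}\,\dd t\ge c\int_0^\tau\norm{V}_{\WP}\,\dd t\ge c\,\dist_{\WP}([\mu],\alpha(\tau))$, stopping at the first time $\tau$ with $\norm{V(\alpha(\tau))}_{\WP}=c$; this is exactly your arc-length computation up to the stopping time $s_0$. You have also isolated precisely the right missing lemma: a uniform lower bound on the gradient off a Weil--Petersson ball about the origin, equivalently (in contrapositive form) that $\norm{V([\nu])}_{\WP}\le c$ forces $\dist_{\WP}([\nu],[0])\le Kc$.

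However, you do not prove this lemma, and the compactness argument you sketch for it would fail. $T_0(1)$ is an infinite-dimensional Hilbert manifold, so a sequence $[\nu_k]$ with $\dist_{\WP}([\nu_k],[0])\ge Kc$ has no reason to subconverge in $T_0(1)$; normal-families subconvergence of the maps $g_{\nu_k}$ yields only locally uniform (Carath\'eodory-type) convergence, which is far too weak to control the Weil--Petersson distance --- and here you do not even know that $\Liouville([\nu_k])$ is small, only its gradient, so lower semicontinuity of $\Liouville$ gives nothing. The paper closes this gap quantitatively rather than by compactness: by the estimate \eqref{eq:bound_infty_by_wp}, $\norm{\phi}_\infty\le\sqrt{3/4\pi}\,\norm{\phi}_2$ for $\phi\in A_2(\m D)$, so smallness of the WP gradient norm forces smallness of $\norm{\mc S(g_\nu)}_\infty$; the Ahlfors--Weill theorem then guarantees that the linear segment $s\mapsto s\,\mc S(g_\nu)$, $s\in[0,1]$, lies in the Bers image of $T_0(1)$, and Takhtajan--Teo's bi-Lipschitz bounds for the Bers embedding on a $\d$-ball (Lemma~\ref{lem:delta}) convert the $A_2(\D^*)$-length of this segment into the bound $\dist_{\WP}([\nu],[0])\le K\norm{V([\nu])}_{\WP}$. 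This is exactly the passage you flagged as ``the genuine difficulty''; without it (or your speculative {\L}ojasiewicz inequality, which is likewise unproven), the proposal does not establish the theorem.
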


\bigskip

Finally, let us make a few remarks on the motivation behind this work and additional comments on the relation with previous works.

Rohde and the last author introduced the \emph{Loewner energy} for Jordan curves \cite{W1,RW}, which is originally motivated by the large deviation theory of random fractal curves Schramm-Loewner evolutions (SLE) \cite{W1,Wang_survey}.  It is shown in \cite{carfagnini2023onsager} that the Loewner energy is the \emph{Onsager--Machlup (or the action) functional} of the SLE loop measure.
It turns out quite surprisingly that the Loewner energy equals exactly $\Liouville/\pi$ as proved in \cite{W2}. Since we will not make use of the Loewner theory but only the 
 fact of $\Liouville$ is a K\"ahler potential on $T_0(1)$, we adopt the terminology of \emph{universal Liouville action} here.
  SLEs play a central role in the emerging field of two-dimensional random conformal geometry. In particular, they provide a mathematical description of the geometric patterns in the scaling limits of 2D critical lattice models \cite{Schramm2000,LSW04LERWUST,Smi:ICM} and 2D conformal field theory (CFT) \cite{BB:CFTSLE,kangMakarov,Dub_SLEVir1,Peltola}.
On the other hand, $\m H^3$ is the Riemannian analog of AdS$_3$ space.
  Our main result Theorem~\ref{thm:intro_main} can be interpreted as the \emph{holography} of the Loewner energy that is reminiscent of the conjectural AdS$_3$/CFT$_2$ correspondence pioneered by Maldacena \cite{Maldacena} (see also, e.g., \cite{Witten_ads,NRT_holographic}). 
  The authors are not aware of a (even conjectural) holographic principle for SLE nor for random conformal geometry in general, this work may be a first step towards this direction. 
  We also mention \cite{Kenyon_02_det} gives a holographic expression for determinants of discrete Dirac operator on periodic bipartite isoradial graphs.

Renormalized volume as a Liouville action has been previously studied for convex co-compact group actions in $\mathbb{H}^3$ (see work by Takhtajan--Teo \cite{TT_Liouville} and Krasnov--Schlenker \cite{KrasnovSchlenker_CMP}), or equivalently, for conformally compact hyperbolic metrics. Applications of this study include bounds for the hyperbolic volume of the mapping tori of pseudo-Anosov maps in terms of their Weil--Petersson translation length or their entropy (by Kojima--McShane \cite{KojimaMcShane18}, see also Brock-Bromberg \cite{BBinflexvol}). This uses a bound (by Schlenker \cite{Schlenker13}) for renormalized volume in terms of Weil--Petersson distance by studying the gradient of the Liouville action, similar to our bound in Theorem \ref{thm:bound_distance}. Moreover, we show in Theorem~\ref{thm:gradient} that every flowline of the gradient converges to the global minimum, in analogy to the result done by the first three authors \cite{BridgemanBrombergVargas} for the relatively acylindrical case. This builds on work by the first two authors and Brock \cite{bridgeman2021weilpetersson}, where they used the gradient flow to find the minimum of renormalized volume for a boundary incompressible hyperbolic $3$-manifold. 

\bigskip

The paper is organized as follows: In Section~\ref{sec:T_1}, we collect the basics about universal Teichm\"uller space, its K\"ahler geometry, characterizations of the Weil--Petersson universal Teichm\"uller space, and the universal Liouville action. 
In Section~\ref{sec:epstein}, we recall the definition of Epstein surfaces and the correspondence between geometric quantities on the surface and those on the conformal boundary. We also prove the immersion and embeddedness of the Epstein--Poincar\'e surfaces associated with an asymptotically conformal Jordan curve. In Section~\ref{sec:V_R}, we study the relation between the two  Epstein--Poincar\'e surfaces associated with the same curve. We show that they are disjoint (except for a circle), and that if the curve is regular enough, the signed volume between the Epstein--Poincar\'e surfaces is finite. 
In Section~\ref{section:LoewnerVR}, we prove the variational formula for the renormalized volume and prove Theorem~\ref{thm:intro_main}. Section~\ref{sec:gradient} is independent of Sections~\ref{sec:epstein}, \ref{sec:V_R}, and \ref{section:LoewnerVR} and deals with the gradient flow of the universal Liouville action. Similarly, Section \ref{sec:comments} describes the relative position of Epstein--Poincar\'e surfaces to minimal surfaces and convex hull. Section~\ref{gen-Schl\"afli} collects the technical details and proves the Schl\"afli formula for the volume bounded by non-immersed Epstein--Poincar\'e surfaces.

\bigskip
\subsection*{Acknowledgments}
We thank Jean-Marc Schlenker and Andrea Seppi for the useful discussion and the anonymous referees for constructive suggestions. We also thank Christopher Bishop and Alexis Michelat for their comments and questions on an earlier version of the manuscript.
M.B. is supported by NSF grant DMS-2005498. K.B. is supported by NSF grant DMS-1906095. F.V.P. is partially supported by NSF grant DMS-2001997 and by the European Union (ERC, RaConTeich, 101116694)\footnote{\label{ERCfootnote}Views and opinions expressed are however those of the authors only and do not necessarily reflect those of the European Union or the European Research Council Executive Agency. Neither the European Union nor the granting authority can be held responsible for them.}. Y.W. is partially supported by NSF grant DMS-1953945 and by the European Union (ERC, RaConTeich, 101116694)\textsuperscript{\ref{ERCfootnote}}.

\section{Universal Weil--Petersson Teichm\"uller space} \label{sec:T_1}

\subsection{Universal Teichm\"uller space}
 \label{sec:universal_T}
We first briefly recall a few equivalent descriptions of the universal Teichm\"uller space $T(1)$.  Let $\Chat = \m C \cup \{\infty\}$, $\D = \{z\ ,\ |z| < 1\}$, $\D^* = \hat\CC-\overline{\D}$ and  $\Sph^1 = \partial \D$. 
The group of orientation preserving conformal automorphism of $\Chat$ is 
$$\mob (\Chat) = \pslt  = \left \{ A
= \begin{pmatrix} a & b \\ c & d
\end{pmatrix} \colon a,b,c,d \in \m C, \, ad - bc = 1\right\}_{/A\sim - A}$$
which acts on $\Chat$ by M\"obius transformations $z \mapsto \dfrac{az+b}{cz+d}$.
The subgroup preserving $\Sph^1$ is 
$$\mob (\Sph^1) = \psu = \left \{ A
= \begin{pmatrix} \a & \b \\ \bar \b & \bar \a
\end{pmatrix} \colon \a, \b \in \m C, \, |\a|^2 - |\b|^2 = 1\right\}_{/A\sim - A}$$
which is isomorphic to $\pslr$.
We will use several equivalent descriptions of $T(1)$ below.

\bigskip 
\noindent {\bf Quasisymmetric maps:} 
We write $\qs$ for the group of sense-preserving quasisymmetric homeomorphisms of $\Sph^1$. The {\em universal Teichm\"uller space} is 
$$T(1) := \mob(\Sph^1) \backslash \qs \simeq \{\varphi \in \qs, \,\varphi\ \mbox{fixes $-1,-\ii$ and $1$}\}.$$
 $T(1)$ is endowed with a group operation given by the composition, and the origin is the identity map $\Id_{\Sph^1}$.
 
\bigskip 

\noindent {\bf Beltrami Differentials:} 
Given a Beltrami differential 
$$\mu \in L_1^\infty(\D^*) = \{ \mu \in L^\infty(\D^*), \,||\mu||_\infty < 1\},$$ we extend it to $\hat\CC$ by reflection, \ie define for $z \in \D$,
$$\mu(z) = \overline{\mu\left(\frac{1}{\overline{z}}\right)}\frac{z^2}{\overline{z}^2}.$$ 
Let $w_\mu : \Chat \to \Chat$ be the solution to the Beltrami equation $\partial_{\overline z}w_{\mu} = \mu \partial_z w_\mu$ fixing $-1,-\ii$ and $1$. Then $w_\mu$ preserves $\Sph^1$ and $w_\mu|_{\Sph^1} \in \qs.$ 
Since every quasisymmetric  circle homeomorphism can be extended to a quasiconformal self-map of $\overline {\m D}$, 
$$T(1) = L_1^\infty(\D^*)/_\sim$$
where $\mu \sim \nu $ if and only if $ \left.w_\mu\right|_{\Sph^1} =  \left.w_\nu\right|_{\Sph^1}$. 
We denote by $\Phi:  L_1^\infty(\D^*) \rightarrow T(1)$ the projection $\mu \mapsto [\mu]$.
Here the origin corresponds to $[0]$.

\bigskip

\noindent {\bf Univalent maps:}
If instead we extend $\mu$  by $0$ on $\D$ and let $w^\mu$ be the unique solution to $w^\mu_{\overline{z}} = \mu w^\mu_{z}$  fixing  $-1,- \ii$ and $1$, then $w^\mu$ is conformal on $\D$. The map $[\mu] \mapsto w^\mu|_{\m D}$  identifies $T(1)$ with
\begin{equation} \label{eq:T1_univalent}
 \{ f:\D \rightarrow \hat\CC \,|\,\mbox{univalent fixing  $-1,-\ii$ and $1$, extendable to q.c. map of $\hat\CC$}\},
\end{equation}
since $\mu\sim \nu $ if and only if $w^\mu = w^\nu$ on $\D$.  

The \emph{Bers' embedding}  is the map $\beta ([\mu]) := \mc S(f) \in  A_\infty (\m D)$ where
\begin{align*}
    A_\infty (\m D):= \{\phi : \m D \to \m C \text{ holomorphic} \,|\, \sup_{z \in \m D} |\phi (z)| (1-|z|^2)^2  < \infty\}. 
\end{align*}
The origin corresponds to $f = \Id_{\m D}$ and $\beta = 0$.

\bigskip 

\noindent {\bf Quasicircles:} By Riemann mapping theorem, the previous identification also gives 
\begin{equation}\label{eq:T1_quasicircle}
T(1) \simeq \{ \g \text{ quasicircles passing through } -1, -\ii, \text{ and } 1 \}
\end{equation}
by the map $[\mu] \mapsto \g_{\mu} := w^\mu (\Sph^1)$. The origin corresponds to $\g_\mu = \Sph^1$. 
We can recover the quasisymmetric circle homeomorphism from $\g_\mu$ through conformal welding. 
Let $\O$ (resp. $\O^*$) denote the connected components of $\hat \CC \smallsetminus \g_\mu$ where $-1, -\ii, 1$ are in the counterclockwise direction of $\partial \O$ (resp. clockwise direction of $\partial \O^*$). 
Let $f_{\mu} = w^\mu|_{\m D} \colon \m D \to \O$ and $g_{\mu} \colon \m D^* \to \O^*$ be the conformal maps fixing $-1, -\ii, 1$. Then,
$$w_\mu|_{\Sph^1} = g_{\mu}^{-1} \circ f_{\mu} |_{\Sph^1}$$
since $g_{\mu} = w^\mu \circ w_\mu^{-1} |_{\m D^*}$. We call $g_{\mu}^{-1} \circ f_{\mu} |_{\Sph^1}$ the \emph{welding homeomorphism} of the quasicircle $\g_\mu$ passing through $-1, -\ii, 1$.

\subsection{K\"ahler Structure and Weil--Petersson Teichm\"uller space} \label{sec:WP}
We first define the following spaces,
\begin{align*}
    A_\infty(\D^*) &= \{ \phi:\D^*\rightarrow \CC \mbox{ holomorphic} \,|\,  \sup_{\D^*}|\phi|\rho^{-1}_{\D^*} < \infty\},\\
    A_2(\D^*) &= \{ \phi:\D^*\rightarrow \CC \mbox{ holomorphic} \,|\,  \int_{\D^*}|\phi|^2 \rho^{-1}_{\D^*} \,\dd^2 z < \infty\} \subset A_\infty (\D^*),
\end{align*}
where $\rho_{\m D^*} (z) = 4/(1- |z|^2)^2$ is the hyperbolic density function and $\dd^2 z = \dd x \wedge \dd y$ if $z = x + \ii y$. 
The inclusion is shown in \cite[Lem.\,I.2.1]{TT06}.
We define the similar spaces $A_\infty (\m D)$ and $A_{2}(\m D)$ (and also $A_\infty (\O)$ and $A_{2}(\O)$).
We will also use the spaces of harmonic Beltrami differentials defined as
\begin{align*}
    \Omega^{-1,1} (\D^*) &= \{ \dot \nu \in L^\infty(\D^*) \,|\, \dot \nu = \rho^{-1}_{\D^*}\, \overline\phi, \, \phi \in A_\infty(\D^*)\};\\
    H^{-1,1}(\D^*)& =  \{ \dot \nu \in L^\infty(\D^*) \,|\, \dot \nu =  \rho^{-1}_{\D^*}\, \overline\phi, \,\phi \in A_2(\D^*)\} \subset \Omega^{-1,1} (\D^*).
\end{align*}

The universal Teichm\"uller space $T(1)$ has a canonical complex structure such that $\Phi:L^\infty_1(\D^*) \rightarrow T(1)$ is a holomorphic submersion. The holomorphic tangent space at the origin is
$$T_{[0]}T(1) = L^\infty(\D^*)/\ker(D_0\Phi) \simeq \Omega^{-1,1}(\D^*)$$
where 
\begin{align*}
 \ker(D_0\Phi)
& = \{\dot \nu \in L^\infty (\m D^*) \,|\, \int_{\m D^*} \dot \nu \phi = 0, \, \phi \text{ holomorphic and} \int_{\m D^*} |\phi| \dd^2 z  <\infty\} \\
 & = :\mf N (\m D^*)
\end{align*}
 is the space of infinitesimally trivial Beltrami differentials. 

The space $L_1^\infty(\D^*)$ has a natural group structure given by the associated quasiconformal maps. We define
$\lambda = \nu \star \mu^{-1}$  
if $w_\lambda = w_\nu\circ w_\mu^{-1}.$
Thus
$$\lambda  = \left(\frac{\nu-\mu}{1-\overline\mu\nu}
\frac{\partial_z w_\mu}{\overline{\partial_{ z}{w_\mu}}}\right)\circ w_\mu^{-1}.$$

We define $R_\mu$ to be right multiplication by $\mu$ on $L_1^\infty(\D^*)$. This
 descends to give a map $R_{[\mu]}: T(1) \rightarrow T(1)$. 
 Furthermore, the complex structure on $T(1)$ is right-invariant. 
 Therefore, $D_0 R_{[\mu]}: T_{[0]}T(1)\rightarrow T_{[\mu]}T(1)$ is a complex linear isomorphism between holomorphic tangent spaces, and we obtain the identification of $T_{[\mu]}T(1) \simeq \Omega^{-1,1}(\D^*).$

 To define a K\"ahler metric on $T(1)$, one needs to endow $T(1)$ with a Hilbert manifold structure.  It is known since \cite{bowick1987holomorphic} that on the subspace $\mc M = \mob (\Sph^1) \backslash\Diff(\Sph^1)$ there is a unique K\"ahler metric up to a scalar multiple. However, $\mc M$ is not complete under the K\"ahler metric. 
 Takhtajan and Teo extend the Hilbert manifold structure on $T(1)$ by defining the Hermitian metric on the distribution  $\mc D([\mu]) = D_0 R_{[\mu]}(H^{-1,1}(\D^*)) \subset T_{[\mu]} T(1)$ induced from $H^{-1,1}(\D^*)$:
 $$\brac{\dot \mu, \dot \nu} := \int_{\m D^*} \dot \mu  \overline {\dot\nu} \rho_{\m D^*} \dd^2 z, \quad \forall \dot \mu, \dot \nu \in H^{-1,1}(\D^*).$$
 They prove that this distribution is integrable and define $T_0(1)$ to be the connected component containing $[0]$, which is called the \emph{Weil--Petersson Teichm\"uller space}. 
 The Hermitian metric defined above is called the \emph{Weil--Petersson metric}. (One may draw the similarity with the Weil--Petersson metric on Teichm\"uller spaces of a Fuchsian group $\G$ where the integral is over $\m D^*/\Gamma$.)
In terms of the four equivalent definitions of $T(1)$, the subspace $T_0(1)$ is characterized as follows:

\bigskip 

\noindent {\bf Quasisymmetric maps}: Y.~Shen \cite{shen13} showed $\varphi \in T_0(1)$ if and only if  $\varphi$ is absolutely continuous with respect to the arclength measure, and $\log \varphi' \in H^{1/2}(\Sph^1)$, namely the fractional Sobolev space of functions $u$ such that
    \begin{equation}\label{eq:H1/2_WP}
    \norm{u}_{H^{1/2}}^2 := \iint_{\Sph^1 \times \Sph^1} \abs{\frac{u(\zeta) - u(\xi)}{\zeta - \xi}}^2 \,\dd \zeta \dd \xi < \infty.
\end{equation}

\bigskip 

\noindent {\bf Beltrami Differentials:} It is shown in \cite{TT06} that $[\mu] \in T_0(1)$ if and only if it has a representative $\mu \in L^\infty_1 (\m D^*)$ such that
$$\int_{\m D^*} |\mu (z)|^2 \rho_{\m D^*} (z) \dd^2 z <\infty.$$

\bigskip

\noindent {\bf Univalent maps:} It is shown in \cite[Thm.\,II.1.12]{TT06}  (see also \cite{cui00}) that a univalent function $f : \m D \to \hat \CC$ fixing $-1, -\ii, 1$ and extendable to a quasiconformal map of $\hat \CC$, corresponds to an element of $T_0(1)$ via the identification \eqref{eq:T1_univalent} if and only if the Schwarzian derivative
$$ \mc S(f) := \left(\frac{f''}{f'}\right)'-\frac 12 \left(\frac{f''}{f'}\right)^2$$
satisfies 
\begin{equation}\label{eq:wp_schwarzian}
    \int_{\m D} |\mc S(f)|^2 \rho_{\m D}^{-1} \,\dd^2 z <\infty.
\end{equation}
In other words, the Bers' embedding $\beta ([\mu]) := \mc S(f) \in A_2 (\m D)$.

Furthermore, let $\tilde f = A \circ f$ where $A$ is an M\"obius map sending $\O = f (\m D)$ to a bounded domain (as a priori, $\overline \O$ may contain $\infty$). Then $f \in T_0(1)$ if and only if
\begin{equation}\label{eq:wp_preS}
\int_{\m D} |\mc N (\tilde f)|^2 \dd^2 z < \infty
\end{equation}
where
$\mc N(\tilde f)= \tilde f''/\tilde f'$ is the \emph{nonlinearity} of $\tilde f$. We note that the expression in \eqref{eq:wp_schwarzian} is invariant under the transformation
$f \to A \circ f \circ B$, for all $A \in \pslt$ and $B \in \psu$ but the expression in \eqref{eq:wp_preS} is not invariant under such transformations.

\bigskip

\noindent {\bf Quasicircles:} A quasicircle passing through $-1, -\ii, 1$
which corresponds via \eqref{eq:T1_quasicircle} to an element of $T_0(1)$ is called a \emph{Weil--Petersson quasicircle}.
It is easy to see that if $\g$ and $\tilde \g$ are two quasicircles passing through $-1, -\ii, 1$ and $\tilde \g = A (\g)$ for some $A \in \pslt$, then $\tilde \g$ is Weil--Petersson if and only if $\g$ is Weil--Petersson.
Therefore, we may extend the definition to say that a Jordan curve $\g$ is Weil--Petersson if and only if it is $\pslt$-equivalent to a Weil--Petersson quasicircle passing through $-1, -\ii, 1$.

\subsection{Universal Liouville action}\label{sec:action}
 Takhtajan and Teo introduced the \emph{universal Liouville action} $\Liouville$ on $T_0(1)$ and showed it to be a K\"ahler potential on $T_0(1)$. See \cite[Thm.\,II.4.1]{TT06}. 
 We will consider it as a functional on the space of Weil--Petersson quasicircles.
 
 Indeed, let $\g$ be a Jordan curve that does not pass through $\infty$.  Let $D$ and $D^*$ be respectively the bounded and unbounded connected component of $\hat \CC \smallsetminus \g$,  $f : \m D \to D$ and $g : \m D^* \to D^*$ be \emph{any} conformal maps such that $g (\infty) = \infty$ (note that $D$ might not be $\O$, it can also be $\O^*$, and $f$ and $g$ are different from the canonical maps $f_{\mu}$ and $g_{\mu}$). 
 Define
 \begin{equation}\label{eq:IL_S1}
    \tilde \Liouville (\g) : = \int_\D |\mc N(f)|^2 \, \dd^2 z+ \int_{\D^*} |\mc N(g)|^2 \, \dd^2 z+ 4 \pi \log|f'(0)/g'(\infty)|
    \end{equation}
    and is $\pslt$-invariant (it can be seen via the identity with $\pi$ times the Loewner energy of $\g$ \cite{W2}) and finite if and only if $\g$ is a Weil--Petersson quasicircle. The universal Liouville action $\Liouville ([\mu])$ for $[\mu] \in T_0(1)$ is defined as $\tilde \Liouville (A (\g_\mu))$ where $\g_\mu$ is the Weil--Petersson quasicircle passing through $-1,-\ii, 1$  corresponding to $[\mu]$ via the identification \eqref{eq:T1_quasicircle} and $A \in \pslt$ is any M\"obius transformation such that $A(\g_\mu)$ is bounded.

The universal Liouville action $\Liouville$ satisfies the following properties:
\begin{itemize}[itemsep=-1 pt]
    \item $\Liouville([\mu]) \ge 0$ for all $[\mu] \in T_0(1)$ (see, e.g., \cite[Thm.\,1.4]{W2});
    \item $\tilde \Liouville(\g) = 0$ if and only if $\g$ is a circle, or equivalently, $[\mu] = [0]$.
\end{itemize}

The first variation formula of $\Liouville$ from \cite{TT06} will be a key ingredient in our proofs. 
We now restate it for $\tilde \Liouville$.
Let $\g$ be the Weil--Petersson quasicircle 
passing through $-1,-\ii,1$ corresponding to an element $[\mu]$ of $T_0(1)$. 
Let $\O$ and $\O^*$  be the connected components  of $\hat \CC \smallsetminus \g$ as in Section~\ref{sec:universal_T}. Let  $f_{\mu} : \m D \to \O$ and $g_{\mu}: \m D^* \to \O^*$ be the conformal maps fixing $-1, -\ii, 1$. 
Let $\dot \nu \in  H^{-1,1}(\D^*) \simeq T_{[\mu]}T_0(1)$, $t \in (-\norm{\dot \nu}_\infty^{-1}, \norm{\dot \nu}_\infty^{-1})$, $w_t : \hat \CC \to \hat \CC$ be the solution fixing $-1, -\ii, 1$ to the Beltrami equation 
$$\frac{\partial_{\bar z} w_t}{ \partial_z w_t} (z) = \begin{cases} 0  \qquad & z \in \O,\\
t (g_{\mu})_*\, \dot \nu (z) \, \qquad &z \in \O^*
\end{cases}
$$
where $$(g_{\mu})_* \,\dot \nu (z) = \dot \nu \circ g_{\mu}^{-1} \,\frac{\overline{(g_{\mu}^{-1})'}}{(g_{\mu}^{-1})'}.
$$
 
We let $\g_t = w_t (\g)$ which is a small deformation of $\g$. 

\begin{thm}[\!\!{\cite[Cor.\,II.3.9]{TT06}}]  \label{thm:S_1_first_variation}
The universal Liouville action satisfies the following first variation formula. Let $\dot \nu \in  H^{-1,1}(\D^*) \simeq T_{[\mu]}T_0(1)$,
\begin{align*}
    (\dd \Liouville)_{[\mu]}(\dot \nu) & =  \frac{\dd}{ \dd t}\Big|_{t = 0} \tilde \Liouville(\g_t) = 
4  \Re \int_{\m D^*} \dot \nu \mc S(g_{\mu})  \dd^2 z \\
& = - 4 \Re \int_{\O^*}  ((g_{\mu})_*\dot \nu) \, \mc S(g_{\mu}^{-1}) \,\dd^2 z.
\end{align*}
\end{thm}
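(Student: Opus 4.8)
The plan is to differentiate the defining expression \eqref{eq:IL_S1} of $\tilde\Liouville(\g_t)$ term by term. The one input needed is the infinitesimal behaviour of the flow $w_t$. Set $\dot w := \frac{\dd}{\dd t}\big|_{t=0} w_t$; since $w_0 = \mathrm{id}$, Ahlfors--Bers variational theory gives that $\dot w$ solves $\partial_{\bar z}\dot w = \dot\mu$, where the infinitesimal Beltrami differential $\dot\mu$ vanishes on $\O$ and equals $(g_\mu)_*\dot\nu$ on $\O^*$; in particular $\dot w$ is holomorphic on $\O$. Because $w_t$ is conformal on $\O$, the composition $f_t := w_t\circ f_\mu$ is the conformal uniformization of $w_t(\O)$ fixing $-1,-\ii,1$, hence admissible in \eqref{eq:IL_S1}; on the other side the conformal uniformization $g_t$ of $w_t(\O^*)$ differs from $w_t\circ g_\mu$ by a quasiconformal self-map $h_t$ of $\D^*$ with Beltrami coefficient $t\dot\nu$, so that $g_t = w_t\circ g_\mu\circ h_t^{-1}$.

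For the two area integrals I would use the cocycle rule $\mc N(u\circ v) = (\mc N(u)\circ v)\,v' + \mc N(v)$ together with $\mc N(\mathrm{id}) = 0$, which give $\frac{\dd}{\dd t}\big|_{t=0}\mc N(f_t) = (\dot w''\circ f_\mu)\,f_\mu'$ and an analogous expression on the $\O^*$ side incorporating the variation $\dot h$ of the reparametrization. Differentiating under the integral sign produces $2\Re$ of pairings of $\overline{\mc N(f_\mu)}$ and $\overline{\mc N(g_\mu)}$ against these variations. Transporting the integrals back to $\O$ and $\O^*$ and applying Green's theorem, the holomorphic part of $\dot w$ contributes only contour integrals over $\g = \partial\O = \partial\O^*$ and terms localized at $0$ and $\infty$; these are precisely matched by the derivative of the logarithmic term $4\pi\log|f_t'(0)/g_t'(\infty)|$, so they cancel. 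The only contribution not of boundary type comes from the antiholomorphic derivative $\partial_{\bar z}\dot w = \dot\mu$, supported on $\O^*$; after rewriting the nonlinearity through $\mc S = \mc N' - \tfrac12\mc N^2$ it pairs $\dot\mu = (g_\mu)_*\dot\nu$ against $\mc S(g_\mu^{-1})$, yielding the surviving bulk term $-4\Re\int_{\O^*}((g_\mu)_*\dot\nu)\,\mc S(g_\mu^{-1})\,\dd^2 z$.

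The remaining equality of the two displayed integrals is then a change of variables. Substituting $z = g_\mu(\zeta)$ with $\zeta\in\D^*$ in the right-hand integral, so that $\dd^2 z = |g_\mu'(\zeta)|^2\,\dd^2\zeta$, and using $((g_\mu)_*\dot\nu)\circ g_\mu = \dot\nu\,g_\mu'/\overline{g_\mu'}$ together with the inverse rule for the Schwarzian $\mc S(g_\mu^{-1})\circ g_\mu = -\mc S(g_\mu)/(g_\mu')^2$, all Jacobian factors collapse to $-1$, turning $-4\Re\int_{\O^*}((g_\mu)_*\dot\nu)\,\mc S(g_\mu^{-1})\,\dd^2 z$ into $4\Re\int_{\D^*}\dot\nu\,\mc S(g_\mu)\,\dd^2 z$.

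The main obstacle is analytic rather than algebraic. First, one must justify differentiation under the integral sign and the absolute convergence of every intermediate integral; the Weil--Petersson hypotheses enter exactly here, since $\mc S(g_\mu)\in A_2(\D^*)$ and $\dot\nu\in H^{-1,1}(\D^*)$ (so $\dot\nu = \rho_{\D^*}^{-1}\overline\phi$ with $\phi\in A_2(\D^*)$) make the final pairing absolutely convergent by Cauchy--Schwarz. More delicate is the vanishing and exact cancellation of all the boundary terms generated by Green's theorem: because $\O$ and $\O^*$ are unbounded and the uniformizing maps have prescribed behaviour at $\infty$ and at the normalization points, one must establish enough decay of the integrands near $\g$, near $\infty$, and near $0$ so that no contour contribution is dropped and the cancellation against the logarithmic derivative is genuine. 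This boundary bookkeeping, and not the pointwise differential identities, is the crux of the argument.
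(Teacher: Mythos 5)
The first thing to say is that the paper does not actually prove this statement: the variational formula is imported verbatim from \cite[Cor.\,II.3.9]{TT06}, and the only pieces argued in the paper itself (in the Remark that follows the theorem) are the normalization $(\dd \Liouville)_{[\mu]}(\dot \nu) = 2 \Re \partial_{\dot \nu} \Liouville([\mu])$ relating real and holomorphic derivatives, and the final equality of the two displayed integrals, obtained from the substitution $z = g_{\mu}(\zeta)$ together with the inverse rule $\mc S(g^{-1}) = -\left(\mc S(g)\circ g^{-1}\right)\left((g^{-1})'\right)^2$. Your change-of-variables paragraph reproduces exactly this argument, correctly: the Jacobian and conformal-distortion factors do collapse to $-1$. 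So on that piece your proposal and the paper coincide; on the main equality you are taking a genuinely different route, namely attempting to supply the proof that the paper outsources to \cite{TT06}.

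The route you sketch is in fact the strategy of the Takhtajan--Teo proof, and its algebraic skeleton is sound: $\dot w$ solves $\partial_{\bar z}\dot w = \dot\mu$ with $\dot\mu = 0$ on $\O$ and $(g_{\mu})_*\dot\nu$ on $\O^*$; $f_t = w_t \circ f_{\mu}$ stays conformal; $g_t = w_t \circ g_{\mu} \circ h_t^{-1}$ with $h_t$ the normalized solution for $t\dot\nu$; the cocycle rule for $\mc N$ and $\mc S = \mc N' - \tfrac12 \mc N^2$ are the right pointwise identities. But as a proof it has a genuine gap, which you yourself flag in the closing paragraph: the entire content of the theorem lies in the analysis you defer --- differentiation under the integral sign with uniform control in $t$, Green's theorem on $\O$ and $\O^*$ (which for a general Weil--Petersson quasicircle requires an exhaustion argument, since $\g$ is not smooth enough for naive Stokes), the explicit computation of the contour terms along $\g$, at $0$ and at $\infty$, their exact cancellation against $4\pi\,\partial_t\log|f_t'(0)/g_t'(\infty)|$, and the emergence of $\mc S(g_{\mu}^{-1})$ from the nonlinearity pairing. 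In \cite{TT06} this is several pages of computation using the Ahlfors integral representations of $\dot w$ and $\dot h$; nothing in your sketch substitutes for it. There is also a normalization wrinkle you pass over: \eqref{eq:IL_S1} requires the uniformizer of the unbounded component to fix $\infty$, whereas your $f_t$, $g_t$ only fix $-1,-\ii,1$, so you must either prove that \eqref{eq:IL_S1} is independent of the choice of conformal maps or renormalize by M\"obius transformations using the $\pslt$-invariance of $\tilde\Liouville$. In short: correct strategy and correct algebra, with the final equality done properly, but the analytic core is asserted rather than executed; completing your outline would amount to re-deriving the computation of \cite{TT06} that the paper simply cites.
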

\begin{remark}
    We note that compared to the formula in \cite{TT06}, we take derivatives of $\Liouville$ in the real tangent space (which is canonically isomorphic to the holomorphic tangent space) while \cite{TT06} takes derivatives in the holomorphic tangent space and both derivatives are related by
$$(\dd \Liouville)_{[\mu]}(\dot \nu) = 2 \Re \partial_{\dot \nu} \, \Liouville([\mu]). $$
The last equality in Theorem~\ref{thm:S_1_first_variation} follows from a change of variable and the chain rule for Schwarzian derivatives, which shows
$$\mc S(g^{-1}) = - \mc S(g) \circ g^{-1} (g^{-1}{}')^2.$$
See another proof of Theorem~\ref{thm:S_1_first_variation}  in \cite{sung2023quasiconformal} using the Loewner theory and when $\dot \nu$ is compactly supported.
\end{remark}

\begin{remark}\label{rem:var_general_beltrami}
    We choose $\dot \nu$ to be \emph{harmonic} Beltrami differential as $H^{-1,1}(\m D^*)$ is isomorphic to $T_{[\mu]} T_0(1)$.
    Clearly, the variational formula also holds for $\dot \nu \in H^{-1,1} (\m D^*) \oplus \mf N(\m D^*)$ if $\int |\mc S (g)| \dd^2 z < \infty$, which is the case, e.g., whenever the curve $\g$ is $C^{3,\a}$ for $\a > 0$.
\end{remark}

\section{Preliminaries on Epstein surfaces}\label{sec:epstein}
\subsection{Definition of general Epstein surfaces}\label{subsec:generalepstein}
In \cite{epstein-envelopes}, Epstein associated to a smooth conformal metric $\rho$ on a domain $\Omega \subseteq \Sph^n$ a surface $\Ep_\rho:\Omega \rightarrow \m H^{n+1}$ given by taking an envelope of horospheres based at points of $\Omega$ with size determined by the conformal metric. Explicitly, for $x \in  \m H^{n+1}$ in the ball model, we let $\nu_x$ be the hyperbolic visual measure on the unit sphere  $\Sph^n = \partial \m H^{n+1}$ from $x$ (namely, the pull-back of the round metric on $\Sph^n$ by any isometry of $\m H^{n+1}$ sending $x$ to the origin), then for $z\in \Omega$ considering both $\rho$ and $\nu_x$ as conformal metrics, we define
$$\mathfrak H(z,\rho) = \{ x\in \m H^{n+1} \,|\, \nu_x(z) = \rho(z)\}.$$
Then the set $\mathfrak H(z,\rho)$ is a horosphere based at $z$. The \emph{Epstein map} $\Ep_\rho$ is the solution to the envelope equation of these horospheres. More precisely, there exists a (unique) smooth map 
\begin{equation}\label{eq:tilde_Ep}
    \widetilde{\Ep}_\rho\colon \Omega \to T^1\m H^{n+1}
\end{equation}
 such that $\widetilde{\Ep}_\rho(z)$ is an outward pointing normal to the horoball bounded by $\mathfrak H(z,\rho)$ and
$$\Ep_\rho\colon \Omega \to \m H^{n+1}$$
is the composition of $\widetilde\Ep_\rho$ with the projection $T^1\m H^{n+1} \to \m H^{n+1}$ and that the image of the tangent maps of $\Ep_\rho$ at $z$ is orthogonal to $\widetilde\Ep_\rho(z)$. We call the image of $\Ep_\rho$ the \emph{Epstein surface associated with $\rho$} and denote it by $\S_\rho$.

These {\em Epstein surfaces} generalize surfaces such as the convex hull boundary and have had numerous applications in hyperbolic geometry, complex analysis, and the study of univalent functions.
We record some basic facts about Epstein maps. The following property follows directly from the definition.

\begin{lemma}[Naturality of Epstein map]\label{lem:naturality}
    If $x\in\mathbb{H}^{n+1}$, $h\in \Isom_+(\mathbb{H}^{n+1})$ then $h^*(\nu_{h(x)}) = \nu_x$. 
Hence it follows that
 \begin{equation}\label{eq:Eps_invariant}
 \Ep_{\rho} = h \circ \Ep_{h^* \rho}
\end{equation}
where $h^* \rho$ is the pull-back metric of $\rho$ under $h$.
\end{lemma}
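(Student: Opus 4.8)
The plan is to prove the two assertions in turn, deriving the geometric equivariance \eqref{eq:Eps_invariant} from the infinitesimal statement about visual measures. Throughout, I write $\partial h\colon \Sph^n \to \Sph^n$ for the conformal boundary extension of the isometry $h$, and $h^{*}$ applied to a conformal density on $\Sph^n$ means pullback by $\partial h$. First I would establish $h^{*}(\nu_{h(x)}) = \nu_x$ directly from the definition of the visual measure. Recall that $\nu_x = (\partial g)^{*}\rho_{\mathrm{round}}$ for any isometry $g$ with $g(x) = o$, the origin of the ball model; this is well defined because two such isometries differ by an element of $\SO(n+1)$, whose boundary action is a rotation preserving $\rho_{\mathrm{round}}$. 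Picking an isometry $g'$ sending $h(x)$ to $o$, so that $\nu_{h(x)} = (\partial g')^{*}\rho_{\mathrm{round}}$, the composition $g'\circ h$ sends $x$ to $o$, and since $\partial(g'\circ h) = \partial g' \circ \partial h$, functoriality of pullback gives
$$\nu_x = (\partial(g'\circ h))^{*}\rho_{\mathrm{round}} = (\partial h)^{*}(\partial g')^{*}\rho_{\mathrm{round}} = (\partial h)^{*}\nu_{h(x)} = h^{*}(\nu_{h(x)}),$$
which is the first claim.

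For the second claim I would show that $h$ carries the horosphere $\mathfrak H(w, h^{*}\rho)$ to $\mathfrak H(\partial h(w), \rho)$ for each $w$ in the domain of $h^{*}\rho$. Writing $z = \partial h(w)$ and letting $\lambda(w) = |(\partial h)'(w)|$ be the (metric-independent) conformal factor of $\partial h$ at $w$, for any conformal density $\mu$ on $\Sph^n$ one has $(h^{*}\mu)(w) = \mu(z)\,\lambda(w)$. Hence for $x \in \mathfrak H(w, h^{*}\rho)$, using the first part,
$$\nu_{h(x)}(z)\,\lambda(w) = (h^{*}\nu_{h(x)})(w) = \nu_x(w) = (h^{*}\rho)(w) = \rho(z)\,\lambda(w),$$
so that $\nu_{h(x)}(z) = \rho(z)$, i.e.\ $h(x) \in \mathfrak H(z,\rho)$; the reverse inclusion is identical. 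Since $h$ is an isometry it carries outward unit normals of $\mathfrak H(w,h^{*}\rho)$ to outward unit normals of $\mathfrak H(z,\rho)$ and preserves orthogonality of tangent planes. Therefore the unit vector field $dh\circ \widetilde\Ep_{h^{*}\rho}$ satisfies, along $\partial h$, the two conditions characterizing $\widetilde\Ep_\rho$. By the uniqueness clause in the definition of the solution to the envelope equation, $\widetilde\Ep_\rho \circ \partial h = dh \circ \widetilde\Ep_{h^{*}\rho}$, and projecting to $\m H^{n+1}$ yields $\Ep_\rho\circ\partial h = h\circ \Ep_{h^{*}\rho}$, which is \eqref{eq:Eps_invariant} once the domains are identified via $\partial h$.

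The computations here are all routine, so the only points requiring genuine care are bookkeeping of domains, since the natural domain of $\Ep_{h^{*}\rho}$ is $(\partial h)^{-1}(\Omega)$ and the identity is really an equivariance precomposed with $\partial h$, and the observation that the conformal factor $\lambda(w)$ is common to every density and hence cancels in the horosphere condition. The only real obstacle, such as it is, is justifying that $h$ commutes with the envelope construction; this reduces to the naturality of the unit normal bundle under isometries together with the uniqueness of $\widetilde\Ep_\rho$, and so requires no further computation.
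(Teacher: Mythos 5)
Your proof is correct and takes the approach the paper intends: the paper gives no written proof of this lemma, asserting only that it ``follows directly from the definition,'' and your argument---well-definedness of $\nu_x$ modulo rotations, functoriality of pullback giving $h^*(\nu_{h(x)})=\nu_x$, cancellation of the conformal factor in the horosphere condition, and uniqueness of the envelope solution $\widetilde\Ep_\rho$---is exactly the careful expansion of that assertion. Your point that \eqref{eq:Eps_invariant} should really be read as $\Ep_\rho\circ \partial h = h\circ\Ep_{h^*\rho}$ after identifying domains is also consistent with how the paper itself uses the lemma later, e.g.\ $\Ep_{M(\m D)}\circ M = M\circ \Ep_{\m D}$ in Lemma~\ref{lem:example_disk}.
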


\begin{thm}[See \cite{KrasnovSchlenker_CMP,BBB}] \label{thm:basic_epstein}
Let $\Omega$ be a domain in $\Sph^n$ and $\rho$ a smooth conformal metric on $\Omega$. Let $\rho_t = e^{2t}\rho$ for some $t \in \m R$.
\begin{enumerate}
    \item The value of $\Ep_\rho(z)$ is determined by $\rho$ and its first derivatives at $z$.

\item We let $\mathfrak g_t: T_1\m H^3 \rightarrow T_1\m H^3$ be time $t$ geodesic flow.  Then $\widetilde\Ep_{\rho_t} = \mathfrak{g}_{-t} \circ \widetilde\Ep_\rho$. 
    \item Let $\mathfrak g_{-\infty}: T_1 \m H^3 \rightarrow \hat{\m C}$ be the {\em hyperbolic Gauss map} sending a tangent vector to the endpoint of the associated geodesic ray as $t \to -\infty$. Then $g_{-\infty} \left(\widetilde \Ep_\rho (z) \right) = z$.

    \item For each $z \in \Omega$ there are at most two values of $t$ where $\Ep_{\rho_t}$ is not an immersion at $z$.
\end{enumerate}
\end{thm}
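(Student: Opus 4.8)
The plan is to pass to the Minkowski (hyperboloid) model $\m H^{n+1} = \{x \in \reals^{n+1,1} : \langle x,x\rangle = -1,\ x_0 > 0\}$, in which the boundary $\Sph^n$ is the projectivized forward null cone and every horosphere is recorded by a single future null vector: to $V$ with $\langle V,V\rangle = 0$ one attaches $H_V = \{x : \langle x, V\rangle = -1\}$, based at $[V] \in \Sph^n$, with $V \mapsto \lambda V$ shrinking or growing the horoball. The one computation underlying everything is the identification of the visual density with this pairing: for a fixed null lift $z \mapsto V(z)$ of $\Omega$ one has $\nu_x(z) = c\,(-\langle x, V(z)\rangle)^{-2}$ (this is the Poisson kernel, checked directly in the upper half-space model, where moving $x$ a hyperbolic distance $s$ toward $z$ multiplies $\nu_x(z)$ by $e^{2s}$). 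Consequently the level set $\{x:\nu_x(z) = \rho(z)\} = \mathfrak H(z,\rho)$ is exactly $H_{W(z)}$ for the rescaled lift $W(z) := V(z)\sqrt{\rho(z)/c}$; that is, the conformal metric is encoded as a null section $W$ of the light cone. Here I use that $\rho$ is a metric tensor, with linear scale $\rho^{1/2}$, which is what makes the exponent in $\rho_t = e^{2t}\rho$ match the flow time in item (2).

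With this encoding the Epstein map becomes linear algebra. The envelope of $\{H_{W(z)}\}$ over $z$ is cut out by the incidence condition $\langle x, W(z)\rangle = -1$ together with its first $z$-derivatives $\langle x, \partial_i W(z)\rangle = 0$ for $i = 1,\dots,n$; together with $\langle x,x\rangle=-1$ these determine $\Ep_\rho(z)$ and its unit normal $\widetilde\Ep_\rho(z)$ as the unique solution. Since $W(z)$ and $\partial_i W(z)$ involve only $\rho$ and $\nabla\rho$ at $z$, this proves (1). For (2)--(3) I would observe that $\rho_t=e^{2t}\rho$ replaces $W$ by $e^{t}W$, so the envelope equations become $\langle x_t, W\rangle = -e^{-t}$ together with the unchanged $\langle x_t, \partial_i W\rangle = 0$. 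Differentiating $\langle W,W\rangle = 0$ gives $\langle W, \partial_i W\rangle = 0$, so the locus $\{x\in\m H^{n+1} : \langle x, \partial_i W\rangle = 0\ \forall i\}$ is the intersection of $\m H^{n+1}$ with a timelike $2$-plane containing $W$, hence a single geodesic $\gamma_z$; it is the normal geodesic, asymptotic to $[W(z)] = z$, along which $-\log(-\langle x,W\rangle)$ is unit-speed arclength. Thus $x_t$ runs along $\gamma_z$ at unit speed, which is exactly the geodesic flow, and matching parameters gives $\widetilde\Ep_{\rho_t} = \mathfrak g_{-t}\circ\widetilde\Ep_\rho$; letting $t\to+\infty$ (so $x_t\to[W]=z$) identifies the backward endpoint of $\gamma_z$ as $z$, i.e. $\mathfrak g_{-\infty}(\widetilde\Ep_\rho(z)) = z$.

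The substantive statement is (4), which I would prove by a Jacobi-field count specific to curvature $-1$. By (2), for fixed $z$ the curve $\tau\mapsto\gamma_z(\tau)$ (with $\gamma_z(-t) = \Ep_{\rho_t}(z)$) is the unit-speed normal geodesic, and varying $z$ exhibits $\{\gamma_{z'}\}$ as a smooth $2$-parameter family of geodesics in $\Hs$; the differential $d_z\Ep_{\rho_t}$ is spanned by the values at $\tau=-t$ of the Jacobi fields $J_1,J_2$ along $\gamma_z$ obtained by differentiating in a basis $\partial_1,\partial_2$ of $T_z\Omega$. The defining orthogonality of the Epstein map, $\langle\partial_i\Ep_\rho,\widetilde\Ep_\rho\rangle=0$, together with $|\widetilde\Ep_\rho|\equiv1$, forces $J_i(0)\perp\gamma_z'$ and $J_i'(0)\perp\gamma_z'$, so each $J_i$ is a \emph{purely normal} Jacobi field. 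In constant curvature $-1$ the normal Jacobi equation is $J'' = J$, so in a parallel orthonormal frame transverse to $\gamma_z$ each field has the form $J_k(\tau) = e^{\tau}A_k + e^{-\tau}B_k$ with $A_k,B_k\in\reals^2$. Now $\Ep_{\rho_t}$ fails to be an immersion at $z$ exactly when $J_1,J_2$ are linearly dependent at $\tau=-t$, i.e. $\det[J_1(\tau),J_2(\tau)] = 0$; multiplying by $e^{2\tau}$ turns this into
$$e^{2\tau}\det[J_1(\tau),J_2(\tau)] = \det[A_1,A_2]\,u^2 + \big(\det[A_1,B_2]+\det[B_1,A_2]\big)\,u + \det[B_1,B_2], \qquad u := e^{2\tau},$$
a polynomial of degree at most two in $u$. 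Unless it vanishes identically, it has at most two positive roots $u$, each giving a single real $\tau$, hence a single $t$; so there are at most two values of $t$ at which $\Ep_{\rho_t}$ is not an immersion at $z$.

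I expect the main obstacle to be exactly this last point: controlling the degenerate situation in which the polynomial vanishes identically (the totally geodesic case, e.g. the round circle, where the whole geodesic is singular and must be treated directly) and confirming that for the metrics of interest the two normal Jacobi fields cannot remain proportional along all of $\gamma_z$, so that the leading or constant coefficient is nonzero. The secondary bookkeeping hurdle is pinning down the precise flow constant in (2), which is where the metric-tensor normalization $\rho_t=e^{2t}\rho$ versus the linear scale $\rho^{1/2}$ must be tracked carefully. Once the light-cone encoding $\rho\leftrightarrow W$ and the Poisson-kernel scaling $\nu_x(z)\propto(-\langle x,W\rangle)^{-2}$ are in hand, the rest reduces to linear algebra and the second-order ODE $J''=J$.
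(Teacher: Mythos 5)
The paper offers no proof of this theorem: it is quoted from \cite{KrasnovSchlenker_CMP,BBB}, so your proposal can only be compared with the arguments in those references (which go back to Epstein's original preprint). Your treatment of items (1)--(3) is correct and is essentially Epstein's own derivation: encode the conformal metric as a null section $W$ of the light cone in the Minkowski model, cut out the envelope by $\langle x,W\rangle=-1$, $\langle x,\partial_i W\rangle=0$, $\langle x,x\rangle=-1$, and observe that $\rho\mapsto e^{2t}\rho$ rescales $W\mapsto e^{t}W$ and slides the solution along the normal geodesic. One detail worth making explicit: uniqueness of the envelope point holds because the affine solution set of the linear system is a line in the \emph{null} direction $W$, so the condition $\langle x,x\rangle=-1$ is linear (not quadratic) along it; also, the claim in (2) that $\{\langle x,\partial_iW\rangle=0\}$ is a timelike $2$-plane needs the $\partial_iW$ to be linearly independent, which follows from the Gram identity below.

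The genuine gap is in item (4), and you flagged it yourself: your quadratic $\det[A_1,A_2]\,u^2+(\cdots)\,u+\det[B_1,B_2]$ in $u=e^{2\tau}$ controls non-immersion only if it is not identically zero, and the theorem as stated admits no exceptional points, so this case must be excluded rather than set aside. Moreover, your guess about when degeneracy could occur is wrong: in the totally geodesic case (Poincar\'e metric of a round disk) one has $\hat B=\id$, the polynomial is proportional to $(1+u)^2$, which has no positive roots, so $\Ep_{\rho_t}$ is an immersion for \emph{every} $t$ there; in fact the identically-degenerate case never occurs for any smooth conformal metric. The clean way to close the gap inside your own framework is to solve your envelope equations in a null frame: writing $\sigma$ for the unique null vector with $\langle\sigma,W\rangle=-1$ and $\langle\sigma,\partial_iW\rangle=0$, one gets
\begin{equation*}
x_t \;=\; \tfrac{e^{t}}{2}\,W \;+\; e^{-t}\sigma ,
\end{equation*}
and differentiating in $z$ and matching the $e^{-2\tau}$-coefficients of the Gram matrix $\langle J_i(\tau),J_j(\tau)\rangle$ gives
\begin{equation*}
\langle B_i,B_j\rangle \;=\; \tfrac14\,\langle \partial_iW,\partial_jW\rangle \;=\; \tfrac14\,\rho_{ij},
\end{equation*}
i.e.\ the renormalized backward limit of the Jacobi fields recovers the conformal metric itself (this is the identity $\hat\I=\rho$ of Theorem~\ref{thm:forms_infty}). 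Hence $\det[B_1,B_2]^2=\tfrac1{16}\det(\rho_{ij})>0$: the constant term of your quadratic never vanishes, the polynomial is never identically zero, and (4) holds with no exceptional case. This is also, in effect, how the cited references obtain (4): since $\I_t=\tfrac14\,\hat\I\bigl((e^{t}\id+e^{-t}\hat B)\cdot,(e^{t}\id+e^{-t}\hat B)\cdot\bigr)$, the map $\Ep_{\rho_t}$ fails to be an immersion at $z$ exactly when $-e^{2t}$ is an eigenvalue of $\hat B(z)$, which can happen for at most two values of $t$ because $\hat B(z)$ has only two eigenvalues.
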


Whenever $\Ep_\rho$ is an immersion we pullback the fundamental forms $\I, \II, \III$ of $\Sigma_\rho$ with respect to $\Ep_\rho$ to $\O$ to obtain 
$$\II(X,Y) = \I(BX,Y)\qquad \III(X,Y) = \II(BX,Y) = \I(BX,BY)$$
where $B$ is the pullback of the shape operator of $\Sigma_\rho$, namely,  
$$D\Ep_\rho (B X) = -\nabla_{D\Ep_\rho X} {\widetilde \Ep_\rho}$$
since $\widetilde \Ep_\rho$ defines a unit normal vector field on $\S_\rho$.
The eigenvalues $\{k_+, k_-\}$ of $B$ are the {\em principal curvatures} of the surface $\Sigma_\rho$. The {\em mean curvature} $H$ is defined as $\tr(B)/2$.

If $\I_t$ is the pullback of the metric on $\Sigma_{\rho_t}$ under $\Ep_{\rho_t}$, then by \cite{KrasnovSchlenker_CMP}\footnote{We note that our convention for Epstein maps is slightly different from the one in \cite{KrasnovSchlenker_CMP}, that our foliation $\Sigma_{\rho_t}$ coincides with their foliation $S_{t - (\log 2)/2}$. Our choice is such that when $\rho$ is the Poincar\'e metric on the unit disk, the Epstein surface is exactly the totally geodesic plane bounded by the unit circle. See Lemma~\ref{lem:example_disk}.},
$$\I_t(X, Y) = \I(\cosh(t)X+\sinh(t)BX, \cosh(t)Y+\sinh(t)BY).$$
Expanding out, we obtain
$$\I_t = \frac{1}{4}\left(e^{2t} \hat \I + 2\hat \II + e^{-2t}\hat \III \right)$$
where
\begin{eqnarray*}
\hat \I&=&  \I+2\II +\III  =  \I((\id+B)\cdot,(\id+B)\cdot) \\
\hat \II &=& \I-\III  =  \I((\id+B)\cdot,(\id-B)\cdot) \\
 \hat \III &=& \I-2\II+\III  =   \I((\id-B)\cdot,(\id-B)\cdot).
 \end{eqnarray*}

These are called the {\em fundamental forms at infinity} $\hat \I, \hat \II, \hat \III$  and it is natural then to define the {\em shape operator at infinity} by $\hat B = (\id+B)^{-1}(\id-B)$ which satisfies
$$\hat \II(X,Y) = \hat \I(\hat BX,Y)\qquad \hat \III(X,Y) = \hat \II(\hat BX,Y) = \hat\I(\hat BX,\hat BY).$$
Further, we define the {\em mean curvature at infinity} as $\hat H = \tr(\hat B)/2$. These formulas can be inverted with   $B = (\id+\hat B)^{-1}(\id-\hat B)$ and \begin{eqnarray*}
\I &=& \frac{1}{4}\left(\hat\I+2\hat\II +\hat\III\right) = \frac{1}{4}\hat\I\left((\id+\hat B)\cdot,(\id+\hat B)\cdot\right)\\
 \II &=& \frac{1}{4}\left(\hat\I-\hat\III\right) = \frac{1}{4}\hat\I\left((\id+\hat B)\cdot,(\id-\hat B)\cdot\right)  \\
 \III &=&  \frac{1}{4}\left(\hat\I-2\hat\II+\hat\III\right) = \frac{1}{4}\hat\I\left((\id-\hat B)\cdot,(\id-\hat B)\cdot\right).
 \end{eqnarray*}

If $\Phi = g(z)\, \dd z^2$ is a quadratic differential ($g$ not necessarily holomorphic) and $\rho$ a conformal metric then we define the norm of $\Phi$ with respect to $\rho$ by
\begin{equation}\label{eq:norm_def}
\|\Phi(z)\|_\rho = \frac{|\Phi(z)|}{\rho(z)}.
\end{equation}

Epstein (see \cite[Section 5]{epstein-envelopes}) gave the following description of the fundamental forms at infinity. 
\begin{thm}\label{thm:forms_infty}
Let $\rho = e^{\varphi}|\dd z|^2$ be a conformal metric on $\Omega$. Then
\begin{itemize}
\item $\hat \I = \rho.$
\item If $\hat K$ is the Gaussian curvature of  $\hat \I$ and   $\vartheta = (\varphi_{zz} - \frac{1}{2}\varphi_z^2) \, \dd z^2$, then
$$\hat{\II} = \vartheta  + \overline\vartheta- \hat K\rho.$$
\item The eigenvalues of $\hat B$ are 
$$\hat k_\pm = \frac{1-k_\pm}{1+k_\pm} = -\hat K \pm 2\|\vartheta\|_{\rho}.$$
\item The Epstein map $\Ep_\rho$ is an immersion on
$\{ z\in \O \ |\  -1 \not\in \{\hat k_+,\hat k_-\} \}.$
\end{itemize}
\end{thm}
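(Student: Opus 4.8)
The plan is to reduce all four assertions to a single pointwise computation by exploiting the naturality of the Epstein map (Lemma~\ref{lem:naturality}). Each isometry $h\in\Isom_+(\m H^{n+1})$ restricts to a M\"obius transformation on $\Sph^n$, and M\"obius maps have vanishing Schwarzian, so the quadratic differential $\vartheta=(\varphi_{zz}-\tfrac12\varphi_z^2)\,\dd z^2$ transforms as a genuine quadratic differential under the coordinate changes relevant here: the projective-connection anomaly in its transformation law is proportional to the Schwarzian of the change of coordinates and hence vanishes for M\"obius maps. Likewise $\hat K\rho$ and $\rho$ are tensorial, and $\hat\I,\hat\II$ are intrinsic to $\S_\rho$ and transform tensorially under $h$ by naturality. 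Thus it suffices to verify $\hat\I=\rho$ and $\hat\II=\vartheta+\overline\vartheta-\hat K\rho$ at a single point $z_0$. Fixing $z_0$, I would use a M\"obius transformation to arrange $z_0=0$, $\varphi(0)=0$ and $\varphi_z(0)=0$: translation fixes $z_0$, the scaling part of $g'$ normalizes $\varphi(0)$, and the ratio $g''/g'$ can be chosen to kill $\varphi_z(0)$. With this normalization $\rho(0)=|\dd z|^2$, $\vartheta(0)=\varphi_{zz}(0)\,\dd z^2$, $\hat K(0)=-2\varphi_{z\bar z}(0)$, and $\varphi(z)=\Re(\varphi_{zz}(0)z^2)+\varphi_{z\bar z}(0)|z|^2+O(|z|^3)$.

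With this reduction I would compute the horosphere envelope in the upper half-space model directly. The first bullet $\hat\I=\rho$ says that the conformal metric read off from the leading asymptotics of the equidistant family $\Sigma_{\rho_t}$, $\rho_t=e^{2t}\rho$, is exactly the metric one started with; equivalently, from the expansion $\I_t=\tfrac14(e^{2t}\hat\I+2\hat\II+e^{-2t}\hat\III)$ recorded above, $\hat\I=\lim_{t\to+\infty}4e^{-2t}\I_t$, and a first-order computation of the envelope at the normalized point shows this limit equals $\rho$. The immersion criterion (second bullet) follows from the explicit form of $D\Ep_\rho$ produced by the same computation: since $\hat\I=\I((\id+B)\cdot,(\id+B)\cdot)$ with $\hat\I=\rho$ nondegenerate, $\id+B$ must be invertible wherever $\Ep_\rho$ is an immersion, and conversely the rank of $D\Ep_\rho$ drops exactly where $\id+B$ degenerates, i.e.\ where $-1\in\{k_+,k_-\}$.

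The heart of the matter is the formula for $\hat\II$, which I would obtain by pushing the envelope computation to second order at $z_0=0$: expand the unit normal $\widetilde\Ep_\rho(z)$ and the base point $\Ep_\rho(z)$ to order $|z|^2$ and extract $\hat\II$ as the $t$-independent term in $\I_t$ (equivalently via $\hat\II=\I-\III$). The holomorphic second-order part of $\varphi$ contributes the real quadratic differential $\vartheta+\overline\vartheta$, while the mixed term $\varphi_{z\bar z}(0)|z|^2$ produces the conformal term $-\hat K\rho$; assembling these yields $\hat\II=\vartheta+\overline\vartheta-\hat K\rho$ at the normalized point, and naturality propagates it. The eigenvalue formula is then pure linear algebra: $\hat B=\hat\I^{-1}\hat\II=\rho^{-1}(\vartheta+\overline\vartheta)-\hat K\,\id$, and in a $\rho$-orthonormal frame $\vartheta+\overline\vartheta$ is trace-free symmetric with eigenvalues $\pm2\|\vartheta\|_\rho$, so $\hat B$ has eigenvalues $-\hat K\pm2\|\vartheta\|_\rho$; the identity $\hat k_\pm=(1-k_\pm)/(1+k_\pm)$ is the eigenvalue form of the relation $\hat B=(\id+B)^{-1}(\id-B)$ recorded earlier.

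The main obstacle I anticipate is the bookkeeping in the second-order envelope computation for $\hat\II$: one must verify that the non-tensorial piece $\varphi_z^2$ inside $\vartheta$ appears with exactly the coefficient $-\tfrac12$ and that it combines correctly with the curvature term, whereas the computation of $\hat\I$, the immersion criterion, and the eigenvalue algebra are comparatively routine. I would also take care that the normalization $\varphi(0)=\varphi_z(0)=0$ is used only to streamline this bookkeeping, with the final tensorial identity restored by naturality.
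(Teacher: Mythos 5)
The paper itself contains no proof of this theorem: it is quoted from Epstein \cite{epstein-envelopes}, so your proposal can only be measured against the standard computation, and that is essentially the route you take. The explicit upper half-space parametrization recorded in Section~\ref{sec:explicit} is exactly the engine your plan calls for; the M\"obius normalization step is legitimate (the anomaly in the transformation of $\vartheta$ is the Schwarzian of the coordinate change, hence vanishes for M\"obius maps, while $\rho$, $\hat K \rho$, $\hat \I$, $\hat \II$ are tensorial by Lemma~\ref{lem:naturality}); and your arguments for the first, third and fourth bullets are sound modulo the second-order bookkeeping that you yourself flag as the main labor.

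Your argument for the second bullet (the immersion criterion), however, has a genuine gap. The nondegeneracy argument you give --- $\hat\I=\rho$ is nondegenerate and $\hat\I=\I((\id+B)\cdot,(\id+B)\cdot)$, hence $\id+B$ is invertible --- proves only the inclusion $\{\text{immersion points}\}\subseteq\{z \,:\, -1\notin\{k_+,k_-\}\}$, which is the \emph{converse} of what the bullet asserts; and the ``conversely, the rank of $D\Ep_\rho$ drops exactly where $\id+B$ degenerates'' is not a meaningful statement, because $B$ and $k_\pm$ are defined as pullbacks of the shape operator of the image surface and therefore exist only at immersion points. (Indeed, by your own first step $\id+B$ is invertible wherever it is defined, so the degeneracy locus you invoke is empty.) The criterion must be formulated and proved in terms of the shape operator at infinity: set $\hat B:=\rho^{-1}(\vartheta+\overline\vartheta)-\hat K\,\id$, which is defined at every point of $\O$ from the boundary data alone, show from your envelope computation that the pullback of the hyperbolic metric under $\Ep_\rho$ (a smooth, possibly degenerate, symmetric form) equals $\frac14\,\rho\bigl((\id+\hat B)\cdot,(\id+\hat B)\cdot\bigr)$ everywhere, and conclude that $\Ep_\rho$ is an immersion exactly where $\det(\id+\hat B)\neq 0$, i.e.\ where $-1\notin\{\hat k_+,\hat k_-\}$. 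This is also the only reading consistent with how the statement is used in the paper: in Theorem~\ref{thm:PE} the immersion fails precisely at $\|\mc S(f^{-1})\|_\O=1$, which is where $\hat k_-=-1$, whereas the principal curvatures $k_\pm=-\|\mc S(f^{-1})\|_\O/(\|\mc S(f^{-1})\|_\O\pm 1)$ are never equal to $-1$ (they blow up, rather than pass through $-1$, at the non-immersion locus). So the $k_\pm$ in the bullet must be read as $\hat k_\pm$, and your proof of this bullet needs to be restructured around $\hat B$ rather than $B$.
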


By the above
\begin{equation}\label{eq:relate_rho_I}
\rho =  \I((\id+ B)\cdot,(\id+B)\cdot) = (\id+B)^*\I.
\end{equation}
We let $\dd \hat a$ be the area form for $\hat\I =\rho$, then the area measure on $\Sigma_\rho$ satisfies

$$
\dd A = \frac{1}{4}\left|\det(\id+\hat B)\right| |\dd \hat a|.$$
We define the {\em signed area} of $\Sigma_\rho$, denoted be $\dd a$, as the area form with induced orientation by ${\widetilde \Ep_\rho}$, which satisfies
$$\dd a = \frac{1}{4}\det(\id+\hat B) \,\dd \hat a.$$
Thus $\dd A  = |\dd a|$.
\begin{cor}\label{cor:epstein_curv} Let $\Sigma_\rho$ be the Epstein surface for  $\rho = e^{\varphi}|\dd z|^2$. Then at places where $\Ep_\rho$ is an immersion, 
\begin{itemize} 
\item $\hat H = -\hat K$,
\item 
$H\dd a = \left(\frac{1-\hat K^2}{4} + \|\vartheta\|^2_\rho\right) \dd \hat a$,
\item
$\det(B)\,\dd a =  \left(\frac{(1+\hat K)^2}{4} - \|\vartheta\|^2_\rho\right) \dd\hat a$.
\end{itemize}
\end{cor}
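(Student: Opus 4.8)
The plan is to reduce all three identities to an elementary computation in the eigenvalues of the shape operator at infinity $\hat B$. Since $\hat B$ is self-adjoint with respect to $\hat\I = \rho$, it is diagonalizable with the real eigenvalues $\hat k_\pm = -\hat K \pm 2\|\vartheta\|_\rho$ supplied by Theorem~\ref{thm:forms_infty}. Working at a point where $\Ep_\rho$ is an immersion guarantees $-1 \notin \{k_+,k_-\}$, equivalently that $\hat k_\pm$ are finite (and then automatically $1+\hat k_\pm \neq 0$), so every expression below is well-defined. The two symmetric functions I will use are
\[\tr(\hat B) = \hat k_+ + \hat k_- = -2\hat K, \qquad \det(\hat B) = \hat k_+ \hat k_- = \hat K^2 - 4\|\vartheta\|_\rho^2,\]
where the second is the difference-of-squares $(-\hat K + 2\|\vartheta\|_\rho)(-\hat K - 2\|\vartheta\|_\rho)$. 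The first bullet is then immediate: $\hat H = \tr(\hat B)/2 = -\hat K$.

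For the remaining two identities I would invoke the inversion $B = (\id+\hat B)^{-1}(\id-\hat B)$, which shows $B$ and $\hat B$ are simultaneously diagonalizable with $k_\pm = (1-\hat k_\pm)/(1+\hat k_\pm)$, together with the area relation $da = \tfrac14\det(\id+\hat B)\, d\hat a = \tfrac14(1+\hat k_+)(1+\hat k_-)\, d\hat a$. The key observation is that multiplying $H=(k_++k_-)/2$ or $\det(B)=k_+k_-$ by this Jacobian factor clears all denominators, since $k_\pm(1+\hat k_\pm) = 1 - \hat k_\pm$:
\[\frac{k_+ + k_-}{2}(1+\hat k_+)(1+\hat k_-) = \tfrac12\big[(1-\hat k_+)(1+\hat k_-) + (1-\hat k_-)(1+\hat k_+)\big] = 1 - \det(\hat B),\]
\[k_+ k_- (1+\hat k_+)(1+\hat k_-) = (1-\hat k_+)(1-\hat k_-) = 1 - \tr(\hat B) + \det(\hat B).\]
Substituting the symmetric functions above (and using $1+2\hat K+\hat K^2 = (1+\hat K)^2$) yields
\[H\, da = \tfrac14\big(1 - \det(\hat B)\big)\, d\hat a = \Big(\tfrac{1-\hat K^2}{4} + \|\vartheta\|_\rho^2\Big)\, d\hat a,\]
\[\det(B)\, da = \tfrac14\big(1 - \tr(\hat B) + \det(\hat B)\big)\, d\hat a = \Big(\tfrac{(1+\hat K)^2}{4} - \|\vartheta\|_\rho^2\Big)\, d\hat a,\]
which are the second and third bullets.

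There is no genuine analytic obstacle here: the content sits entirely in Theorem~\ref{thm:forms_infty}, and the corollary is bookkeeping. The one point I would take care with is to never divide by $1+\hat k_\pm$ on its own (it can be small where a principal curvature is large); instead I always pair each curvature with its Jacobian factor, as in the two displayed clearing identities. This keeps the identities valid as forms and makes them hold verbatim at every immersion point, which is exactly the form in which the corollary will be used in the subsequent integration over $\S_\O$.
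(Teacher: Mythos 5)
Your proof is correct and takes essentially the same route as the paper: both arguments read off $\tr(\hat B)=-2\hat K$ and $\det(\hat B)=\hat K^2-4\|\vartheta\|_\rho^2$ from the eigenvalues $\hat k_\pm=-\hat K\pm2\|\vartheta\|_\rho$, convert via $k_\pm=(1-\hat k_\pm)/(1+\hat k_\pm)$, and multiply by $\dd a=\tfrac14\det(\id+\hat B)\,\dd\hat a$ to get $H\,\dd a=\tfrac14(1-\det(\hat B))\,\dd\hat a$ and $\det(B)\,\dd a=\tfrac14\det(\id-\hat B)\,\dd\hat a$. The only difference is cosmetic --- the paper first writes $H=\frac{1-\det(\hat B)}{\det(\id+\hat B)}$ and $\det(B)=\frac{\det(\id-\hat B)}{\det(\id+\hat B)}$ as quotients and then cancels against the Jacobian, whereas you pair each curvature with its Jacobian factor so no division ever occurs, which is a slightly cleaner way to phrase the same computation at points where $\det(\id+\hat B)$ is small.
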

\begin{proof}
As the eigenvalues of $\hat B$ are $\hat k_\pm = -\hat K \pm 2\|\vartheta\|_\rho$, then
$$\tr(\hat B) = -2\hat K\qquad \det(\hat B) = \hat K^2-4\|\vartheta\|_\rho^2.$$
Thus $\hat H = \tr(\hat B)/2 = -\hat K$. Also  
\begin{eqnarray*}
H &=& \frac{1}{2}(k_1 + k_2) = \frac{1}{2}\left(\frac{1-\hat k_1}{1+\hat k_1}+\frac{1-\hat k_2}{1+\hat k_2}\right) = \frac{1-\det(\hat B)}{\det(\id+\hat B)}\\
\det(B) &=& k_1k_2 = \left(\frac{1-\hat k_1}{1+\hat k_1}\right)\left(\frac{1-\hat k_2}{1+\hat k_2}\right) = \frac{\det(\id-\hat B)}{\det(\id+\hat B)}.
\end{eqnarray*}
From this, we obtain
$$H \dd  a =  \frac{1}{4}(1-\det(\hat B)) \dd \hat a = \left(\frac{1-\hat K^2}{4} + \|\vartheta\|^2_\rho\right) \dd\hat a$$
and
$$\det(B)\,\dd a = \frac{1}{4}\det(\id-\hat B)\dd \hat a = \left(\frac{(1+\hat K)^2}{4} - \|\vartheta\|^2_\rho\right) \dd\hat a$$
as claimed.
\end{proof}

\subsection{Epstein--Poincar\'e surface}\label{subsec:EPsimplyconnected}
We now consider the Epstein map associated with the Poincar\'e metric $\rho_\O$ (namely, complete and $\hat K \equiv - 1$) on a simply connected domain $\O \subsetneq \m C$, that we call the \emph{Epstein--Poincar\'e map} $\Ep_\O$. We write the Epstein--Poincar\'e surface as $\S_\O$ similarly. 
There are two connected components of the complement of a Jordan curve $\g$ in $\Chat$, we will study the relation between the two Epstein--Poincar\'e maps later in Section~\ref{sec:V_R} which will be crucial to defining renormalized volume. 
However, let us first record some properties of a single Epstein--Poincar\'e map.

As the Euclidean diameter of the horosphere of $z \in \O$ associated with  $\rho_\O$ goes to $0$ as $z \to \partial \O$, the Epstein map extends to the identity map on $\partial \O$ (and $\Ep_\O$ meets $\Chat$ along $\partial \O$).
 
Epstein showed that in this case, $\vartheta = \mc S(f^{-1}) $ the Schwarzian quadratic differential of $f^{-1}$, where $f:\mathbb{D}\rightarrow\O$ is any conformal map. 
It follows from above that 
\begin{equation}\label{eq:Poincare_k_hat}
    \hat B \text{ has eigenvalues }1 \pm 2\|\mc S(f^{-1})\|_{\O}
\end{equation} 
where $\|\cdot\|_{\O} = \|\cdot\|_{\rho_\O}$ is the norm with respect to the hyperbolic metric $\rho_\O$ as defined in \eqref{eq:norm_def}.
Thus inverting the principal curvatures are
\begin{equation}\label{eq:k_EP_schwarzian}
k_\pm = -\frac{\|\mc S(f^{-1})\|_\O}{\|\mc S(f^{-1})\|_\O \pm 1}.
\end{equation}

Applying  Corollary \ref{cor:epstein_curv} above, we obtain the following result.
\begin{thm}{\label{thm:PE}}
Let $\O \subsetneq \m C$ be a simply connected domain.
Then $\Ep_\O$ is an immersion on  $\{z \in \O \ | \ \|\mc S(f^{-1})(z)\|_\O \neq 1\}$. Furthermore,
$$ H \dd a = -\det B \,\dd a = \|\mc S(f^{-1})(z)\|_\O^2 \,\dd\hat a = |H \dd a|.$$
\end{thm}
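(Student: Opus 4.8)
The plan is to obtain Theorem~\ref{thm:PE} as a direct specialization of the general Epstein formulas in Theorem~\ref{thm:forms_infty} and Corollary~\ref{cor:epstein_curv}, feeding in the two defining features of the Poincar\'e metric $\rho_\O$: its Gaussian curvature is $\hat K \equiv -1$, and, as recalled just above the statement, Epstein's computation identifies $\vartheta = \mc S(f^{-1})$. Consequently $\|\vartheta\|_\rho = \|\mc S(f^{-1})\|_\O =: s$, which is independent of the choice of conformal $f:\m D\to\O$ since any two choices differ by precomposition of $f^{-1}$ with an automorphism of $\m D$, on which the Schwarzian vanishes. So the entire proof is a substitution of $\hat K=-1$ and $\|\vartheta\|_\rho = s$.

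First I would pin down the immersion locus through the signed area form rather than through the principal curvatures $k_\pm$ of the surface itself. By Theorem~\ref{thm:forms_infty} the eigenvalues of the shape operator at infinity are $\hat k_\pm = -\hat K \pm 2\|\vartheta\|_\rho = 1 \pm 2s$, so
\[
\det(\id + \hat B) = (1 + \hat k_+)(1 + \hat k_-) = (2+2s)(2-2s) = 4(1 - s^2).
\]
Recalling the signed area form $\dd a = \tfrac14 \det(\id + \hat B)\,\dd\hat a = (1 - s^2)\,\dd\hat a$, where $\dd\hat a$ is the positive area form of $\hat\I = \rho_\O$, the map $\Ep_\O$ is an immersion exactly where this form is nonzero, i.e.\ on $\{z\in\O : \|\mc S(f^{-1})(z)\|_\O \neq 1\}$, as asserted. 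The point deserving care — and the only real (conceptual, not computational) obstacle — is precisely this: the shape operator $B$ of the surface, and with it each individual principal curvature in \eqref{eq:k_EP_schwarzian}, escapes to infinity as $s\to 1$, so failure of immersion must be read off from the vanishing of the at-infinity data $\det(\id+\hat B)$ rather than from the finite surface curvatures.

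With the immersion statement in hand, the two curvature identities follow by substituting $\hat K = -1$ and $\|\vartheta\|_\rho = s$ into Corollary~\ref{cor:epstein_curv}:
\[
H\,\dd a = \Big(\tfrac{1-\hat K^2}{4} + \|\vartheta\|_\rho^2\Big)\dd\hat a = s^2\,\dd\hat a, \qquad \det(B)\,\dd a = \Big(\tfrac{(1+\hat K)^2}{4} - \|\vartheta\|_\rho^2\Big)\dd\hat a = -s^2\,\dd\hat a.
\]
Hence $H\,\dd a = -\det(B)\,\dd a = \|\mc S(f^{-1})\|_\O^2\,\dd\hat a$; and since $s^2\ge 0$ and $\dd\hat a$ is positive, $H\,\dd a$ is a nonnegative multiple of $\dd\hat a$, which gives $H\,\dd a = |H\,\dd a|$ and closes the chain of equalities.

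As a consistency check (not logically required) I would reconcile with \eqref{eq:k_EP_schwarzian}, where $k_\pm = -s/(s\pm 1)$: a short computation gives $H = \tfrac12(k_++k_-) = s^2/(1-s^2)$ and $\det(B) = k_+k_- = -s^2/(1-s^2)$. Multiplying these by $\dd a = (1-s^2)\,\dd\hat a$ reproduces the displayed formulas and confirms that, although $H$ and $\det(B)$ individually blow up at $s=1$, the densities $H\,\dd a$ and $\det(B)\,\dd a$ extend smoothly across the non-immersion locus because the blow-up is exactly cancelled by the vanishing of $\dd a$.
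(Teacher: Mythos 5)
Your proof is correct and is essentially the paper's own argument: the paper proves Theorem~\ref{thm:PE} exactly by substituting $\hat K \equiv -1$ and Epstein's identification $\vartheta = \mc S(f^{-1})$ into Theorem~\ref{thm:forms_infty} and Corollary~\ref{cor:epstein_curv}, which yields $\hat k_\pm = 1 \pm 2\|\mc S(f^{-1})\|_\O$, the immersion locus, and the chain of identities $H\,\dd a = -\det B\,\dd a = \|\mc S(f^{-1})\|_\O^2\,\dd\hat a = |H\,\dd a|$. Your one deviation---detecting the immersion locus from the vanishing of $\det(\id+\hat B)$ (equivalently $\hat k_- = -1$) rather than from the surface principal curvatures---is in fact the right reading of the immersion criterion in Theorem~\ref{thm:forms_infty}, since, as you observe, $k_\pm = -\|\mc S(f^{-1})\|_\O/(\|\mc S(f^{-1})\|_\O \pm 1)$ never equals $-1$ but instead blows up at $\|\mc S(f^{-1})\|_\O = 1$, so your phrasing is, if anything, the more careful one.
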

Observe that since $\|\mc S(f^{-1})(z)\|_\O^2 \,\dd\hat a$ is a smooth form defined for all points $z\in\O$ and the set $\{z \in \O \ | \ \|\mc S(f^{-1})(z)\|_\O \neq 1\}$ is dense and has full measure, we can uniquely extend $H\dd a$ to $\O$ as $\|\mc S(f^{-1})(z)\|_\O^2 \,\dd\hat a$ and obtain the following corollary from the characterization \eqref{eq:wp_schwarzian}.
\begin{cor}\label{cor:WP_HdA}
A Jordan curve $\gamma$ is a Weil--Petersson quasicircle if and only if
$$ \int_{\Sigma} H \dd a =- \int_{\Sigma} \det(B) \,\dd a  < \infty.$$
\end{cor}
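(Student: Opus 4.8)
The plan is to combine the pointwise identity from Theorem~\ref{thm:PE} with the integral characterization \eqref{eq:wp_schwarzian} of Weil--Petersson quasicircles, after reconciling the two different conformal maps appearing in these statements. Theorem~\ref{thm:PE} gives, at every immersion point, the identity $H\,\dd a = \|\mc S(f^{-1})\|_\O^2\,\dd\hat a$ where $f\colon \m D \to \O$ is a conformal map and $\dd\hat a$ is the area form of $\hat\I = \rho_\O$. As the excerpt already observes, the right-hand side $\|\mc S(f^{-1})\|_\O^2\,\dd\hat a$ is a genuinely smooth (nonnegative) $2$-form on all of $\O$, and the non-immersion locus $\{\,\|\mc S(f^{-1})\|_\O = 1\,\}$ has measure zero, so $H\,\dd a$ extends uniquely to this smooth form over all of $\O$. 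The integral $\int_\S H\,\dd a$ is thus unambiguously $\int_\O \|\mc S(f^{-1})\|_\O^2\,\dd\hat a$, a nonnegative integral that is finite precisely when this quantity is.

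The key step is to rewrite this conformally invariant integral in the domain $\m D$ so as to match \eqref{eq:wp_schwarzian}. First I would express the norm and area form concretely: with $\rho_\O = e^{\varphi}|\dd z|^2$ the Poincar\'e metric, we have $\dd\hat a = \rho_\O\,\dd^2 z$ (using $\dd^2z$ for Euclidean area), and $\|\mc S(f^{-1})\|_\O^2 = |\mc S(f^{-1})|^2 \rho_\O^{-2}$ by \eqref{eq:norm_def}, so the integrand on $\O$ is $|\mc S(f^{-1})|^2 \rho_\O^{-1}\,\dd^2z$, i.e.
\[
\int_\S H\,\dd a = \int_\O |\mc S(f^{-1})|^2\,\rho_\O^{-1}\,\dd^2 z .
\]
Next I would pull this back to $\m D$ under $f$. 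The Poincar\'e metric transforms by $\rho_\O \circ f \,|f'|^2 = \rho_{\m D}$, and the Schwarzian obeys the cocycle/chain rule $\mc S(f^{-1})\circ f \cdot (f')^2 = -\mc S(f)$, so $|\mc S(f^{-1})|^2\circ f \,|f'|^4 = |\mc S(f)|^2$; combining these with the Jacobian $\dd^2 z = |f'|^2\,\dd^2 w$ shows the integrand is conformally invariant and the integral equals $\int_{\m D} |\mc S(f)|^2\,\rho_{\m D}^{-1}\,\dd^2 w$. By the univalent-map characterization \eqref{eq:wp_schwarzian}, this is finite if and only if $\g$ is a Weil--Petersson quasicircle. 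The equality $\int_\S H\,\dd a = -\int_\S \det(B)\,\dd a$ is already contained pointwise in Theorem~\ref{thm:PE}, so it transfers directly to the integrals.

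The main obstacle is purely bookkeeping rather than conceptual: I must verify the Schwarzian cocycle computation carefully (including signs) and confirm that $\pslt$-changes of the conformal map $f$ do not affect the integral, so that ``any conformal map $f$'' in the statement is legitimate — this follows because $\mc S(A\circ f) = \mc S(f)$ for $A\in\pslt$ and because $\rho_\O$ is intrinsic to $\O$. One should also note that the displayed identity is an equality in $(-\infty,\infty]$ of nonnegative (resp.\ nonpositive) quantities, so finiteness of one side is equivalent to finiteness of the other, and both are equivalent to \eqref{eq:wp_schwarzian} holding; this is what yields the stated ``if and only if.'' No delicate convergence or truncation argument is needed here, since the integrands are pointwise controlled and the non-immersion set is negligible.
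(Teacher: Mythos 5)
Your proposal is correct and follows essentially the same route as the paper: Theorem~\ref{thm:PE} plus the extension of $H\,\dd a$ as the smooth form $\|\mc S(f^{-1})\|_\O^2\,\dd\hat a$ over the measure-zero non-immersion locus, then conformal invariance (the identity $\norm{\mc S(f^{-1})(f(\z))}_\O = \norm{\mc S(f)(\z)}_{\m D}$ together with invariance of hyperbolic area) to reduce to the characterization \eqref{eq:wp_schwarzian}. The paper merely leaves the pull-back bookkeeping implicit, which you spell out correctly.
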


From the formula of the principal curvatures \eqref{eq:k_EP_schwarzian}, we obtain immediately the following explicit example of Epstein--Poincar\'e surface.
\begin{lemma}\label{lem:example_disk}
The Epstein--Poincar\'e surface associated with $\m D$ \textnormal{(}take $f = \id_{\m D}$\textnormal{)} is the totally geodesic plane bounded by $\partial \m D$ and $\Ep_{\m D} (0) = (0,0,1)$ in the upper half-space model. 
From the naturality of Epstein map, if $M$ is an M\"obius transformation \textnormal{(}which extends to an isometry of $\m H^3$ as we explain below\textnormal{)}, then $\Ep_{M(\m D)} \circ M = M \circ \Ep_{\m D}$. 
\end{lemma}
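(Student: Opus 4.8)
The plan is to verify the two assertions of Lemma~\ref{lem:example_disk} in turn: first identify $\S_{\m D}$ and its basepoint, then deduce the Möbius equivariance from the naturality of the construction. I would deliberately avoid computing the horosphere envelope by hand and instead read the geometry off the curvature formula already established for Epstein--Poincar\'e surfaces. Taking $f=\id_{\m D}$ gives $f^{-1}=\id$, so the Schwarzian quadratic differential $\vartheta=\mc S(f^{-1})$ vanishes identically; hence $\|\mc S(f^{-1})\|_{\m D}\equiv 0$, and by Theorem~\ref{thm:PE} the map $\Ep_{\m D}$ is an immersion on all of $\m D$ (since $\|\mc S(f^{-1})\|_{\m D}\neq 1$ everywhere). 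Formula \eqref{eq:k_EP_schwarzian} then yields $k_+=k_-\equiv 0$, so the shape operator $B$ is identically zero and $\S_{\m D}$ is totally geodesic. Because $\Ep_{\m D}$ extends continuously to the identity on $\partial\m D=\Sph^1$, the connected totally geodesic image $\S_{\m D}$ is the unique totally geodesic plane $P$ with ideal boundary $\Sph^1$, namely the unit hemisphere $\{(x,y,t)\in\m H^3: x^2+y^2+t^2=1,\ t>0\}$ in the upper half-space model.

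It remains to locate $\Ep_{\m D}(0)$, and for this I would use symmetry. Each rotation $r_\theta\colon z\mapsto e^{\ii\theta}z$ fixes $0$, preserves the Poincar\'e metric $\rho_{\m D}$, and lies in $\psu$, so it extends to the isometry $(z,t)\mapsto(e^{\ii\theta}z,t)$ of $\m H^3$ whose fixed-point set is the vertical geodesic $\{(0,0,t):t>0\}$. By the naturality of the Epstein map (Lemma~\ref{lem:naturality}) together with $r_\theta^*\rho_{\m D}=\rho_{\m D}$, one gets $r_\theta\circ\Ep_{\m D}=\Ep_{\m D}\circ r_\theta$, so $\Ep_{\m D}(0)$ is fixed by every $r_\theta$ and therefore lies on this vertical geodesic. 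Intersecting with $P$ leaves the single point $(0,0,1)$, whence $\Ep_{\m D}(0)=(0,0,1)$.

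For the second assertion I would combine Lemma~\ref{lem:naturality} with the conformal invariance of the Poincar\'e metric. A Möbius transformation $M$ restricts to a biholomorphism $\m D\to M(\m D)$, so by uniqueness of the complete hyperbolic metric $M^*\rho_{M(\m D)}=\rho_{\m D}$. Applying the naturality relation $\Ep_\rho=h\circ\Ep_{h^*\rho}$ with $h=M$ and $\rho=\rho_{M(\m D)}$, and tracking domains --- the map $\Ep_{h^*\rho}=\Ep_{\rho_{\m D}}$ has domain $\m D$ while $\Ep_\rho=\Ep_{M(\m D)}$ has domain $M(\m D)$, so the identity must be read after precomposing with $M$ --- produces exactly $\Ep_{M(\m D)}\circ M=M\circ\Ep_{\m D}$.

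The genuinely routine parts are the vanishing of the Schwarzian and the metric pull-back identity; the only real care is twofold. First, one must match the ball-model definition of the Epstein map with the upper half-space normalization, so that the geodesic plane over $\Sph^1$ is the unit hemisphere with apex $(0,0,1)$; this is precisely the normalization convention adopted for the Epstein map in this section, and the whole point of this lemma is to record that the convention is consistent. Second, the bookkeeping of domains in the naturality formula is what produces the extra factor $\circ\,M$ on the left-hand side of the equivariance identity rather than a bare $\Ep_{M(\m D)}$; this is the one place where a careless application of \eqref{eq:Eps_invariant} would give a domain mismatch.
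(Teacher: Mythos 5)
Your proof is correct and takes essentially the same route as the paper: total geodesicity is read off from the vanishing Schwarzian in the principal-curvature formula \eqref{eq:k_EP_schwarzian} together with the boundary extension, and the equivariance follows from Lemma~\ref{lem:naturality} combined with conformal invariance of the Poincar\'e metric (your careful domain bookkeeping, giving $\Ep_{M(\m D)}\circ M = M\circ \Ep_{\m D}$ rather than a bare $\Ep_{M(\m D)}$, is exactly the right reading of \eqref{eq:Eps_invariant}). The only divergence is how $\Ep_{\m D}(0)=(0,0,1)$ is pinned down: you use rotational symmetry and the fixed-point set of the extended rotations, whereas the paper obtains it by direct computation in upper half-space coordinates (see Example~\ref{ex:epstein_maps} and Lemma~\ref{lem:explicit_Poincare}); both are valid.
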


\begin{remark}\label{rem:E_radius}
    In particular, the Euclidean radius of the horosphere associated with $\rho  = 4 |\dd z|^2$ is $1/2$. From this, we obtain that, more generally, the Euclidean radius of the horosphere associated with $\rho$ is $1/\sqrt {\rho}$.
\end{remark}

Although totally geodesic planes are trivial examples of Epstein--Poincar\'e surfaces, the other Epstein--Poincar\'e surfaces have an elegant description in terms of these maps associated with the geodesic planes and osculating M\"obius transformations (Lemma~\ref{lemma:osc-mob}). 

Let us first recall the classical result about extending an M\"obius transformation to an isometry of $\m H^3$. For this, we use the upper half-space model and use  quaternions to parametrize $\m H^3 = \m C \oplus  \jj \m R_+ = \{z +   \jj t  \,|\, z = x +   \ii y \in \m C, t > 0 \}$ (so that $(x,y,t)$ in the upper-half space is identified with $z +  \jj  t $). A M\"obius transformation $z \mapsto \frac{az + b}{cz+d}$, where $\begin{psmallmatrix}
    a & b \\ c & d
\end{psmallmatrix} \in \pslt$ extends to the isometry of $\m H^3$ by $$ Z \mapsto (aZ + b) (cZ+d)^{-1}, \quad \forall Z = z + \jj t $$
using the multiplication on quaternions, see \cite[Sec.\,2.1]{Ahlfors_Mobius} for more details.

 Since the Epstein map depends on the metric and its derivatives at infinity (Theorem~\ref{thm:basic_epstein}), the Epstein--Poincar\'e map $\Ep_\O \circ f (z_0)$ depends only on the two-jet of $f$ at $z_0$ (namely the values of $f(z_0), f'(z_0)$, and $f''(z_0)$). 
There exists a unique M\"obius transformation $M_{f, z_0}$ with the same two-jet as $f$ at $z_0$,  called the {\em osculating M\"obius transformation of $f$ at $z_0$}. 
 Therefore, 
$$\Ep_\O \circ f(z_0) = \Ep_{M_{f,z_0}(\m D)} \circ M_{f,z_0} (z_0).$$ 
From the naturality of the Epstein map (Lemma~\ref{lem:naturality})  
$$\Ep_{M_{f,z_0}(\m D)} \circ M_{f,z_0} (z_0) = M_{f,z_0} \circ \Ep_{\m D} (z_0).$$ 
Summarizing, we have proved (assuming $z_0 = 0$) and using Lemma~\ref{lem:example_disk}:
\begin{lemma}\label{lemma:osc-mob} Let  $\O \subsetneq \m C$ be a simply connected domain and 
$f:\m D\rightarrow \Omega$ be a univalent map. Then
$\Ep_\O \circ f (0) = M_{f,0} \circ \Ep_{\m D} (0) =  M_{f,0}(\jj).$
\end{lemma}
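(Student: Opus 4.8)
The plan is to obtain the lemma directly from the local nature of the Epstein map recorded in Theorem~\ref{thm:basic_epstein}(1), combined with the naturality of Lemma~\ref{lem:naturality} and the explicit disk computation of Lemma~\ref{lem:example_disk}. The key observation I want to exploit is that $\Ep_\O \circ f(0)$ is a function of the two-jet $(f(0), f'(0), f''(0))$ alone, which frees me to replace $f$ by any map sharing this two-jet.

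First I would make the jet dependence precise. By Theorem~\ref{thm:basic_epstein}(1), the point $\Ep_{\rho_\O}(w)$ depends only on $\rho_\O$ and its first derivatives at $w$. Since $f \colon \m D \to \O$ is a conformal isomorphism, the Poincar\'e metric satisfies $f^* \rho_\O = \rho_{\m D}$, equivalently $\rho_\O(w) = \rho_{\m D}(f^{-1}(w)) \, |(f^{-1})'(w)|^2$. Differentiating this once in $w$ recruits $f^{-1}$ up to its second derivative at $w = f(0)$, i.e.\ exactly the two-jet of $f^{-1}$ at $f(0)$, which is equivalent to the two-jet of $f$ at $0$. Hence $\Ep_\O \circ f(0)$ depends only on $(f(0), f'(0), f''(0))$.

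Next, let $M = M_{f,0}$ be the osculating M\"obius transformation, which by construction has the same two-jet as $f$ at $0$; in particular $M(0) = f(0)$. Replacing the pair $(\O, f)$ by $(M(\m D), M)$ leaves this two-jet unchanged, so the previous step yields $\Ep_\O \circ f(0) = \Ep_{M(\m D)} \circ M(0)$. Since $M \in \pslt$ extends to an orientation-preserving isometry of $\m H^3$ and preserves the Poincar\'e metric, $M^* \rho_{M(\m D)} = \rho_{\m D}$, so Lemma~\ref{lem:naturality} gives $\Ep_{M(\m D)} \circ M = M \circ \Ep_{\m D}$. Finally Lemma~\ref{lem:example_disk} identifies $\Ep_{\m D}(0) = (0,0,1) = \jj$ in the upper half-space model, and I conclude $\Ep_\O \circ f(0) = M(\jj)$, as claimed.

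The only genuinely delicate point is the bookkeeping in the second paragraph: I must verify that a single differentiation of $\rho_\O$ invokes at most the second derivative of $f^{-1}$, so that the dependence is truly on the two-jet and not on some higher jet. This is precisely what legitimizes replacing $f$ by the osculating M\"obius transformation, which matches $f$ only through second order. Everything after that is a mechanical application of naturality and the disk example, both already established above.
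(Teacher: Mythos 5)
Your proof is correct and takes essentially the same route as the paper: dependence of $\Ep_\O \circ f(0)$ on the two-jet of $f$ via Theorem~\ref{thm:basic_epstein}(1), substitution of the osculating M\"obius transformation $M_{f,0}$, naturality (Lemma~\ref{lem:naturality}), and the explicit disk computation (Lemma~\ref{lem:example_disk}). The only difference is that you spell out the verification that $\rho_\O$ and its first derivatives at $f(0)$ are determined by the two-jet of $f^{-1}$ (equivalently of $f$), a point the paper asserts without computation.
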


We now state equivalent conditions for a curve to be asymptotically conformal.

\begin{thm}{(See \cite[Thm.\,11.1]{Pommerenke_boundary})} \label{thm:AC}
Let $f$ be a conformal map from $\m D$ onto a domain bounded by a Jordan curve $\g$.
  The following are equivalent: 
    \begin{enumerate}
         \item[(AC1)] $\g$ is asymptotically conformal;
        \item [(AC2)] \label{it:asymp_conf_P}
$\lim_{|\z| \to 1\sminus} \frac{f''(\zeta)}{f'(\zeta)}(1-|\zeta|^2) = 0$;
\item [(AC3)] \label{it:asym_conf_S}
$\lim_{|\z| \to 1\sminus} \norm{\mc S(f) }_{\m D}(\zeta) = 0$.
    \end{enumerate}
\end{thm}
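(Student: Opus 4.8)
The plan is to close the logical cycle (AC2) $\Rightarrow$ (AC3) $\Rightarrow$ (AC1) $\Rightarrow$ (AC2), isolating one elementary function-theoretic step and two compactness arguments. Throughout I would use the M\"obius invariance $\norm{\mc S(f)}_{\m D}(z) = \norm{\mc S(f\circ\phi)}_{\m D}(\phi^{-1}(z))$ for disk automorphisms $\phi$ (since $\mc S(\phi)\equiv 0$), the identity $\mc S(f) = \mc N(f)' - \tfrac12 \mc N(f)^2$, and the universal a priori bounds $\norm{\mc N(f)}_{\m D}\le C$ and $\norm{\mc S(f)}_{\m D}\le C$ valid for all univalent $f$, together with the compactness of the normalized univalent class $\{F: F(0)=0,\,F'(0)=1\}$ (Koebe) and Vitali's theorem.

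First, (AC2) $\Rightarrow$ (AC3) is elementary. Writing $b(z) = (1-|z|^2)\,\mc N(f)(z)$, the Riccati identity gives $(1-|z|^2)^2\mc S(f) = (1-|z|^2)^2\mc N(f)' - \tfrac12 b^2$. The second term tends to $0$ by (AC2). For the first I would estimate $|\mc N(f)'(z)|$ by the Cauchy integral on the disk $D\!\left(z,\tfrac12(1-|z|)\right)$, on which $1-|w|\asymp 1-|z|$; this yields $(1-|z|^2)^2|\mc N(f)'(z)| \lesssim \sup_{w\in D(z,\frac12(1-|z|))}|b(w)| \to 0$ as $|z|\to 1$.

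For (AC3) $\Rightarrow$ (AC1), fix a sequence $z_k\to\partial\m D$, say $z_k\to\zeta$, and form the automorphism-normalized univalent maps
$$F_k(w) = \frac{f(\phi_k(w)) - f(z_k)}{(1-|z_k|^2)f'(z_k)}, \qquad \phi_k(w) = \frac{z_k+w}{1+\bar z_k w},$$
so that $F_k(0)=0$ and $F_k'(0)=1$. By M\"obius invariance, $\norm{\mc S(F_k)}_{\m D}(w) = \norm{\mc S(f)}_{\m D}(\phi_k(w))$; since $|\phi_k(w)|\to 1$ for each fixed $w$, (AC3) gives pointwise convergence to $0$, and Vitali's theorem upgrades this to locally uniform convergence $\mc S(F_k)\to 0$. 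Koebe compactness produces a univalent subsequential limit $F_\infty$ with $\mc S(F_\infty)\equiv 0$, hence M\"obius, so $F_\infty(\m D)$ is a round disk or half-plane. Transferring back through the Carath\'eodory extension (available as $\g$ is Jordan), the rescaled boundary arcs of $f(\m D)$ near $f(z_k)$ converge to a circular arc, and the three-point characterization of asymptotic conformality follows. Since \emph{any} M\"obius limit yields a circle, I need not identify $F_\infty$ precisely, which makes this the cleaner of the two geometric directions.

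The remaining implication (AC1) $\Rightarrow$ (AC2) is where the main difficulty lies. I would argue by contradiction: if (AC2) fails, there is $z_k\to\zeta\in\partial\m D$ with $|b(z_k)|\ge\epsilon$. A direct computation with $\mc N(\phi_k)(0) = -2\bar z_k$ gives $\mc N(F_k)(0) = b(z_k) - 2\bar z_k$, so the second Taylor coefficient is $a_2(z_k) = \tfrac12 b(z_k) - \bar z_k$, bounded away from the value $-\bar z_k$ attained by the trivial normalization $w\mapsto w/(1+\bar\zeta w)$. Extracting a univalent limit $F_\infty$, the content of asymptotic conformality is that the rescaled boundary arcs converge to a \emph{straight line}, forcing $F_\infty(\m D)$ to be a half-plane and $F_\infty(w)=w/(1+\bar\zeta w)$; this gives $a_2(z_k)\to -\bar\zeta$ and hence $b(z_k)\to 0$, contradicting $|b(z_k)|\ge\epsilon$. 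The crux — and the step I expect to be hardest — is showing that asymptotic conformality pins the limit domain down to a half-plane rather than an arbitrary disk, which requires quantitatively converting the three-point arc condition into Carath\'eodory-kernel convergence of the normalized image domains.
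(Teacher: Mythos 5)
The paper offers no proof of this statement at all: Theorem~\ref{thm:AC} is quoted directly from Pommerenke's book (Thm.~11.1 there), so the only comparison available is between your attempt and the literature. On its own terms, your implication (AC2) $\Rightarrow$ (AC3) is correct and complete (the Cauchy estimate on $D(z,\tfrac12(1-|z|))$, where $1-|w|\asymp 1-|z|$, is exactly right), but each of the two geometric implications has a genuine gap, and the second is circular at precisely the step that carries the content of the theorem.

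For (AC3) $\Rightarrow$ (AC1): asymptotic conformality is a \emph{flatness} condition, namely $\max_{z\in\g(a,b)}\bigl(|a-z|+|z-b|\bigr)/|a-b|\to 1$ as $|a-b|\to 0$, so your remark that ``any M\"obius limit yields a circle, I need not identify $F_\infty$ precisely'' misreads the definition. If a blow-up limit is $w\mapsto w/(1+aw)$ with $|a|<1$, the limit image is a round disk of radius comparable to $1$ in rescaled units; undoing the rescaling, the curve near $f(z_k)$ is then close to a circular arc of radius $R$ comparable to $d(f(z_k),\g)$, and for chords of length $\ell\asymp R$ the three-point ratio is $\approx 1+(\ell/R)^2/32$, bounded away from $1$. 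Thus a proper-disk limit \emph{violates} (AC1) rather than implying it: ruling out $|a|<1$ is unavoidable in this direction too, and it is not free (one needs, for instance, uniform quasicircle bounds for the rescaled domains together with an argument that a uniform quasidisk whose rescaled diameter tends to infinity cannot have a bounded Carath\'eodory kernel; kernel convergence alone also does not give the convergence of boundary arcs you invoke).

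For (AC1) $\Rightarrow$ (AC2): even granting that flatness forces the rescaled boundaries to converge to straight lines, the normalization $F_\infty(0)=0$, $F_\infty'(0)=1$ only yields $F_\infty(w)=w/(1+\omega w)$ for \emph{some} unimodular constant $\omega$ (the boundary line lies at distance $1/2$ from the origin, in a direction determined by $\omega$). You assert without argument that $\omega=\bar\z$. But since $a_2(F_k)=\tfrac12 b(z_k)-\bar z_k\to-\omega$, the statement ``$\omega=\bar\z$ along every sequence $z_k\to\z$'' is literally equivalent to $b(z_k)\to 0$, i.e.\ to (AC2) itself; the direction of the limiting half-plane encodes the relation between $\arg f'(z_k)$ and the direction from $f(z_k)$ to the nearest point of $\g$, and nothing in the flatness condition as you have used it controls this. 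So the crux is not only ``half-plane versus disk'' (which you flag) but also ``which half-plane'' (which you treat as automatic), and that second point is where Pommerenke's actual proof does its work, via boundary-conformality (Visser--Ostrowski-type) estimates rather than a bare normal-families argument.
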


From now on, we will assume that the boundary of $\O$ is asymptotically conformal and use a few classical results from geometric function theory. 

\begin{ex}
    Weil--Petersson quasicircles satisfy AC3 (see \cite[Corollary II.1.4]{TT06}) and are therefore asymptotically conformal.
\end{ex}

The following is a simple consequence of the Koebe 1/4 theorem.
\begin{thm}{(See \cite[Cor.\,1.4]{Pommerenke_boundary})}\label{thm:qh}
Let $\gamma$ be a Jordan curve bounding $\Omega$ and $\rho_\Omega$ the hyperbolic metric on $\Omega$. Then
$$\frac{1}{2d(z,\gamma)} \leq \sqrt{\rho_\O(z)} \leq \frac{2}{d(z,\gamma)}$$
where $d (\cdot, \cdot)$ denotes the Euclidean distance in $\m R^2$.
\end{thm}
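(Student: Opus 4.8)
The statement is a standard consequence of the Koebe distortion theory transported to $\O$ by a Riemann map, together with the domain monotonicity of the hyperbolic metric; the two inequalities are proved by separate, elementary arguments. Throughout I would fix $z \in \O$ and write $d = d(z,\g)$ for its Euclidean distance to the boundary, so that the inscribed ball $B(z,d)$ lies inside $\O$ while $B(z,d')\not\subset\O$ for $d'>d$. Since $\g$ is a Jordan curve, $\O$ is a simply connected domain with $\O \neq \m C$, so its hyperbolic metric $\rho_\O$ exists and is a conformal invariant; I will use the normalization $\rho_{\m D}(w) = 4/(1-|w|^2)^2$ (curvature $-1$), for which $\rho_{\m D}(0) = 4$.

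\emph{Upper bound} $\sqrt{\rho_\O(z)} \le 2/d$. Here I would invoke domain monotonicity: because $B(z,d) \subseteq \O$ and the Poincar\'e metric is decreasing under inclusion of hyperbolic domains, $\rho_\O(z) \le \rho_{B(z,d)}(z)$. The density of a Euclidean disk of radius $d$ at its center is $\rho_{B(z,d)}(z) = 4/d^2$ (pull back $\rho_{\m D}$ by $w \mapsto z + d\,w$), whence $\sqrt{\rho_\O(z)} \le 2/d$. This step is insensitive to whether $\O$ is bounded.

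\emph{Lower bound} $\sqrt{\rho_\O(z)} \ge 1/(2d)$. Let $f : \m D \to \O$ be a conformal map with $f(0) = z$ (obtained from any Riemann map by precomposing with a disk automorphism). Conformal invariance of the hyperbolic metric, $\rho_\O(f(w))|f'(w)|^2 = \rho_{\m D}(w)$, evaluated at $w=0$ gives the key identity $\sqrt{\rho_\O(z)} = 2/|f'(0)|$. The Koebe $1/4$ theorem applied to the univalent map $f$ shows that $\O = f(\m D)$ contains the ball $B(z, |f'(0)|/4)$, hence $d \ge |f'(0)|/4 = 1/(2\sqrt{\rho_\O(z)})$, which rearranges to the claim. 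Combining the two bounds yields $\tfrac{1}{2d(z,\g)} \le \sqrt{\rho_\O(z)} \le \tfrac{2}{d(z,\g)}$.

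The only genuine subtlety --- and the point I would treat most carefully --- is the case where $\O$ is the unbounded complementary component, so that $\infty \in \O$: then the Riemann map $f$ has a pole in $\m D$ and the classical Koebe theorem does not apply to it verbatim. This affects only the lower bound; it is handled by precomposing the whole configuration with an inversion $S(\zeta) = 1/(\zeta - a)$ centered at a nearest boundary point $a \in \g$, which moves $\infty$ to the interior and sends $\O$ to a simply connected domain in $\m C$ with $\infty$ on its boundary, reducing matters to the bounded case already treated (the hyperbolic metric being conformally invariant under $S$, while the Euclidean distances transform explicitly). Apart from this reduction, the argument is purely a bookkeeping of the normalization constants, which is exactly what fixes the precise factors $1/2$ and $2$.
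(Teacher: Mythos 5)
Your first two paragraphs are correct and give essentially the argument the paper has in mind: the paper supplies no proof of its own, merely citing Pommerenke's Corollary 1.4 and calling the statement ``a simple consequence of the Koebe 1/4 theorem,'' and your combination of domain monotonicity (Schwarz--Pick) for the upper bound with the Koebe quarter theorem applied to a Riemann map normalized at $z$ for the lower bound is exactly that standard proof. The constants come out right under the paper's normalization $\rho_{\m D}(w)=4/(1-|w|^2)^2$, via the identity $\sqrt{\rho_\O(z)}=2/|f'(0)|$.

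Your final paragraph, however, contains a genuine error rather than a routine reduction: when $\infty\in\O$ the lower bound is simply \emph{false}, so no inversion trick can recover it. Take $\g=\m S^1$ and $\O=\Chat\smallsetminus\overline{\m D}$; then $\sqrt{\rho_\O(z)}=2/(|z|^2-1)$, and for real $z=R$ this is strictly smaller than $1/(2(R-1))=1/\bigl(2d(z,\g)\bigr)$ as soon as $R>3$. The specific flaw in the proposed reduction is the claim that ``Euclidean distances transform explicitly'' under $S(\zeta)=1/(\zeta-a)$: what the argument needs is $d\bigl(S(z),\partial S(\O)\bigr)\le |S'(z)|\,d(z,\g)$, and this fails --- in the example above with $z=2$, $a=1$, one finds $S(\O)=\{\Re w>-1/2\}$, so $d\bigl(S(z),\partial S(\O)\bigr)=3/2$ while $|S'(z)|\,d(z,\g)=1$. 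Even the weaker Koebe-type bound $d\bigl(S(z),\partial S(\O)\bigr)\le 4|S'(z)|\,d(z,\g)$ is unavailable, because proving it would require $S^{-1}$ to be pole-free on the ball $B\bigl(S(z),d(S(z),\partial S(\O))\bigr)$, which fails precisely because $S(\infty)$ lies inside $S(\O)$. The correct resolution is that the theorem presupposes $\infty\notin\O$; this is how Pommerenke states it ($f:\m D\to G\subset\m C$ univalent, $G$ planar) and is the reading forced by the statement's use of Euclidean distance in $\m R^2$. With that reading, your first two paragraphs already constitute a complete proof; for a component containing $\infty$, only your monotonicity upper bound $\sqrt{\rho_\O(z)}\le 2/d(z,\g)$ remains valid.
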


Using this, we obtain the following control over the behavior of the Epstein map.

\begin{cor}\label{cor:bdyvals}
Let $\gamma$ be a Jordan curve bounding $\Omega$. Then $\Ep_\Omega$ extends continuously to the identity on $\gamma$ with
$$(\sqrt{5}-2)d(z,\gamma) \leq d(\Ep_\Omega(z),\gamma) \leq 5d(z,\gamma)$$
where $d (\cdot, \cdot)$ denotes the Euclidean distance in $\m R^3$.
Furthermore if $\Ep_\Omega(z) = (Z(z),\xi(z)) \in \m C\times \m R_+$, then
$$\frac{1}{5}d(z,\gamma) \leq \xi(z) \leq 4d(z,\gamma).$$
  \end{cor}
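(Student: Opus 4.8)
The plan is to combine the two-sided estimate on the hyperbolic density from Theorem~\ref{thm:qh} with the description of the Epstein--Poincar\'e map in terms of osculating M\"obius transformations (Lemma~\ref{lemma:osc-mob}) and the explicit horosphere geometry recorded in Remark~\ref{rem:E_radius}. The key observation is that $\Ep_\O(z)$ lies on the horosphere $\mathfrak H(z, \rho_\O)$ based at $z$, whose Euclidean radius is $1/\sqrt{\rho_\O(z)}$ by Remark~\ref{rem:E_radius}. Hence the Euclidean distance from $\Ep_\O(z)$ (viewed in the upper half-space $\m C \times \m R_+$) to the boundary point $z \in \g$ is comparable to this radius, and by Theorem~\ref{thm:qh} the quantity $1/\sqrt{\rho_\O(z)}$ is in turn comparable to $d(z,\g)$. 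This already gives continuity of the extension to the identity on $\g$: as $z \to \g$, both the horosphere radius and $d(z,\g)$ tend to $0$, forcing $\Ep_\O(z) \to z$.

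First I would make the comparison quantitative at a single point. By naturality (Lemma~\ref{lem:naturality}) I may normalize: using Lemma~\ref{lemma:osc-mob}, $\Ep_\O \circ f(z_0) = M_{f,z_0} \circ \Ep_{\m D}(z_0)$, so it suffices to understand one geodesic-plane Epstein map and then transport by the osculating M\"obius map. Concretely, I would fix $z \in \O$, consider the horosphere based at $z$ of Euclidean radius $r = 1/\sqrt{\rho_\O(z)}$, and note that $\Ep_\O(z)$ is the point on this horosphere in the direction of the outward normal; a direct computation in the upper half-space model locates $\Ep_\O(z)$ at a point whose third coordinate $\xi(z)$ and whose horizontal displacement $|Z(z) - z|$ are each bounded by a fixed multiple of $r$. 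Feeding in $\tfrac{1}{2}d(z,\g) \le 1/\sqrt{\rho_\O(z)} \le 2 d(z,\g)$ from Theorem~\ref{thm:qh} converts these into the asserted multiples of $d(z,\g)$.

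For the lower bound on $d(\Ep_\O(z),\g)$ I would argue that $\g$ stays Euclidean-distance at least $d(z,\g)$ from $z$, while $\Ep_\O(z)$ is displaced from $z$ by at most a controlled multiple of $r \asymp d(z,\g)$; a triangle-inequality bookkeeping in $\m R^3$ then yields $d(\Ep_\O(z),\g) \ge (\sqrt 5 - 2) d(z,\g)$, where the precise constant emerges from optimizing the worst-case configuration of horizontal versus vertical displacement (the $\sqrt 5$ reflecting the Pythagorean combination of a horizontal shift of size $\le 2 d(z,\g)$ and a vertical coordinate; note $\sqrt{1^2+2^2}=\sqrt5$). The upper bound $d(\Ep_\O(z),\g) \le 5 d(z,\g)$ and the two-sided bound on $\xi(z)$ follow from the same displacement estimates with the explicit constants tracked.

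The main obstacle will be pinning down the exact numerical constants rather than mere comparability: the clean constants $\sqrt5 - 2$, $5$, $1/5$, and $4$ require carefully identifying where $\Ep_\O(z)$ sits on its horosphere and combining the horizontal and vertical displacements optimally, since Theorem~\ref{thm:qh} only gives the density up to a factor of $4$. I expect the bookkeeping to reduce to an elementary optimization over the possible positions of the basepoint $z$ relative to $\g$ and the worst-case horosphere radius, and the continuity of the extension to the identity on $\g$ drops out for free once the displacement $|\Ep_\O(z) - z| \to 0$ is established.
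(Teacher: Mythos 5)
Your framework for the continuity statement and for the two distance bounds is exactly the paper's: $\Ep_\O(z)$ lies on the horosphere of Euclidean radius $r=1/\sqrt{\rho_\O(z)}$ based at $z$ (Remark~\ref{rem:E_radius}), Theorem~\ref{thm:qh} gives $s/2\le r\le 2s$ with $s=d(z,\gamma)$, every point of that horosphere is within $2r\le 4s$ of $z$ (hence continuity and $d(\Ep_\O(z),\gamma)\le 2r+s\le 5s$), and for any $w\in\gamma$ the sphere estimate $d(\Ep_\O(z),w)\ge\sqrt{|z-w|^2+r^2}-r\ge\sqrt{s^2+r^2}-r$ is decreasing in $r$, so the worst case $r=2s$ gives $(\sqrt5-2)s$; your Pythagorean reading of the constant $\sqrt5$ is the correct one. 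Up to this point your proposal and the paper's proof coincide.

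The gap is the lower bound $\xi(z)\ge s/5$. None of the facts you have assembled excludes $\Ep_\O(z)$ sitting near the tangency point of the horosphere with $\hat{\m C}$: such a point lies on the horosphere, is displaced from $z$ by at most $2r$, and is still at distance roughly $s\ge(\sqrt5-2)s$ from $\gamma$, yet its height $\xi$ is arbitrarily close to $0$. So no optimization over the position of $z$ relative to $\gamma$ and the worst-case radius can yield a positive lower bound on $\xi$; what is needed is quantitative control of \emph{where} on the horosphere the Epstein point sits, equivalently of how far the unit normal tilts from vertical, and this is second-jet information about the uniformizing map. The paper supplies it by computing, via Lemma~\ref{lemma:osc-mob} with $f(0)=z_0$, $\alpha^2=f'(0)$, $2\alpha\beta=-f''(0)/f'(0)$,
$$\xi(z_0)=\frac{|f'(0)|}{1+\left|\frac{f''(0)}{2f'(0)}\right|^2},$$
and then invoking Bieberbach's coefficient inequality $|f''(0)|\le 4|f'(0)|$ for univalent $f$, so that the denominator is at most $5$; combined with $|f'(0)|=2r$ (since $\rho_\O(z_0)=4/|f'(0)|^2$) and $s/2\le r\le 2s$, this gives $s/5\le 2r/5\le\xi(z_0)\le 2r\le 4s$. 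Your proposal gestures at Lemma~\ref{lemma:osc-mob} and at "identifying where $\Ep_\O(z)$ sits on its horosphere," but it never names the univalence/distortion input that makes this possible, and that input is precisely the step which does not follow from horosphere geometry and triangle inequalities alone.
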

\begin{proof} 
Let $s = d(z,\gamma)$. Since $\Ep_\Omega(z)$ is on the boundary of a horosphere of Euclidean radius $r = 1/\sqrt{\rho(z)}$ based at $z$ by Remark~\ref{rem:E_radius}, Theorem \ref{thm:qh} shows that
$$s/2 \leq r \leq 2s.$$ 
We let $z_i\rightarrow z\in \gamma$. Then
$$d(\Ep_\Omega(z_i),z) \leq 2r+d(z_i,z) \leq 4d(z_i,\gamma)+d(z_i,z) \leq 5d(z_i,z).$$
Therefore $\Ep_\Omega(z_i)\rightarrow z$ giving $\Ep_\Omega$ extends continuously to the identity on $\gamma.$

By the triangle inequality, we also have
$$ \sqrt{s^2+r^2}-r \leq d(\Ep_\Omega(z),\gamma) \leq 2r+s.$$
Thus 
$$(\sqrt{5}-2)s \leq d(\Ep_\Omega(z),\gamma) \leq 5s.$$

To bound $\xi$, let  $z_0 \in \Omega$ and $f:\m D\rightarrow \Omega$ a uniformizing map with $f(0) = z_0$. Further, by post-composition by a translation, it suffices to consider $z_0= 0$. Thus the osculating M\"obius map $M$ of $f$ at $0$ (i.e. the M\"obius map with the same 2-jet at $0$)  is
$$
M(\z) = \frac{\a \z}{\beta \z + 1/\a}$$
where 
$\a^2 = f'(0)$ and 
$2 \a \b = - f''(0)/f'(0)$.  By Lemma~\ref{lemma:osc-mob} 
$$\Ep_\O \circ f (0) = M(\jj)  =  \a \jj (\b \jj + 1/\a)^{-1} = \frac{\alpha\overline{\beta} + \jj}{|\beta|^2+\frac{1}{|\alpha|^2}}. 
$$
Thus
\begin{equation}\label{eq:xi_0_EP}
\xi(z_0) = 
\frac{|\a|^2}{|\a \beta|^2+1}= \frac{|f'(0)|}{1+\left|\frac{f''(0)}{2f'(0)}\right|^2}.
\end{equation}
As $f$ is univalent, by the Bieberbach theorem (see \cite{Bieberbach}) then $|f''(0)|\leq 4|f'(0)|$. We also know that $1/r^2 = \rho(z_0) = 4 / |f'(0)|^2$. Thus  $|f'(0)| = 2r$ and
$$ \frac{s}{5} \le \frac{2r}{5} = \frac{|f'(0)|}{5} \leq \xi(z_0) \leq |f'(0)| = 2r \le 4 s.$$
The result follows.
\end{proof}

\begin{cor}\label{cor:embedding_wp}
If $\O$ is bounded by an asymptotically conformal curve $\g$ then $\Ep_\Omega$ is an 
embedding in a neighborhood of $\partial \O$.
\end{cor}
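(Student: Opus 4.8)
The plan is to establish the two conclusions — immersion and embedding — separately, in each case by transporting the boundary estimates of Corollary~\ref{cor:bdyvals} together with the asymptotic-conformality hypothesis toward $\g$. For the immersion, recall from the composition rule $\mc S(f^{-1})\circ f\cdot (f')^2=-\mc S(f)$ and the isometry relation $(\rho_\O\circ f)\,|f'|^2=\rho_{\m D}$ that the Schwarzian norm is a conformal invariant, $\|\mc S(f^{-1})\|_\O\circ f=\|\mc S(f)\|_{\m D}$. Hence the equivalence (AC3) of Theorem~\ref{thm:AC} gives $\|\mc S(f^{-1})(w)\|_\O\to 0$ as $w\to\partial\O$, so there is a collar of $\partial\O$ on which $\|\mc S(f^{-1})\|_\O<1$; by Theorem~\ref{thm:PE} the map $\Ep_\O$ is an immersion there.

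The substance of the statement is injectivity on a collar, and this is where the work lies. First I would localize any coincidence. If $\Ep_\O(z_1)=\Ep_\O(z_2)=(Z,\xi)$, then Corollary~\ref{cor:bdyvals} forces $d(z_1,\g)\asymp\xi\asymp d(z_2,\g)=:s$, and since $\Ep_\O(z_i)$ lies on a horosphere of Euclidean radius $\asymp s$ based at $z_i$, also $|z_1-z_2|\lesssim s$. Combining this with Theorem~\ref{thm:qh} ($\sqrt{\rho_\O}\asymp 1/d(\cdot,\g)$) and the conformal invariance of the hyperbolic distance, $d_\O(z_1,z_2)=d_{\m D}(f^{-1}(z_1),f^{-1}(z_2))$, one bounds the hyperbolic distance between the preimages $\zeta_i:=f^{-1}(z_i)$ by a universal constant $C$. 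Thus any identification in $\O$ corresponds to two points $\zeta_1,\zeta_2\in\m D$ lying near $\Sph^1$ and within bounded hyperbolic distance $C$ of one another.

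Next I would rule out such an identification by comparing $G:=\Ep_\O\circ f$ to its osculating model. By Lemma~\ref{lemma:osc-mob} and the naturality of the Epstein map (Lemma~\ref{lem:naturality}), near any $\zeta\in\m D$ the map $G$ agrees to second order with $\Phi_\zeta:=M_{f,\zeta}\circ\Ep_{\m D}$, where $M_{f,\zeta}$ is the osculating M\"obius transformation; since $\Ep_{\m D}$ parametrizes a totally geodesic plane (Lemma~\ref{lem:example_disk}) and $M_{f,\zeta}$ extends to an isometry of $\m H^3$, each $\Phi_\zeta$ is an embedding, and by the transitivity of $\psu$ on $\m D$ these models all have the same geometry as $\Ep_{\m D}$ restricted to a fixed hyperbolic ball $B(0,R)$. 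The asymptotic-conformality condition (AC2), which bounds $f''/f'$ at the hyperbolic scale, says precisely that on a hyperbolic ball $B(\zeta,R)$ the map $f$ becomes $C^2$-close to $M_{f,\zeta}$ as $\zeta\to\Sph^1$; feeding this into the two-jet dependence of the Epstein map (Theorem~\ref{thm:basic_epstein}(1)) yields $\|G-\Phi_\zeta\|_{C^1(B(\zeta,R))}\to 0$. Since a $C^1$-small perturbation of an embedding of a compact set is again an embedding, and the perturbation threshold is uniform by homogeneity, $G$ is injective on $B(\zeta,C)$ once $\zeta$ is close enough to $\Sph^1$, provided $R>C$. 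As $\zeta_2\in B(\zeta_1,C)$, the coincidence $G(\zeta_1)=G(\zeta_2)$ forces $\zeta_1=\zeta_2$, whence $z_1=z_2$, giving injectivity of $\Ep_\O$ on a collar of $\partial\O$.

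The main obstacle is the last convergence: turning the qualitative hypothesis (AC2) into quantitative $C^1$- (ideally $C^2$-) closeness of $\Ep_\O\circ f$ to its osculating geodesic-plane model, uniformly on unit-size hyperbolic balls approaching $\Sph^1$. This requires a scale-invariant form of the statement that the Epstein map depends continuously on the two-jet of the underlying conformal metric (Theorem~\ref{thm:basic_epstein}(1)), together with uniform control, via the transitivity of $\psu$, on the geometry of the limiting totally geodesic planes. Everything else — the localization of coincidences and the openness of embeddings — is soft.
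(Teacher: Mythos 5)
Your immersion argument is the same as the paper's (conformal invariance of the Schwarzian norm, (AC3), and Theorem~\ref{thm:PE}), and it is correct. The embedding argument, however, takes a different route from the paper's, and as written it has two genuine gaps.

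First, the localization step is not justified by what you invoke. From $\Ep_\O(z_1)=\Ep_\O(z_2)$, Corollary~\ref{cor:bdyvals} and Remark~\ref{rem:E_radius} do give $d(z_1,\g)\asymp d(z_2,\g)=:s$ and $|z_1-z_2|\lesssim s$, but the further claim that this forces the hyperbolic distance in $\O$ between $z_1$ and $z_2$ to be bounded by a universal constant does \emph{not} follow from Theorem~\ref{thm:qh} plus conformal invariance: those control only the hyperbolic density, not the length of paths joining $z_1$ to $z_2$ inside $\O$. The implication is false for general Jordan domains --- in a domain with a thin inward spike, two points on opposite sides of the spike are Euclidean-close and at comparable distance from the boundary, yet hyperbolically arbitrarily far apart. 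It holds here only because asymptotically conformal curves are quasicircles, so $\O$ is a uniform domain and the quasihyperbolic (hence hyperbolic) distance admits a logarithmic upper bound in terms of $|z_1-z_2|/\min_i d(z_i,\g)$; this property must be invoked explicitly, since your whole comparison scheme collapses without it.

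Second, the step you yourself call ``the main obstacle'' --- uniform, scale-invariant $C^1$-closeness of $\Ep_\O\circ f$ to the osculating model $M_{f,\zeta}\circ\Ep_{\m D}$ on hyperbolic balls $B(\zeta,R)$ --- is the heart of the proof and is left unproven, so the proposal is a strategy rather than a proof. It can be filled: taking $\sigma_\zeta\in\psu$ with $\sigma_\zeta(0)=\zeta$, the normalized maps $\phi_\zeta:=M_{f,\zeta}^{-1}\circ f\circ\sigma_\zeta$ are univalent with identity two-jet at $0$, and (AC3) together with M\"obius invariance gives $\sup_{B(0,R)}\norm{\mc S(\phi_\zeta)}_{\m D}\to0$ as $\zeta\to\Sph^1$; normal families then force $\phi_\zeta\to\id$ in $C^\infty_{loc}$, and the two-jet dependence of the Epstein map (Theorem~\ref{thm:basic_epstein}, Lemma~\ref{lem:explicit_Poincare}) converts this into the closeness you need. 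For comparison, the paper proves embeddedness with far less machinery: arguing by contradiction with sequences $x_i\neq y_i$, $\Ep_\O(x_i)=\Ep_\O(y_i)$, converging to a common boundary point, it applies \cite[Lemma 3.5]{BBvariation} --- in a region where $\norm{\mc S(f^{-1})}_\O<\epsilon$, the Epstein image of a hyperbolic geodesic arc has geodesic curvature less than $1$ --- together with the standard fact that a curve of geodesic curvature at most $1$ in $\m H^3$ is embedded. That single curvature estimate replaces your entire blow-up comparison, and it also sidesteps your localization issue completely, since the geodesic arc joining the two preimages stays in the collar and is embedded no matter how hyperbolically far apart they are.
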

\begin{proof}
We note that $\norm{\mc S(f^{-1}) (f(\z))}_{\O} =  \norm{\mc S(f) (\z)}_{\m D}$. 
 Since $\g$ is asymptotically conformal, Theorem~\ref{thm:PE} and (AC3) imply that 
 $\Ep_\O$ is an immersion in a neighborhood of $\partial \O$.

By Corollary \ref{cor:bdyvals}, $\Ep_{\O}$ extends to the identity on $\partial\Omega$. Therefore, if $\Ep_{\O}$ is injective in a neighborhood of $\partial \O$, then by compactness, the extension map is a homeomorphism on a neighborhood of $\partial \O$.
Thus it suffices to show $\Ep_{\O}$ is injective in a neighborhood of $\partial \O$.

If $\Ep_{\O}$
is not injective in a neighborhood of $\partial \O$ then  there exists sequences $x_i \rightarrow u, y_i \rightarrow v$ with $x_i \neq y_i$ but $\Ep_\O(x_i) = \Ep_\O(y_i)$ and $u,v \in \partial 
\O$. Then $u = v$ by Corollary~\ref{cor:bdyvals}. 

We now obtain our contradiction to $\Ep_\O(x_i) = \Ep_\O(y_i)$ for all $i$. We let  $A_\d := \{f(\z)  \in \O \,|\, 1-|\z| < \d \}$. Given any $\varepsilon > 0$ we can choose $\d$ such that $\|\mc S (f^{-1})\| < \varepsilon$ on $A_\d$. 
For sufficiently large $i$  the geodesic arc $\gamma_i$ joining $x_i$ to $y_i$ is in $A_\d$. Furthermore, for any fixed  $r_0> 0$, for sufficiently large $i$ the $r_0-$neighborhood of the geodesic arc $\gamma_i$ is in $A_\d$. Therefore as $\|\mc S (f^{-1})\| < \varepsilon$ on $A_\d$ by \cite[Lemma 3.5]{BBvariation}, then for $r_0 \leq 1/2$ the curve $\Ep_{\O}\circ \gamma_i$  has geodesic curvature less than $\kappa = \frac{3\varepsilon}{2r_0(1-\varepsilon)^2}$ in $\Hs$. A standard fact about hyperbolic space is that any smooth curve with geodesic curvature bounded above by $1$ is embedded (see, for example, \cite[Lemma 3.6]{BBvariation}). It follows that by choosing $\varepsilon,r_0$ such that $\kappa \leq 1$ then $\Ep_{\O}(x_i) \neq \Ep_{\O}(y_i)$, a contradiction.
\end{proof}

\subsection{Explicit expression of Epstein maps in the upper-space model}\label{sec:explicit}
For concreteness, we also mention that in the upper-space model, Epstein maps have explicit expressions derived in \cite{epstein-envelopes,KrasnovSchlenker_CMP}.
We collect them here for the readers' convenience. 
We choose to include the simple derivations or examples to be specific about our conventions which is slightly different from \cite{KrasnovSchlenker_CMP} as we mentioned before.

Let $\rho = e^\varphi |\dd z|^2$ be a smooth conformal metric on an open set $U \subset \m C$. The Epstein map  
$\Ep_\rho : z \in U \mapsto (Z, \xi) \in \m C \times \m R_+ = \m H^3$  is given explicitly by
\begin{equation}\label{eq:Epsteincoordinates}
    \xi = \frac{2e^{-\varphi/2}}{1+|\varphi_\zbar|^2e^{-\varphi}},\qquad Z = z + \frac{2\varphi_{\zbar} e^{-\varphi}}{1+|\varphi_{\zbar}|^2e^{-\varphi}} =  z + \xi \cdot \psi,
\end{equation}
where 
$$\psi := \varphi_{\bar z} e^{-\varphi/2}, \qquad \varphi_\zbar = \partial_{\bar z} \varphi. $$

The Epstein Gauss map is $\widetilde \Ep_\rho \colon U \subset \m C \to T_1 \m H^3$ such that the base point is $\Ep_\rho$ and the vector component is  $\xi \normal$ where
\begin{equation}\label{eq:normalvector}
    \normal = \left( \frac{2\varphi_{\bar z} e^{-\varphi/2}}{1+|\varphi_{\zbar}|^2e^{-\varphi}} , \frac{1-|\varphi_{\zbar}|^2e^{-\varphi}}{1+|\varphi_{\zbar}|^2e^{-\varphi}} \right) = \left(\frac{2\psi}{1 +|\psi|^2}, \frac{1-|\psi|^2}{1+|\psi|^2}\right)
\end{equation}
is a Euclidean normal vector.
It is straightforward to check that the geodesic flow $\mf g_t (\widetilde \Ep_\rho (z)) \in T_1 \m H^3$ 
satisfies
$$\mf g_{-t} (\widetilde \Ep_\rho  (z)) = \widetilde \Ep_{e^{2t} \rho}  (z), $$
and the base point of $\mf g_{-t} (\widetilde \Ep_\rho (z))$ tends to $z$ as $t \to \infty$.

\begin{ex} \label{ex:epstein_maps}
\begin{itemize}
    \item If $\varphi \equiv 2 t$, then for all $z$,
    $$\Ep_\varphi (z) = (z, 2 e^{-t}) \qquad \normal = (0,1).$$
    \item If $e^\varphi  = \frac{4}{(1+ |z|^2)^2}$, then for all $z \in \m C$, $(Z, \xi) = (0, 1)$.
    \item If $\varphi = \log 4 - 2 \log (1-|z|^2)$, i.e., $\rho = e^\varphi |\dd z|^2$ is the hyperbolic metric in $\m D$, then for $z = r e^{\ii \t} \in \m D$,
    $$\Ep_\rho (r e^{\ii \t}) = \left(\frac{2r}{1+r^2} e^{\ii \t}, \frac{1-r^2}{1+r^2}\right) = \normal.$$
    This is consistent with Lemma~\ref{lem:example_disk} (and one of the advantages of choosing this convention is) that the Epstein--Poincar\'e map $\Ep_{\rho}$ maps $\m D$ onto the totally geodesic plane in $\m H^3$ bounded by $\partial \m D$. 
\end{itemize}
\end{ex}

More generally, we have the following explicit formula for the Epstein--Poincar\'e map associated with a simply connected domain $\O$. 
\begin{lemma}\label{lem:explicit_Poincare}
  Let $f : \m D \to \O$ be a conformal map and $\Ep_{\O} : \O \to \S_\O$ the  Epstein--Poincar\'e map. If $z = f(\z)$, $\z \in \m D$, then
\begin{equation*}
\begin{split}
\psi (z) & =  \varphi_{\zbar}e^{-\varphi/2}=  \frac{|f'(\zeta)|}{\overline{f'(\zeta)}}\left( -\frac{\overline{f''(\zeta)}}{\overline{f'(\zeta)}}\frac{(1-|\zeta|^2)}{2} + \zeta\right),\\
e^{-\varphi (z)/2} & =  \frac 12 |f'(\zeta)|(1-|\zeta|^2)\\
  \xi(z) & = \frac{2e^{-\varphi/2}}{1+|\psi|^2} = \frac{|f'(\zeta)|(1-|\zeta|^2)}{1+\big| -\frac{f''(\zeta)}{f'(\zeta)}\frac{(1-|\zeta|^2)}{2} + \overline{\zeta} \big|^2},\\
    Z (z) &=   z + \xi \cdot \psi = f(\zeta) + \frac{\left( -\frac{\overline{f''(\zeta)}}{\overline{f'(\zeta)}}\frac{(1-|\zeta|^2)}{2} + \zeta\right)f'(\zeta)(1-|\zeta|^2)}{1+\big| -\frac{f''(\zeta)}{f'(\zeta)}\frac{(1-|\zeta|^2)}{2} + \overline{\zeta} \big|^2}.
\end{split}
\end{equation*}

\end{lemma}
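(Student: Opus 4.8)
The plan is to derive all four identities directly from the coordinate formula \eqref{eq:Epsteincoordinates}, so the only real work is to express the conformal factor $\varphi$ of the Poincar\'e metric $\rho_\O = e^{\varphi}|\dd z|^2$ and its Wirtinger derivative $\varphi_{\zbar}$ in terms of $f$. The starting point is that $f$ is an isometry from $(\m D,\rho_{\m D})$ onto $(\O,\rho_\O)$, where $\rho_{\m D}(\z) = 4/(1-|\z|^2)^2$; equivalently, the pullback relation $e^{\varphi(f(\z))}|f'(\z)|^2 = 4/(1-|\z|^2)^2$ holds. Taking $-\tfrac12\log$ of both sides gives immediately
$$e^{-\varphi(z)/2} = \tfrac12 |f'(\z)|(1-|\z|^2),$$
which is the second displayed identity.

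The main computational step is the evaluation of $\varphi_{\zbar}$, and this Wirtinger bookkeeping is the only genuinely delicate part. Writing $\z = f^{-1}(z)$, which is holomorphic in $z$ with $\partial_z\z = 1/f'(\z)$ and $\partial_{\zbar}\z = 0$, I would differentiate
$$\varphi(z) = \log 4 - \log f'(\z) - \log\overline{f'(\z)} - 2\log(1-|\z|^2)$$
using $\partial_{\zbar}\overline{f'(\z)} = \overline{\partial_z f'(\z)} = \overline{f''(\z)/f'(\z)}$ and $\partial_{\zbar}(\z\overline{\z}) = \z\,\overline{\partial_z\z} = \z/\overline{f'(\z)}$. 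The holomorphic term drops out and one is left with
$$\varphi_{\zbar} = -\frac{\overline{f''(\z)}}{\overline{f'(\z)}^{2}} + \frac{2\z}{\overline{f'(\z)}(1-|\z|^2)}.$$

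From here everything is algebra. Multiplying by $e^{-\varphi/2}$ and factoring out $|f'(\z)|/\overline{f'(\z)}$ yields the stated expression for $\psi = \varphi_{\zbar}e^{-\varphi/2}$. Since $|f'(\z)|/\overline{f'(\z)}$ has modulus one, $|\psi|^2$ equals the squared modulus of the bracketed factor; conjugating inside the modulus rewrites this as $\big|-\tfrac{f''}{f'}\tfrac{1-|\z|^2}{2}+\overline{\z}\big|^2$, and substituting into $\xi = 2e^{-\varphi/2}/(1+|\psi|^2)$ produces the third identity. Finally, in $Z = z + \xi\psi$ the product $\xi\psi$ simplifies using $|f'|^2/\overline{f'} = f'$, which converts the prefactor to $f'(\z)(1-|\z|^2)$; together with $z = f(\z)$ this gives the last identity and completes the proof.
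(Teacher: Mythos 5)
Your proof is correct: the chain-rule computation of $\varphi_{\zbar}$ from the pullback relation $e^{\varphi(f(\z))}|f'(\z)|^2 = 4/(1-|\z|^2)^2$ is accurate (the holomorphic term indeed drops, and the two surviving terms assemble into the stated $\psi$), and the remaining algebra for $\xi$ and $Z$ checks out. This is exactly the derivation the paper intends --- it states the lemma without proof as a direct substitution of the Poincar\'e metric into the coordinate formula \eqref{eq:Epsteincoordinates} --- so you have simply supplied the computation the paper leaves implicit.
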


We verify from the explicit formulas that the expression of $\Ep_\O(f(0))$ coincides with the one in \eqref{eq:xi_0_EP} and that the Epstein map $\Ep_{\O}$ extends continuously to $\g$ as the identity map using the bounds
\begin{equation}\label{it:preS_bound}
\abs{\frac{(1-|\z|^2)}{2}\frac{f''(\z)}{f'(\z)} -  \bar \z} \le 2,  \qquad (1-|\z|^2)\abs{f'(\z)} \le 4\dist(f(\z), \g).
\end{equation}
See \cite[Prop.\,1.2, Cor.\,1.4]{Pommerenke_boundary}.

\section{Renormalized volume for a Jordan curve} \label{sec:V_R}

In this section, let $\g \subset \Chat$ be a Jordan curve and $\O$, $\O^*$ be the connected component of $\Chat \smallsetminus \g$. Let $\Ep_\O$ (resp. $\Ep_{\O^*}$) be the Epstein--Poincar\'e map associated with $\O$ (resp, $\O^*$). We write as before $\S_\O$ and $\S_{\O^*}$ for their images. 

\subsection{Disjoint Epstein--Poincar\'e surfaces}

When $\g$ is a circle,  both Epstein surfaces coincide with the geodesic plane bounded by $\g$ by Lemma~\ref{lem:example_disk}. We now show that in all other cases, the two Epstein surfaces of a Jordan curve are disjoint. We need the following special case of Grunsky's inequality, see, e.g., \cite[Thm.\,4.1, (21)]{Pom_uni} for the proof.
\begin{lemma}[Consequence of Grunsky inequality]\label{lem:Grunsky_inequality}
  Suppose that $f: \m D \to  \m C$ and $g : \m D^* \to  \Chat$ are univalent functions on $\m D$ and $\m D^*$ such that $f(0) = 0$ and $g(\infty) = \infty$, and $f(\m D) \cap g(\m D^*) = \emptyset$. Then 
  $$\int_{\m D} \abs{ \frac{f'(z)}{f (z)}  - \frac{1}{z} }^2  \dd^2 z +  \int_{\m D^*} \abs{\frac{g'(z)}{g(z)}  -\frac{1}{z} }^2  \dd^2 z  \le 2 \pi \log \abs{\frac{ g'(\infty)}{f'(0)}} $$
  where $g'(\infty) = \lim_{z \to \infty} g'(z)$. 
  Equality holds if  $\m C \smallsetminus \{f (\m D) \cup g (\m D^*)\}$ has zero Lebesgue measure.
\end{lemma}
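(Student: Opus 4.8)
The plan is to recognize both integrals as conformally invariant Dirichlet energies and to compare them, via a single global harmonic function on the sphere, against the area of the ``gap'' left between the two images.

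First I would record the elementary reductions. Since $f$ is univalent with $f(0)=0$, the quotient $f(z)/z$ is a nonvanishing holomorphic function on the simply connected domain $\m D$; because $\Omega_1:=f(\m D)$ and $\Omega_2:=g(\m D^*)$ are disjoint and $0=f(0)\in\Omega_1$, the map $g$ omits the value $0$, so $g(z)/z$ is likewise nonvanishing and holomorphic on the simply connected (on $\Chat$) domain $\m D^*$, with $g(z)/z\to g'(\infty)\neq 0$ at $\infty$. Hence there are single-valued branches
$$P(z)=\log\frac{f(z)}{z}\ \ (z\in\m D),\qquad Q(z)=\log\frac{g(z)}{z}\ \ (z\in\m D^*),$$
normalized by $P(0)=\log f'(0)$ and $Q(\infty)=\log g'(\infty)$. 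Then $P'=f'/f-1/z$ and $Q'=g'/g-1/z$, so the two quantities in the statement are precisely the Dirichlet energies $\int_{\m D}|P'|^2\,\dd^2 z$ and $\int_{\m D^*}|Q'|^2\,\dd^2 z$.

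The heart of the argument is a global area balance. I would consider the harmonic function $u(w)=\log|w|$ on $\Chat\smallsetminus\{0,\infty\}$, whose energy density is $|\nabla u|^2=|w|^{-2}$, so that over the annulus $A_{\d,R}=\{\d<|w|<R\}$ one has $\int_{A_{\d,R}}|\nabla u|^2\,\dd^2 w=2\pi\log(R/\d)$. Splitting $A_{\d,R}$ into its intersections with $\Omega_1$, $\Omega_2$, and the gap $E:=\Chat\smallsetminus(\Omega_1\cup\Omega_2)$, and discarding the nonnegative contribution of $E$, gives
$$\int_{\Omega_1\cap A_{\d,R}}|w|^{-2}\,\dd^2 w+\int_{\Omega_2\cap A_{\d,R}}|w|^{-2}\,\dd^2 w\ \le\ 2\pi\log(R/\d).$$
Now I would use conformal invariance of the Dirichlet energy in two dimensions, together with $u\circ f=\log|z|+\Re P$ and $u\circ g=\log|z|+\Re Q$, to turn the two left-hand integrals into $\int|1/z+P'|^2$ and $\int|1/z+Q'|^2$ over the pulled-back regions. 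Because $0\in\Omega_1$ and $\infty\in\Omega_2$, for small $\d$ (resp.\ large $R$) the set $\{|w|<\d\}$ lies in $\Omega_1$ and $\{|w|>R\}$ lies in $\Omega_2$; the pulled-back regions are, to leading order, $\m D$ minus a disk of radius $\approx\d/|f'(0)|$ about $0$, and the part of $\m D^*$ out to radius $\approx R/|g'(\infty)|$. On these (asymptotically round) centered annuli the cross terms $\Re\int P'/\bar z$ and $\Re\int Q'/\bar z$ vanish by angular orthogonality, while $\int|1/z|^2$ produces the logarithmic divergences $2\pi\log(|f'(0)|/\d)$ and $2\pi\log(R/|g'(\infty)|)$. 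Subtracting these from $2\pi\log(R/\d)$ leaves exactly the constant $-2\pi\log|g'(\infty)/f'(0)|$, so letting $\d\to0$ and $R\to\infty$ yields
$$\int_{\m D}|P'|^2\,\dd^2 z+\int_{\m D^*}|Q'|^2\,\dd^2 z\ \le\ 2\pi\log\abs{\frac{g'(\infty)}{f'(0)}},$$
which is the claimed inequality. The only quantity dropped was the energy of $u$ over $E$, namely $\int_E|w|^{-2}\,\dd^2 w$; since the density $|w|^{-2}$ is strictly positive, this vanishes precisely when $E$ (equivalently $\m C\smallsetminus\{f(\m D)\cup g(\m D^*)\}$) has zero Lebesgue measure, which gives, and in fact strengthens to an iff, the equality statement.

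The main obstacle is making the renormalization rigorous rather than merely asymptotic. One must justify that the divergent pieces near $w=0$ and $w=\infty$ are captured exactly by $2\pi\log(|f'(0)|/\d)$ and $2\pi\log(R/|g'(\infty)|)$ up to $o(1)$, controlling the error coming from the non-circularity of the level sets $\{|f(z)|=\d\}$ and $\{|g(z)|=R\}$, and---most delicately---handling the case where $\Omega_1$ or $\Omega_2$ is unbounded, so that all three regions $\Omega_1,\Omega_2,E$ interact near $\infty$. A clean way to organize this is to isolate the exact renormalized identities $\lim_{\d\to0}\big[\int_{\{|f|>\d\}}|f'/f|^2-2\pi\log(|f'(0)|/\d)\big]=\int_{\m D}|P'|^2$ and its analogue for $g$ as independent lemmas (each following from the vanishing of the cross term on centered annuli together with dominated convergence), after which the area balance above assembles the result. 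Alternatively, one may deduce the equivalent coefficient form $\sum_{n\ge1}n|b_n|^2+\sum_{n\ge1}n|c_n|^2\le 2\log|g'(\infty)/f'(0)|$, where $b_n,c_n$ are the Taylor/Laurent coefficients of $P$ and $Q$, directly from the Grunsky inequalities for the non-overlapping pair $(f,g)$ as in Pommerenke.
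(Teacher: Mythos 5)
Your proof is correct, and it takes a genuinely different route from the paper, which does not prove the lemma at all but cites Pommerenke (Thm.\ 4.1, (21)), where the statement comes out of the Grunsky inequalities in coefficient form --- essentially the alternative you mention only in your closing sentence. Your argument is instead a self-contained potential-theoretic one: conformal invariance of the Dirichlet energy of $\log|w|$, exact angular orthogonality (Parseval) on centered annuli, and positivity of the energy of the gap $E=\Chat\smallsetminus(f(\m D)\cup g(\m D^*))$. This buys something the citation does not: the deficit in the inequality is identified exactly as $\int_E |w|^{-2}\,\dd^2 w$, so the equality case comes out as an ``if and only if'', strictly stronger than the one-directional claim in the lemma, and the renormalized quantities (the Dirichlet integrals of $P$ and $Q$) acquire a transparent meaning. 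Three small points. First, the unboundedness difficulty you flag is vacuous: $g(\m D^*)$ is an open neighborhood of $\infty$ in $\Chat$, so disjointness forces $f(\m D)$ to be bounded, and the pulled-back region for $f$ stabilizes to $\{z\in\m D:\ |f(z)|>\delta\}$ once $R$ is large. Second, to drop the cross terms rigorously you should first prove the exact identity $\int_{\{\rho<|z|<r\}}|f'/f|^2\,\dd^2 z = 2\pi\log(r/\rho)+\int_{\{\rho<|z|<r\}}|P'|^2\,\dd^2 z$ on compactly contained annuli with $r<1$, where Fubini requires no integrability hypothesis, and then let $r\to 1^-$ by monotone convergence; this also produces the finiteness of $\int_{\m D}|P'|^2$ as an output of the argument rather than a hidden assumption (your appeal to dominated convergence would be circular at that point). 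Third, there is a harmless sign slip: subtracting the two divergent pieces from $2\pi\log(R/\delta)$ leaves $+2\pi\log|g'(\infty)/f'(0)|$, not $-2\pi\log|g'(\infty)/f'(0)|$; the final inequality you then state is nevertheless the correct one.
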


Applying this, we obtain the following result.
\begin{prop}\label{prop:disjoint}
If $\g$ is not a circle, then $\S_\O$ and $\S_{\O^*}$ are disjoint.
\end{prop}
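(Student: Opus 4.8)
The plan is to argue by contradiction: assume $\S_\O$ and $\S_{\O^*}$ meet at a point $p \in \Hs$, and deduce that $\g$ must be a circle. Choose $z_0 \in \O$ and $w_0 \in \O^*$ with $\Ep_\O(z_0) = \Ep_{\O^*}(w_0) = p$; since $z_0 \neq w_0$, there is a M\"obius transformation $h$ sending $z_0 \mapsto 0$ and $w_0 \mapsto \infty$. By the naturality of the Epstein map (Lemma~\ref{lem:naturality}), after replacing $\g,\O,\O^*,p$ by their images under $h$ (which does not affect whether $\g$ is a circle), I may assume $0 \in \O$, $\infty \in \O^*$, and that $q := p = \Ep_\O(0) = \Ep_{\O^*}(\infty)$. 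Picking conformal maps $f \colon \m D \to \O$ with $f(0)=0$ and $g \colon \m D^* \to \O^*$ with $g(\infty)=\infty$, their images are disjoint, so the Grunsky inequality (Lemma~\ref{lem:Grunsky_inequality}) applies and gives $\log|g'(\infty)/f'(0)| \ge 0$, i.e. $|f'(0)| \le |g'(\infty)|$.

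The contradiction will come from comparing this with the height of $q = (Z_q,\xi_q) \in \m C \times \m R_+$ computed from each side. Since $q = \Ep_\O(0)$ lies on the horosphere based at $0$ of Euclidean radius $1/\sqrt{\rho_\O(0)} = |f'(0)|/2$ (Remark~\ref{rem:E_radius} and $\rho_\O(0)=4/|f'(0)|^2$), I obtain the exact relation $|Z_q|^2 + \xi_q^2 = |f'(0)|\,\xi_q$. For the $\O^*$ side I would apply the inversion $J(z) = -1/z$, an isometry of $\Hs$ sending $\infty \mapsto 0$: writing $\tilde f := J \circ g \circ J \colon \m D \to J(\O^*)$, one checks $\tilde f(0)=0$ and $\tilde f'(0) = 1/g'(\infty)$, so by \eqref{eq:xi_0_EP} the height of $\Ep_{J(\O^*)}(0) = J(q)$ is at most $|\tilde f'(0)| = 1/|g'(\infty)|$. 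Since $J$ acts on heights by $\xi \mapsto \xi/(|Z|^2+\xi^2)$, the height of $J(q)$ equals $\xi_q/(|Z_q|^2+\xi_q^2)$, which by the exact relation above is precisely $1/|f'(0)|$. Hence $1/|f'(0)| \le 1/|g'(\infty)|$, that is $|g'(\infty)| \le |f'(0)|$.

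Combining the two bounds forces $|f'(0)| = |g'(\infty)|$, so the right-hand side of the Grunsky inequality vanishes; as the left-hand side is a sum of two nonnegative integrals, both integrands vanish identically, giving $f'/f \equiv 1/z$ on $\m D$ and $g'/g \equiv 1/z$ on $\m D^*$. Thus $f(z)=cz$ and $g(z)=c'z$ are linear, $\O$ is a round disk and $\O^*$ the complementary round disk, so $\g$ is a circle, contradicting the hypothesis. I expect the main obstacle to be the height bookkeeping in the middle step: getting the signs and the horosphere radius $1/\sqrt{\rho}$ right, handling the base point $\infty$ cleanly through the inversion (including the transformation law $\xi \mapsto \xi/(|Z|^2+\xi^2)$ and the identity $\tilde f'(0)=1/g'(\infty)$), and verifying that these heights combine to exactly reverse the Grunsky inequality, so that its equality case rigidly pins down the circle.
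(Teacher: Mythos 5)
Your proof is correct and follows essentially the same route as the paper: after the identical M\"obius normalization ($z_0 \mapsto 0$, $w_0 \mapsto \infty$, with $f(0)=0$, $g(\infty)=\infty$), both arguments reduce to comparing $|f'(0)|$ with $|g'(\infty)|$ via the Grunsky inequality (Lemma~\ref{lem:Grunsky_inequality}), whose equality case forces $f$ and $g$ to be linear and hence $\g$ to be a circle. The only cosmetic difference is that the paper directly shows the two families of horospheres are pairwise disjoint (the horosphere at $0$ has Euclidean diameter $|f'(0)|$, while the one at $\infty$ is the horizontal plane at height $|g'(\infty)|$, obtained by the same inversion trick you use), whereas you run the comparison as a contradiction by tracking the heights of the two coincident Epstein points.
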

\begin{proof}
We consider $f:\m D\rightarrow \Omega$ and $g:\m D^* \rightarrow \Omega^*$ univalent maps. We  show that for $z \in \m D, w\in \m D^*$ the horospheres at $f(z), g(w)$ associated with the metrics $\rho_\O$ and $\rho_{\O^*}$ respectively are disjoint. By pre-composition and post-composition by  M\"obius transformations we can assume $z =0, w= \infty$ and $f(0) = 0, g(\infty) = \infty$. 
By Remark~\ref{rem:E_radius},
the Euclidean diameter
of the horosphere at $f(0) = 0$ is $|f'(0)|$ and the horosphere at $g(\infty) = \infty$ is the plane at Euclidean height $|g'(\infty)|$. This can be seen by considering the map $z \mapsto 1/g(1/z)$), which has the derivative at zero equaling $1/g'(\infty)$. Thus they are disjoint if  $|g'(\infty)| > |f'(0)|$. This follows from Lemma~\ref{lem:Grunsky_inequality} above unless $f'(z)/f(z) = 1/z$ and $g'(z)/g(z) = 1/z$.
But this implies both $f,g$ have Schwarzian zero, and therefore are M\"obius transformations with $\g$ is a circle. This contradicts the assumption.
\end{proof}

\subsection{Volume between the Epstein--Poincar\'e surfaces} \label{sec:vol_between}

 Let $\g \subset \Chat$ be an asymptotically conformal Jordan curve. 
 We now define the volume between $\S_\O$ and $\S_{\O^*}$. 
 Without loss of generality, we assume that $\g$ does not contain $\infty \in \Chat$ and use the upper half-space model of $\m H^3$.
  We cautiously note that both Epstein--Poincar\'e surfaces are non-compact and may not be embedded everywhere. For this reason, we use an approximation to compute the volume. For $\vare > 0$, let 
 $$\vol_\vare = \eta\left(\xi/\vare\right) \vol_{\mathbb{H}^3}$$
 where $\vol_{\mathbb{H}^3}$ is the hyperbolic volume form of the upper half-space model and $\eta:\mathbb{R}\rightarrow\mathbb{R}$ is a non-negative smooth function with $\supp(\eta)=\{\xi\geq1\}$ and $\eta^{-1}(1) = \{\xi\geq2\}$.
 
 Let $\varphi_\g$ be a continuous map $\overline{\m H^3} \to \overline {\m H^3}$, such that $\varphi_\g|_{\O} = \Ep_\O$, $\varphi_\g|_{\O^*} = \Ep_{\O^*}$, $\varphi_\g|_{\m H^3}$ is differentiable taking values in $\m H^3$ and with nonnegative Jacobian near $\g$.
 
 \begin{remark}\label{rem:extension}
 We give an example of such map $\varphi_\g$. Recall that for every quasicircle $\g$, there is a quasiconformal map $w:\Chat\rightarrow\Chat$ sending the unit circle to $\gamma$, so that $w$ is \emph{real analytic} in the complement of the unit circle (find for instance $w$ solution to the Beltrami equation $\partial_{\overline z}w = \mu \partial_z w$ for $\mu$ real analytic in the complement of the unit circle following \cite[Section III.1.1]{Lehto87}).
 Moreover, take a smooth family of Beltrami coefficients $(\mu_\xi)_{\xi> 0}$ defined as the convolutions $\mu\ast\varsigma_\xi$, where $\varsigma$ is a non-negative bump function around $0$ with total integral $1$ and $\varsigma_\xi =\xi^{-2}\varsigma\left(z/\xi\right)$. We then consider an extension of $w$ to a homeomorphism $w:\overline{\mathbb{H}^3}\rightarrow\overline{\mathbb{H}^3}$ defined leaf-wise from $\mathbb{C}\times\{\xi\}$ to $\mathbb{C}\times\{\xi\}$ as the solution of $\partial_{\overline z}w = \mu_\xi \partial_z w$ with the same images for $0, 1, \infty$ as our chosen $w:\Chat\rightarrow\Chat$. 
 By construction, $w$ sends $\D, \D^*$ and $S^1$ to $\O, \O^*$ and $\gamma$, respectively, and it is a diffeomorphism from $\mathbb{H}^3$ to itself. Denoting by $\operatorname{Arc}(\zeta)$ the hyperbolic geodesic with endpoints $\zeta, 1/\overline{\zeta}$ for $\zeta\in\D$, define a map $\varphi: \overline{\m H^3} \to \overline {\m H^3}$ that for $z\in\O$ sends $w(\operatorname{Arc}(w^{-1}(z))$  to the hyperbolic geodesic between $\Ep_\O(z)$ and $\Ep_{\O^*}(w (1/\overline{w^{-1} (z)} ))$. To obtain a map $\varphi$ which is differentiable away from $\gamma$, one can consider the Euclidean parametrization of arc length for $w(\operatorname{Arc}(w^{-1}(z))$ and send it to the geodesic between $\Ep_\O(z)$ and $\Ep_{\O^*}(w (1/\overline{w^{-1} (z)} ))$ with constant speed with respect to the hyperbolic arc length. 
  Since both Epstein surfaces are disjoint (unless $\g$ is a circle) by Proposition~\ref{prop:disjoint} and embedded near the boundary by Corollary~\ref{cor:embedding_wp}, we check easily that the Jacobian of $\varphi$  is positive in a neighborhood $U_\g$ of $\g$ in $\m H^3$. If $\g$ is a circle, the Jacobian is zero.
   
 \end{remark}
 
 We define
 $$V_2(\g) (\vare) := \int_{\m H^3} \varphi_\g^* \vol_\vare.$$
 This is the \emph{signed volume between the Epstein surfaces} bounded by $\g$ and weighted by $\eta$ at scale $\vare$.
 
 Since the boundary values of $\varphi_\g$ are determined and $\varphi_\g (\overline{\m H^3}) \cap \{(Z,\xi) \,|\, \xi \ge \vare\}$ is compact, $V_2(\g) (\vare)$ is finite and independent of the choice of $\varphi_\g$. Indeed, given two maps $\varphi^0_\g$, $\varphi^1_\g$, the convex combination $\varphi^t_\g:= (1-t)\varphi^0_\g + t\varphi^0_\g$ defines a homotopy of maps satisfying the same conditions. Take $N\subseteq \overline{\mathbb{H}^3}$ smooth submanifold so that it contains the support of $(\varphi^t_\g)^*(\vol_\vare)$ for any $0\leq t\leq1$. By Stokes theorem, it follows $\int_{\partial (N\times [0,1])} i^*((\varphi^t_\g)^*\vol_\vare)=0$, where $i:\partial (N\times [0,1])\rightarrow N\times [0,1]$ is the inclusion map. Since $\varphi^t_\g$ is point-wise constant in $\partial N \cap \Chat$ along $0\leq t\leq1$ and in $\partial N \smallsetminus \Chat$ does not hit the support of $\vol_\vare$, 
this implies $\int_N (\varphi^1_\g )^*\vol_\vare - \int_N (\varphi^0_\g) ^*\vol_\vare = 0$.

We claim that the limit
\begin{equation}\label{eq:limit_V}
 V (\g) : = \lim_{\vare \to 0\splus} V_2(\g) (\vare) \in (-\infty,\infty]
 \end{equation}
 exists. To see this, we note first that 
 $\int_{U_\g} \varphi_\g^* \vol_\vare$ increases as $\vare \to 0\splus$, where $U_\g$ is a neighborhood of $\g$ where the Jacobian of $\varphi_\g$ is nonnegative. 
 Since  
 $\overline{\m H^3} \smallsetminus U_\g$ is 
 a compact set which does not intersect with $\g$, the image by $\varphi_\g$ is a compact subset of $\m H^3$.  Therefore, there exists $\vare_0>0$ such that 
$$\varphi_\g (\overline{\m H^3} \smallsetminus U_\g) \subseteq \{(Z,\xi) \colon \xi \ge \vare_0\}.$$
 Thus $\int_{\m H^3 \smallsetminus U_\g} \varphi_\g^* \vol_\vare$ is constant for small enough $\vare$ and $\lim_{\vare \to 0\splus} V_2(\g)(\vare)$ exists. The monotonicity and \eqref{eq:Eps_invariant} also show that the limit is invariant under the action of elements in $\pslt$ that do not send any point of $\g$ to $\infty \in \Chat$. Indeed, for any such $M\in \pslt$ and $\varepsilon>0$, $M\circ\varphi_\g\circ M^{-1}$ satisfies the conditions for a candidate for $\varphi_{M(\g)}$ while there exists $\varepsilon'>0$ so that $$M\left(\varphi_\g(\mathbb{H}^3)\cap \{(Z,\xi) \colon \xi \ge \vare\}\right) \subseteq (M\circ\varphi_\g\circ M^{-1})(\mathbb{H}^3) \bigcap \{(Z,\xi) \colon \xi \ge 2\vare'\}.$$
 Taking $\vare$ small enough so that $\overline{\m H^3}\smallsetminus M\left(\varphi_\g(\mathbb{H}^3)\cap \{(Z,\xi) \colon \xi \ge \vare\}\right)$ is in the region around $M(\gamma)$ where Corollary~\ref{cor:embedding_wp} applies and where we have that $\varphi_\g$ has positive Jacobian as in Remark~\ref{rem:extension}, we use $M^*\vol_{\mathbb{H}^3}=\vol_{\mathbb{H}^3}$ to get $\varphi_\g ^*\vol_\varepsilon \leq (M^{-1})^*(M\circ\varphi_\g\circ M^{-1})^* \vol_{\vare'} $. Hence we get 
 $$V_2(\g)(\vare) = \int_{\mathbb{H}^3}\varphi_g^*\vol_\vare \leq \int_{\mathbb{H}^3} (M^{-1})^*(M\circ\varphi_\g\circ M^{-1})^* \vol_{\vare'} = V_2(M(\g))(\vare'),$$ 
 from which it follows that $V(\g)\leq V(M(\g))$. Since we get the reverse inequality by considering $M^{-1}$, $V(\g) = V(M(\g))$ follows.

Summarizing the above, we obtain the following definition.
\begin{df}\label{def:volume}
   For an asymptotically conformal Jordan curve $\g \subset \Chat$, we define \emph{the signed volume between the Epstein--Poincar\'e surfaces} $V(\g)$ to be the limit in \eqref{eq:limit_V} applied to the curve $A (\g)$, where $A$ is any element in $\pslt$ such that $A (\g)$ does not pass through $\infty$.
\end{df}

\subsection{Volume for smooth Jordan curves}

In this subsection, we will see if the Jordan curve $\gamma$ is sufficiently smooth, then the map $\Ep_\Omega$ extends not only continuously to $\gamma$ but also osculates to the totally geodesic plane bounded by the circle osculating to $\gamma$ in $\m C$. This will be useful later to prove that the volume between $\Ep_\O$ and $\Ep_{\O^*}$ is finite if $\gamma$ is sufficiently smooth.

If $\g$ is $C^{4,\a}$  for some $0 <\a <1$, Kellogg's theorem  (see, e.g., \cite[Thm.\,II.4.3]{GM}) implies that the conformal map $f : \m D \to \O$ extends to a $C^{4,\a}$ homeomorphism  $\overline{\m D} \to \overline \O$. Hence by Lemma \ref{lem:explicit_Poincare} the Epstein map $\Ep_\O$ extends to $\partial \O$ as a $C^{2,\alpha}$ map. We define the {\em osculating circle} $\calC_\gamma(\t)$  at $\gamma(e^{\ii \t})$ to be the circle tangent to $\g$ at $\gamma(e^{\ii \t})$ and with the same curvature as $\gamma$ at $\gamma(e^{\ii \t})$. We then define the \emph{osculating plane}  $\calP_\gamma(\t)$ to be  the geodesic plane in $\mathbb{H}^3$ so that the boundary of $\calP_\gamma(\t)$ is $\calC_\gamma(\t)$.

\begin{prop}\label{prop:secondorderplane}
Let $\gamma$ be a $C^{4,\alpha}$ Jordan curve in $\mathbb{C}$ for some $0 <\a <1$. Then $\Ep_\O$ and $\calP_\gamma(\t)$, viewed as surfaces in $\m R^3$,  are tangent at $(\gamma(e^{\ii \t}),0)$ and agree up to second order.
\end{prop}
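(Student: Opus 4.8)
The plan is to prove that the full two-jet at a boundary point of the Epstein--Poincaré surface is determined by the two-jet of the uniformizing map there, and then to recognize $\calP_\gamma(\theta)$ as the Epstein--Poincaré surface of the \emph{osculating disk}. Since the osculating Möbius transformation of $f$ shares its two-jet with $f$, the two surfaces will then automatically have the same two-jet at $(\gamma(e^{\ii t}),0)$, giving both tangency and second-order agreement. For the regularity input, since $\gamma$ is $C^{4,\a}$ Kellogg's theorem gives that $f$ extends to a $C^{4,\a}$ map of $\overline{\m D}$, and then Lemma~\ref{lem:explicit_Poincare} shows $\Ep_\O\circ f$ extends to a $C^{2,\a}$ map of $\overline{\m D}$; in particular the parametrized surface $\zeta\mapsto\Ep_\O(f(\zeta))$ has a well-defined two-jet at each $\zeta_0=e^{\ii t}\in\Sph^1$.

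The technical heart is the claim that this two-jet depends only on $f(\zeta_0),f'(\zeta_0),f''(\zeta_0)$. I would prove this directly from Lemma~\ref{lem:explicit_Poincare}, writing $\Ep_\O\circ f=(Z,\xi)$ with $Z=f+\xi\psi$ and inspecting where $f'''$ (equivalently $\partial_\zeta(f''/f')$, or $\mc S(f)$) can enter the first and second $\partial_\zeta,\partial_{\bar\zeta}$ derivatives at $\zeta_0$. The mechanism is that every occurrence of $f'''$ or $\overline{f'''}$ is multiplied by one of the factors $(1-|\zeta|^2)$, $\xi$, or the numerator $|f'|(1-|\zeta|^2)$, each of which vanishes on $\Sph^1$. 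Consequently these terms drop out at $\zeta_0$, and the two-jet of $(Z,\xi)$ there is a universal expression in $f(\zeta_0),f'(\zeta_0),f''(\zeta_0)$ alone.

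It remains to identify $\calP_\gamma(\theta)$ geometrically. Let $M=M_{f,\zeta_0}$ be the osculating Möbius transformation of $f$ at $\zeta_0$ (Lemma~\ref{lemma:osc-mob}). Because $M$ and $f$ have the same value, first, and second derivative at $\zeta_0$, the image circle $M(\Sph^1)$ is tangent to $\gamma$ at $\gamma(e^{\ii t})$ with the same curvature, so $M(\Sph^1)=\calC_\gamma(\theta)$ and $M(\m D)$ is the osculating disk $D_0$ bounded by it. By the naturality of the Epstein map (Lemma~\ref{lem:naturality} and \eqref{eq:Eps_invariant}) together with Lemma~\ref{lem:example_disk}, one has $\Ep_{D_0}\circ M=M\circ\Ep_{\m D}$, whose image is $M$ applied to the geodesic plane over $\Sph^1$, namely the geodesic plane over $\calC_\gamma(\theta)$, which is exactly $\calP_\gamma(\theta)$. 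I would then apply the two-jet claim to both $F=f$ and $F=M$ using the same explicit formula: the parametrizations $\zeta\mapsto\Ep_\O(f(\zeta))$ and $\zeta\mapsto\Ep_{D_0}(M(\zeta))$ have identical two-jets at $\zeta_0$ since $f$ and $M$ do, and two maps with the same two-jet at $\zeta_0$ have images agreeing to second order at the common value $(\gamma(e^{\ii t}),0)$.

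I expect the main obstacle to be the bookkeeping in the two-jet claim, namely verifying that no $f'''$ survives. The subtle point is that individual second derivatives of the parametrization do contain $f'''$: for instance $\partial_\zeta^2\xi$ involves $\partial_\zeta^2$ of the denominator $1+|{-}\tfrac12(f''/f')(1-|\zeta|^2)+\bar\zeta|^2$, which carries $\partial_\zeta(f''/f')$ and hence $f'''$. However this contribution enters only through the term proportional to the numerator $|f'|(1-|\zeta|^2)$, which vanishes at $\zeta_0$, so it does not affect the two-jet. Tracking all such suppressions across the real and imaginary parts of $Z$ and across the $\partial_\zeta$ and $\partial_{\bar\zeta}$ derivatives — and confirming that the analogous anti-holomorphic terms in $\psi$ are likewise killed by $(1-|\zeta|^2)$ or by the vanishing of $\xi$ — is the part requiring genuine care.
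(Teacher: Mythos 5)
Your proposal is correct, but it follows a genuinely different route from the paper's proof, and the difference is instructive. The paper never works with boundary jets: it normalizes so that the marked boundary point is $0$ and the uniformizer $f$ (taken from the half-plane $\m H$) satisfies $f(0)=0$, $f'(0)=1$, $f''(0)=0$, which makes the osculating plane the coordinate plane $\{y=0\}$ in $\m C\times\m R_+$; it then evaluates $\Ep_\Omega(f(\z))$ at \emph{interior} points $\z$ via the interior osculating M\"obius transformation (Lemma~\ref{lemma:osc-mob}), whose formula involves only $f(\z),f'(\z),f''(\z)$ exactly --- no third derivative enters at all --- and Taylor-expands these quantities to conclude $\Ep_\Omega(f(\z))=u+\jj v+O(|\z|^3)$. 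All dependence on $f'''$ is thereby absorbed into harmless Taylor remainders, so no cancellation bookkeeping is required. You instead extend the parametrization $C^{2,\a}$ to $\Sph^1$ and prove a boundary jet-dependence lemma, which forces you to verify that every occurrence of $f'''$ (and, note, also $f^{(4)}$, since the formulas of Lemma~\ref{lem:explicit_Poincare} contain $f''$ and you differentiate twice) in the second derivatives of $(Z,\xi)$ is suppressed by a factor vanishing on $\Sph^1$; this claim is true, and by exactly the mechanism you describe --- the dangerous terms coming from two derivatives of the denominator enter the quotient rule only multiplied by the numerator, which vanishes on $\Sph^1$, while all remaining third- and fourth-derivative terms carry an explicit factor of $(1-|\z|^2)$. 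Your route buys a more structural, reusable statement (the boundary two-jet of the Epstein--Poincar\'e parametrization is a universal function of the boundary two-jet of the uniformizer) and a transparent reduction to the disk via naturality; the paper's normalization buys a much shorter computation. Two points you should make explicit in a write-up: (i) the osculating M\"obius transformation at the boundary point is well defined because Kellogg's theorem gives $f'\neq 0$ on $\overline{\m D}$; and (ii) passing from ``the two parametrizations have the same two-jet at $\z_0$'' to ``the surfaces agree to second order'' uses that both parametrizations are immersions at $\z_0$, which holds since $M\circ\Ep_{\m D}$ is an immersion up to the boundary (see Example~\ref{ex:epstein_maps}) and your one-jet agreement transfers this to $\Ep_\O\circ f$.
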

\begin{proof}

We pick a point $z_0 = x_0 + \ii y_0 \in \g = \partial \O$. By post-composing by an M\"obius transformation, we can assume that $z_0 = 0$.  Let $f: \m H \rightarrow \Omega$ be a conformal map, where $\m H = \{ \z = u + \ii v \colon v > 0\}$ denotes the half-plane. Without loss of generality, we assume that  $f(0) = 0$, $f'(0) = 1$  and $f''(0) = 0$. Thus the plane bounded by the osculating circle (which is the line $\{y = 0\} \subset \m C$) at $z_0$ is the plane $\{y = 0\} \subset \m C\times \m \R_+ = \{x +\ii y + \jj \xi \colon \xi > 0\}$. We now show that  
$\Ep_\Omega(f(\z)) = u+\jj v+ O(|\z|^3)$ for $|\z|$ small, which would imply
$\Ep_\Omega(x + \ii y) = x+\jj y+ O(|x + \ii y|^3)$ and thus
the statement would follow.

We take $\z = u+\ii v \in \m H$. We let $g_\z:\m D \rightarrow \Omega$ uniformize $\Omega$ with $g_\z(0) = f(\z)$.
The osculating M\"obius transformation $M_\z$ for $g_\z$ at $0$ is  
$$M_\z(w) = f(\z)+\frac{\a w}{\beta w + 1/\a}$$
with $\a^2 = g_\z'(0)$ and $2\a\b = -g_\z''(0)/g_\z'(0)$. Then by Lemma \ref{lemma:osc-mob}  
$$\Ep_\Omega(f(\z))  = M_\z(\jj) = f(\z) + \frac{\alpha\overline{\beta} + \jj}{|\beta|^2+1/|\alpha|^2} = f(\z) + \frac{\alpha^2(\overline{\alpha\beta}) + |\alpha|^2\jj}{|\alpha\beta|^2+1}.$$
We  choose $g_\z = f \circ \varphi_\z$ where $\varphi_\z : \m D \to \m H$ is given by 
$$\varphi_\z(w) =u+ \ii v\left(\frac{1-w}{1+w}\right).$$
Thus as $\varphi_\z (0) = \z$, $\varphi_\z'(0) = -2\ii v, \varphi_\z''(0)= 4\ii v$, 
\begin{eqnarray*}
\a^2 &=& g_\z'(0) = f'(\z)\varphi_\z'(0) = -2\ii v f'(\z) = -2\ii v+ O(|\z|^3)\\
 g_\z''(0) &=&  f'(\z)\varphi_\z''(0) + f''(\z)\varphi'_\z(0)^2= 4\ii v(f'(\z)+\ii vf''(\z)) = 4\ii v +O(|\z|^3)\\
 \a\b &=& -\frac{g_\z''(0)}{2g_\z'(0)} = 1+\ii v\frac{f''(\z)}{f'(\z)}= 1+O(|\z|^2).
 \end{eqnarray*}
As $f(\z) = \z + O(|\z|^3)$ then a straightforward computation shows 
$$\Ep_\Omega(f(\z)) = \z+\frac{1}{2}(-2\ii v+2\jj v) + O(|\z|^3) = u+\jj v+ O(|\z|^3)$$
as claimed.
\end{proof}

 Next, we show that for sufficiently regular curves $\gamma$, this volume is, in fact, finite.

\begin{prop}\label{prop:finitevolume}
     Let $\gamma$ be a $C^{5,\alpha}$ Jordan curve in $\mathbb{C}$ for some $0 <\a <1$. Then $V(\gamma)$ is finite.
\end{prop}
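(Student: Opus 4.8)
The plan is to reduce the statement to a finite-volume estimate for the region trapped between the two Epstein--Poincar\'e surfaces in a neighborhood of $\g$, and then to exploit the second-order osculation from Proposition~\ref{prop:secondorderplane} to show that this region pinches fast enough to cancel the blow-up of the hyperbolic volume form near the ideal boundary.

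First I would unwind the definition in \eqref{eq:limit_V}. Since $V(\g)\in(-\infty,\infty]$ and $V_2(\g)(\vare)$ splits as $\int_{U_\g}\varphi_\g^*\vol_\vare + \int_{\m H^3\smallsetminus U_\g}\varphi_\g^*\vol_\vare$, where the second term is constant for small $\vare$, it suffices to bound the first term uniformly in $\vare$. Because $\g$ is $C^{5,\a}$ it is asymptotically conformal, so by Corollary~\ref{cor:embedding_wp} both $\Ep_\O$ and $\Ep_{\O^*}$ are embeddings near $\g$, and by Proposition~\ref{prop:disjoint} the two surfaces are disjoint. Shrinking $U_\g$ if necessary and choosing $\varphi_\g$ to be a diffeomorphism of $U_\g$ onto the region between the surfaces with positive Jacobian, the quantity $\int_{U_\g}\varphi_\g^*\vol_\vare$ equals the hyperbolic volume of the honest region $R_\vare$ lying between $\S_\O$ and $\S_{\O^*}$ and above height $\xi=\vare$. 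Thus the whole proposition reduces to showing that the region between the two surfaces has finite hyperbolic volume near $\g$.

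Next I would run the local estimate. Fix $p\in\g$ and normalize by a M\"obius transformation as in the proof of Proposition~\ref{prop:secondorderplane} so that $p=0$, the osculating circle is the real axis, and the osculating geodesic plane is $\mathcal P_\g=\{Y=0\}$. Applying Proposition~\ref{prop:secondorderplane} to both $\O$ and $\O^*$ --- which share this osculating plane --- both surfaces are graphs $Y=G_\O(X,\xi)$ and $Y=G_{\O^*}(X,\xi)$ over $\mathcal P_\g$ with $G=O(r^3)$, $r=\sqrt{X^2+\xi^2}$; in particular the transverse gap between the two surfaces above $p$ at height $\xi$ is $O(\xi^3)$. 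By Kellogg's theorem the $C^{5,\a}$ regularity of $\g$ gives $f\in C^{5,\a}(\overline{\m D})$, hence via Lemma~\ref{lem:explicit_Poincare} the Epstein maps extend to $C^{3,\a}$ maps, which yields control of the third-order remainder \emph{uniformly} in $p\in\g$ along the compact curve. Parametrizing a neighborhood of $\g$ by arc length $s$, height $\xi$, and the transverse coordinate $Y$ (a change of variables with bounded Jacobian since $\g$ is $C^1$ and the surfaces are embedded graphs), the hyperbolic volume of the trapped region is bounded by
$$\int_{\g}\int_0^\delta \frac{w(s,\xi)}{\xi^3}\,\dd\xi\,\dd s \le C\int_\g\int_0^\delta \frac{\xi^3}{\xi^3}\,\dd\xi\,\dd s = C\,\delta\,\length(\g) < \infty,$$
where $w(s,\xi)=O(\xi^3)$ denotes the transverse gap at position $s$ and height $\xi$. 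Combined with the first step this proves $V(\g)<\infty$.

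The main obstacle is the third step, specifically making the gap estimate $w(s,\xi)=O(\xi^3)$ \emph{uniform} in $s$: the osculating plane $\mathcal P_\g(s)$ varies with the foot point, so one must either localize with a partition of unity and re-center at each foot point, or set up genuine Fermi-type coordinates adapted to the moving family of osculating planes, and in either case control the Jacobian of the resulting coordinate map all the way down to $\xi=0$. This is precisely where the regularity beyond what Proposition~\ref{prop:secondorderplane} requires enters: one needs uniform bounds not only on the second-order agreement but on the third-order term and enough of its derivatives, so that the coordinate system degenerates no worse than $\xi^3$ and the integrand stays bounded as $\xi\to 0^\splus$.
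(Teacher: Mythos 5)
Your proposal is correct and follows essentially the same route as the paper: reduce to the region trapped between the two Epstein--Poincar\'e surfaces near $\g$, invoke Proposition~\ref{prop:secondorderplane} to get a transverse gap of order (height)$^3$, and integrate this against the $\xi^{-3}$ blow-up of the hyperbolic volume form over a compact neighborhood of the curve. The only real difference is how your flagged ``main obstacle'' is handled: instead of graphing the surfaces over the moving family of osculating planes (with partitions of unity or Fermi-type coordinates), the paper works in the fixed tube coordinates $G(s,a,b)=\gamma(s)+\ii a\gamma'(s)+\jj b$, where the volume form is explicitly $\frac{(1-ak(s))}{b^3}\,\dd s\,\dd a\,\dd b$; the surfaces are then graphs $a=a_\Omega(s,b)$, $a=a_{\Omega^*}(s,b)$ with $a_\Omega,a_{\Omega^*}\in C^{3,\alpha}(\m S^1\times[0,\vare_0])$, and since Proposition~\ref{prop:secondorderplane} forces their $2$-jets in $b$ to agree at $b=0$, Taylor's theorem on this compact set gives $|a_\Omega(s,b)-a_{\Omega^*}(s,b)|\le Cb^3$ uniformly in $s$, with no re-centering at each foot point and no Jacobian degeneration to control.
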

\begin{proof}
Without loss of generality, we assume that $\gamma$ is parametrized by arc-length 
 as a function $\m S^1\rightarrow\mathbb{C}$. Take $\varphi_\g$ some continuous map $\overline{\m H^3} \to \overline {\m H^3}$ as before, \ie $\varphi_\g|_{\O} = \Ep_\O$, $\varphi_\g|_{\O^*} = \Ep_{\O^*}$, and $\varphi_\g|_{\m H^3}$ is differentiable. We take the following $C^{4,\alpha}$ parametrization of a neighborhood $U$ of $\gamma$ in $\overline{\mathbb{H}^3}$, denoted $G:\m S^1_s\times \overline{\mathbb{H}^2}_{(a,b)} \rightarrow \overline{\mathbb{H}^3}$, 
by
\begin{equation}\label{eq:G_coordinate}
G(s,a,b) = \gamma(s) + \ii a\gamma'(s) + \jj b.
\end{equation}
It is a straightforward calculation to see that the hyperbolic metric in $G$-coordinates is given by 
$$\frac{(1-ak(s))^2}{b^2}\dd s^2 + \frac{1}{b^2}\dd a^2 + \frac{1}{b^2}\dd b^2,$$
where $k(s)$ is the signed curvature of $\gamma$ given by $\gamma''(s)=\ii k(s)\gamma'(s)$. Hence the volume form is given by 
$$\frac{(1-ak(s))}{b^3}\, \dd s \,\dd a\, \dd b.$$

If we assume that $\gamma$ is $C^{5,\alpha}$, then the Epstein--Poincar\'e surfaces are $C^{3,\a}$ up to the boundary. This means that there are $C^{3,\alpha}$ functions $a_\Omega,a_{\Omega^*}:\m S^1_s\times [0,\vare_0]_b\rightarrow\mathbb{R}$ so that the Epstein--Poincar\'e surfaces in the neighborhood $U$ of $\gamma$ are parametrized by $G(s,a_\Omega(s,b),b)$, $G(s,a_{\Omega^*}(s,b),b)$. And since by Proposition \ref{prop:secondorderplane} the Epstein--Poincar\'e surfaces agree up to second order at $\gamma$, then there exists a constant $C>0$ so that $|a_\Omega(s,b)-a_{\Omega^*}(s,b)|\leq Cb^3$.

Hence for small enough neighborhood $U$ of $\g$, the integral $\int_{U}\varphi_\g^* \vol$ will be bounded by

\[V_1(\g)(\vare_0) := \int_{\m S^1}\int_0^{\vare_0} \int_{a_{\Omega}(s,z)}^{a_{\Omega^*}(s,z)}\frac{(1-ak(s))}{b^3} \, \dd a \, \dd b\, \dd s.
\]

This integral is well-defined and convergent since 
\begin{align*}
  &\bigg\vert\int_{a_{\Omega}(s,b)}^{a_{\Omega^*}(s,b)}\frac{(1-ak(s))}{b^3}\dd a\bigg\vert \\
  & = \frac{1}{b^3}\bigg\vert a_{\Omega^*}(s,b)-a_\Omega(s,b)\bigg\vert.\bigg\vert\bigg(\frac{a_{\Omega}(s,b)+a_{\Omega^*}(s,b)}{2}\bigg)k(s)-1\bigg\vert  
\end{align*}
is bounded by a constant independent of $(s,b)$. Hence 
$$V(\g)= \lim_{\vare \to 0\splus} V_2(\g)(\vare)$$ is finite. 
\end{proof}

\begin{df}\label{df:RenormalizedVolume}
Let $\gamma$ be a Weil--Petersson quasicircle in $\mathbb{C}$. Then we define $V_R(\gamma)$, the renormalized volume of $\gamma$, as
\begin{align*}
\begin{split}
    & V_R(\gamma) :=   V(\gamma) - \frac{1}{2} \int_{\S_{\O} \cup \S_{\O^*}} H \dd a\\
    & = V(\gamma) - \frac12 \left(\int_{\O} \norm{\mc S(f^{-1})}^2 (z) \rho_{\O} (z) \dd^2 z   + \int_{\O^*} \norm{\mc S(g^{-1})}^2 (z) \rho_{\O^*} (z) \dd^2 z\right).
\end{split}
\end{align*}
\end{df}

\begin{remark}\label{remark:VRdefinition}
The second identity follows from Theorem~\ref{thm:PE}. A priori, $V_R(\g) \in (-\infty, \infty]$ as $V(\g) \in (-\infty, \infty]$ and the integrals of mean curvature are finite by Corollary~\ref{cor:WP_HdA}.
Proposition~\ref{prop:finitevolume} shows that if $\g$ is $C^{5,\a}$, then $V_R(\g) < \infty$.
From the $\pslt$-invariance of each summand in Definition \ref{df:RenormalizedVolume}, we can easily see that $V_R$ is $\pslt$-invariant.
\end{remark}

\section{Universal Liouville action as renormalized volume}\label{section:LoewnerVR}

Our objective in this section is to prove that the renormalized volume in Definition \ref{df:RenormalizedVolume} agrees up to a constant with the Loewner energy for $C^{5,\alpha}$ curves. 

\subsection{Schl\"afli formula for the variation of the volume}

We say that $(\gamma_t)_{t \in [0,1]}$ is a $C^{k,\a}$ family of Jordan curves in $\m C$ for some $k \ge 1$, $0 < \a <1$, if $\g_t$ is $C^{k,\a}$ for all $t \in [0,1]$, and we can choose the conformal maps $f_t : \m D \to \O_t$ to be jointly $C^{k,\a}$ for $(t, z) \in [0, 1] \times \overline{\m D}$ and $g_t : \m D^* \to \O_t^*$ to be $C^{k,\a}$ for $(t, z) \in [0, 1] \times \overline{\m D^*}$.
\begin{ex} \label{ex:deformation}
      Let $\g = \g_1$ be a  $C^{k,\a}$ Jordan curve in $\m C$ for some $k \ge 1$, $0 < \a <1$, and $D$ be a small round disk in $\O$ and $A = \O \smallsetminus D$. Then there is $0< r < 1$ such that the round annulus $A_r = \{z \in \m C \,|\, r< |z|< 1\}$ is conformally equivalent to $A$ (sending $\m S^1$ onto $\g$ and $r \m S^1$ onto $\partial D$). It is easy to see using Kellogg's theorem that any conformal map $F : A_r \to A$ is $C^{k,\a}$ on $\overline {A_r}$. The family of Jordan curves $(\g_t = F ((r + t(1-r)) \m S^1))_{t \in [0,1]}$ is a $C^{k,\a}$ family of Jordan curves. This can be seen using \cite{Roth_Schippers}.
\end{ex}

For simplicity, for a quantity $Q$ which is a differentiable function of a parameter $t$ on an open interval about $0$, we let $\delta Q(t)$ be the derivative at $t$ and $\delta Q$ be its derivative at $0$, i.e.
 $$\delta Q = \frac{\partial}{\partial t}\bigg|_{t=0}Q(t).$$

\begin{thm}\label{thm:Schl\"afliformula}
Let $(\gamma_t)_{t \in [0,1]}$ be a $C^{5,\alpha}$ family of Jordan curves for some $0 < \alpha <1$. Then the first derivative of the volume $\delta V(\gamma) = (\partial/\partial_t)|_{t = 0} V(\g_t)$ is computed by
\[
\delta V(\gamma) = \int_{\Omega} \Ep_{\Omega}^*\left(\delta H + \frac14\brac{ \delta\I, \II } \,\dd a\right) + \int_{\Omega^*}  \Ep_{\Omega^*}^*\left(\delta H + \frac14\brac{ \delta\I, \II } \,\dd a\right)
\]
where $\Ep_{\Omega}, \Ep_{\Omega^*}$ are the Epstein--Poincar\'e maps of $\Omega, \Omega^*$ (respectively); $\I, \II, H, \dd a$ are the metric, second fundamental form, mean curvature and area form on $\S_\O$ and $\S_{\O^*}$ associated with $\g_0$.
\end{thm}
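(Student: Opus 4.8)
The plan is to write $V(\gamma_t)$ as a limit of volumes of compact \emph{truncated} regions and to apply a smooth Schläfli formula to each truncation, then pass to the limit. Recall from Section~\ref{sec:vol_between} that $V(\gamma_t) = \lim_{\vare \to 0\splus} V_2(\gamma_t)(\vare)$, where $V_2(\gamma_t)(\vare)$ is the signed hyperbolic volume of the region lying between $\S_{\O_t}$ and $\S_{\O^*_t}$ above Euclidean height $\vare$. For $\vare$ small and $t$ near $0$, Proposition~\ref{prop:disjoint} (disjointness) and Corollary~\ref{cor:embedding_wp} (embeddedness near $\gamma$) ensure that the relevant region, cut off at the horosphere $\{\xi = \vare\}$, is a genuine compact region $M_t(\vare)$ whose boundary has three faces: the truncated Epstein surfaces $\S_{\O_t}\cap\{\xi \ge \vare\}$ and $\S_{\O^*_t}\cap\{\xi \ge \vare\}$, together with the piece of the horosphere $\{\xi = \vare\}$ caught between the two curves where the Epstein surfaces meet that horosphere.

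First I would establish a differential Schläfli formula for the compact region $M_t(\vare)$. Choosing a $C^{5,\a}$ family of diffeomorphisms of a fixed model that restricts to $\Ep_{\O_t}$ and $\Ep_{\O^*_t}$ on the two Epstein faces and to the identity on the horosphere face, and pulling back the (fixed) hyperbolic metric, puts us in the standard setting of the classical Schläfli formula for a hyperbolic region with ridges. This expresses $\partial_t \vol(M_t(\vare))$ as a sum of three types of terms: integrals of $\Ep^*\!\left(\delta H + \tfrac14 \langle \delta\I, \II\rangle \,\dd a\right)$ over the two truncated Epstein faces, an integral over the horosphere face $\{\xi = \vare\}$, and dihedral (edge) terms along the two truncation curves. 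Since the horosphere face is stationary in $t$, only its behaviour near the moving boundary curves survives.

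The heart of the argument is the passage $\vare \to 0\splus$. The two Epstein-face integrals converge to $\int_\O \Ep_\O^*(\cdots) + \int_{\O^*}\Ep_{\O^*}^*(\cdots)$ once one knows $\delta H + \tfrac14\langle\delta\I,\II\rangle$ is integrable over $\O, \O^*$; this follows from the joint $C^{5,\a}$ regularity together with the explicit formulas of Lemma~\ref{lem:explicit_Poincare} and the asymptotics of Theorem~\ref{thm:PE}. The horosphere-face integral and the edge terms vanish because the truncation region near $\gamma$ is a \emph{thin sliver}: by Proposition~\ref{prop:secondorderplane} (the two surfaces osculate the same totally geodesic plane to second order along $\gamma$) and the $G$-coordinate computation in the proof of Proposition~\ref{prop:finitevolume}, one has $|a_\O(s,b) - a_{\O^*}(s,b)| \le C b^3$, so at height $\xi = \vare$ the two surfaces meet $\{\xi = \vare\}$ along curves of Euclidean separation $O(\vare^3)$. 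Since the hyperbolic metric carries a factor $1/\xi^2$, the horosphere face then has hyperbolic area $O(\vare)$, and with a bounded integrand its contribution, along with the dihedral edge terms supported there, is $o(1)$. Interchanging $\partial_t$ with $\lim_{\vare\to 0}$ requires these estimates to be uniform in $t$ near $0$, which again rests on the joint $C^{5,\a}$ regularity of the family.

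I expect the main obstacle to be the non-immersion locus. The Epstein--Poincaré maps are immersions only away from $\{\norm{\mc S(f^{-1})}_\O = 1\}$, so $M_t(\vare)$ is bounded by surfaces that are merely immersed almost everywhere, and the smooth-with-corners Schläfli formula does not apply verbatim; reconciling the signed-volume bookkeeping with the Schläfli identity across this locus is the genuinely delicate point, whose systematic treatment is precisely the content of Theorem~\ref{thm:schlafli_imm} and Section~\ref{gen-Schl\"afli}. The secondary difficulty is that noncompactness forces every estimate to be accompanied by uniform decay near $\gamma$, where the second-order osculation of Proposition~\ref{prop:secondorderplane} is exactly what renders all boundary-at-infinity contributions integrable and the truncation terms negligible.
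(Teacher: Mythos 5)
Your overall strategy is the same as the paper's: replace $V(\gamma_t)$ by the volume of a compact truncated region, apply Souam's Schl\"afli formula for piecewise-smooth boundaries (Theorem~\ref{theorem:Schl\"afli_pw}), and show that all truncation contributions die in the limit, with the second-order osculation of Proposition~\ref{prop:secondorderplane} as the engine. The one structural difference is the cap: you cut along the fixed horosphere $\{\xi=\vare\}$, while the paper caps with the annulus $A(r)$ ruled by horizontal segments joining matched points of the two surfaces via the map $h_t$. The paper's choice makes the decomposition $V(\gamma_t)=V_1(r,t)+V_2(r,t)$ exact for every fixed $r$, so it never interchanges $\partial_t$ with a limit; it only has to show $\partial_t V_1(r,0)\to 0$. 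Your version needs the uniform-in-$t$ convergence you allude to; that is repairable but is extra work the proposal does not carry out.

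The genuine gap is your treatment of the dihedral edge terms. These are line integrals $\int \partial_t\theta\, E^*\dd\ell$ along the two curves where $\S_{\O_t}$ and $\S_{\O^*_t}$ cross $\{\xi=\vare\}$; they live on curves of hyperbolic length $\asymp 1/\vare$, not on the thin horosphere face, so ``area $O(\vare)$ times bounded integrand'' says nothing about them, and in fact they do \emph{not} tend to zero individually. Already for geodesic planes this fails: if $\O_t$ is the round disk of radius $R_t=1+t$, the plane meets $\{\xi=\vare\}$ at angle $\pi/2-\arcsin(\vare/R_t)$, so $\partial_t\theta\approx \vare\,\dot R_t/R_t^2$, while the edge has hyperbolic length $2\pi\sqrt{R_t^2-\vare^2}/\vare$; the edge integral equals $2\pi \dot R_t/R_t$ \emph{independently of} $\vare$. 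What makes the edge contribution vanish is cancellation between the \emph{pair} of edges: second-order osculation gives $|\partial_t\theta+\partial_t\theta^*|=O(\vare^2)$ together with $O(\vare^2)$-matching of the two edge parametrizations, so only the \emph{sum} of the two edge integrals is $O(\vare)$. This pairing estimate is exactly the displayed computation in the paper's proof of Theorem~\ref{thm:schlafli_imm}; citing thinness and boundedness in its place leaves the limit unable to close.

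Separately, deferring the non-immersed case to Theorem~\ref{thm:schlafli_imm} and Section~\ref{gen-Schl\"afli} is circular here, since those constitute the paper's own proof of the statement being proved: as a standalone argument your proposal covers only the locus where both Epstein--Poincar\'e maps are immersions. Where $\norm{\mc S(f^{-1})}_\O=1$ the pulled-back metric degenerates, so $H$, $\langle\delta\I,\II\rangle$ and Souam's formula are literally undefined; one must define the shape operator through the covariant derivative of the normal field as in \eqref{eq:Bdefinition}, show that the relevant $2$-forms (including $\dd(i_{\nabla_\xi \vec n})$ and $\tr(\langle R(\xi,DE\,\cdot)\vec n,DE\,\cdot\rangle)E^*\dd a$) extend continuously across this locus using density of the immersion points, and rerun the Stokes argument. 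That is genuinely additional content, not something the smooth-with-corners Schl\"afli formula supplies.
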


The proof of this theorem when the Epstein--Poincar\'e surfaces are immersions everywhere is completed in Theorem~\ref{thm:schlafli_imm}. We postpone the proof of the non-immersed case to Section~\ref{gen-Schl\"afli}. 

To prove this, we will decompose the region between the  Epstein surfaces into two subregions and analyze the variation on both. This will require us to use a Schl\"afli formula --- the key ingredient --- for the variation of regions with piecewise smooth boundaries.

We consider $F: \overline{\m B} \rightarrow \m H^3$ a parametrization of a region $R$ by the closed unit $3$-ball $\overline{\m B}$ such that $F$ is an immersion in $\m B$ and the boundary map  $E: \hat{\m C}\rightarrow \m H^3$ is piecewise smooth. We let $V$ be the hyperbolic volume of $R$ defined as $\int_{\m B} F^* \vol_{hyp}$. For our purposes, we can assume that $E$ is piecewise smooth on two disjoint smooth simply connected domains $\Omega_1,\Omega_2$ and the annulus $A$ between them. Further, wherever $E$ is an immersion, we let $B$ be the pullback of the shape operator by $E$. Now we consider a smooth variation of $E$ by maps $F_t, E_t$ with the same decomposition of $\hat{\m C}$ and let $\xi = \partial_t  E_t|_{t = 0}$ be the vector field on $\partial R$ describing this variation.

We consider the following version of Schl\"afli.
\begin{thm}[See {\cite[Thm\,4]{Souam04}}]{\label{theorem:Schl\"afli_pw}}
Let $F_t, E_t$ be variations of $F,E$ as above such that $E$ is an immersion on $\Omega_1,\Omega_2$, and $A$. Then the variation of volume satisfies
\begin{align*}
2 \delta V = \int_{\Omega_1\cup\Omega_2\cup A} \hspace{-10pt}\tr\brac{ \nabla_\xi (B\cdot),D E\cdot} E^*\dd a 
 +
\left(\int_{\partial\Omega_1}  
\delta \theta_1 
E^*\dd\ell_1 +\int_{\partial\Omega_2} 
\delta \theta_2 E^*\dd\ell_2\right)
\end{align*}
where $\tr$ denotes the 
trace of a $2$-tensor with respect to the induced metric, $\dd a$ denotes the induced area form, $\theta_i$ is the exterior dihedral angle between the regions $\Omega_i, A$, and $\dd\ell_i$ is the length measure of $\partial\Omega_i$. 
\end{thm}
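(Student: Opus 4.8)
The plan is to deduce the formula from the first variation of volume, reorganised face by face so that the smooth interiors produce the curvature term and the creases produce the dihedral angle terms. First I would record the flux form of the first variation: since the hyperbolic volume form $\vol$ is closed, Cartan's formula applied to the bulk variation field $\hat\xi=\partial_t F_t|_{t=0}$ (which restricts to $\xi$ on $\partial R$) gives $\partial_t\int_{\m B}F_t^*\vol\big|_{t=0} = \int_{\m B}F^*\dd(\iota_{\hat\xi}\vol) = \int_{\partial\m B}E^*(\iota_\xi\vol)$, and along each smooth boundary face the pullback $E^*(\iota_\xi\vol)$ equals $\langle\xi,\nu\rangle\,E^*\dd a$ with $\nu$ the outward unit normal. (This uses only that $F$ is an immersion on $\m B$, so it is insensitive to the possible non-embeddedness of $R$.) Hence $2\partial_t V = 2\sum_{S}\int_{S}\langle\xi,\nu\rangle\,E^*\dd a$, the sum running over the three smooth faces $S\in\{\Omega,\Omega^*,A\}$, and the entire content of the theorem is to match this against the right-hand side one face at a time.

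On a single face I would expand the integrand $\tr\langle\nabla_\xi(B\,\cdot),DE\,\cdot\rangle$ in an orthonormal frame $\{e_i\}$ for $\I$, rewriting $DE(Be_i) = -\nabla_{DE(e_i)}\nu$ by the Weingarten relation so that each summand becomes an ambient covariant derivative of the Gauss map. Splitting $\xi=\xi^{\top}+\langle\xi,\nu\rangle\,\nu$ into its tangential and normal parts and commuting derivatives by means of the Codazzi equation (valid because $\Hs$ has constant curvature) and the Gauss equation, the terms organise into two groups: the normal part reproduces the flux integrand $2\langle\xi,\nu\rangle$, the ambient curvature $-1$ entering exactly so that its contribution cancels against the variation of $\I$; and the tangential part assembles into a tangential divergence $\div_S W$ of an explicit vector field $W=W(\xi,B)$ built from $\xi^{\top}$ and the shape operator. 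The identity I aim to prove on each face is thus $\tr\langle\nabla_\xi(B\,\cdot),DE\,\cdot\rangle\,\dd a = 2\langle\xi,\nu\rangle\,\dd a - \div_S W\,\dd a$, up to the sign conventions fixed in the computation.

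Integrating this identity over each closed face and applying the divergence theorem converts $\div_S W$ into a conormal flux $\int_{\partial S}\langle W,n\rangle\,\dd\ell$, where $n$ is the conormal of $S$ along its edge. Summing over the three faces, the $2\langle\xi,\nu\rangle$ terms reassemble into $2\partial_t V$ and what remains are the edge integrals. Along $\partial\Omega$ the faces $\Omega$ and $A$ abut, and I would show that the two conormal fluxes contributed there combine, using the continuity of $\xi$ across the crease and the fact that the outward normals $\nu_\Omega,\nu_A$ differ by a rotation through the dihedral angle, into precisely $\partial_t\theta$; the same computation along $\partial\Omega^*$ yields $\partial_t\theta^*$. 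Alternatively one may run the argument intrinsically in the orthonormal frame bundle, where $\vol=\omega^1\wedge\omega^2\wedge\omega^3$ and the Maurer--Cartan structure equations exhibit $\partial_t\vol$ as an exact form whose integral automatically localises on the codimension-two strata as $\partial_t\theta\,\dd\ell$; this is the classical route to Schl\"afli for polyhedra and makes the appearance of the dihedral terms transparent.

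The main obstacle is precisely this edge bookkeeping: showing that the combined conormal flux across each crease equals $\partial_t\theta\,\dd\ell$ with the correct sign and normalisation. This requires relating the variation of the angle between the two face-normals to the tangential components of $\xi$ and to the second fundamental forms of the adjacent faces, and verifying that every term not of the form $\partial_t\theta\,\dd\ell$ cancels between the two faces meeting along the crease. A secondary technical point is regularity: because $E$ is only piecewise smooth and $\xi$ need only be continuous along the creases, I would justify the divergence theorem on each closed face and the finiteness of the edge integrals, using the immersion hypothesis on $\Omega,\Omega^*,A$ to guarantee that $B$, and hence $W$, extends continuously up to the edges. Once the edge term is identified the remaining steps are routine applications of the Codazzi and Gauss equations together with Stokes' theorem.
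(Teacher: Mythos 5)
Your proposal is correct and takes essentially the same approach as the paper's: the paper itself imports this statement from Souam (Thm.\,4 together with Eq.\,(3.1) and Prop.\,5) rather than proving it, but when it re-derives the formula for the non-immersed case in Section~\ref{gen-Schl\"afli} it follows exactly your strategy --- the flux form of the first variation of volume, a pointwise identity obtained by differentiating the Weingarten relation $Bu=-\nabla_u\vec n$ along $\xi$ and commuting derivatives via the ambient curvature tensor (your Codazzi/Gauss step, traced to produce $2\langle\xi,\nu\rangle$ plus an exact form $\dd(i_{\nabla_\xi\vec n})$), then Stokes on each face and identification of the crease contributions with $\partial_t\theta\,\dd\ell$. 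The edge bookkeeping you single out as the main remaining obligation is precisely the step the paper also does not reprove but quotes from Souam, so your outline matches the paper's treatment both in structure and in where the cited input enters.
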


The referenced variational formula in \cite{Souam04} differs from the above in the first term on the right-hand side. We obtain our formula above using the following lemma.  

\begin{lemma}[{See \cite[Eq.\,(3.1) and Prop.\,5]{Souam04}}] \label{lem:Tr_variation_V}
    At points where $E$ is an immersion, the form $\frac12\tr\brac{ \nabla_\xi (B\cdot),DE \cdot} E^*\dd a$ agrees with the pullback by $E$ of the form $(\d H + \frac14\brac{ \d \I, \II })\,\dd a$.
\end{lemma}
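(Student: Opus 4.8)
The plan is to verify the asserted identity pointwise, at a point where $E$ is an immersion, as an equality of the scalar coefficients of the induced area form $\dd a$; since $\delta H$, $\langle \delta \I, \II\rangle$ and $\dd a$ are all intrinsic to the abstract surface $\Sigma$, this immediately upgrades to the claimed equality of pulled-back $2$-forms. The first step is to unpack the notation: the tensor $\langle \nabla_\xi (B\cdot), DE\cdot\rangle$ is to be read as the bilinear form $(X,Y)\mapsto \langle \nabla_{\partial_t}(DE_t(B_t X)), DE(Y)\rangle|_{t=0}$, where $\xi = \partial_t E_t|_{t=0}$ is the variation field, $B_t$ is the shape operator of the immersion $E_t$, and $\nabla$ is the Levi-Civita connection of $\m H^3$; the trace $\tr$ is taken with respect to the induced metric $\I$ at $t=0$. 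Thus the whole lemma reduces to the scalar identity $\tfrac12\tr\,T = \delta H + \tfrac14\langle \delta \I, \II\rangle$.

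First I would fix $t$-independent coordinate fields $\partial_a$ on $\Sigma$, write $E_a := DE_t(\partial_a)$, $\I_{ab} = \langle E_a, E_b\rangle$, and $B_t \partial_b = B^a_b\,\partial_a$, so that $DE_t(B_t \partial_b) = B^a_b E_a$ (all $t$-dependent). Expanding by the Leibniz rule,
\[
\langle \nabla_{\partial_t}(B^a_b E_a), E_c\rangle = (\partial_t B^a_b)\,\I_{ac} + B^a_b\,\langle \nabla_{\partial_t} E_a, E_c\rangle .
\]
The crucial simplification is the standard symmetry lemma: since $\nabla$ is torsion-free and $[\partial_t,\partial_a]=0$, the mixed covariant derivatives commute, $\nabla_{\partial_t} E_a = \nabla_{\partial_t}\partial_a E_t = \nabla_{\partial_a}\partial_t E_t = \nabla_{E_a}\xi$. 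Hence $\langle \nabla_{\partial_t} E_a, E_c\rangle = \langle \nabla_{E_a}\xi, E_c\rangle$, and its symmetrization in $(a,c)$ equals $\tfrac12\,\partial_t \I_{ac} = \tfrac12\,\delta\I_{ac}$, because $\partial_t \I_{ac} = \langle \nabla_{E_a}\xi, E_c\rangle + \langle E_a, \nabla_{E_c}\xi\rangle$. Note only the tangential part of $\nabla_{E_a}\xi$ enters, automatically, since it is paired against the tangent vector $E_c$.

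Next I would take the trace with $\I^{bc}$. The first term contracts to $\I^{bc}\I_{ac}\,\partial_t B^a_b = \partial_t(B^a_a) = \partial_t(\tr B) = 2\,\delta H$, using $\tr B = 2H$ (the trace of the $(1,1)$-tensor $B$ being metric-independent, there is no ambiguity here). In the second term the coefficient $\I^{bc}B^a_b =: B^{ac}$ is symmetric in $(a,c)$ because $\II$ is symmetric, so it annihilates the antisymmetric part of $\langle \nabla_{E_a}\xi, E_c\rangle$ and sees only $\tfrac12\,\delta\I_{ac}$; a one-line index manipulation gives $B^{ac}\delta\I_{ac} = \langle \delta\I, \II\rangle$, so the second term equals $\tfrac12\langle \delta\I, \II\rangle$. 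Collecting, $\tr\,T = 2\,\delta H + \tfrac12\langle \delta\I, \II\rangle$, and multiplying by $\tfrac12$ yields exactly $\delta H + \tfrac14\langle \delta\I, \II\rangle$, as claimed.

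The computation itself is short, so the main obstacle is conceptual rather than computational: pinning down the precise meaning of each $t$-dependent object inside $\nabla_\xi(B\cdot)$ and keeping straight which quantities are differentiated in $t$ and which are contracted with the fixed-time metric. I expect the two delicate points to be (i) the commutation $\nabla_{\partial_t}E_a = \nabla_{E_a}\xi$, which is what converts the $t$-derivative of the frame into $\tfrac12\,\delta\I$, and (ii) the symmetry argument discarding the antisymmetric part of $\langle \nabla_{E_a}\xi, E_c\rangle$, which is precisely what fixes the coefficient $\tfrac14$. It is worth emphasizing that this translation step is purely kinematic and does \emph{not} use the constant curvature of $\m H^3$; the ambient curvature enters Souam's variational formula only at the earlier stage (his Eq.~(3.1)), and not in the identity established here.
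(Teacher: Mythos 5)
Your proof is correct. A point of comparison: the paper does not actually prove this lemma --- it is imported wholesale by citation to Souam (his Eq.\,(3.1) and Prop.\,5), so your self-contained verification is filling a gap the paper deliberately outsources. Your computation is exactly the standard argument underlying that citation: the Leibniz split $\nabla_{\partial_t}\bigl(B^a_b E_a\bigr)=(\partial_t B^a_b)E_a+B^a_b\nabla_{E_a}\xi$ is precisely the decomposition $\langle \nabla_\xi (Bu), DE(v)\rangle=\langle B'(DE(u)),DE(v)\rangle+\langle \nabla_{Bu}\xi,DE(v)\rangle$ that the paper records (without proof) in the remark following Proposition~\ref{prop:geometricidentity}, and your two tracing steps --- the $(1,1)$-trace $\partial_t B^a_a=2\,\delta H$ needing no metric, and the symmetry of $B^{ac}$ projecting $\langle\nabla_{E_a}\xi,E_c\rangle$ onto $\tfrac12\,\delta\I_{ac}$ --- are exactly what produce the coefficients $1$ and $\tfrac14$. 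Your closing observation is also accurate and worth keeping: this identity is purely kinematic, with the ambient curvature of $\m H^3$ entering only in the companion identity (the paper's Proposition~\ref{prop:geometricidentity}, with its $R(\xi,D_pE(u))\vec n$ term) used to pass from this trace to the divergence form of the Schl\"afli formula, not in the lemma itself.
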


 We now describe the decomposition for $C^{5,\alpha}$ curves.

\subsection{Decomposition} \label{sec:Decomposition}
We consider a $C^{5,\alpha}$ family $(\gamma_t)_{t \in [0,1]}$ of Jordan curves ($\alpha>0$). 
We will define a parametrization of the Epstein surfaces that allows us to compute the derivative $\frac{\partial}{\partial t}
V(\gamma_t)$. Since scalar multiplications are isometries of $\mathbb{H}^3$, we can assume without loss of generality that all curves $\gamma_t$ have Euclidean arclength $2\pi$. Furthermore, for  $\vare$ small, $V(\gamma_t) = V_1(\gamma_t)(\vare) + V_2(\gamma_t)(\vare)$, where $V_2$ is defined in Section~\ref{sec:vol_between}. Moreover, we also know that $V_2(\gamma_t)(\vare) \xrightarrow[]{\vare\rightarrow 0} V(\gamma_t)$ converges uniformly in $t$ by the proof of Proposition~\ref{prop:finitevolume}.

Let $f_t:\mathbb{D}\rightarrow\Omega_t$, $g_t:\mathbb{D}^*\rightarrow\Omega^*_t$ be univalent functions that extend to $C^{k,\a}$ functions on $[0, 1] \times \overline{\m D}$ and $[0, 1] \times \overline{\m D^*}$ respectively. 
Consider $\vare$ sufficiently small so that for $z\in\overline{\mathbb{D}}$ with $|z|>1-\vare$ then $\Ep_{\Omega_t}(f_t(z))$ belongs to the parametrized neighborhood $U_{\gamma_t}$ from Proposition \ref{prop:finitevolume} for all $t \in [0,1]$. 
Take the horizontal line $L_{t,z}$ (horocycle centered at $\infty \in \Chat$) obtained by varying the second $G$-coordinate \eqref{eq:G_coordinate} in $U_{\gamma_t}$ starting from $\Ep_{\Omega_t}(f_t(z))$, and define $h_t(z) \in \overline{\mathbb{D}^*}$ to be the point such that $\Ep_{\Omega^*_t}(g_t(h_t(z)))$ is the first point of intersection of the horizontal line with $\Ep_{\Omega^*_t}$. See Figure~\ref{fig:h_t} for an illustration.

Clearly along $\partial\mathbb{D}$ the map $h_t$ agrees with $g_t^{-1}\circ f_t$, and from the regularity of $f_t, g_t$ and the $G$-coordinates of $U_{\gamma_t}$ we can see that the 1-parameter family of functions $h_t$ is $C^{3,\alpha}$ in $1-\vare<|z|\leq 1$ and $t$-parameters. Moreover, by the inverse function theorem, we can take $\vare$ sufficiently small so that
$h_t$ is a diffeomorphism from $1-\vare<|z|\leq 1$ with its image in $\overline{\m D^*}$.

\begin{figure}
    \centering
    \includegraphics[width=\textwidth]{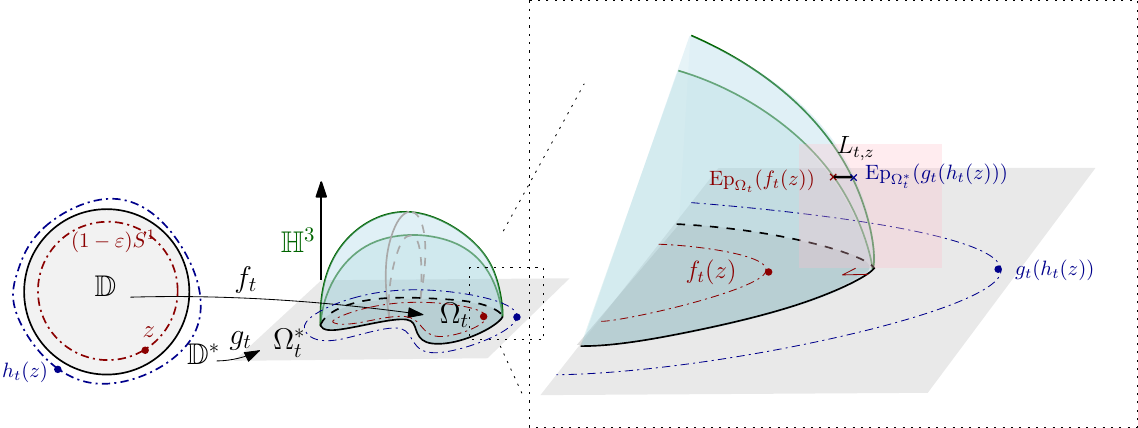}
    \caption{Illustration of the two Epstein--Poincar\'e surfaces associated with the two connected component of $\Chat \smallsetminus \g_t$ and the map $h_t$.}
    \label{fig:h_t}
\end{figure}

For $1-\vare < r < 1$, define the cylindrical neighborhood of $\gamma_0$ as
$$A(r) = f_0(\lbrace r\leq|z|\leq 1 \rbrace)\cup g_0(h_0(\lbrace r\leq|z|\leq 1 \rbrace)),$$ which we parametrize by $\m S^1\times[r,1/r]$, sending $(p,s)$ to $f_0(sp)$ if $s\leq 1$ and sending $(p,s)$ to $g_0(h_0(\frac{p}{s}))$ if $s\geq 1$.
These cylindrical neighborhoods are nested as $r$ grows, and their intersection as $r\rightarrow1^-$ is $\gamma_0$. Define as well $\Omega(r), \Omega^*(r)$ to be the components of $\mathbb{C}\smallsetminus A(r)$ in $\Omega_0$ and $\Omega^*_0$, respectively.

Define a 1-parameter family of homeomorphisms
$F_t:\Chat\rightarrow\Chat$ so that for $z\in\overline{\Omega_0}$ we define $F_t(z):=f_t(f_0^{-1}(z))$, for $z\in g_0(h_0(\lbrace 1-\vare < |z| \leq 1 \rbrace))$ we define 
$F_t (z) = g_t \circ h_t \circ h_0^{-1}\circ g_0^{-1} (z)$. In this way, if $\Ep_{\O_0} (u)$ and $\Ep_{\O_0^*} (v)$ are connected by the line $L_{0,z}$ (namely, $u = f_0(z)$ and $v = g_0 \circ h_0 (z)$), then $\Ep_{\O_t} (F_t(u))$ and $\Ep_{\O_t^*} (F_t(v))$ are connected by the line $L_{t,z}$.
We extend $F_t$ to the rest of $\Omega^*_0$ as a $C^{3,\alpha}$ map in both $\overline{\Omega^*_0}$ and $t$ parameters. Let us also fix $F_0$ to be the identity. It follows then that $F_t|_{\Omega_0}$ is a conformal map between $\Omega_0$ and $\Omega_t$, and $F_t|_{\gamma_0}$ parametrizes $\gamma_t$. For $1-\vare<r<1$, we construct the family of piecewise smooth maps $E_{r,t}:\Chat\rightarrow \mathbb{H}^3$ satisfying the following properties:

\begin{enumerate}[label={(C\arabic*)}]
    \item \label{item:rcutoff} In $\Omega(r), \Omega^*(r)$ the map $E_{r,t}$ is defined as the composition of  the Epstein--Poincar\'e maps $\Ep_{\Omega_t}, \Ep_{\Omega^*_t}$ with $F_t$.
    \item \label{item:horizontal step} Considering the parametrization of $A(r)$, for each $p\in \m S^1$ then $E_{r,t}(\lbrace p\rbrace\times[r,1/r])$ is the straight horizontal segment $L_{t,rp}$. 
\end{enumerate}

The map $E_{r,\cdot} (\cdot)$ is piecewise smooth, and it is $C^{3,\alpha}$ while restricted to $[0,1]\times\Omega(r), [0,1]\times A(r), [0,1]\times \Omega^*(r)$.

\begin{lemma}\label{lemma:piecewiseshape}
Along each $\Omega(r),A(r), \Omega^*(r)$, on the image of $E_{r,t}(p)$ there is a well-defined unit vector $\vec n$ that is normal to the image of $DE_{r,t}$. 
On $\Omega(r)$ and $\Omega^*(r)$, $\vec n$ coincides with $\widetilde \Ep_{\O_t}$ and $\widetilde \Ep_{\O^*_t}$ 
respectively and on $A(r)$, we choose $\vec n$ to have positive vertical component. 
The corresponding Euclidean unit vector $\vec \eta$ varies piecewise $C^{3,\alpha}$ on $\{(r,t, p) \,|\, r \in (1-\vare, 1], t \in [0,1], p \in \O(r) \text{ or } A(r) \text{ or } \O^*(r)\}$, and when $r = 1$, $p\in A(r) = \gamma$, $\vec \eta \equiv (0,0,1)$.
\end{lemma}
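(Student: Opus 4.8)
The plan is to handle the three pieces separately. The two Epstein pieces are essentially immediate and the annular piece $A(r)$ carries all the work, so I will treat $\O(r),\O^*(r)$ first and then concentrate on $A(r)$, where both the immersion property and the boundary value at $r=1$ must be established.

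On $\O(r)$ and $\O^*(r)$, property~\ref{item:rcutoff} makes $E_{r,t}$ equal to $\Ep_{\O_t}\circ F_t$, which on $\O_0$ is the conformal reparametrization $\Ep_{\O_t}\circ f_t\circ f_0^{-1}$. I would take $\vec n=\widetilde\Ep_{\O_t}$ (respectively $\widetilde\Ep_{\O^*_t}$). This is a well-defined smooth unit vector at \emph{every} point, even where $\Ep_{\O_t}$ fails to be an immersion, because $\widetilde\Ep_{\O_t}$ is the outward normal to the envelope of horospheres and, by its defining property in Section~\ref{subsec:generalepstein}, the image of $D\Ep_{\O_t}$ is orthogonal to it. The corresponding Euclidean vector $\vec\eta$ is given by the closed formula \eqref{eq:normalvector}, equivalently by Lemma~\ref{lem:explicit_Poincare}, as an \emph{undifferentiated} expression in the $2$-jet $f_t,f_t',f_t''$ at $f_0^{-1}(p)$. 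Hence Kellogg's theorem and the $C^{5,\a}$ hypothesis give $\vec\eta\in C^{3,\a}$ jointly in $(t,p)$, with no dependence on $r$ on these pieces.

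The substance is the piece $A(r)$, which I would analyze in the $G$-coordinates \eqref{eq:G_coordinate} of $U_{\gamma_t}$, writing the collar parameter as $\sigma\in[r,1/r]$ so as to free $s$ for the arclength coordinate. By property~\ref{item:horizontal step} and the construction of $L_{t,rp}$, along a fixed segment only the coordinate $a$ moves, while $(s,b)$ stay frozen at the values $(s_0(p),b_0(p))$ of the base point $\Ep_{\O_t}(f_t(rp))$. Thus $\partial_\sigma E_{r,t}$ is a nonzero multiple of the horizontal vector $\partial_a G=\ii\gamma_t'(s_0)$, while in the Euclidean orthonormal frame $e_1=\gamma_t'(s_0)$, $e_2=\ii\gamma_t'(s_0)$, $e_3=(0,0,1)$ one has $\partial_p E_{r,t}=\partial_p s_0\,(1-ak_t)\,e_1+\partial_p a\,e_2+\partial_p b_0\,e_3$, using $\gamma_t''=\ii k_t\gamma_t'$. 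Taking the Euclidean cross product gives
\[
\partial_p E_{r,t}\times\partial_\sigma E_{r,t}\ \propto\ \partial_p s_0\,(1-ak_t)\,e_3-\partial_p b_0\,e_1,
\]
which is orthogonal to the segment and lies in the vertical plane $\mathrm{span}(e_1,e_3)$. For $r$ close to $1$ the collar is thin, so $1-ak_t>0$, and the base points trace $\gamma_t$ so $\partial_p s_0\neq0$; hence the cross product is nonzero, $E_{r,t}$ is an immersion on $A(r)$, and I define $\vec n$ by hyperbolic normalization with positive vertical component (the directions of the hyperbolic and Euclidean unit normals coincide by conformality). The claimed regularity then follows because $s_0,b_0,a$ and the frame are $C^{3,\a}$ in $(r,t,p)$ from the regularity of $f_t,g_t,h_t$ and of the $G$-chart, and the normalized cross product is a smooth operation away from its zero locus, which the immersion bound avoids.

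For the limit $r\to1^-$ I would use $b_0(p)=\xi\big(\Ep_{\O_t}(f_t(rp))\big)$ together with Corollary~\ref{cor:bdyvals}, which gives $\tfrac15\,d(\cdot,\gamma_t)\le\xi\le4\,d(\cdot,\gamma_t)$ and $\xi\equiv0$ on $\gamma_t$. Since $\xi$ vanishes identically along $\gamma_t$ and is $C^{1}$ up to the boundary, its tangential derivative vanishes on $\gamma_t$, so $\partial_p b_0=O(1-r)\to0$, while $a\to0$ gives $1-ak_t\to1$ and $\partial_p s_0$ tends to the nonzero arclength speed along $\gamma_t$. Thus the $e_1$-component of the normalized cross product vanishes in the limit and $\vec\eta\to e_3=(0,0,1)$, matching the claim at $r=1$, where $A(r)$ collapses to $\gamma$. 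The main obstacle is exactly this $A(r)$ step: proving the immersion uniformly for $r$ near $1$ and controlling the degeneration of the collar onto $\gamma$ so that the tangential derivative of the base height decays. This relies on the second-order agreement of the two Epstein surfaces along $\gamma$ (Proposition~\ref{prop:secondorderplane}) and on the boundary estimates of Corollary~\ref{cor:bdyvals}, fed by the $C^{5,\a}$ regularity through Kellogg's theorem; the two Epstein pieces are comparatively routine.
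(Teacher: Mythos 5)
Your proposal is correct and follows essentially the same route as the paper's (much terser) proof: the explicit Epstein normal formula \eqref{eq:normalvector} handles $\O(r),\O^*(r)$, the $G$-coordinates \eqref{eq:G_coordinate} give the normal on the ruled annular piece $A(r)$, and the vanishing of the height $\xi$ along $\gamma_t$ (via Lemma~\ref{lem:explicit_Poincare}/Corollary~\ref{cor:bdyvals}) forces $\vec\eta\to(0,0,1)$ as $r\to1^-$. Your write-up simply carries out in detail the cross-product computation and boundary degeneration that the paper leaves implicit; the appeal to Proposition~\ref{prop:secondorderplane} at the end is not actually needed for this lemma, since the normal along $A(r)$ depends only on the base data $(s_0,b_0)$ from the $\O_t$ side.
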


\begin{proof}
The regularity of $\vec n$ on $\Omega(r), \Omega^*(r)$ follows from the construction of the Epstein--Poincar\'e map, see \eqref{eq:normalvector}. 
The regularity of $\vec n$ in $A(r)$ for $ 1-\vare < r <1$ can be seen using the $G$-coordinates parametrizing $\S_{\O_t}$ and $\S_{\O_t^*}$.

To obtain the regularity of the Euclidean unit vector $\vec \eta$ on $A(r)$ up to $r = 1$ and its value $(0,0,1)$, we use the $G$-coordinates and the expression of $\xi$ in Lemma~\ref{lem:explicit_Poincare}.
\end{proof}

\begin{remark}
We will simplify notation by dropping $r,t$ sub-indices when appropriate.
\end{remark}

\subsection{Proof of Schl\"afli formula} 
We first prove Theorem~\ref{thm:Schl\"afliformula} under the added assumption that the Epstein--Poincar\'e surfaces are immersions.

\begin{thm}\label{thm:schlafli_imm}
Let $(\gamma_t)_{t \in [0,1]}$ be a $C^{5,\alpha}$ family of 
Jordan curves ($\alpha>0$) such that the Epstein--Poincar\'e surfaces are \emph{immersions}. Then the first derivative of the volume $V(\gamma)$ is computed by

\[\delta V(\gamma) = \int_{\Omega} \Ep_{\Omega}^*\left(\delta H + \frac14\brac{ \delta\I, \II } \dd a\right) + \int_{\Omega^*}  \Ep_{\Omega^*}^*\left(\delta H + \frac14\brac{ \delta\I, \II } \dd a\right).
\]
\end{thm}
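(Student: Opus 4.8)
The plan is to run Souam's piecewise Schl\"afli formula (Theorem~\ref{theorem:Schl\"afli_pw}) on the family of compact regions $R_{r,t}$ whose boundary is parametrized by the maps $E_{r,t}$ constructed in Section~\ref{sec:Decomposition}, and then to let the radial cutoff $r\to1^-$. Recall that, according to the decomposition $\Chat = \Omega(r)\cup A(r)\cup \Omega^*(r)$, the boundary $\partial R_{r,t}$ splits into the two Epstein pieces $E_{r,t}(\Omega(r))$ and $E_{r,t}(\Omega^*(r))$ together with the cap $E_{r,t}(A(r))$ ruled by the horizontal segments $L_{t,rp}$, and that the two dihedral edges run along $\partial\Omega(r)$ and $\partial\Omega^*(r)$. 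Since $\gamma_t$ is $C^{5,\alpha}$ the maps $E_{r,t}$ are piecewise $C^{3,\alpha}$, and under the standing hypothesis that the Epstein surfaces are immersions (the cap being an immersion by construction) Theorem~\ref{theorem:Schl\"afli_pw} applies and yields
\[
2\,\partial_t \vol(R_{r,t})\big|_{t=0} = \int_{\Omega(r)\cup A(r)\cup\Omega^*(r)} \tr\big(\langle \nabla_\xi(B\cdot),DE\cdot\rangle\big)\,E^*\dd a + \int_{\partial\Omega(r)} \frac{\partial\theta}{\partial t}\,E^*\dd\ell + \int_{\partial\Omega^*(r)} \frac{\partial\theta^*}{\partial t}\,E^*\dd\ell .
\]

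On the two Epstein pieces the identification would be immediate. Because $F_t|_{\Omega_0}$ is conformal onto $\Omega_t$, the restriction of $E_{r,t}$ to $\Omega(r)$ is a reparametrization of $\Ep_{\Omega_t}$, so Lemma~\ref{lem:Tr_variation_V} turns $\tfrac12\tr\big(\langle\nabla_\xi(B\cdot),DE\cdot\rangle\big)\,E^*\dd a$ into $\Ep_{\Omega}^*\big(\delta H + \frac14\langle \delta\I,\II\rangle\,\dd a\big)$, and likewise on $\Omega^*(r)$. As $r\to1^-$ the domains $\Omega(r),\Omega^*(r)$ exhaust $\Omega,\Omega^*$, and the second-order agreement of Proposition~\ref{prop:secondorderplane}, which controls the integrand near $\gamma$, guarantees absolute convergence; hence these two contributions tend to the right-hand side of the statement.

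The heart of the argument is to show that the cap term and the two dihedral terms disappear in the limit. For the cap, the second-order agreement quantified in the proof of Proposition~\ref{prop:finitevolume}, namely $|a_\Omega(s,b)-a_{\Omega^*}(s,b)|\le Cb^3$, shows that at height $b$ the horizontal rulings have hyperbolic length $O(b^2)$, whence $E_{r,t}(A(r))$ has total hyperbolic area $O(1-r)$; combined with the uniform $C^{3,\alpha}$ control of the unit normal $\vec\eta$ up to $r=1$ from Lemma~\ref{lemma:piecewiseshape}, which bounds both the cap's shape operator and the variation field there, the integrand is bounded and $\int_{A(r)}\tr(\cdots)\,E^*\dd a\to0$. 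For the dihedral terms one uses that, by Lemma~\ref{lemma:piecewiseshape}, $\vec\eta\equiv(0,0,1)$ on $A$ at $r=1$ for every $t$, while by Proposition~\ref{prop:secondorderplane} the Epstein normal along $\gamma_t$ is horizontal; hence $\theta\equiv\theta^*\equiv\pi/2$ at $r=1$ independently of $t$, so $\partial_t\theta=\partial_t\theta^*=0$ there. Since $\partial\Omega(r)$ sits at height $\asymp(1-r)$ and therefore has hyperbolic length $\asymp(1-r)^{-1}$, the mere $C^1$-in-$r$ bound $\partial_t\theta=O(1-r)$ is not enough, and the delicate point — which I expect to be the main obstacle — is to upgrade it to $\partial_t\theta=o(1-r)$ uniformly in $s,t$. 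I would extract this again from Proposition~\ref{prop:secondorderplane}, by showing that the coefficient of the leading $(1-r)$-term in $\theta-\pi/2$ is $t$-independent (so that its $t$-derivative contributes only at order $(1-r)^2$); granting this, both boundary integrals vanish as $r\to1^-$.

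Finally, assembling the pieces gives $2\,\partial_t\vol(R_{r,t})|_{t=0}\to 2\int_{\Omega}\Ep_\Omega^*\big(\delta H+\frac14\langle\delta\I,\II\rangle\,\dd a\big)+2\int_{\Omega^*}\Ep_{\Omega^*}^*\big(\delta H+\frac14\langle\delta\I,\II\rangle\,\dd a\big)$, the right-hand side converging uniformly in $t$. Since $\vol(R_{r,t})\to V(\gamma_t)$ uniformly in $t$ by the proof of Proposition~\ref{prop:finitevolume}, I would then interchange $\lim_{r\to1^-}$ with $\partial_t|_{t=0}$ to conclude that $\partial_t V(\gamma_t)|_{t=0}$ equals the claimed expression.
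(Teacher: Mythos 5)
Your overall architecture coincides with the paper's: Souam's piecewise formula (Theorem~\ref{theorem:Schl\"afli_pw}) applied to the region bounded by $E_{r,t}$, identification of the contributions over $\Omega(r),\Omega^*(r)$ via Lemma~\ref{lem:Tr_variation_V}, vanishing of the cap term via the $O(1-r)$ hyperbolic area of $E_{r,t}(A(r))$ against a bounded integrand, and then $r\to1^-$. The genuine gap is in the dihedral-angle step, and the fix you propose would fail. You want to kill each boundary integral \emph{separately} by upgrading $\partial_t\theta$ to $o(1-r)$, on the grounds that the coefficient of the linear $(1-r)$-term of $\theta-\pi/2$ should be $t$-independent. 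It is not. In the $G$-coordinates \eqref{eq:G_coordinate}, Proposition~\ref{prop:secondorderplane} says the Epstein surface of $\Omega_t$ is the graph $a=a_{\Omega}(s,b)$ with $a_{\Omega}(s,b)=\tfrac12 k_t(s)\,b^2+O(b^3)$, where $k_t$ is the signed curvature of $\gamma_t$ (the osculating hemisphere over a circle of radius $R=1/k$ has cross-section $a=R-\sqrt{R^2-b^2}=\tfrac12 k b^2+O(b^4)$). Hence at height $b\asymp 1-r$ its tangent plane tilts from the vertical by $\partial_b a_{\Omega}=k_t(s)\,b+O(b^2)$, and the exterior angle with the horizontal cap is $\theta=\pi/2\pm k_t(s)\,b+O(b^2)$. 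The linear coefficient is the curvature of $\gamma_t$ at the moving foot point (and the height $b$ of the edge also moves with $t$), so generically $\partial_t\theta\asymp 1-r$, not $o(1-r)$; integrated against an edge of hyperbolic length $\asymp(1-r)^{-1}$, each dihedral integral is $O(1)$ and does not vanish by itself.

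What saves the argument --- and what the paper actually does --- is a cancellation between the \emph{two} edges, which must therefore be kept together rather than estimated separately. Since both Epstein surfaces osculate the same plane to second order, their first-order tilts agree, giving $\bigl|\partial_t\theta+\partial_t\theta^*\bigr|\le C(1-r)^2$, and the two edge parametrizations satisfy $\bigl\Vert \frac{\dd\gamma_r}{\dd p}-\frac{\dd\gamma_r^*}{\dd p}\bigr\Vert\le C(1-r)^2$. Splitting the sum of the two boundary integrals as $\partial_t\theta$ times the \emph{difference} of the length elements plus $(\partial_t\theta+\partial_t\theta^*)$ times one length element, each piece is $\frac{1}{1-r}\cdot O(1)\cdot O\bigl((1-r)^2\bigr)=O(1-r)\to0$; only boundedness of each individual $\partial_t\theta$ is needed. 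Separately, a smaller issue: you justify interchanging $\lim_{r\to1^-}$ with $\partial_t|_{t=0}$ by uniform convergence of the volumes $V_2(r,t)\to V(\gamma_t)$, which is not a sufficient hypothesis --- one needs uniform-in-$t$ convergence of the $t$-derivatives (which your estimates would supply if carried out uniformly in $t$), or, as in the paper, one avoids the interchange altogether by writing the exact identity $\partial_t V(\gamma_t)=\partial_t V_1(r,t)+\partial_t V_2(r,t)$ at each fixed $r$, where $V_1(r,t)$ is the sliver between the Epstein surfaces outside $E_{r,t}$, and showing $\partial_t V_1(r,0)\to0$ by applying the same second-order agreement to $\partial_t a_{\Omega}$ and $\partial_t a_{\Omega^*}$.
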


\begin{remark}
  To remove the assumption of immersion and to prove Theorem~\ref{thm:Schl\"afliformula} will require some technical analysis which we leave to a later section (see Section~\ref{gen-Schl\"afli}). As by Theorem \ref{thm:PE} the map $\Ep_{\O}$ (respectively for $ \Ep_{\O^*}$) is an immersion in $\{z \in \O \ | \ \|\mc S(f^{-1})(z)\|_\O \neq 1\}$, in particular we will extend continuously the right-hand side of the formula in Theorem~\ref{thm:Schl\"afliformula} to the locus $\{z \in \O \ | \ \|\mc S(f^{-1})(z)\|_\O = 1\}$ as smooth differential forms so the variation of volume formula still holds.
\end{remark}

\begin{proof}
For $r$ close to $1$ we define $V_2(r,t)$ as the volume bounded by $E_{r,t}$. Similarly, we define $V_1(r,t)$ as the volume of the region between $\Sigma_\O, \Sigma_{\O^*}$ outside of $E_{r,t}$. Then  $V(\gamma_t) = V_1(r,t)+V_2(r,t)$.

We first show that
\begin{align*}
  \lim_{r\rightarrow 1^-} 
  \delta
  V_2(r,t) = & \int_{\Omega} \Ep_{\Omega}^*\left(\delta H + \frac14\brac{ \delta\I, \II } \dd a\right) \\
  &+ \int_{\Omega^*}  \Ep_{\Omega^*}^*\left(\delta H 
  + \frac14\brac{ \delta\I, \II } \dd a\right).  
\end{align*}
 Combining Theorem \ref{theorem:Schl\"afli_pw} and Lemma \ref{lem:Tr_variation_V},  we only need to prove that 
\[\lim_{r\rightarrow 1^-}\int_{A(r)} \frac12\tr\brac{ \nabla_\xi (B\cdot),DE_p\cdot} =0 
\]
and
\[\lim_{r\rightarrow 1^-} \frac12\bigg( \int_{\partial \Omega(r)} \frac{\partial \theta}{\partial t} E^*\dd \ell + \int_{\partial \Omega^*(r)} \frac{\partial \theta^*}{\partial t} E^*\dd \ell\bigg) = 0.
\]

For the first term, observe that $A(r)$ belongs to the surface described in \ref{item:horizontal step}. These families of surfaces can be described by
\begin{align*}
    \{p \in \m S^1,  s \in [r, 1/r], & r \in (1-\vare,1], t\in [0,1]\} \rightarrow \overline{\m H^3} \subset \m R^3 
\\
(p,s,r,t)& \mapsto (x(p,s,r,t),y(p,s,r,t),z(p,s,r,t)),
\end{align*}
where $p,s$ parametrize the surface as in \ref{item:horizontal step} for $\gamma_t$. This parametrization extends $C^{3,\alpha}$ for $r=1$ towards the boundary of $\mathbb{H}^3$ by making $z(p,s,1,t)\equiv 0$. Moreover, given \ref{item:rcutoff} and Lemma~\ref{lem:explicit_Poincare} then that $z(p,s,r,t)=O(1-r)$ uniformly for all other parameters.

Hence the first and second fundamental form (as well as their first order variations) are of order at most $(1-r)^{-2}$, and the inverse of the first fundamental form has order $(1-r)^2$. This follows from the expression of these fundamental forms in terms of the derivatives up to the third order of the parametrization into $\m R^3$ and the conformal factor $z(p,s,r,t)=O(1-r)$. Thus, the terms $H, \delta H, \brac{ \delta I, \II }$ are uniformly bounded. As by Proposition~\ref{prop:secondorderplane} $E_{r,t}(A(r))$ has euclidean area $O((1-r)^3)$ so
the hyperbolic area of $E_{r,t}(A(r))$ is of order $O(1-r)$. Therefore 
\[\lim_{r\rightarrow 1^-}\int_{A(r)} \frac12\tr\brac{ \nabla_\xi (B\cdot),DE_p\cdot}=0.
\]
Likewise, the exterior dihedral angle $\theta$ that takes each $(p, r,t)$ to the angle between $E_{r,t} (\Omega(r))$ and $E_{r,t}(A(r))$ at $(x(p,r, r,t),y(p,r,r,t),z(p,r,r,t)) = : \gamma_r (p)$, extends smoothly to $r=1$ as right angles. Similarly for $\theta^*$ the exterior dihedral angle between $E_{r,t} (\Omega^*(r))$ and $E_{r,t}(A(r))$ along $\gamma_r^*$, where $\gamma_r^* (p) : = (x(p,1/r, r,t),y(p,1/r,r,t),z(p,1/r,r,t)) $. 
We use again that the Epstein--Poincar\'e surfaces agree up to second order (Proposition~\ref{prop:secondorderplane}) to use parametrizations $\gamma_r(p)$ and $\gamma_r^*(p)$ satisfying
\begin{equation}
\begin{split}
    \bigg\Vert\frac{\partial \gamma_r}{\partial p}(p) - \frac{\partial \gamma^*_r}{\partial p}(p)\bigg\Vert&\leq C(1-r)^2\\
     \bigg|\frac{\partial \theta(p)}{\partial t} + \frac{\partial \theta^*(p)}{\partial t}\bigg|&\leq C(1-r)^2
\end{split}
\end{equation}
for some uniform constant $C>0$.
Then since the last coordinate of $\gamma_r$ and $\gamma_r^*$ is $O(1-r)$, for some uniform constant $C>0$
\begin{equation}
\begin{split}
    \bigg|&\int_{\partial \Omega(r)} \frac{\partial \theta}{\partial t} E^*\dd \ell + \int_{\partial \Omega^*(r)} \frac{\partial \theta^*}{\partial t} E^*\dd \ell\bigg|\\
    & \leq C \int_{\m S^1}\bigg| \frac{1}{1-r} \frac{\partial \theta(\gamma_r(p))}{\partial t}\bigg|.\bigg\Vert\frac{\partial\gamma_r}{\partial p}\bigg\Vert + \bigg|\frac{1}{1-r}.\frac{\partial \theta^*(\gamma^*_r(p))}{\partial t}\bigg|.\bigg\Vert\frac{\partial \gamma^*_r}{\partial p} \bigg\Vert \dd p\\
    &\leq \frac{1}{1-r} \int_{\m S^1} \bigg|\frac{\partial \theta(\gamma_r(p))}{\partial t}\bigg|. \bigg\Vert\frac{\partial \gamma_r}{\partial p}(p) - \frac{\partial \gamma^*_r}{\partial p}(p)\bigg\Vert \\
    & \hspace{60pt} + \bigg|\frac{\partial \theta(\gamma_r(p))}{\partial t} + \frac{\partial \theta^*(\gamma^*_r(p))}{\partial t}\bigg|. \bigg\Vert\frac{\partial \gamma^*_r}{\partial p}\bigg\Vert \dd p
\end{split}
\end{equation}
goes to $0$ as $r\rightarrow1^-$ uniformly in $t$.

Using the parameters of Proposition \ref{prop:finitevolume}, we can see that the $t$ derivatives of the functions $a_\O,a_{\O^*}$ in the proof of Proposition \ref{prop:finitevolume} agree as well up to order 2, so by the same argument $\lim_{r\rightarrow 1^-} \delta V_1(r,t) = 0$.

As for any $r$ near $1^-$ then $\delta V(\gamma_t) = \delta V_1(r,t)+ \delta V_2(r,t)$, we send $r$ to $1$ on the right hand side to obtain the result.
\end{proof}

\subsection{Variation of mean curvature and Schl\"afli formula}

The goal of this section is to prove the identity between the renormalized volume and the universal Liouville action when the curve is regular enough (Corollary~\ref{cor:smooth_identity}).

The following result is proved by Krasnov--Schlenker, see \cite[Cor. 6.2]{KrasnovSchlenker_CMP}, for the renormalized volume of convex co-compact manifolds. 
We adapt it to the renormalized volume associated with a smooth Jordan curve using Theorem~\ref{thm:Schl\"afliformula}.

\begin{thm} \label{thm:first_var_VR}
The first order variation of the $V_R$
$$\delta V_R (\g)= - \frac{1}{4} \int_{\O \cup \O^*} \delta \hat H + \frac 12 \brac{\delta \hat \I, \hat  \II_0} \dd a^*$$
where $\hat \II_0 =  \vartheta \,\dd z^2 +  \bar \vartheta \, \dd \bar z^2$ is the traceless part of $\hat \II$, $\brac {A, B}$ stands for $\tr [\hat \I^{-1} A \hat \I^{-1} B]$.
\end{thm}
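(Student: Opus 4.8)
The plan is to differentiate $V_R=V-\tfrac12\int_{\S_\O\cup\S_{\O^*}}H\,\dd a$ (Definition~\ref{df:RenormalizedVolume}) by inserting the Schl\"afli formula (Theorem~\ref{thm:Schl\"afliformula}) into the first term and the variation of the Riemannian area form into the second, and then converting the resulting integral of intrinsic surface data into data at infinity via the dictionary of Section~\ref{subsec:generalepstein}. Concretely, I would combine $\delta V=\int_{\O\cup\O^*}(\delta H+\tfrac14\brac{\delta\I,\II})\,\dd a$ with $\delta\big(\tfrac12\int H\,\dd a\big)=\tfrac12\int(\delta H\,\dd a+H\,\delta(\dd a))$, using $\delta(\dd a)=\tfrac12\brac{\I,\delta\I}\,\dd a$. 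Here $\brac{A,B}=\tr[\I^{-1}A\I^{-1}B]$ is taken with respect to the relevant first fundamental form, and $\I,\II,H,\dd a$ denote the pullbacks by $\Ep_\O$ (resp.\ $\Ep_{\O^*}$). The two contributions collapse into the single intrinsic expression
\begin{equation*}
\delta V_R=\int_{\O\cup\O^*}\Big(\tfrac12\,\delta H+\tfrac14\brac{\delta\I,\II}-\tfrac14\,H\,\brac{\I,\delta\I}\Big)\dd a .
\end{equation*}

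The core of the argument is then the pointwise identity, following Krasnov--Schlenker \cite[Cor.~6.2]{KrasnovSchlenker_CMP},
\begin{equation*}
\Big(\tfrac12\,\delta H+\tfrac14\brac{\delta\I,\II}-\tfrac14\,H\,\brac{\I,\delta\I}\Big)\dd a=-\tfrac14\Big(\delta\hat H+\tfrac12\brac{\delta\hat\I,\hat\II_0}\Big)\dd a^*,
\end{equation*}
valid wherever $\Ep_\O$ is an immersion, the bracket on the right now being taken with respect to $\hat\I$ as in the statement. To establish it I would substitute the relations $\I=\tfrac14\hat\I((\id+\hat B)\cdot,(\id+\hat B)\cdot)$, $\II=\tfrac14\hat\I((\id+\hat B)\cdot,(\id-\hat B)\cdot)$, $\dd a=\tfrac14\det(\id+\hat B)\,\dd a^*$ and $H=(1-\det\hat B)/\det(\id+\hat B)$ from Corollary~\ref{cor:epstein_curv}, recalling from Theorem~\ref{thm:forms_infty} that $\hat B=\hat H\,\id+\hat\I^{-1}\hat\II_0$ with $\hat\II_0=\vartheta+\bar\vartheta$, and then differentiate in a local $\hat\I$-orthonormal frame diagonalizing $\hat B$. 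The combined integrand is a pointwise rational expression in $(\hat\I,\hat B,\delta\hat\I,\delta\hat B)$ carrying no derivatives of the variation, so the translation is a finite algebraic identity, not an integration by parts. The decisive point, which one verifies by a direct frame computation, is that the dependence on the traceless part of $\delta\hat B$ (equivalently on $\delta\vartheta$) cancels identically, leaving only $\delta\hat H$ and $\brac{\delta\hat\I,\hat\II_0}$; in particular no divergence term arises.

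It then remains to assemble the global statement and to handle the non-compactness, which is the feature absent from the convex co-compact case treated in \cite{KrasnovSchlenker_CMP}. By Theorem~\ref{thm:PE} the maps $\Ep_\O,\Ep_{\O^*}$ fail to be immersions only on the closed null set $\{\,\|\mc S(f^{-1})\|_\O=1\,\}$, while the right-hand side above is a smooth form on all of $\O\cup\O^*$ (it depends only on $\rho$ and $\vartheta$); hence the almost-everywhere identity extends across this set and may be integrated. To legitimize $\delta\int H\,\dd a=\int\delta(H\,\dd a)$ and the convergence of the final integral, I would invoke the $C^{5,\alpha}$ regularity together with the finiteness inputs (Proposition~\ref{prop:finitevolume} and Corollary~\ref{cor:WP_HdA}) and the decay $\|\vartheta\|_\rho\to0$ at $\g$ given by (AC3), controlling the integrands uniformly near $\g$ exactly as in the proof of Theorem~\ref{thm:schlafli_imm}; the point $\infty\in\Chat$ is dealt with by the $\pslt$-invariance of $V_R$ (Remark~\ref{remark:VRdefinition}). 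I expect this uniform control near the boundary and the interchange of variation with integration over the non-compact surfaces---rather than the algebra---to be the main obstacle. As a check, since $\rho$ is hyperbolic one has $\hat K\equiv-1$, hence $\hat H\equiv1$ and $\delta\hat H=0$, so the identity further reduces to $\delta V_R=-\tfrac18\int_{\O\cup\O^*}\brac{\delta\hat\I,\hat\II_0}\,\dd a^*$.
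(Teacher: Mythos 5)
Your proposal is correct and takes essentially the same route as the paper's proof: combine the Schl\"afli formula (Theorem~\ref{thm:Schl\"afliformula}) with the variation of the mean-curvature integral, convert the resulting integrand pointwise into data at infinity following \cite[Sec.\,6]{KrasnovSchlenker_CMP}, and extend across the non-immersion locus using density of immersion points and smoothness of the limiting forms. The only differences are expository: you propose to verify the Krasnov--Schlenker conversion by a direct frame computation where the paper simply cites it (your description is accurate --- it is a pointwise algebraic identity in $(\delta\hat\I,\delta\hat\II)$ with the $\delta\vartheta$-dependence cancelling and no divergence term), and your expansion $\delta(H\,\dd a)=\delta H\,\dd a+H\,\delta(\dd a)$ uses the correct sign, whereas the integrand displayed in the paper's proof contains an inessential sign typo.
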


\begin{proof}
By Definition \ref{df:RenormalizedVolume}, Remark \ref{remark:VRdefinition} and Theorem \ref{thm:Schl\"afliformula},  we can express $\delta V_R$ as the integral of smooth 2-forms in $\O, \O^*$, so that at points where the respective Epstein--Poincar\'e maps are immersions these forms are given by the pullback of the form
\[(\delta H + \frac14\brac{ \delta\I, \II }) \,\dd a - \frac12 (\delta H \dd a - H\delta(\dd a)) 
\]
by the respective Epstein--Poincar\'e map. Following \cite[Section 6]{KrasnovSchlenker_CMP} this pullback is expressed precisely as $-\frac 14 \left(\delta \hat H + \frac 12 \brac{\delta \hat \I, \hat \II_0} \right)\dd \hat a$. As points where the Epstein--Poincar\'e maps are immersions are dense in $\O, \O^*$, and by the piecewise regularity of $E$ and of its shape operator (Lemma~\ref{lem:formextension}) then all forms discussed vary continuously, and the result follows.
\end{proof}

More explicitly, we can write the variation of $V_R$ in terms of the Beltrami differentials.
We consider a $C^{5,\a}$ family of Jordan curves $(\g_t)$ as in the previous section and let $F_t$ be the corresponding homeomorphism of $\Chat$ which maps $\O_0$ conformally onto $\O_t$ and a diffeomorphism from $\O_0^*$ to $\O_t^*$, as constructed in Section~\ref{sec:Decomposition}.
For $z \notin \g_0$, let $$\mu_t := \frac{\partial_{\bar z} F_t}{\partial_z F_t} = t \dot \nu + O (t^2).$$
In particular, 
$\d F = \frac{\dd} {\dd t} F_t |_{t = 0}
$
satisfies
$$\partial_{\bar z} (\d F )= \dot \nu, \qquad F_t (z) = z + t (\d F(z)) + O (t^2).$$
Since $F_t$ is conformal in $\O_0$, $\dot \nu|_{\O_0} \equiv 0$.

\begin{lemma}
     $\norm{\dot  \nu}_\infty < \infty$. Moreover, $\dot \nu|_{\O^*} \in H^{-1,1} (\O^*) \oplus \mf N (\O^*)$.
\end{lemma}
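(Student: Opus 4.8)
The plan is to treat the boundedness and the subspace membership separately. For the first claim I would exploit the regularity of the deformation. Since $(\g_t)$ is a $C^{5,\a}$ family, the homeomorphisms $F_t$ constructed in Section~\ref{sec:Decomposition} are jointly $C^{3,\a}$ in $(t,z)$ (near $\g_0$ they are built from the $C^{3,\a}$ data $g_t$ and $h_t$, and elsewhere they are $C^{3,\a}$ extensions, which we may take to fix $\infty$). Hence $\dot F=\tfrac{\dd}{\dd t}F_t|_{t=0}$ is a $C^{3,\a}$ vector field on $\Chat$ and $\dot\nu=\partial_{\bar z}\dot F$ is continuous; inspecting $\dot F$ in the coordinate $w=1/z$ shows $\dot\nu$ remains bounded near $\infty$ as well. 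Equivalently, the $F_t$ are uniformly quasiconformal with $C^1$ dependence on $t$, so $\norm{\dot\nu}_\infty=\norm{\partial_t\mu_t|_{t=0}}_\infty<\infty$. In particular $\dot\nu|_{\O^*}$ is a bounded Beltrami differential, and therefore admits the decomposition $\dot\nu|_{\O^*}=\dot\nu_h+\dot\nu_0$, where $\dot\nu_h\in\Omega^{-1,1}(\O^*)$ is its unique harmonic representative and $\dot\nu_0\in\mf N(\O^*)$.

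The real content is to upgrade $\dot\nu_h\in\Omega^{-1,1}(\O^*)$ to $\dot\nu_h\in H^{-1,1}(\O^*)$, i.e. to show that the holomorphic quadratic differential attached to $\dot\nu_h$ is square-integrable against $\rho_{\O^*}^{-1}$. Since the spaces $\Omega^{-1,1}$, $H^{-1,1}$, $\mf N$, and $A_2$ are conformally natural, I would pull everything back by the uniformizing map $g_0\colon\m D^*\to\O_0^*$ and work on $\m D^*$. The path $t\mapsto[\g_t]$ passes through $[\g_0]$ with velocity represented by $\dot\nu$, and its image under the Bers embedding is the Schwarzian family $\mc S(g_t)$. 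By the theory developed in \cite{TT06}, $T_0(1)$ is carved out of $T(1)$ by the square-integrability of this Schwarzian datum; consequently the tangent vector $\tfrac{\dd}{\dd t}[\g_t]|_{t=0}$ lies in the Weil--Petersson tangent space --- equivalently $\dot\nu_h\in H^{-1,1}$ --- if and only if
$$\dot\beta:=\frac{\dd}{\dd t}\Big|_{t=0}\mc S(g_t)\in A_2(\m D^*).$$

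It then remains to verify $\dot\beta\in A_2(\m D^*)$, namely $\int_{\m D^*}|\dot\beta|^2\rho_{\m D^*}^{-1}\,\dd^2 z<\infty$. First, $\dot\beta$ is holomorphic, being the $t$-derivative of the $z$-holomorphic family $\mc S(g_t)$, and the joint $C^{5,\a}$ regularity of $g_t$ (which carries three $z$-derivatives into $\mc S(g_t)$) makes $\dot\beta$ continuous, hence bounded, up to $\Sph^1=\partial\m D^*$. Near $\Sph^1$ the weight $\rho_{\m D^*}^{-1}=(|z|^2-1)^2/4$ vanishes, so the integral converges there. Near $\infty$ I would use $g_t(\infty)=\infty$: the Laurent expansion $g_t(z)=a_tz+b_t+c_t/z+\cdots$ forces $\mc S(g_t)(z)=O(z^{-4})$, and hence $\dot\beta(z)=O(z^{-4})$, while $\rho_{\m D^*}^{-1}\sim|z|^4/4$, so $|\dot\beta|^2\rho_{\m D^*}^{-1}=O(|z|^{-4})$ is integrable at $\infty$. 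Combining the two ends gives $\dot\beta\in A_2(\m D^*)$, whence $\dot\nu_h\in H^{-1,1}(\O^*)$ and $\dot\nu|_{\O^*}=\dot\nu_h+\dot\nu_0\in H^{-1,1}(\O^*)\oplus\mf N(\O^*)$.

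I expect the main obstacle to be the middle step: pinning down that Weil--Petersson membership of the harmonic part is detected exactly by $\dot\beta\in A_2$ at the non-origin base point $[\g_0]$, and then controlling the $A_2$-integral uniformly at both ends --- the decay $O(z^{-4})$ at $\infty$ being indispensable, since against the rapidly growing weight $\rho_{\m D^*}^{-1}$ any slower decay would destroy square-integrability. I would deliberately avoid trying to prove that $\dot\nu$ itself lies in $L^2(\rho_{\O^*})$: because the extension of $F_t$ across $\g_0$ is non-canonical, $\dot\nu$ need not decay at the boundary and so need not be square-integrable, whereas the Schwarzian velocity $\dot\beta$ depends only on the conformal data of the curves $\g_t$ and is insensitive to that choice.
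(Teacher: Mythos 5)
Your proposal is correct, and its skeleton is the same as the paper's: both split $\dot\nu|_{\O^*}$ into a harmonic part plus an element of $\mf N(\O^*)$, and both conclude by identifying the harmonic part with the velocity of the path $t\mapsto[\g_t]$ in $T_0(1)$. The difference is in what actually gets proved. The paper's argument for the second claim is a single sentence --- ``$(\g_t)$ corresponds to a differentiable path in $T_0(1)$'' --- with no verification; you supply precisely the missing verification by testing Weil--Petersson tangency against the Bers embedding, proving $\dot\beta=\frac{\dd}{\dd t}\big|_{t=0}\mc S(g_t)\in A_2(\m D^*)$ from boundedness up to $\Sph^1$ (joint $C^{5,\a}$ regularity of $g_t$) together with the $O(|z|^{-4})$ decay at $\infty$ beating the weight $\rho_{\m D^*}^{-1}\sim|z|^4/4$. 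Your route is longer but self-contained where the paper leans on an unproved differentiability assertion. Two points should be tightened. First, the ``consequently \dots if and only if'' step: the point-level fact that $T_0(1)$ is cut out of $T(1)$ by $\mc S(g_\mu)\in A_2(\m D^*)$ does not by itself give tangent-level detection; you need that $D\hat\beta$ is injective on $\Omega^{-1,1}(\m D^*)$ (the Bers embedding is a holomorphic embedding of $T(1)$) and restricts to an isomorphism $H^{-1,1}(\m D^*)\to A_2(\m D^*)$ (\cite{TT06}; compare Lemma~\ref{lem:delta} of the paper), so that $D\hat\beta(\dot\nu_h)=\dot\beta\in A_2(\m D^*)$ forces $\dot\nu_h\in H^{-1,1}(\m D^*)$ by injectivity. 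Second, the decay $\dot\beta=O(|z|^{-4})$ needs a word of uniformity in $t$: the Laurent expansion controls each $\mc S(g_t)$ separately, so one should note that $z^4\mc S(g_t)(z)$ is holomorphic on $\m D^*\cup\{\infty\}$ and apply the maximum principle on $\{|z|\ge R\}$ to see that its $t$-difference quotients converge uniformly near $\infty$. With these two additions your argument is complete and, modulo the extra detail, follows the same route as the paper.
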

\begin{proof}
On $\Omega$ the 1-parameter family $F_t$ is conformal, while in $\overline{\Omega^*}$, $F_t|_{\O^*}$ is jointly $C^{3,\alpha}$ in $(t,z)$.  
The $L^\infty$ bound of $\dot \nu$ follows from the compactness of the domains (viewed in $\Chat$).

For the second claim, as $(\g_t)$ corresponds to a differentiable path in $T_0(1)$, the projection of $\dot \nu$ onto harmonic Beltrami differentials $\O^{-1,1} (\O^*)$ parallel to $\mf N(\O^*)$ lies in $H^{-1,1}(\O^*)$. This completes the proof.
\end{proof}

\begin{cor}\label{cor:variation_VR_mu}
The first variation of the renormalized volume associated with the family of deformed Jordan curves $(\g_t := F_t (\g_0))$ is given by 
    \begin{equation}\label{eq:var_V_R_Sch}
    \delta V_R(\g) =  -\Re \int_{\O^*}  \dot \nu \mc S [g^{-1}] \dd^2 z.
    \end{equation}
   where we recall  $g : \m D^* \to \O^*$ is any conformal map.
\end{cor}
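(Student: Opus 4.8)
The plan is to evaluate the coordinate-free formula of Theorem~\ref{thm:first_var_VR} directly, using that for the Epstein--Poincar\'e surfaces the first fundamental form at infinity is the hyperbolic (Poincar\'e) metric $\hat\I = \rho$ and the traceless part of the second fundamental form at infinity is $\hat\II_0 = \vartheta\,\dd z^2 + \bar\vartheta\,\dd\bar z^2$ with $\vartheta = \mc S(f^{-1})$ on $\O$ and $\vartheta = \mc S(g^{-1})$ on $\O^*$ (see Section~\ref{subsec:EPsimplyconnected}). I would compute the two terms of the integrand separately on $\O$ and $\O^*$, and show that the contribution of $\O$ vanishes while that of $\O^*$ produces the stated integral.

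First, the term $\delta\hat H$ vanishes everywhere: since $\hat\I=\rho$ is always the Poincar\'e metric, its Gaussian curvature is $\hat K\equiv -1$ along the family, so by Corollary~\ref{cor:epstein_curv} we have $\hat H = -\hat K \equiv 1$, a constant, whence $\delta\hat H = 0$ on $\O$ and on $\O^*$. Next I treat the pairing $\brac{\delta\hat\I,\hat\II_0}$. On $\O$ the map $F_t$ is conformal, hence a hyperbolic isometry from $(\O_0,\rho_{\O_0})$ onto $(\O_t,\rho_{\O_t})$; therefore $F_t^*\rho_{\O_t}\equiv\rho_{\O_0}$ is independent of $t$, giving $\delta\hat\I|_\O = 0$ and no contribution from $\O$. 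On $\O^*$ I would write the pulled-back metric as $F_t^*\rho_{\O^*_t} = \lambda_t\,|\dd z + \mu_t\,\dd\bar z|^2$ with $\lambda_t = \rho_{\O^*_t}(F_t)\,|\partial_z F_t|^2$ and $\mu_t = t\dot\nu + O(t^2)$. Expanding $|\dd z + \mu_t\,\dd\bar z|^2 = (1+|\mu_t|^2)|\dd z|^2 + \bar\mu_t\,\dd z^2 + \mu_t\,\dd\bar z^2$ and differentiating at $t=0$ (where $\mu_0 = 0$) isolates the traceless part of $\delta\hat\I$ as $\rho(\overline{\dot\nu}\,\dd z^2 + \dot\nu\,\dd\bar z^2)$; the conformal (trace) part is irrelevant here because $\hat\II_0$ is $\hat\I$-traceless, so that $\brac{c\hat\I,\hat\II_0} = c\,\tr[\hat\I^{-1}\hat\II_0]=0$.

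It then remains a short linear-algebra computation in the frame $(\partial_z,\partial_{\bar z})$, in which $\hat\I$ and $\hat\I^{-1}$ are off-diagonal and $\hat\II_0$, $\delta\hat\I$ are diagonal, to obtain $\brac{\delta\hat\I,\hat\II_0} = \tfrac{8}{\rho}\Re(\dot\nu\,\vartheta)$ on $\O^*$ from $\brac{A,B}=\tr[\hat\I^{-1}A\,\hat\I^{-1}B]$. Since the area form of $\hat\I=\rho$ is $\dd\hat a = \rho\,\dd^2 z$, the integrand $-\tfrac14\big(\delta\hat H + \tfrac12\brac{\delta\hat\I,\hat\II_0}\big)\,\dd\hat a$ collapses to $-\Re(\dot\nu\,\vartheta)\,\dd^2 z$ on $\O^*$ and to $0$ on $\O$; integrating and substituting $\vartheta = \mc S(g^{-1})$ yields exactly $\delta V_R(\g) = -\Re\int_{\O^*}\dot\nu\,\mc S(g^{-1})\,\dd^2 z$. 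As a consistency check, this equals $\tfrac14\,(\dd\tilde\Liouville)$ from Theorem~\ref{thm:S_1_first_variation} under the identification of $\dot\nu|_{\O^*}$ with $(g_\mu)_*\dot\nu$, matching the target identity $\tilde\Liouville = 4V_R$. The only genuinely delicate points are the bookkeeping of the factors of $2$ in the pairing and the area form, and the justification that the pointwise identity may be integrated; the latter is already supplied by the density of the immersion locus and the continuity of all the forms involved, as established in the proof of Theorem~\ref{thm:first_var_VR}.
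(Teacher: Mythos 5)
Your proposal is correct and follows essentially the same route as the paper: both reduce to the pointwise identity supplied by Theorem~\ref{thm:first_var_VR}, kill the $\delta\hat H$ term via $\hat H=-\hat K\equiv 1$, and compute $\brac{\delta\hat\I,\hat\II_0}=8\rho^{-1}\Re(\dot\nu\,\mc S[g^{-1}])$ on $\O^*$, the only differences being cosmetic (you isolate the traceless part of $\delta\hat\I$ from the Beltrami form $\lambda_t|\dd z+\mu_t\dd\bar z|^2$ and note that the conformal part pairs to zero against $\hat\II_0$, whereas the paper expands $\dd F_t\,\dd\overline{F_t}$ in matrix form and lets the trace part drop out in the trace; you also make explicit the vanishing of the $\O$-contribution via $\delta\hat\I|_{\O}=0$, which the paper leaves implicit since $\dot\nu|_{\O}\equiv 0$).
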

\begin{proof}
As $\dot \nu \in L^\infty(\O^*)$ and  $\mc S [g^{-1}]$ extend continuously to the boundary, the integral  in \eqref{eq:var_V_R_Sch} is absolutely convergent. From Theorem~\ref{thm:first_var_VR}, we only need to check the pointwise identity 
\begin{equation}\label{eq:pointwise_id_hyp}
\left(\frac{1}{4} \delta \hat H + \frac{1}{8} \brac{\delta \hat \I , \hat \II_0}\right) \dd \hat a = \Re \left( \dot \nu \mc S[g^{-1}] \right)\, \dd^2 z
\end{equation}
on $\O^*$.
As
$$\dd F_t (z) = \dd z + t \partial_z (\d F) \,\dd z  + t \partial_{\bar z} (\d F) \,\dd \bar z + O(t^2) = \dd z + t \partial_z(\d F) \,\dd z  + t \dot \nu \,\dd \bar z + O(t^2) $$
and in the $\dd z, \dd \bar z$  coordinates
$$\dd F_t (z) \dd \overline{ F_t (z)} =  \begin{pmatrix}
t \bar {\dot \nu} &  \frac{1}{2} (1 + 2  t \Re (\partial_z (\d F)))  \\
\frac{1}{2} (1 + 2 t \Re(\partial_z (\d F))) & t \dot \nu 
\end{pmatrix} + O(t^2). $$

 Therefore, the hyperbolic metric in $\O_t^*$ is
\begin{align*}
   &  e^{\varphi }  (1 + 2 t s + O(t^2))\,\dd F_t (z) \dd \overline{ F_t (z)} \\
     &= \hat \I + t e^{\varphi} \begin{pmatrix}
 \bar {\dot \nu} &   \Re( \partial_z (\d F)) + s  \\
 \Re( \partial_z (\d F))  + s &  \dot \nu 
\end{pmatrix} + O(t^2). 
\end{align*}
 where $s$ is some smooth function on $\O^*$ and
$$\hat \I = e^{\varphi} \dd z \dd \bar z =  \frac 12
\begin{pmatrix}
0 & e^{\varphi} \\
e^{\varphi} & 0
\end{pmatrix}.$$
We obtain
$$\delta \hat \I = e^\varphi \begin{pmatrix}
 \bar {\dot \nu} &   \Re( \partial_z (\d F)) + s  \\
 \Re( \partial_z (\d F))  + s &  \dot \nu 
\end{pmatrix}.$$
Recall that 
$$
\hat \II_0 =  \begin{pmatrix}
 \vartheta & 0 \\
0 &  \bar \vartheta
\end{pmatrix} = \begin{pmatrix}
 \mc S (g^{-1}) & 0 \\
0 &  \overline{ \mc S (g^{-1})}
\end{pmatrix},
$$ 
giving
$$\brac{\delta \hat \I, \hat \II_0}  =  8 e^{-\varphi} \Re (\dot \nu \mc S [g^{-1}]).$$
Since $\dd \hat a = e^{\varphi} \,\dd^2 z$ and from Corollary~\ref{cor:epstein_curv},  $\hat H = - \hat K \equiv 1$ which implies $\delta \hat H \equiv 0$,
we obtain the claimed formula \eqref{eq:pointwise_id_hyp}. 
\end{proof}

\begin{cor}\label{cor:smooth_identity}
   For all $C^{5,\a}$ Jordan curves $\g$, 
    $$\tilde \Liouville (\g) = 4 V_R(\g).$$
\end{cor}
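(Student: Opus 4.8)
The plan is to prove the identity by verifying that both sides vanish at a round circle and that they have equal first variations along a path of $C^{5,\a}$ curves joining $\g$ to a circle, and then integrating. For the base case, when $\g$ is a round circle we have $\tilde\Liouville(\g) = 0$ by the stated properties of the universal Liouville action, and $V_R(\g) = 0$ as well: by Lemma~\ref{lem:example_disk} both Epstein--Poincar\'e surfaces coincide with the totally geodesic plane bounded by $\g$, so the signed volume $V(\g)$ vanishes, while any uniformizing map is M\"obius, so that $\mc S(f^{-1}) \equiv \mc S(g^{-1}) \equiv 0$ and the mean-curvature integrals in \eqref{eq:VRdefinition} also vanish.

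Given an arbitrary $C^{5,\a}$ Jordan curve $\g$, I would invoke the equipotential construction of Example~\ref{ex:deformation} (with $k=5$) to obtain a $C^{5,\a}$ family $(\g_t)_{t\in[0,1]}$ with $\g_1 = \g$ and $\g_0 = \partial D$ a round circle. Each $\g_t$ is $C^{5,\a}$ and hence Weil--Petersson, so $\tilde\Liouville(\g_t)$ is finite, and $V_R(\g_t)$ is finite by Proposition~\ref{prop:finitevolume}; both $t \mapsto \tilde\Liouville(\g_t)$ and $t\mapsto V_R(\g_t)$ are differentiable, as the variation formulas below show.

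The heart of the argument is to compare the two first-variation formulas at each $t$. Regarding $(\g_{t+s})_s$ as a deformation of $\g_t$ (after a harmless $\pslt$-normalization, using that both functionals are $\pslt$-invariant), it is conformal on $\O_t$ and carries Beltrami differential $\dot\nu$ on $\O_t^*$, with $\dot\nu \in H^{-1,1}(\O_t^*)\oplus \mf N(\O_t^*)$. On one hand, Corollary~\ref{cor:variation_VR_mu} gives $\frac{\dd}{\dd t}V_R(\g_t) = -\Re\int_{\O_t^*}\dot\nu\,\mc S(g_t^{-1})\,\dd^2 z$. On the other hand, writing $\dot\nu = (g_t)_*\dot\nu_0$ for the corresponding differential $\dot\nu_0 \in H^{-1,1}(\m D^*)\oplus\mf N(\m D^*)$ and applying Theorem~\ref{thm:S_1_first_variation} in the form extended by Remark~\ref{rem:var_general_beltrami} (valid since $\int|\mc S(g_t)|\,\dd^2 z <\infty$ for $C^{3,\a}$ curves), the same deformation yields $\frac{\dd}{\dd t}\tilde\Liouville(\g_t) = -4\Re\int_{\O_t^*}\dot\nu\,\mc S(g_t^{-1})\,\dd^2 z$. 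Here $\mc S(g_t^{-1})\,\dd w^2$ is a well-defined holomorphic quadratic differential on $\O_t^*$, independent of the choice of conformal $g_t\colon \m D^*\to\O_t^*$ (two such maps differ by a M\"obius automorphism of $\m D^*$, whose Schwarzian vanishes), and the $\mf N(\O_t^*)$-component of $\dot\nu$ contributes nothing because $\mc S(g_t^{-1})$ is integrable and holomorphic on $\O_t^*$. Hence $\frac{\dd}{\dd t}\tilde\Liouville(\g_t) = 4\,\frac{\dd}{\dd t}V_R(\g_t)$ for every $t\in[0,1]$.

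Integrating this identity over $[0,1]$ and using the base case $\tilde\Liouville(\g_0) = 4V_R(\g_0) = 0$ gives $\tilde\Liouville(\g) = 4V_R(\g)$, as desired. I expect the main obstacle to be the bookkeeping in the matching step: reconciling the harmonic-Beltrami parametrization on $\m D^*$ underlying Theorem~\ref{thm:S_1_first_variation} with the concrete Beltrami differential $\dot\nu$ produced by the deformation in Corollary~\ref{cor:variation_VR_mu}, and confirming that both variations collapse to the identical integral over $\O_t^*$, differing only by the factor $4$.
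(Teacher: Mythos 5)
Your proposal is correct and follows essentially the same route as the paper's own proof: both sides vanish at the round circle, a $C^{5,\a}$ family from Example~\ref{ex:deformation} connects the circle to $\g$, and the first variations from Theorem~\ref{thm:S_1_first_variation} (via Remark~\ref{rem:var_general_beltrami}) and Corollary~\ref{cor:variation_VR_mu} are matched and integrated. Your write-up merely fills in details the paper leaves implicit (the $\pslt$-normalization, the $\mf N(\O_t^*)$-component pairing to zero, and the well-definedness of $\mc S(g_t^{-1})$), which is consistent with, not divergent from, the paper's argument.
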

\begin{proof}
When $\g$ is a circle, then $\tilde \Liouville (\g) = 0$ and $V_R(\g) = 0$ since both Epstein surfaces are the geodesic plane bounded by $\g$.

Given a $C^{5,\a}$ Jordan curve $\g$. We consider a $C^{5,\a}$ family $(\g_t)_{t \in [0,1]}$ of Jordan curves as in Example~\ref{ex:deformation} such that $\gamma_0 = \partial D$ is a circle and $\g_1 = \g$. 
The variational formula Theorem~\ref{thm:S_1_first_variation} and Corollary~\ref{cor:variation_VR_mu} show that  $$\tilde \Liouville (\g) = 4 V_R(\g)$$
since $\tilde \Liouville (\g_0) = 4 V_R(\g_0)$.
\end{proof}

\subsection{Approximation of  general Weil--Petersson quasicircle} \label{sec:approximation}

The goal of the section is to prove the following theorem using an approximation.
\begin{thm}\label{thm:general_ineq}
    For any Weil--Petersson quasicircle $\g$,
    $$\tilde \Liouville (\g) \ge 4 V_R(\g).$$
\end{thm}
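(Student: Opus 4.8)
The plan is to approximate $\g$ by analytic curves for which the identity of Corollary~\ref{cor:smooth_identity} already holds, and then pass to the limit while carefully tracking the \emph{direction} in which each quantity converges. Concretely, let $f$ be a conformal map from $\m D$ onto the bounded component of $\Chat \smallsetminus \g$, and set $\g_r := f(r\, \Sph^1)$ for $r \in (0,1)$, the equipotential curves; write $\O_r, \O_r^*$ for the two components of $\Chat \smallsetminus \g_r$. Each $\g_r$ is analytic, hence $C^{5,\a}$, so Corollary~\ref{cor:smooth_identity} gives
\begin{equation}\label{eq:ineq_approx}
\tilde \Liouville(\g_r) = 4 V_R(\g_r) = 4 V(\g_r) - 2 \int_{\S_{\O_r} \cup \S_{\O_r^*}} H \,\dd a .
\end{equation}
I would first record the two convergences going in the ``easy'' direction. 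Since $\g$ is a Weil--Petersson quasicircle, $\tilde \Liouville(\g_r)$ converges (increasingly) to the finite value $\tilde \Liouville(\g)$ as $r \to 1^-$, which is the equipotential approximation property of the universal Liouville action. On the other hand, writing the total mean curvature through Theorem~\ref{thm:PE} as the Schwarzian integrals of \eqref{eq:VRdefinition}, one has $\int_{\S_{\O_r}\cup\S_{\O_r^*}} H\,\dd a \to \int_{\S_\O \cup \S_{\O^*}} H\,\dd a$: the inner uniformizing map is $z \mapsto f(rz) \to f$ in $C^\infty_{\mathrm{loc}}(\m D)$, the outer maps converge in the Carath\'eodory sense, and the resulting $A_2$-convergence of Schwarzian derivatives is precisely convergence in $T_0(1)$.

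The heart of the argument is the lower semicontinuity of the signed volume,
\begin{equation}\label{eq:ineq_lsc}
\liminf_{r \to 1^-} V(\g_r) \ge V(\g),
\end{equation}
which is exactly where equality is lost. Recall from Section~\ref{sec:vol_between} that $V(\g) = \lim_{\vare \to 0^+} V_2(\g)(\vare)$ is an \emph{increasing} limit coming from the region near $\g$, so $V_2(\g)(\vare) \le V(\g)$ for all small $\vare$; the same monotonicity holds for each $\g_r$. Because $\g_r \to \g$, the neighborhoods $U_{\g_r}$ can be chosen to contain a fixed neighborhood of $\g$, yielding a threshold $\vare_0 > 0$ such that $V_2(\g_r)(\vare) \le V(\g_r)$ for all $\vare < \vare_0$ and all $r$ close to $1$. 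For each fixed $\vare < \vare_0$ the truncated region $\{\xi \ge \vare\}$ is compact, and the Epstein--Poincar\'e maps of $\g_r$ converge there (with the relevant derivatives, via Lemma~\ref{lem:explicit_Poincare}) to those of $\g$; hence $V_2(\g_r)(\vare) \to V_2(\g)(\vare)$ as $r \to 1^-$. Combining these,
$$ \liminf_{r\to 1^-} V(\g_r) \;\ge\; \liminf_{r\to 1^-} V_2(\g_r)(\vare) \;=\; V_2(\g)(\vare) \qquad \text{for every } \vare < \vare_0, $$
and letting $\vare \to 0^+$ gives \eqref{eq:ineq_lsc}.

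Finally I would assemble the pieces. Solving \eqref{eq:ineq_approx} for $V(\g_r)$ and letting $r \to 1^-$, the two easy convergences show that $\lim_{r\to 1^-} V(\g_r)$ exists and equals $\tfrac14 \tilde \Liouville(\g) + \tfrac12 \int_{\S_\O \cup \S_{\O^*}} H\,\dd a$. Comparing with \eqref{eq:ineq_lsc} gives
$$ \tfrac14 \tilde \Liouville(\g) + \tfrac12 \int_{\S_\O \cup \S_{\O^*}} H\,\dd a \;\ge\; V(\g), $$
which, rearranged via Definition~\ref{df:RenormalizedVolume}, is exactly $\tilde \Liouville(\g) \ge 4 V_R(\g)$. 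The main obstacle is \eqref{eq:ineq_lsc}: we only bound $V(\g_r)$ from below by the truncated volumes $V_2(\g_r)(\vare)$, and without a tightness (uniform integrability) estimate for the volume forms near $\g$ as $r \to 1^-$ there is no matching upper bound. This is precisely why the method yields only the inequality and cannot be pushed to the full identity \eqref{eq:intro_main_eq}.
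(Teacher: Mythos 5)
Your proposal is correct and follows essentially the same route as the paper: approximation by equipotentials, the $C^{5,\a}$ identity of Corollary~\ref{cor:smooth_identity}, the monotone convergence $\tilde\Liouville(\g_r)\to\tilde\Liouville(\g)$, convergence of the total mean curvature via convergence in $T_0(1)$, and convergence of the truncated volumes $V_2(\cdot)(\vare)$ combined with the monotonicity $V_2 \le V$ before letting $\vare \to 0^+$. Your packaging of the last step as lower semicontinuity of $V$ (with the explicit uniform threshold $\vare_0$) is the same argument as the paper's chain of inequalities in Corollary~\ref{cor:finite_v_general}, and your closing remark on the missing tightness matches the paper's explanation of why only the inequality is obtained.
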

\begin{remark}
We have already proved the equality when $\g$ is a $C^{5,\a}$ Jordan curve. We also believe the equality holds for arbitrary Weil--Petersson quasicircle but are only able to prove the inequality. 
\end{remark}

For the inequality, we will use the approximation using equipotential curves.
Let $\g$ be a Weil--Petersson quasicircle bounding Jordan domain $\Omega$ and $f : \m D \to \O$  a univalent map uniformizing $\Omega$. Up to post-composing $f$ by a M\"obius map, we may assume that $f(0) = 0$, $f'(0) = 1$ and $f''(0) = 0$.
The equipotentials 
$$\g_n = f_n (\m S^1), \quad \text{where } f_n (z): = \frac {n}{n-1} f \left(\frac {n-1}{n} z \right)$$
form a family of analytic Jordan curves. The map $f_n$ satisfies the same normalization as $f$ at $0$. 
 We let $\O_n^* := \Chat \smallsetminus \overline {f_n (\m D)}$ (resp. $\O^* := \Chat \smallsetminus \overline {f (\m D)}$) and $g_n$ (resp. $g$) be an arbitrary conformal map $\m D^* \to \O_n^*$ (resp, $\m D^* \to \O^*$).
Apart from the analyticity, the family of equipotentials is nice because of the following theorem.

\begin{thm}[See {\cite[Cor.\,1.5]{VW1}}]\label{thm:equipotential}
Along the family of equipotentials, the universal Liouville action converges and is non-decreasing.
Specifically, $$\lim_{n \to \infty} \uparrow \tilde \Liouville (\g_n) = \tilde \Liouville (\g). $$
If $\g$ is not a circle, then $\tilde \Liouville (\g_{n+1}) > \tilde \Liouville (\g_{n})$. 
\end{thm}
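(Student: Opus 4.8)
The plan is to replace the discrete sequence $(\g_n)$ by the continuous family $\g_r:=f(r\m S^1)$, $r\in(0,1)$, and to control $\tilde\Liouville$ through a first--variation argument. Since $\tilde\Liouville$ is $\pslt$--invariant and $f_n(z)=\frac1r f(rz)$ with $r=r_n=(n-1)/n$, we have $\tilde\Liouville(\g_n)=\tilde\Liouville(\g_{r_n})$; as $r_n\uparrow 1$, it suffices to show that $L(r):=\tilde\Liouville(\g_r)$ is non--decreasing on $(0,1)$, strictly increasing unless $\g$ is a circle, and that $L(r)\to\tilde\Liouville(\g)$ as $r\to1^-$. For each fixed $r<1$ the map $f$ is holomorphic and univalent on a neighbourhood of $\overline{r\m D}$, so $\g_r$ is an analytic Jordan curve; in particular it is $C^{5,\a}$ and Weil--Petersson, $L(r)$ is finite, and $r\mapsto L(r)$ is smooth. (As a sanity check, the normalization $f(0)=0,\ f'(0)=1,\ f''(0)=0$ gives $f(rz)/r\to z$, so $\g_r$ degenerates to a round circle and $L(0^+)=0$.)

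Monotonicity is the heart of the matter, and I would extract it from Theorem~\ref{thm:S_1_first_variation}. The deformation from $\g_r$ to $\g_{r'}$ is \emph{conformal on the inner side}: setting $F_r(z):=f(rz)$, the composition $F_{r'}\circ F_r^{-1}$ is a conformal map $\O_r\to\O_{r'}$, so the generating Beltrami differential $\dot\nu_r=\partial_{\bar z}\dot F$ is supported on the outer domain $\O_r^*$ and vanishes on $\O_r$, matching the convention of Theorem~\ref{thm:S_1_first_variation}. Hence $L'(r)=-4\Re\int_{\O_r^*}\dot\nu_r\,\mc S(g_r^{-1})\,\dd^2z$, where $g_r\colon\m D^*\to\O_r^*$. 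Writing the energy via the nonlinearity \eqref{eq:IL_S1} with inner map $F_r$, a change of variables turns the inner contribution into $\int_{r\m D}|\mc N(f)|^2\,\dd^2w$, whose $r$--derivative $r\int_0^{2\pi}|\mc N(f)(re^{\ii\theta})|^2\,\dd\theta$ is manifestly non--negative; the real task is therefore to control the outer nonlinearity term and the logarithmic term together. I expect the cleanest realization to be an \emph{additive decomposition} $\tilde\Liouville(\g)=\tilde\Liouville(\g_r)+\mc E_r$, in which $\mc E_r\ge0$ is the ``energy of the annular region'' bounded by $\g_r$ and $\g$; since these regions are nested and shrink to $\g$, $\mc E_r$ decreases to $0$ as $r\to1$ and increases to $\tilde\Liouville(\g)$ as $r\to0$, yielding monotonicity and the limit simultaneously. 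Equivalently, one signs $L'(r)\ge0$ directly by projecting $\dot\nu_r$ onto $H^{-1,1}(\O_r^*)$ and pairing with $\mc S(g_r^{-1})$.

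The convergence $L(r)\to\tilde\Liouville(\g)$ can also be obtained without the decomposition: the inner term increases to $\int_{\m D}|\mc N(f)|^2\,\dd^2w<\infty$ by monotone convergence (finite because $\g$ is Weil--Petersson), while $\O_r^*\to\O^*$ in the Carath\'eodory sense forces $g_r\to g$ locally uniformly, together with convergence of $g_r'(\infty)$ and of the outer nonlinearity integral, so that the outer and logarithmic terms converge to their values for $\g$. Strict monotonicity when $\g$ is not a circle then follows because equality in the variation (equivalently $\mc E_r\equiv0$ on a subinterval, or $L'\equiv0$) forces $\mc S(f)\equiv0$, hence $f$ M\"obius and $\g$ a circle, contrary to assumption.

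The main obstacle is signing the outer contribution. The inner Dirichlet energy is trivially monotone, but the outer nonlinearity integral and the term $4\pi\log|F_r'(0)/g_r'(\infty)|$ are not individually monotone, and the non--canonical quasiconformal extension of $\dot F$ to $\O_r^*$ makes $\dot\nu_r$ only implicit. Converting the first variation into a manifestly non--negative quantity --- most naturally by establishing the positivity of the annular energy $\mc E_r$ --- is the step that requires genuine work.
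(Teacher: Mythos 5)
First, a point of comparison: the paper does not prove Theorem~\ref{thm:equipotential} at all --- it is imported wholesale from \cite[Cor.\,1.5]{VW1}. So your attempt has to be judged as a proof of that cited result, and as such it has a genuine gap at its center. Your reduction is fine: by $\pslt$-invariance $\tilde \Liouville(\g_n) = L(r_n)$ with $L(r) := \tilde \Liouville(f(r\m S^1))$, each $\g_r$ is analytic and hence Weil--Petersson, the inner nonlinearity term equals $\int_{r\m D} |\mc N(f)|^2 \,\dd^2 w$ and is monotone in $r$, and Theorem~\ref{thm:S_1_first_variation} gives $L'(r) = -4\Re \int_{\O_r^*} \dot\nu_r \,\mc S(g_r^{-1})\, \dd^2 z$. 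But the whole theorem hinges on showing $L'(r) \ge 0$, equivalently on producing the decomposition $\tilde \Liouville(\g) = L(r) + \mc E_r$ with $\mc E_r \ge 0$, and this is exactly the step you concede ``requires genuine work'' without supplying it. It is not a routine completion: $\mc S(g_r^{-1})$ has no sign, $\dot\nu_r$ depends on a non-canonical quasiconformal extension, and no soft projection or symmetry argument signs that pairing. In \cite{VW1} the positivity of precisely your ``annular energy'' $\mc E_r$ is the main theorem of the paper (energy duality): $\tilde \Liouville(\g) - \tilde \Liouville(\g_r)$ is identified with $16$ times the Loewner--Kufarev energy of the portion of the equipotential foliation between $\g_r$ and $\g$, which is manifestly nonnegative, and the strict inequality for non-circles comes from the same identification. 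That duality is a substantial piece of Loewner theory, not a consequence of the K\"ahler-potential variation formula; your proposal correctly guesses the architecture of the proof but omits its entire content.

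A secondary, more reparable gap: the limit $L(r) \to \tilde \Liouville(\g)$ does not follow from Carath\'eodory convergence alone. Local uniform convergence of $g_r$ to $g$ gives only lower semicontinuity (Fatou) for $\int_{\m D^*} |\mc N(g_r)|^2\, \dd^2 z$, not convergence of the outer term, and the log term does not save this by itself. One either derives the limit from the (missing) decomposition combined with lower semicontinuity of $\tilde \Liouville$ under uniform convergence of curves \cite[Lem.\,2.12]{RW} --- since $\mc E_r \ge 0$ gives $L(r) \le \tilde \Liouville(\g)$, forcing equality in the limit --- or one argues convergence in $T_0(1)$ as the paper does for the Schwarzian integrals in Lemma~\ref{lem:conv_HdA}, via the results of \cite{TT06}. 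Either way, the convergence statement is downstream of machinery your sketch does not provide.
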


\begin{lemma}\label{lem:conv_HdA}
\begin{equation}
\lim_{n\rightarrow \infty}\int_{\S_{\O_n} \cup \S_{\O^*_n}} \hspace{-10pt}H \dd a = \int_{\S_{\O} \cup \S_{\O^*} } \hspace{-10pt}H \dd a.
\end{equation}
\end{lemma}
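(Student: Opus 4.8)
The plan is to reduce the statement to a convergence of Schwarzian energies and then treat the interior and exterior contributions separately. Each equipotential $\g_n$ is analytic, hence a Weil--Petersson curve, so by Theorem~\ref{thm:PE} together with the change of variables used in Definition~\ref{df:RenormalizedVolume},
\[
\int_{\S_{\O_n}\cup\S_{\O_n^*}}\!\!H\,\dd a=\int_{\m D}|\mc S(f_n)(z)|^2\,\rho_{\m D}^{-1}(z)\,\dd^2 z+\int_{\m D^*}|\mc S(g_n)(z)|^2\,\rho_{\m D^*}^{-1}(z)\,\dd^2 z,
\]
with $\rho_{\m D}^{-1}(z)=(1-|z|^2)^2/4$, $\rho_{\m D^*}^{-1}(z)=(|z|^2-1)^2/4$, and the same identity holds with $f,g$ for the limit curve $\g$. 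So it suffices to show that each of the two integrals converges to its counterpart for $\g$.

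For the interior term I would use the explicit scaling. Writing $r_n=(n-1)/n\uparrow 1$, we have $f_n=L\circ f\circ D$ with the dilations $D(z)=r_n z$ and $L(w)=w/r_n$; both are Möbius, so the chain rule for the Schwarzian gives $\mc S(f_n)(z)=r_n^2\,\mc S(f)(r_n z)$. After the change of variables $w=r_n z$,
\[
\int_{\m D}|\mc S(f_n)|^2\rho_{\m D}^{-1}\,\dd^2 z=\int_{\{|w|<r_n\}}|\mc S(f)(w)|^2\,\frac{(r_n^2-|w|^2)^2}{4r_n^2}\,\dd^2 w.
\]
A direct computation shows that, for fixed $w$, the weight $(r_n^2-|w|^2)^2/(4r_n^2)$ increases to $(1-|w|^2)^2/4$ as $r_n\uparrow 1$ and stays below it on $\{|w|<r_n\}$. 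Since $\g$ is Weil--Petersson the limiting weight is integrable against $|\mc S(f)|^2$, so monotone (equivalently dominated) convergence yields convergence of the interior integral.

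The exterior term is the delicate one, since the exterior uniformizing map carries no such elementary scaling. Here I would first strip off the outer dilation: because $\O_n^*=r_n^{-1}\,\tilde\O_n^*$ with $\tilde\O_n^*:=\Chat\smallsetminus\overline{f(r_n\m D)}$, and the Schwarzian energy is both independent of the chosen conformal map and invariant under post-composing with the Möbius dilation $w\mapsto w/r_n$, I may replace $g_n$ by a conformal map $\tilde g_n:\m D^*\to\tilde\O_n^*$ with the same energy. The domains now form a nested \emph{decreasing} family with $\O^*\subset\tilde\O_n^*$ for every $n$ and $\bigcap_n\tilde\O_n^*=\O^*$. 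By the Carathéodory kernel theorem (normalizing $\tilde g_n(\infty)=\infty$, $\tilde g_n'(\infty)>0$) the maps $\tilde g_n$ converge locally uniformly to $g$ on $\m D^*$, hence $\mc S(\tilde g_n)\to\mc S(g)$ locally uniformly. As the integrands are nonnegative, Fatou's lemma gives the lower bound $\liminf_n\int_{\m D^*}|\mc S(\tilde g_n)|^2\rho_{\m D^*}^{-1}\ge\int_{\m D^*}|\mc S(g)|^2\rho_{\m D^*}^{-1}$, and combined with the interior convergence this already yields one inequality for the whole sum.

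The main obstacle is the matching upper bound for the exterior energy, namely ruling out concentration of $|\mc S(\tilde g_n)|^2\rho_{\m D^*}^{-1}$ near $\partial\m D^*$: the naive Nehari estimate $|\mc S(\tilde g_n)(z)|\,(|z|^2-1)^2\le 6$ is not integrable against $\rho_{\m D^*}^{-1}$, so univalence alone furnishes no uniform domination. I would resolve this through a domain-monotonicity of the exterior Schwarzian energy. Since $\O^*\subset\tilde\O_n^*$, the subordination $h_n:=\tilde g_n^{-1}\circ g:\m D^*\to\m D^*$ is a univalent self-map fixing $\infty$ with $h_n\to\id$, and $g=\tilde g_n\circ h_n$. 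Feeding this into the Schwarzian cocycle identity $\mc S(g)=(\mc S(\tilde g_n)\circ h_n)\,(h_n')^2+\mc S(h_n)$ and using the conformal naturality of the hyperbolic-weighted $L^2$ norm, I expect $\int_{\m D^*}|\mc S(\tilde g_n)|^2\rho_{\m D^*}^{-1}\le\int_{\m D^*}|\mc S(g)|^2\rho_{\m D^*}^{-1}$ (the smaller exterior domain carrying more energy), in parallel with the monotonicity of $\tilde\Liouville$ along equipotentials in Theorem~\ref{thm:equipotential}. This bound forces $\limsup_n\le\int_{\m D^*}|\mc S(g)|^2\rho_{\m D^*}^{-1}$; together with the Fatou lower bound it gives exterior convergence, and adding the interior convergence completes the proof.
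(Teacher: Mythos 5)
Your reduction to the two Schwarzian energies and your treatment of the interior term are both correct: the identity $\mc S(f_n)(z)=r_n^2\,\mc S(f)(r_nz)$ and the monotone-convergence argument with the weight $(r_n^2-|w|^2)^2/(4r_n^2)\uparrow(1-|w|^2)^2/4$ give a clean, self-contained proof of the interior convergence (the paper instead cites the continuity of the Bers embedding on $T_0(1)$ from Takhtajan--Teo). The Fatou lower bound for the exterior term is also fine.

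The gap is the upper bound for the exterior energy, which you yourself flag with ``I expect'': the claimed domain monotonicity $\int_{\m D^*}|\mc S(\tilde g_n)|^2\rho_{\m D^*}^{-1}\le\int_{\m D^*}|\mc S(g)|^2\rho_{\m D^*}^{-1}$ for $\O^*\subset\tilde\O_n^*$ is \emph{false} as a general statement about nested exterior domains. Take $\O^*$ a round disk exterior contained in the exterior $\tilde\O^*$ of a non-round analytic Jordan curve: then the right-hand side vanishes while the left-hand side is strictly positive. Nor does the cocycle identity rescue it: writing $g=\tilde g_n\circ h_n$ with $h_n:\m D^*\to\m D^*$ univalent, conformal naturality together with the Schwarz lemma gives $\|h_n^*\mc S(\tilde g_n)\|_2\le\|\mc S(\tilde g_n)\|_2$, i.e.\ the pullback \emph{decreases} the hyperbolic $L^2$ norm, and there is the extra term $\mc S(h_n)$ besides — so the inequality you need does not follow, and the analogy with the monotonicity of $\tilde\Liouville$ along equipotentials (which concerns a different functional, built from nonlinearities plus a logarithmic term) does not transfer. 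To close the argument you would need some genuinely new input controlling possible concentration of $|\mc S(\tilde g_n)|^2\rho_{\m D^*}^{-1}$ near $\partial\m D^*$. The paper avoids this entirely: it deduces $[\mu_n]\to[\mu]$ in $T_0(1)$ from \cite[Cor.~A.4, Cor.~A.6]{TT06}, gets the interior convergence from the continuity of the Bers embedding into $A_2(\m D)$, and then obtains the exterior convergence from the fact that $T_0(1)$ is a \emph{topological group}, so $[\mu_n]^{-1}\to[\mu]^{-1}$ and the same continuity applies to $g_n$. Some such structural fact (or an equivalent uniform integrability estimate) is what your exterior step is missing.
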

\begin{proof}
It follows from \cite[Cor.\,A.4., Cor.\,A.6]{TT06} that the element $[\mu_n]$ in $T_0(1)$ associated with $\g_n$ converges to $[\mu]$ which is associated with $\g$. In particular, \cite[Chap.\,I, Thm.\,2.13, Thm.\,3.1]{TT06} implies that 
$$\int_{\m D} \norm{\mc S (f_n)}_{\m D}^2 \,\rho_{\m D} \, \dd^2 z = \int_{\m D} |\mc S (f_n)|^2 \rho^{-1}_{\m D} \,\dd^2 z \xrightarrow[]{n \to \infty} \int_{\m D} \norm{\mc S (f)}_{\m D}^2 \, \rho_{\m D} \, \dd^2 z.$$
As $T_0(1)$ is a topological group, $[\mu_n]^{-1}$ converges to $[\mu]^{-1}$ which implies
$$\int_{\m D^*} \norm{\mc S (g_n)}^2_{\m D^*}\, \rho_{\m D^*} \,\dd^2 z = \int_{\m D^*} |\mc S (g_n)|^2 \rho^{-1}_{\m D^*} \,\dd^2 z \xrightarrow[]{n \to \infty} \int_{\m D^*} \norm{\mc S (g)}^2_{\m D^*}\, \rho_{\m D^*} \,\dd^2 z.$$
The proof is completed using Theorem~\ref{thm:PE} and that $\norm{\mc S(f^{-1}) (f(\z))}_{\O} =  \norm{\mc S(f) (\z)}_{\m D}$.
\end{proof}

\begin{lemma} Recall that $V_{2} (\g)(\vare)$ denotes the signed volume between $\Ep_{\O}$ and $\Ep_{\O^*}$ above Euclidean height $\vare$.
Then $V_{2} (\g_n)(\vare)$ converges to $V_{2} (\g)(\vare)$ for all $\vare > 0$.
\end{lemma}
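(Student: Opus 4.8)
The plan is to keep the pullback-integral definition $V_2(\g)(\vare) = \int_{\m H^3} \varphi_\g^* \vol_\vare$ and to pass to the limit by dominated convergence, once we know that the Epstein--Poincar\'e maps of the equipotentials converge to those of $\g$ on the region responsible for the height-$\ge\vare$ part. First I would record the convergence of the defining conformal data. The rescaled maps $f_n(z) = \tfrac{n}{n-1} f(\tfrac{n-1}{n} z)$ are holomorphic on a neighbourhood of $\overline{\m D}$ and converge to $f$ locally uniformly on $\m D$, hence together with all their derivatives by Cauchy's estimates; in particular $\O_n = f_n(\m D)$ exhausts and converges to $\O$ in the sense of Carath\'eodory kernels (based at $0$), so $\O_n^*$ converges to $\O^*$ and, with a fixed normalization, $g_n : \m D^* \to \O_n^*$ converges to $g$ locally uniformly on $\m D^*$ with all derivatives (this is also contained in the convergence $[\mu_n] \to [\mu]$ in $T_0(1)$ already used in Lemma~\ref{lem:conv_HdA}; note the Epstein map depends only on the domain, not on the parametrization, so the normalization is merely a computational device). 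Substituting into the explicit formula of Lemma~\ref{lem:explicit_Poincare} gives $\Ep_{\O_n} \circ f_n \to \Ep_\O \circ f$ and $\Ep_{\O_n^*} \circ g_n \to \Ep_{\O^*} \circ g$ in $C^\infty_{\mathrm{loc}}$, and hence $\Ep_{\O_n} \to \Ep_\O$, $\Ep_{\O_n^*} \to \Ep_{\O^*}$ uniformly together with first derivatives on every compact subset of $\O$, $\O^*$ bounded away from $\g$.

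Next I would localize above height $\vare$ uniformly in $n$. By Corollary~\ref{cor:bdyvals}, for an asymptotically conformal curve one has $\xi(z) \le 4\, d(z,\g)$, so the portion of each Epstein--Poincar\'e surface at height $\ge \vare$ is the image of points at Euclidean distance $\ge \vare/4$ from the relevant curve. Since $\g_n \to \g$ uniformly and the exterior caps $\Ep_{\O_n^*}(\infty)$ converge (their height being governed by $g_n'(\infty) \to g'(\infty)$, so that the domes $\S_{\O_n^*}$ do not escape), these portions stay inside one fixed compact set $K \subset \overline{\m H^3}$ and are parametrized by compact subsets of $\O,\O^*$ bounded away from $\g$, for all large $n$; there the convergence of the previous paragraph is uniform in $C^1$. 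I would then choose the interpolating maps $\varphi_{\g_n}$ to carry the (converging) Epstein boundary data and to converge to $\varphi_\g$ in $C^1$ on a neighbourhood of $K$, with $\varphi_{\g_n}^{-1}(\{\xi \ge \vare\})$ contained in a fixed compact subset of $\m H^3$; this is possible precisely because the boundary data and their first derivatives converge uniformly on the region producing the height-$\ge\vare$ part.

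Finally I would pass to the limit. Writing $\varphi_{\g_n}^* \vol_\vare = (\1_{\xi \ge \vare} \circ \varphi_{\g_n})\, \varphi_{\g_n}^*(\xi^{-3}\,\vol_{eucl})$, the second factor converges pointwise (Jacobians and $\xi^{-3}$ converge) and, on its support, is dominated by $\vare^{-3}\,(\sup_n \|D\varphi_{\g_n}\|^3)\, \1_{K}$, a fixed integrable function. Since $V_2(\g)(\vare)$ is independent of the choice of interpolation, I may fix $\varphi_\g$ so that $\vare$ is a regular value of $\xi \circ \varphi_\g$, whence the indicator factor converges off the measure-zero level set $\{\xi \circ \varphi_\g = \vare\}$. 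Dominated convergence then yields $V_2(\g_n)(\vare) \to V_2(\g)(\vare)$.

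The main obstacle is the uniform control required for dominated convergence: producing the single compact set $K$ and the uniform $C^1$ bounds on the interpolating maps, which rests on the height estimate of Corollary~\ref{cor:bdyvals} and on the convergence of the exterior uniformizations $g_n \to g$ near $\infty$. The truncation at height $\vare$ contributes the minor but genuine point of arranging $\vare$ to be a regular value of $\xi\circ\varphi_\g$, so that the discontinuous indicator converges almost everywhere rather than merely in measure.
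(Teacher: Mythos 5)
Your proposal is correct and follows essentially the same route as the paper: use the height bound of Corollary~\ref{cor:bdyvals} to confine the height-$\ge\vare$ portion of the surfaces to a fixed compact set bounded away from $\g$ (the paper's sets $K_{\vare,n}\subset K_{\vare/40}$), then invoke uniform convergence of $f_n,g_n$ and their derivatives there, hence of the Epstein maps, to pass to the limit. Your treatment of the final limiting step (choice of interpolating maps, domination, and arranging $\vare$ to be a regular value so the indicators converge a.e.) is in fact more explicit than the paper's, which leaves that step implicit.
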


\begin{proof}
For this, we denote for $\vare > 0$,
$$K_{\vare,n} := \{\z \in \m D \colon \xi_n \circ f_n (\z) \ge \vare\}, \quad K_{\vare} := \{\z \in \m D \colon \xi \circ f (\z) \ge \vare\},  $$
where $(Z_n, \xi_n)$ is the Epstein--Poincar\'e map on the domain $\O_n = f_n (\m D)$ following the notations in Section~\ref{sec:explicit}.

By Corollary \ref{cor:bdyvals}
 \begin{equation*} 
\frac{\dist (f_n(\z), \g_n)}{5}  \le  |\xi_n \circ f_n (\z)| \le 4 \dist (f_n(\z), \g_n)
 \end{equation*}
 which implies for all $\z \in K_{\vare, n}$,
$$\dist (f_n(\z), \g_n) \ge \vare / 4. $$
 
 Since $f_n$ converges uniformly to $f$ on $\overline{\m D}$ from the explicit expression, 
 the derivatives of $f_n$ converge to the derivatives of $f$ uniformly on compact sets of $\m D$ by Cauchy's integral formula.

  Hence, there exists $n_0$ such that for all $n \ge n_0$, 
 \begin{equation*}
 \norm{f_n - f}_{\infty, \overline{\m D}} < \vare /16.  
 \end{equation*}
 This implies 
 $$\dist (f(\zeta), \g) \ge \vare / 8 \quad
\text{ and } \quad \xi \circ f (\z) \ge \vare/40.$$
Summarizing, for all $n \ge n_0$,
$$K_{\vare, n} \subset K_{\vare/40}.$$
Since $K_{\vare/40}$ is a compact set in $\m D$ independent of $n$,  
all derivatives of $f_n$ converge uniformly to the derivatives of $f$ on $K_{\vare/40}$.
As the Epstein--Poincar\'e map only depends on $f$, $f'$, and $f''$ (Theorem~\ref{thm:basic_epstein}), $\Ep_{\O_n} \circ f_n$ converges uniformly to $\Ep_{\O} \circ f$ uniformly on $K_{\vare/40}$. 
Similarly argument applies to the Epstein--Poincar\'e maps $\Ep_{\O_n^*} \circ g_n$.
We obtain that $V_{2} (\g_n)(\vare)$ converges to $V_{2} (\g)(\vare)$.
\end{proof}

We obtain the following corollary.
\begin{cor} \label{cor:finite_v_general}
If $\g$ is a Weil--Petersson quasicircle, then
     $$V(\g)\le    \frac14 \tilde \Liouville (\g) + \frac 12 \int_{\S_{\O} \cup \S_{\O^*}} \hspace{-10pt} H \dd a  < \infty.$$
\end{cor}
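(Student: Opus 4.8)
The plan is to transfer the exact identity already proved for smooth curves to the analytic equipotentials $\g_n$ and then pass to the limit. Since each $\g_n$ is analytic, hence $C^{5,\a}$, Corollary~\ref{cor:smooth_identity} applies and gives $\tilde\Liouville(\g_n) = 4V_R(\g_n)$; unwinding Definition~\ref{df:RenormalizedVolume} and using Proposition~\ref{prop:finitevolume} (so that $V(\g_n)$ is a finite real number) this reads
$$V(\g_n) = \tfrac14\,\tilde\Liouville(\g_n) + \tfrac12\int_{\S_{\O_n}\cup\S_{\O^*_n}} H\,\dd a .$$
First I would record that the right-hand side converges: Theorem~\ref{thm:equipotential} gives $\tilde\Liouville(\g_n)\uparrow\tilde\Liouville(\g)$, and Lemma~\ref{lem:conv_HdA} gives convergence of the total mean curvature. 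Hence $V(\g_n)$ converges to $\tfrac14\tilde\Liouville(\g)+\tfrac12\int_{\S_\O\cup\S_{\O^*}}H\,\dd a$, which is finite since $\g$ being a Weil--Petersson quasicircle forces $\tilde\Liouville(\g)<\infty$ and, by Corollary~\ref{cor:WP_HdA}, the mean-curvature integral is finite as well.

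The remaining step, and the one I expect to carry all the difficulty, is to bound $V(\g)$ above by $\lim_n V(\g_n)$. This is a genuine interchange of the two limits $\vare\to 0^+$ and $n\to\infty$ hidden in the double-indexed quantity $V_2(\g_n)(\vare)$, and the only convergence available in $n$ is at fixed $\vare$ (the preceding lemma). I would exploit the one-sided monotonicity built into the definition of the signed volume: near $\g_n$ the two Epstein--Poincar\'e surfaces are disjoint and embedded (Proposition~\ref{prop:disjoint}, Corollary~\ref{cor:embedding_wp}), so the slab $\{0<\xi<\vare\}$ contributes with positive sign and $V_2(\g_n)(\vare)\le V(\g_n)$ for all small $\vare$. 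The technical crux is that this monotonicity threshold must be taken uniform in $n$; I would deduce this from the fact that the height bounds in Corollary~\ref{cor:bdyvals} have universal constants and that the maps $f_n,g_n$ together with their derivatives converge on compact subsets with uniform embeddedness near $\g_n$, so that for each fixed small $\vare$ and all large $n$ we indeed have $V_2(\g_n)(\vare)\le V(\g_n)$.

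Fixing such an $\vare$ and letting $n\to\infty$, the preceding lemma then yields
$$V_2(\g)(\vare)=\lim_{n\to\infty}V_2(\g_n)(\vare)\le \liminf_{n\to\infty}V(\g_n).$$
Letting $\vare\to 0^+$ and using the monotone convergence $V_2(\g)(\vare)\uparrow V(\g)$ from the definition of the volume gives $V(\g)\le\liminf_n V(\g_n)$, and combining with the computation of $\lim_n V(\g_n)$ above produces
$$V(\g)\le \tfrac14\,\tilde\Liouville(\g) + \tfrac12\int_{\S_\O\cup\S_{\O^*}} H\,\dd a<\infty,$$
as required. The argument is intrinsically one-sided: to upgrade it to the equality $\tilde\Liouville(\g)=4V_R(\g)$ one would need the reverse bound $\limsup_n V(\g_n)\le V(\g)$, i.e.\ uniform smallness of the tails $V(\g_n)-V_2(\g_n)(\vare)$ as $\vare\to0$ (a tightness statement), which is exactly what is not available here.
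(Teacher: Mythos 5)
Your proof is correct and is essentially the paper's own argument: both combine the smooth-case identity $V(\g_n)=\tfrac14\tilde\Liouville(\g_n)+\tfrac12\int H\,\dd a$ for the analytic equipotentials with Theorem~\ref{thm:equipotential}, Lemma~\ref{lem:conv_HdA}, the fixed-$\vare$ convergence $V_2(\g_n)(\vare)\to V_2(\g)(\vare)$, and the monotonicity $V_2(\cdot)(\vare)\le V(\cdot)$, then let $\vare\to0^+$. Your explicit attention to making the monotonicity threshold in $\vare$ uniform in $n$ is a point the paper passes over with ``for small enough $\vare$,'' but it is a refinement of, not a departure from, the same proof.
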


\begin{proof}
     For small enough $\vare >0$,
\begin{align*}
V_{2} (\g)(\vare)& = \lim_{n \to \infty} V_{2} (\g_n) (\vare)  \\
&\le  \lim_{n \to \infty} \frac14 \tilde \Liouville (\g_n) + \frac 12 \int_{\S_{\O_n} \cup \S_{\O^*_n}} \hspace{-10pt} H \dd a  = \frac14 \tilde \Liouville (\g) +\frac 12 \int_{\S_{\O} \cup \S_{\O^*}} \hspace{-10pt} H \dd a
\end{align*}
by Theorem~\ref{thm:equipotential} and Lemma~\ref{lem:conv_HdA}. 
We obtained the inequality by taking $\vare \to 0$.
\end{proof}
Theorem~\ref{thm:general_ineq} follows immediately from this corollary.

\section{Gradient flow of the universal Liouville action} \label{sec:gradient}

Following Bridgeman--Brock--Bromberg \cite{bridgeman2021weilpetersson} and Bridgeman--Bromberg--Vargas-Pallete \cite{BridgemanBrombergVargas}, we introduce the following flow on $T(1)$. For $[\mu] \in T(1)$, there is a natural isomorphism $T_{[\mu]}T(1) \simeq \Omega^{-1,1}(\D^*)$.
We therefore define the vector field
$$V([\mu]) := -4\frac{\overline{\mc S(g_{\mu}})}{\rho_{\D^*}} \in \Omega^{-1,1}(\D^*),$$
where $g_\mu$ is a conformal map defined on $\m D^*$ associated with $[\mu]$ as defined in Section~\ref{sec:universal_T}.

\begin{thm}\label{thm:gradient}
The vector field $V$ has flowlines that exist for all time on  $T(1)$. The flow is restricted to a flow on $T_0(1)$ and is the \textnormal(negative\textnormal) Weil--Petersson gradient of the Liouville functional $\Liouville$. Furthermore, all flowlines on $T_0(1)$ converge to the origin $[0]$, which corresponds to the round circle.
\end{thm}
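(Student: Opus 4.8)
The plan is to treat the three assertions in turn. First I would identify $V$ as the negative Weil--Petersson gradient. Writing $g(\cdot,\cdot)=\Re\brac{\cdot,\cdot}$ for the real inner product underlying the Hermitian Weil--Petersson metric $\brac{\dot\mu,\dot\nu}=\int_{\D^*}\dot\mu\,\overline{\dot\nu}\,\rho_{\D^*}\dd^2 z$, one feeds the first variation formula of Theorem~\ref{thm:S_1_first_variation} into the defining relation $g(\grad_{\WP}\Liouville,\dot\nu)=(\dd\Liouville)_{[\mu]}(\dot\nu)$: for $\dot\nu\in H^{-1,1}(\D^*)$,
\[
g(V_{[\mu]},\dot\nu)=\Re\int_{\D^*}\left(-4\frac{\overline{\mc S(g_\mu)}}{\rho_{\D^*}}\right)\overline{\dot\nu}\,\rho_{\D^*}\dd^2 z=-4\Re\int_{\D^*}\mc S(g_\mu)\,\dot\nu\,\dd^2 z=-(\dd\Liouville)_{[\mu]}(\dot\nu),
\]
so $V_{[\mu]}=-\grad_{\WP}\Liouville([\mu])$. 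That $V$ is tangent to $T_0(1)$, i.e.\ $V_{[\mu]}\in H^{-1,1}(\D^*)$ when $[\mu]\in T_0(1)$, is exactly $\mc S(g_\mu)\in A_2(\D^*)$, which holds by the univalent-map characterization \eqref{eq:wp_schwarzian} applied to $g_\mu$ on $\D^*$ (equivalently, inversion $[\mu]\mapsto[\mu]^{-1}$ preserves $T_0(1)$). Since $V$ is tangent to the integral submanifold $T_0(1)$, uniqueness forces a flowline started in $T_0(1)$ to stay there.

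\textbf{Global existence on $T(1)$.} For \emph{any} univalent $g_\mu$, the Kraus--Nehari bound gives $\norm{\mc S(g_\mu)}_{\D^*}\le 3/2$, so the pointwise Beltrami norm of $V_{[\mu]}$ equals $4\norm{\mc S(g_\mu)}_{\D^*}\le 6$ \emph{uniformly over all of} $T(1)$. As $\mu\mapsto\mc S(g_\mu)$ depends holomorphically on $\mu$ (Ahlfors--Bers), $V$ is a real-analytic, hence locally Lipschitz, vector field, giving local existence and uniqueness. The uniform bound controls the Teichm\"uller-metric speed: a maximal flowline on $[0,T)$ with $T<\infty$ has $\dist_T([\mu_s],[\mu_t])\le 6\abs{t-s}$, so it is Cauchy and converges in the Teichm\"uller-complete space $T(1)$ as $t\to T$; local existence at the limit extends the flowline, contradicting maximality. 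Hence flowlines exist for all time, and by the previous paragraph the restriction to $T_0(1)$ is the negative Weil--Petersson gradient flow.

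\textbf{Convergence to the origin.} On $T_0(1)$ we have $\tfrac{\dd}{\dd t}\Liouville([\mu_t])=-\norm{V_{[\mu_t]}}_{\WP}^2\le 0$, vanishing only where $\mc S(g_\mu)\equiv 0$, i.e.\ at $[0]$. As $\Liouville\ge 0$, the value decreases to some $L\ge 0$ and $\int_0^\infty\norm{V_{[\mu_t]}}_{\WP}^2\dd t=\Liouville([\mu_0])-L<\infty$. The key step, which sidesteps the lack of compactness of sublevel sets in infinite dimensions, is to extract subsequential convergence directly: pick $t_n\to\infty$ with $\norm{V_{[\mu_{t_n}]}}_{\WP}\to 0$, i.e.\ $\mc S(g_{\mu_{t_n}})\to 0$ in $A_2(\D^*)$; since $[\mu]\mapsto\mc S(g_\mu)$ is a homeomorphism of $T_0(1)$ onto an open subset of $A_2(\D^*)$ (the Bers embedding of \cite{TT06}, transported to $\D^*$ by inversion) sending $[0]$ to $0$, continuity of its inverse yields $[\mu_{t_n}]\to[0]$, whence $L=\Liouville([0])=0$.

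To promote this to convergence of the entire flowline, I would use the non-degeneracy of the minimum: $[0]$ is a critical point of the real-analytic functional $\Liouville$ whose Hessian is the positive-definite Weil--Petersson metric (as $\Liouville$ is a K\"ahler potential), which yields a Lojasiewicz--Simon inequality $\norm{\grad_{\WP}\Liouville}_{\WP}\ge c\,\Liouville^{1/2}$ on a neighborhood $U$ of $[0]$. Choosing $t_n$ with $[\mu_{t_n}]\in U$ and $\Liouville([\mu_{t_n}])$ small, the computation $\tfrac{\dd}{\dd t}\Liouville^{1/2}=-\tfrac{\norm{V}_{\WP}^2}{2\Liouville^{1/2}}\le-\tfrac{c}{2}\norm{V}_{\WP}$ gives $\int_{t_n}^t\norm{V}_{\WP}\dd s\le\tfrac{2}{c}\Liouville([\mu_{t_n}])^{1/2}$ as long as the flow remains in $U$; a bootstrap on this length estimate shows it never leaves $U$. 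Thus the flowline has finite Weil--Petersson length, is Cauchy, and converges to a critical point in $U$, necessarily $[0]$. The same length estimate is what yields the distance bound of Theorem~\ref{thm:bound_distance}. I expect the genuine obstacle to be verifying the Lojasiewicz--Simon hypotheses (analyticity and the Fredholm/invertibility of the Hessian in the appropriate Hilbert topology); the clean point that avoids infinite-dimensional compactness is the subsequential convergence read off from the biholomorphism $[\mu]\mapsto\mc S(g_\mu)$.
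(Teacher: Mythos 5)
Your treatment of the first three assertions coincides with the paper's: the gradient identity is read off from Theorem~\ref{thm:S_1_first_variation} exactly as in the paper, tangency of $V$ to $T_0(1)$ is the observation that $\mc S(g_\mu)\in A_2(\D^*)$ via \eqref{eq:wp_schwarzian}, and global existence on $T(1)$ comes from the Nehari bound $\|V\|_\infty\le 6$ together with completeness in the Teichm\"uller metric. The subsequential convergence $[\mu_{t_n}]\to[0]$ from $\int_0^\infty\|V\|_{\WP}^2\,\dd t<\infty$ and the Hilbert-manifold/Bers-embedding structure of $T_0(1)$ is also the paper's argument.

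The genuine gap is in the promotion from subsequential to full convergence. Your route via a Lojasiewicz--Simon inequality is a legitimately different strategy, but as written it is not a proof: the two hypotheses you would need --- real-analyticity of $\Liouville$ in the Hilbert-manifold structure of $T_0(1)$ and invertibility (or at least Fredholmness plus coercivity) of the Hessian at $[0]$ on $H^{-1,1}(\D^*)$ --- are exactly the content you defer, and neither is supplied by the references used in the paper. Moreover, the justification you offer for the second hypothesis is not correct as stated: the fact that $\Liouville$ is a K\"ahler potential identifies the $(1,1)$-part $\partial\bar\partial\Liouville$ with the Weil--Petersson form everywhere, but the full real Hessian at the critical point $[0]$ also contains the $\partial\partial\Liouville+\overline{\partial\partial\Liouville}$ part, so "Hessian $=$ WP metric" requires an additional computation of the second variation at the circle rather than following formally from the K\"ahler-potential property. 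The paper avoids all of this with a soft compactness argument: monotonicity gives $\Liouville(\alpha(t))\to 0$; if some subsequence stayed at WP-distance $\ge\vare$ from $[0]$, the uniform bound on $\Liouville$ makes the associated curves uniformly $K$-quasicircles, a normal-families extraction plus lower semicontinuity of $\tilde\Liouville$ forces the limit curve to be $\Sph^1$ (the unique zero), and a Carath\'eodory argument upgrades uniform convergence of the curves to convergence to $[0]$ in $T_0(1)$ via \cite[Cor.\,A.4]{TT06} --- a contradiction. Note also that the paper's Theorem~\ref{thm:bound_distance} is \emph{not} obtained from a Lojasiewicz-type length estimate but by flowing until $\|V\|_{\WP}$ drops to $c$ and then bounding the residual distance through the Bers embedding and Ahlfors--Weill; so that part of your sketch misattributes the mechanism. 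If you want to keep your route, you must either verify the Lojasiewicz--Simon hypotheses (including the Hessian computation at $[0]$) or replace that step with the paper's quasicircle-compactness argument.
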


\begin{proof} 
By the Nehari bound, in the Teichm\"uller metric on $T(1)$, $||V||_\infty \leq 6$.
Thus as $T(1)$ is complete in the Teichm\"uller metric, the flow under $V$ exists for all time on $T(1)$. If $[\mu] \in T_0(1)$ then by the characterization \eqref{eq:wp_schwarzian} 
$$\int_{\D^*} |\mc S(g_{\mu})|^2\rho_{\D^*}^{-1} \,\dd^2 z< \infty.$$
Thus $V([\mu]) \in H^{-1,1}(\D^*) \simeq T_{[\mu]}T_0(1) $ and therefore by integrability the flow preserves $T_0(1)$.
Furthermore if $\dot \nu \in H^{-1,1}(\D^*) \simeq T_{[\mu]}T_0(1) $ then  by  Theorem~\ref{thm:S_1_first_variation},
$$(\dd \Liouville)_{[\mu]}(\dot \nu) = 4 \Re \int_{\m D^*} \dot \nu \mc S(g_\mu) = -\Re \int_{\m D^*} \dot \nu \,\overline {V([\mu]) } \, \rho_{\m D^*} = -\brac{V([\mu]), \dot \nu}_{\WP}.$$
Therefore $\nabla_{\WP}\Liouville = - V$ and
\begin{equation}\label{eq:first_der_S_gradient}
     \dd \Liouville(V) = - ||V||^2_{\WP}.
\end{equation}

We consider the flowline $\m R_+ \to T_0(1):  t \mapsto \a (t)$ for $V$ starting at a point $[\mu] = \a (0) \in T_0(1)$. Since  $\Liouville \geq 0$, for all $T >0$,
$$0 \leq \int_0^T ||V(\alpha(t))||_{\WP}^2 \, \dd t = \Liouville([\mu]) - \Liouville(\alpha(T)) \leq \Liouville([\mu]).$$
Thus 
$$ \int_0^\infty ||V(\alpha(t))||_{\WP}^2 \, \dd t < \infty.$$
We therefore have a sequence $t_n \rightarrow \infty$ such that
$$\lim_{n\rightarrow \infty} ||V(\alpha(t_n))||_{\WP} = 0.$$
Therefore
the conformal maps $g_{\alpha(t_n)}$ satisfy 
$\norm{\mc S(g_{\alpha(t_n)})}_2 \rightarrow 0$. 
From the Hilbert manifold structure of $T_0(1)$,  see \cite[Ch.\,1, Def.\,2.11]{TT06}, this implies 
$\alpha(t_{n})$ converges in $T_0(1)$ to the origin $[0]$. In particular, $\Liouville (\alpha(t_{n})) \to 0$.

To show that the flow line converges to $[0]$ (not only along a subsequence), we note first that the Liouville action of $\a (t)$ is decreasing, which implies $\Liouville (\alpha(t)) \to 0$ as $t \to \infty$. 
We now show that this implies the convergence of the flow line in $T_0(1)$ to the origin. 

In fact, assuming the opposite, there is $\vare >0$ and a sequence $\alpha(t_{k})$ such that the Weil--Petersson distance to $0$ is greater than $\vare$ along the sequence. Let $\g_{k}$ be the quasicircle passing through $1,-1, -\ii$ associated with $\alpha(t_{k})$ (see Section~\ref{sec:T_1}), since their Liouville action is uniformly bounded, they are all $K$-quasicircles (see, e.g., \cite[Prop.\,2.9]{RW}) for some $K >1$ (namely, image of $\m S^1$ of a $K$-quasiconformal homeomorphism $\varphi_k$ of $\Chat$ fixing $1,-1, -\ii$). We can extract from the normal family $\{\varphi_k\}$ a subsequence $\varphi_{k(n)}$ which converges uniformly on $\m S^1$ as $n \to \infty$. 
We write $\g_\infty$ for the image of $\m S^1$ of the limiting map $\lim_{n\to \infty } \varphi_{k(n)}$. We have $\g_{k(n)}$ converges uniformly to $\g_\infty$.
From \cite[Lem.\,2.12]{RW}, we know the Liouville action is lower-semicontinuous, this implies
\begin{equation}\label{eq:lower_semi}
 0 = \liminf_{n\to \infty}\Liouville (\a_{t_{k(n)}}) =  \liminf_{n\to \infty}\tilde \Liouville (\g_{k(n)}) \ge \tilde \Liouville (\g_{\infty}). 
\end{equation}
Hence $ \tilde \Liouville (\g_\infty) = 0$. As $\m S^1$ is the only zero of $\tilde \Liouville$, $\g_\infty  = \m S^1$ and the corresponding point in $T_0(1)$ is the origin $0$. To see  $\a_{t_{k(n)}}$ also converges in $T_0(1)$, consider the conformal maps $f_n : \m D \to D_n$ fixing $0$ and $g_n : \m D^* \to D_n^*$ fixing $\infty$ as in the definition of $\Liouville (\alpha_{t_{k(n)}})$, where $D_n$ and $D_n^*$ are respectively the bounded and unbounded connected component of $\m C \smallsetminus \g_{k(n)}$. 
From \eqref{eq:lower_semi} and Carath\'eodory theorem that 
$$\lim_{n \to \infty} \log \abs{\frac{f_n'(0)}{g_n'(\infty)}} = 0, \text{ which implies } \lim_{n \to \infty} \int_{\m D} \abs{\frac{f_n''(z)}{f_n'(z)}}^2 \dd^2 z = 0.$$
This shows $\alpha_{t_{k(n)}}$ converges to $0$ in $T_0(1)$ by \cite[Cor.\,A.4]{TT06} and contradicts the assumption of the sequence being $\vare$ distance away from $0$.
\end{proof}

Using the gradient flow, we may bound the Weil--Petersson distance between $[\mu]$ and $[0]$ by the universal Liouville action. 

We first recall the notations from Section~\ref{sec:universal_T}. For $\mu \in \O^{-1,1} (\m D^*)$, let $w_\mu$ be the quasiconformal map $\Chat \to \Chat$ that is invariant under the reflection $z\mapsto 1/\bar z$, fixes $-1,-\ii,1$, and has complex dilatation $\mu$ in $\m D^*$. Let $R_{[\mu]} \colon T(1) \to T(1)$ be the right multiplication by $\mu$. Let $\beta: T(1) \rightarrow A_\infty (\m D)$ be the Bers embedding, which restricts to $T_0(1)$ to a map $ T_0(1) \rightarrow A_2 (\m D)$ (see Section~\ref{sec:WP}).

The following results are proved by Takhtajan and Teo, which we summarize in the lemma below. 
\begin{lemma}[\!{\cite[Ch.\,1: Rem.\,2.4, Lem.\,2.5,  Cor. 2.6]{TT06}}]
  \label{lem:delta}  There exists $0 <\d <1$ such that for all $\mu \in \O^{-1,1} (\m D^*)$ with $\norm{\mu}_\infty <\d$,
    $$
    \abs{\frac{|\partial_z w_\mu(z)|^2}{(1 - |w_\mu(z)|^2)^2} - \frac{1}{(1-|z|^2)^2}} <  \frac{1}{(1-|z|^2)^2}.
    $$
    Moreover, for such $\mu$, the derivative $D_0(\beta\circ R_{[\mu]}):H^{-1,1}(\D^*) \rightarrow A_2(\D)$ is a bounded linear isomorphism with
$$||D_0(\beta\circ R_{[\mu]})(\nu)||_2 \leq 24 ||\nu||_{\WP} \qquad  ||\nu||_{\WP} \leq K||D_{0} (\beta \circ R_{[\mu]})(\nu)||_2 $$
where $K = \sqrt 2/(1-\d)^2$.
\end{lemma}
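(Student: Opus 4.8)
The plan is to prove the two assertions in turn, since the operator statement rests on the pointwise density comparison. Throughout I write
$u_\mu(z) := \frac{|\partial_z w_\mu(z)|(1-|z|^2)}{1-|w_\mu(z)|^2}$, the pullback factor of the hyperbolic density, so that after multiplying the first inequality by $(1-|z|^2)^2$ it becomes $\abs{u_\mu(z)^2 - 1} < 1$, i.e. $0 < u_\mu(z)^2 < 2$. Since $w_0 = \id$ gives $u_0 \equiv 1$, and $u_\mu > 0$ automatically, the entire content of the first statement is the uniform upper bound $u_\mu^2 < 2$ once $\norm{\mu}_\infty$ is small.

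For that bound I would control the real-analytic path $t \mapsto w_{t\mu}$, $t \in [0,1]$, by differentiating $\log u_{t\mu}^2 = \log|\partial_z w_{t\mu}(z)|^2 + 2\log(1-|z|^2) - 2\log(1-|w_{t\mu}(z)|^2)$ in $t$. Inserting the Ahlfors--Bers integral representations of $\dot w := \frac{\dd}{\dd t} w_{t\mu}$ and of $\partial_z \dot w$ as singular integrals of $\mu$ against rational kernels (for the normalization fixing $-1,-\ii,1$), I would show $\bigl|\frac{\dd}{\dd t}\log u_{t\mu}^2\bigr| \le C\norm{\mu}_\infty$ for a universal $C$, uniformly in $z \in \D$ and $t \in [0,1]$; integrating from $0$ then gives $u_\mu(z)^2 \le e^{C\norm{\mu}_\infty}$, so $\d := (\log 2)/C$ forces $u_\mu^2 < 2$ for $\norm{\mu}_\infty < \d$. \emph{The main obstacle is the uniformity up to $\Sph^1$}: a harmonic Beltrami differential need not vanish on the boundary, so $w_\mu$ is in general only H\"older, not smooth, up to $\Sph^1$, and the singular kernels must be shown to stay bounded as $z \to \Sph^1$. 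Here I would exploit the reflection symmetry of $\mu$ (whence $w_\mu(\Sph^1) = \Sph^1$) together with the quasisymmetry comparison $1-|w_\mu(z)| \asymp 1-|z|$ to tame the kernels near the boundary.

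For the operator statement I would begin from the chain rule $D_0(\beta \circ R_{[\mu]}) = D_{[\mu]}\beta \circ D_0 R_{[\mu]}$, where $D_0 R_{[\mu]} : \Omega^{-1,1}(\D^*) \to T_{[\mu]}T(1)$ is the complex-linear isomorphism recorded in Section~\ref{sec:WP} and $D_{[\mu]}\beta$ is the Bers differential at $[\mu]$. Composing and changing variables by $w_\mu$ turns this into a singular integral over $\D^*$ whose kernel is the pullback under $w_\mu$ of the classical Bers kernel $(\zeta-z)^{-4}$. At $\mu = 0$ it reduces to the classical Bers differential $\dot\nu \mapsto -\tfrac{6}{\pi}\int_{\D^*} \tfrac{\dot\nu(\zeta)}{(\zeta-z)^4}\,\dd^2\zeta$ (up to the standard constant); writing $\dot\nu = \rho_{\D^*}^{-1}\overline\phi$ one has $\norm{\dot\nu}_{\WP} = \norm{\phi}_{A_2(\D^*)}$, and a reproducing-kernel (Bergman) computation shows that $D_0\beta$ is a bounded isomorphism $H^{-1,1}(\D^*) \to A_2(\D)$ with explicit two-sided norm bounds.

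For general $\mu$ with $\norm{\mu}_\infty < \d$ I would estimate the perturbed operator directly. After the change of variables the integrand carries the metric factor $u_\mu^2$, bounded by $2$ via the first part (supplying the $\sqrt 2$ in $K$), together with the quasiconformal Jacobian whose reciprocal is controlled by $(1-|\mu|^2)^{-1} \le (1-\d)^{-2}$ (supplying the $(1-\d)^2$). Carrying these through the estimate of the classical kernel yields the forward bound $\norm{D_0(\beta\circ R_{[\mu]})(\nu)}_2 \le 24\,\norm{\nu}_{\WP}$ and the reverse bound $\norm{\nu}_{\WP} \le K\,\norm{D_0(\beta\circ R_{[\mu]})(\nu)}_2$ with $K = \sqrt 2/(1-\d)^2$. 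Invertibility is then immediate: the family $D_0(\beta\circ R_{[\mu]})$ depends continuously on $\mu$ and equals the isomorphism $D_0\beta$ at $\mu = 0$, so for $\d$ small enough it remains a bounded isomorphism (invertibility is open in the operator norm), and the two estimates above bound it and its inverse. As noted, the one genuinely delicate ingredient feeding both parts is the boundary-uniform distortion estimate of the second paragraph.
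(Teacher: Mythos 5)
The first thing to note is that the paper contains no proof of this lemma: it is imported directly from Takhtajan--Teo (\cite[Ch.\,I: Rem.\,2.4, Lem.\,2.5, Cor.\,2.6]{TT06}), introduced by the sentence ``We first recall some results proved by Takhtajan and Teo that we summarize in the lemma below.'' So the comparison is between your argument and the proof in that reference. Your reduction of the first assertion to the bound $u_\mu^2<2$ is correct, but the route you propose has a genuine gap. You need $\bigl|\frac{\dd}{\dd t}\log u_{t\mu}^2\bigr|\le C\norm{\mu}_\infty$ \emph{uniformly in $z\in\D$ and $t$}, and you propose to get it from the Ahlfors--Bers integral representations. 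The problematic term is $\partial_z\dot w$: it is a principal-value integral of Beurling type, $-\frac{1}{\pi}\,\mathrm{p.v.}\!\int\mu(\zeta)(\zeta-z)^{-2}\,\dd^2\zeta$ plus bounded terms, and Calder\'on--Zygmund operators of this kind are \emph{not} bounded from $L^\infty$ to $L^\infty$; no pointwise bound by $C\norm{\mu}_\infty$ follows from the representation. Harmonicity of $\mu$ might conceivably rescue the estimate at $t=0$, but it is destroyed along your path: for $t>0$ the infinitesimal coefficient governing $\frac{\dd}{\dd t}w_{t\mu}$ is the pushforward $\bigl(\tfrac{\mu}{1-t^2|\mu|^2}\,\partial_z w_{t\mu}/\overline{\partial_z w_{t\mu}}\bigr)\circ w_{t\mu}^{-1}$, which is no longer a harmonic Beltrami differential, so whatever special structure you exploit at $t=0$ is unavailable later. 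The fixes you gesture at (reflection symmetry, $1-|w_\mu(z)|\asymp1-|z|$) only yield multiplicative constants depending on the quasiconformality constant, not the sharp factor-of-two statement $|u_\mu^2-1|<1$.

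The standard proof --- in substance the one behind \cite[Lem.\,2.5]{TT06} --- avoids your ``main obstacle'' entirely by making the estimate an interior one: both the hyperbolic sup-norm of a harmonic Beltrami differential and the quantity $u_\mu$ are invariant under pre/post-composition by disk automorphisms (and harmonicity is preserved), so it suffices to bound $u_\mu$ at $z=0$. If the claim failed, there would be harmonic $\mu_n$ with $\norm{\mu_n}_\infty\to0$ and $|u_{\mu_n}(0)^2-1|\ge1$; but $w_{\mu_n}\to\id$ locally uniformly, and since the coefficients are real-analytic near $0$ with uniformly small sup-norm, interior regularity for the Beltrami equation upgrades this to $\partial_z w_{\mu_n}(0)\to1$, a contradiction. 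M\"obius invariance is exactly what converts the boundary-uniformity problem into an interior compactness argument; your approach keeps it a boundary problem, which is why you hit the obstacle you describe. As for the second assertion, your outline (chain rule, Bers kernel, reproducing-kernel identification of $D_0\beta$ on harmonic differentials, transport of the estimates via the density comparison) has the right shape and matches the reference, but the constants $24$ and $\sqrt2/(1-\d)^2$ are asserted to ``come out'' rather than derived, the isomorphism claim needs the unproved operator-norm continuity of $\mu\mapsto D_0(\beta\circ R_{[\mu]})$, and in any case this half rests on the first half, so the gap above propagates.
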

\begin{thm} \label{thm:bound_distance}
With the same constants $\d$ and $K$ as in Lemma~\ref{lem:delta}.
Let $0 < c < 2 \d \sqrt{4 \pi/3}
$, then for $[\mu] \in T_0(1)$, 
\begin{equation}\label{eq:dist_bound}
 c(\dist_{\WP}([\mu],[0]) - K c) \leq \Liouville ([\mu]) .
\end{equation}
\end{thm}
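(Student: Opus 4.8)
The plan is to run the negative gradient flow of $\Liouville$ starting from $[\mu]$ and to bound $\dist_{\WP}([\mu],[0])$ by the length of this flow line, which I will split into a ``fast'' portion where the gradient is large and a ``slow'' terminal portion confined to a small neighbourhood of the origin. Let $\alpha\colon[0,\infty)\to T_0(1)$ be the flow line of $V$ with $\alpha(0)=[\mu]$; by Theorem~\ref{thm:gradient} it exists for all time, stays in $T_0(1)$, satisfies $\nabla_{\WP}\Liouville=-V$ and $\dd\Liouville(V)=-\norm{V}_{\WP}^2$, and converges to $[0]$. First I would record that the flow line has Weil--Petersson speed $\norm{V(\alpha(t))}_{\WP}$, so that
$$\dist_{\WP}([\mu],[0])\le\int_0^\infty\norm{V(\alpha(t))}_{\WP}\,\dd t,$$
and that integrating $\dd\Liouville(V)=-\norm{V}_{\WP}^2$ gives $\int_0^T\norm{V(\alpha(t))}_{\WP}^2\,\dd t=\Liouville([\mu])-\Liouville(\alpha(T))\le\Liouville([\mu])$ for every $T$. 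Here $\norm{V_{[\nu]}}_{\WP}=4\norm{\mc S(g_\nu)}_{A_2(\D^*)}$, a quantity that is continuous in $t$ along the flow and tends to $0$ as $\alpha(t)\to[0]$.

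Next I would introduce the stopping time $T:=\inf\{t\ge 0:\norm{V(\alpha(t))}_{\WP}\le c\}$, which is finite because $\norm{V}_{\WP}\to 0$, and which satisfies $\norm{V(\alpha(T))}_{\WP}\le c$ (with equality when $T>0$, by continuity). Writing
$$\dist_{\WP}([\mu],[0])\le\dist_{\WP}([\mu],\alpha(T))+\dist_{\WP}(\alpha(T),[0]),$$
the first term is the ``fast'' part: for $t\in[0,T)$ one has $\norm{V}_{\WP}>c$, hence $\norm{V}_{\WP}<\tfrac1c\norm{V}_{\WP}^2$, and therefore
$$\dist_{\WP}([\mu],\alpha(T))\le\int_0^T\norm{V}_{\WP}\,\dd t\le\frac1c\int_0^T\norm{V}_{\WP}^2\,\dd t\le\frac1c\,\Liouville([\mu]).$$

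The remaining term $\dist_{\WP}(\alpha(T),[0])$ is the heart of the argument and where Lemma~\ref{lem:delta} enters. Since $\norm{V(\alpha(T))}_{\WP}\le c$, we have $\norm{\mc S(g_{\alpha(T)})}_2\le c/4$. I would bound the distance by joining $\alpha(T)$ to the origin along the straight segment in Bers coordinates, estimating its Weil--Petersson length by $K$ times its $A_2$-length through the inequality $\norm{\nu}_{\WP}\le K\norm{D_0(\beta\circ R_{[\cdot]})(\nu)}_2$ of Lemma~\ref{lem:delta} (using the $\D^*$-analogue of that lemma, which holds verbatim with the same constants by the symmetry interchanging the two complementary components, i.e.\ the roles of $f_\nu$ and $g_\nu$). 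This yields $\dist_{\WP}(\alpha(T),[0])\le K\norm{\mc S(g_{\alpha(T)})}_2\le Kc/4\le Kc$. Combining with the fast estimate gives $\dist_{\WP}([\mu],[0])\le\tfrac1c\Liouville([\mu])+Kc$, which after multiplying by $c$ is exactly \eqref{eq:dist_bound}. The delicate point — the main obstacle — is to guarantee that this entire Bers segment stays inside the ball $\{\norm{\cdot}_\infty<\delta\}$ on which Lemma~\ref{lem:delta} is valid; this is precisely what the threshold $c<2\delta\sqrt{4\pi/3}$ secures, through the pointwise Bergman-type comparison between the sup-norm and the $A_2$-norm of an element of $A_2$ together with the relation between a Bers coordinate and its harmonic Beltrami representative.
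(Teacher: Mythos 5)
Your proposal is correct and follows essentially the same route as the paper: run the gradient flow until the first time $\norm{V}_{\WP}$ drops to $c$, bound the length of that initial arc by $\Liouville([\mu])/c$ via $\dd\Liouville(V)=-\norm{V}_{\WP}^2$, and then bound the remaining distance to the origin by $Kc$ using the linear Bers segment together with Lemma~\ref{lem:delta} and the sup-norm versus $A_2$-norm comparison, which is exactly where the threshold on $c$ enters. Your stopping-time formulation cleanly absorbs the paper's separate treatment of the case $\norm{V([\mu])}_{\WP}<c$, but the argument is otherwise the same.
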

\begin{proof}
Let $t \mapsto \alpha(t)$ be the gradient flow line starting at $[\mu]$.
Assume first that $\norm{V([\mu])} \ge c$ and let $\tau$ be the first time $\norm{V(\alpha(t))}_{\WP} = c$. Then $\norm{V(\alpha(t))}_{\WP} > c$ for all $t< \tau$. Thus
\begin{align*}
   \Liouville([\mu]) - \Liouville(\alpha(\tau)) = \int_0^\tau \norm{V(\alpha(t))}_{\WP}^2 \,\dd t & \geq c\int_0^\tau \norm{V(\alpha(t))}_{\WP} \, \dd t \\
   & \geq  c \, \dist_{\WP}([\mu],\alpha(\tau)). 
\end{align*}
Therefore
$$\Liouville ([\mu]) \geq c(\dist_{\WP}([\mu],[0])-\dist_{\WP}(\alpha(\tau),[0])).$$
By  \cite[Ch.\,1, Lem.\,2.1]{TT06}, for all $\phi \in A_\infty(\D)$,
\begin{equation}\label{eq:bound_infty_by_wp}
||\phi||_\infty := \sup_{z \in \m D} \norm{\phi (z)}_{\m D }\leq \sqrt{\frac{3}{4\pi}}\sqrt{\int_{\m D}||\phi(z)||_{\m D}^2 \,\rho_{\m D} \, \dd^2 z} =  \sqrt{\frac{3}{4\pi}}||\phi||_{2}.
\end{equation}
Hence, since $\norm{V(\a(\tau))} = c$, 
$$\norm{V(\alpha(\tau)))}_{\infty} \le c \sqrt{3/4\pi} < 2 \d.$$ Therefore 
$$ \norm {\hat \beta ([\a (\tau)])}_\infty = \norm{\mc S(g_{\alpha(\tau)})}_\infty < \d /2 < 1/2$$
where $\hat \beta$ is the Bers embedding $T(1) \to A_\infty (\m D^*)$. 
As $\hat \beta(T_0(1)) = \hat \beta(T(1)) \cap A_2(\m D^*)$  the linear path 
$$\gamma (s)  := [s \tilde \mu], \quad \text{where} \quad \tilde \mu = -  \frac{2}{\bar z^4}\frac{\mc S(g_{\alpha(\tau)})}{\rho_{\m D^*}} \left(\frac{1}{\bar z}\right) \text{ satisfies } \norm{\tilde \mu}_{\m D, \infty} < \d $$ for $s \in [0,1]$ from $0$ to $\alpha(\tau)$ is in the ball of radius $\d$ of $T(1)$, and also in  $T_0(1)$ since by Ahlfors--Weill theorem 
$$ \hat \beta ([s \tilde \mu]) =   s \mc S(g_{\alpha(\tau)}) \in A_2 (\m D^*).$$
In $A_2(\D^*)$ this path has length $\norm{V(\alpha(\tau))}_{\WP} = c$. 
By Lemma~\ref{lem:delta}, the preimage of this path by $\hat \beta$ has a length less than $Kc$ and obtain \eqref{eq:dist_bound}.

If $\norm{V([\mu])}_{\WP} < c$, then the above argument shows that $\dist_{\WP} ([\mu], 0) \le Kc$, so \eqref{eq:dist_bound} holds trivially as $\Liouville \ge 0$.
\end{proof}

\section{Comparisons to minimal surfaces and convex hull}
\label{sec:comments}

Using Proposition \ref{prop:minimalsurface} and Proposition \ref{prop:smallminimalsurface}, we will answer a question of Bishop \cite{BishopQ} about how minimal surfaces and convex hulls relate to Epstein--Poincar\'e maps. This section is independent of the proofs in the rest of the paper.

Let us denote $\Sigma_\Omega(t)$ as the image of $\Ep_{e^{2t}\rho_{\Omega}} =:\Ep_{\O} (t)$ for $t\in\mathbb{R}$, with $\Sigma_\O = \Sigma_\O (0)$. The following proposition shows that any minimal surface in $\mathbb{H}^3$ with boundary $\gamma\subset \overline{\mathbb{C}}$ is between appropriate equidistant images $\Sigma_\Omega(t), \Sigma_{\O^*}(t)$.  (We recall that $\overline{\mathbb{C}}\smallsetminus\gamma = \O\cup\O^*$.)
For a conformal map $f:\mathbb{D}\rightarrow\Omega$ we write
$$\norm{\mc S(f^{-1})}_\infty : = \sup_{z \in \O} |\mc S(f^{-1}) (z)| \rho_{\O}^{-1} (z) = \sup_{\z \in \m D} |\mc S(f) (\z)| \rho_{\m D}^{-1} (\z) = \norm{\mc S(f)}_\infty. $$
Observe that while the definitions of $\norm{\mc S(f^{-1})}_\infty$ and $\norm{\mc S(f)}_\infty$ use the conformal map $f$, these only depend on the domain $\O$. Indeed, as pointed out in Section \ref{subsec:EPsimplyconnected}, the Schwarzian quadratic differential of $f^{-1}$ is the same for any conformal map $f : \m D \to \Omega$.

The following result says that after slightly pushing the Epstein--Poincar\'e surfaces towards the conformal boundary, they are disjoint from any minimal surfaces bounded by the same boundary.
\begin{prop}\label{prop:minimalsurface}
  Let $M\subset\mathbb{H}^3$ be a minimal surface so that $\partial_\infty M = \gamma$. Let $M_\Omega$ be the closure of the component of $\mathbb{H}^3\smallsetminus M$ with conformal boundary $\Omega$. Given a conformal map $f:\mathbb{D}\rightarrow\Omega$, let $t_0 = \frac12\log \left(\max\lbrace 1, 2\Vert\mc S(f)\Vert_\infty-1\rbrace\right)$. Then for any $t\geq t_0$, we have  $\Sigma_{\Omega}(t)\subset M_\Omega$.
\end{prop}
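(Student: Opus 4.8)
The plan is to prove the inclusion by a barrier argument based on the geometric maximum principle, the analytic input being a computation of the mean curvature of the rescaled surfaces $\Sigma_\Omega(t)$ which shows that $t_0$ is exactly the threshold beyond which these surfaces are immersed and strictly mean-convex toward $\Omega^*$. First I would compute the fundamental forms at infinity of $\Sigma_\Omega(t)$, i.e. of the Epstein surface of the rescaled Poincar\'e metric $\rho_t = e^{2t}\rho_\Omega = e^{\varphi+2t}|\dd z|^2$. Since adding a constant to $\varphi$ leaves $\vartheta = (\varphi_{zz}-\frac12\varphi_z^2)\,\dd z^2 = \mc S(f^{-1})$ unchanged but rescales the metric and its curvature, Theorem~\ref{thm:forms_infty} gives $\hat K_t = -e^{-2t}$ and $\|\vartheta\|_{\rho_t} = e^{-2t}\|\mc S(f^{-1})\|_\Omega$, so that $\hat k_\pm^{(t)} = e^{-2t}(1 \pm 2\|\mc S(f^{-1})\|_\Omega)$. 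Writing $a = e^{-2t}$ and $s = \|\mc S(f^{-1})\|_\Omega$ (so $s \le \|\mc S(f)\|_\infty$ pointwise), the condition $t \ge t_0$ is precisely $a \le 1$ together with $\hat k_-^{(t)} = a(1-2s) \ge -1$ at every point; the latter is exactly where $\id + \hat B^{(t)}$ stays positive semidefinite, hence where $\Ep_{e^{2t}\rho_\Omega}$ is an immersion. Inverting $k_\pm = (1-\hat k_\pm)/(1+\hat k_\pm)$ and using $H\,\dd a = \frac14(1-\det\hat B)\,\dd\hat a$ from Corollary~\ref{cor:epstein_curv}, I obtain
$$H_t = \frac{1 - a^2(1-4s^2)}{(1+\hat k_+^{(t)})(1+\hat k_-^{(t)})},$$
whose numerator equals $1 - a^2 + 4a^2 s^2 > 0$ whenever $a \le 1$ and whose denominator is positive whenever $t > t_0$. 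Thus $H_t > 0$ for every $t > t_0$.

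Next I would convert this sign into a statement about the mean curvature vector. Since the Epstein unit normal $\widetilde\Ep$ points away from $\Omega$ (Theorem~\ref{thm:basic_epstein}(3)), positivity of $H_t$ means the mean curvature vector of $\Sigma_\Omega(t)$ points away from $\Omega$, i.e. out of $M_\Omega$ and toward $\Omega^*$; this is pinned down by the model case of the disk, where $\Sigma_{\m D}(t)$ is the equidistant cap at distance $t$ on the $\Omega$-side of the geodesic plane, with $H_t = \tanh t$. In other words, for every $t>t_0$ the surface $\Sigma_\Omega(t)$ is a strictly mean-convex barrier whose convex side faces $\Omega^*$. I would then run a sweep-out in $t$. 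As $t \to \infty$ the leaves collapse into an arbitrarily thin neighborhood of $\overline\Omega \subset \Chat$ (quantitatively, the horospheres of $e^{2t}\rho_\Omega$ have Euclidean radius $e^{-t}/\sqrt{\rho_\Omega}$, so Corollary~\ref{cor:bdyvals} applied to the rescaled metric forces the heights to $0$), whence $\Sigma_\Omega(t)\subset M_\Omega$ for all large $t$; the same estimate shows that near $\gamma$ each leaf lies strictly on the $\Omega$-side of $M$, so any failure of the inclusion occurs at an interior point. If the inclusion failed for some $t_1 > t_0$, decreasing $t$ from $+\infty$ would produce a first interior tangency of a leaf $\Sigma_\Omega(t_*)$ with $M$, at which the leaf lies locally inside $M_\Omega$, $M$ lies locally on the convex ($\Omega^*$) side of the leaf, and the leaf is strictly mean-convex toward that side; since $M$ is minimal, the strong maximum principle forces $M$ to coincide with the leaf, contradicting $H_{t_*}>0$. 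Hence $\Sigma_\Omega(t)\subset M_\Omega$ for all $t>t_0$, and the closed case $t=t_0$ follows by letting $t\downarrow t_0$ and using that $M_\Omega$ is closed.

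The main obstacle is the non-compactness together with the shared ideal boundary $\gamma = \partial_\infty M$: I must guarantee that the tangency in the sweep-out cannot escape to infinity, which is precisely why the quantitative boundary control of Corollary~\ref{cor:bdyvals} is essential, allowing the comparison to be localized in a compact interior region. A second technical point is the embeddedness of the leaves used in the sweep-out; Corollary~\ref{cor:embedding_wp} provides embeddedness near $\gamma$, while for the local application of the maximum principle at the first tangent point one only needs that $\Ep$ is an immersion there, which holds since $t_* > t_0$. A final minor issue is the endpoint $t = t_0$ itself, where $\hat k_-^{(t_0)} = -1$ at the points realizing $\|\mc S(f)\|_\infty$ and $\Ep$ may fail to be an immersion; this is handled by establishing the inclusion on the open range $t>t_0$ and passing to the limit $t\downarrow t_0$.
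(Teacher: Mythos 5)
Your barrier strategy, the computation of the principal curvatures at infinity of $\Sigma_\Omega(t)$, and the identification of $t_0$ as the threshold for immersion and positive mean curvature all match the paper's argument. However, there is a genuine gap in how you handle the non-compactness, and it is located exactly where you place the weight of Corollary~\ref{cor:bdyvals}. That corollary controls the \emph{distance from $\Sigma_\Omega(t)$ to $\gamma$}; it says nothing about the position of $\Sigma_\Omega(t)$ relative to $M$. Since $M$ and every leaf $\Sigma_\Omega(t)$ share the same ideal boundary $\gamma$, and the proposition assumes nothing about $\gamma$ beyond it being a Jordan curve bounding a minimal surface, there is no quantitative comparison available between how $M$ and how $\Sigma_\Omega(t)$ approach $\gamma$. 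Consequently neither of your two boundary claims is justified: (i) that $\Sigma_\Omega(t)\subset M_\Omega$ for all large $t$ (the leaves collapse to $\overline\Omega$, but $\overline\Omega$ contains $\gamma$, and points at small height above $\gamma$ need not lie in $M_\Omega$), and (ii) that near $\gamma$ each leaf lies strictly on the $\Omega$-side of $M$, so that the first tangency is interior. Without (i) the sweep-out has no starting point, and without (ii) the tangency can escape to infinity.

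The paper closes this gap with a device you are missing: it first replaces $\Omega$ by a subdomain $\Omega'\Subset\Omega$ bounded by an equipotential. The Schwarzian bound persists for $\rho_{\Omega'}$ (so the same $t_0$ works), but now $\partial_\infty\Sigma_{\Omega'}(t)=\partial\Omega'$ is at positive distance from $\gamma=\partial_\infty M$. This makes the set $\Sigma_{\Omega'}(t')\cap M$ at the critical time $t'=\inf\{t:\Sigma_{\Omega'}(t)\subset M_\Omega\}$ automatically compact, guarantees $t'<+\infty$, and places the tangency at an interior point where your mean-curvature-vector argument applies verbatim. One then recovers the statement for $\Omega$ by letting $\Omega'\to\Omega$, using that $\Sigma_\Omega(t)$ is a limit of the $\Sigma_{\Omega'}(t)$ and that $M_\Omega$ is closed. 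If you incorporate this exhaustion step, the rest of your proof (including the endpoint $t=t_0$ by taking $t\downarrow t_0$) goes through.
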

\begin{proof}
Recall that by the discussion at the start of Section \ref{subsec:EPsimplyconnected}, the principal curvatures at infinity associated with  $\rho_\Omega$ (see Theorem~\ref{thm:forms_infty}) are bounded below by $1-2\Vert\mc S(f)\Vert_\infty$ by \eqref{eq:Poincare_k_hat}. Similarly, taking any domain $\Omega'\subset\Omega$ bounded by an equipotential, the principal curvatures at infinity associated with $\rho_{\O}$ are bounded from below by
$$1- 2 |\mc S(f^{-1})(z)|\rho_{\O'}^{-1} (z) \ge 1- 2 |\mc S(f^{-1})(z)|\rho_{\O}^{-1} (z)  \ge 1-2\Vert\mc S(f)\Vert_\infty$$
as the uniformizing map associated with $\O'$ is given by the restriction of $f^{-1}$ to $\O'$, while the metric $\rho_{\O'}$ is pointwise greater than or equal to $\rho_\O$. 

We now show that $\Sigma_{\Omega'}(t)\subset M_\Omega$ for any $t\geq t_0$.
Fixing $\Omega'\subset\Omega$, define $t'=\inf\lbrace t\in\mathbb{R}\,|\, \Sigma_{\Omega'}(t)\subset M_\Omega\rbrace$. As  $\partial\Omega'\subset\Omega$, then $t'<+\infty$ and $\Sigma_{\Omega'}(t')\cap M$ is a non-empty compact subset of $\mathbb{H}^3$.
If we assume by contradiction that $t'>t_0$, then $e^{2t'}\rho_{\Omega'}$ has principal curvatures 
at infinity given by 
$e^{-2t'}$ times those associated with $\rho_{\Omega'}$ from Theorem~\ref{thm:forms_infty}, hence,
bounded strictly below by $-1$. In particular, this implies that $\Sigma_{\Omega'}(t')$ is an immersed surface by the last bullet point of Theorem~\ref{thm:forms_infty}. 

As the mean curvature at infinity $\hat H$ is the opposite of Gaussian curvature of the metric (see Corollary~\ref{cor:epstein_curv}), then the principal curvatures at infinity $\hat k_{1,2}$ of $e^{2t'}\rho_{\O'}$ satisfy $(\hat k_1+\hat k_2)/2 = e^{-2t'}<1$ at every point.  This implies that the mean curvature vector of $\Sigma_{\Omega'}(t')$, given by 
\begin{equation}\label{eq:mean_convex}
    \frac{1}{2} \left(\frac{1-\hat k_1}{1 + \hat k_1} + \frac{1-\hat k_2}{1 + \hat k_2}\right) = \frac{1-\hat k_1\hat k_2}{(1+\hat k_1)(1+\hat k_2)}>0
\end{equation}
times the outer normal to the associated horoball. 
This leads to a contradiction as at any tangent point between $\Sigma_{\Omega'}(t')$ and $M$, the mean curvature vector points in the wrong direction by the relative position of $\Sigma_{\Omega'}(t')$ and $M$. 
Therefore, we have shown that $t' \le t_0$ and $\Sigma_{\Omega'}(t)\subset M_\Omega$ for any $t\geq t_0$.

Since $\Sigma_\Omega(t)$ can be obtained as a limit of $\Sigma_{\Omega'}(t)$, the conclusion follows for $\Omega$.
\end{proof}

\begin{remark}\label{rmk:Sf<1}
    From Proposition \ref{prop:minimalsurface} if $\Vert \mc S(f)\Vert_\infty, \Vert \mc S(g)\Vert_\infty < 1$ (where $f, g$ are uniformization maps for $\O,\O^*$) then the minimal surface $M$ lies between the Epstein--Poincar\'e maps from $\O, \O^*$. 
    Moreover, $\Vert \mc S(f)\Vert_\infty, \Vert \mc S(g)\Vert_\infty < 1$ imply that each Epstein--Poincar\'e map is mean-convex by the same proof as \eqref{eq:mean_convex}. It follows from the strong maximum principle that if $M$ intersects $\S_\O$ or $\S_{\O^*}$. 
    Assuming $M$ intersects $\S_\O$, then $M$ has to locally agree with $\S_\O$. This implies that on $\O$, there exists an open neighborhood where $\hat k_1 = \hat k_2 = 1$, which is equivalent to that $\mc S(f)$ vanishes on an open set of $\m D$. This is only possible if $\g$ is a round circle. Hence, if $\g$ is not a round circle, then $\Vert \mc S(f)\Vert_\infty, \Vert \mc S(g)\Vert_\infty < 1$ implies that the minimal surface $M$ lies strictly between the Epstein--Poincar\'e maps from $\O, \O^*$. This gives an alternate proof of Proposition \ref{prop:disjoint} under the assumption $\Vert \mc S(f)\Vert_\infty, \Vert \mc S(g)\Vert_\infty < 1$. 
\end{remark}

We can also impose conditions on the curvatures of the minimal surface to obtain a similar conclusion as in Remark \ref{rmk:Sf<1}.

\begin{prop}\label{prop:smallminimalsurface}
  Let $M\subset\mathbb{H}^3$ be a minimal surface so that $\partial_\infty M = \gamma$. Denote by $M_\Omega$ the closure of the component of $\mathbb{H}^3\smallsetminus M$ with conformal boundary $\Omega$. Assume that at any point of $M$, the principal curvatures are strictly between $-1$ and $1$. Then for any $t\geq 0$,  $\Sigma_{\Omega}(t)\subset M_\Omega$.
\end{prop}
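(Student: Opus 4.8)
The plan is to adapt the first-contact argument used to prove Proposition~\ref{prop:minimalsurface}, replacing the control on the Schwarzian by the curvature hypothesis on $M$. As there, I would first reduce to a subdomain $\Omega'\subset\Omega$ bounded by an equipotential, so that $\partial\Omega'$ is a compact analytic curve sitting inside $\Omega$; then $\Vert\mc{S}(f_{\Omega'}^{-1})\Vert_\infty<\infty$, so $\Sigma_{\Omega'}(t)$ is an immersion for all large $t$ and shrinks towards $\partial\Omega'\subset\Omega=\partial_\infty M_\Omega$ as $t\to+\infty$, forcing $\Sigma_{\Omega'}(t)\subset M_\Omega$ for $t$ large. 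Since $\Sigma_\Omega(t)$ is the limit of $\Sigma_{\Omega'}(t)$ as $\Omega'\nearrow\Omega$ and $M_\Omega$ is closed, it suffices to show that $t':=\inf\{t\in\mathbb{R}:\Sigma_{\Omega'}(t)\subset M_\Omega\}$ satisfies $t'\leq 0$, whence $\Sigma_{\Omega'}(t)\subset M_\Omega$ for every $t\geq 0$ by the monotonicity of the family.

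Arguing by contradiction, suppose $t'>0$. By definition of $t'$ and compactness, $\Sigma_{\Omega'}(t')\subset\overline{M_\Omega}$ and the two surfaces are tangent at some interior point $p\in\mathbb{H}^3$, with $\Sigma_{\Omega'}(t')$ lying on the $M_\Omega$-side of $M$ near $p$. I would then apply the geometric maximum principle at $p$: writing both surfaces as graphs over their common tangent plane with the Epstein unit normal $\widetilde\Ep$ (which points out of $M_\Omega$), the one-sided contact yields a comparison $B_\Sigma\preceq B_M$ of shape operators at $p$. Using minimality of $M$ and the hypothesis $-1<k^M_\pm<1$, this gives both $\tfrac12\tr B_\Sigma\leq\tfrac12\tr B_M=0$ and $\lambda_{\max}(B_\Sigma)\leq\lambda_{\max}(B_M)<1$.

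The heart of the matter is then the explicit curvature computation for $\Sigma_{\Omega'}(t')$. By the geodesic-flow description (Theorem~\ref{thm:basic_epstein}) together with Theorem~\ref{thm:forms_infty} and Corollary~\ref{cor:epstein_curv}, the shape operator at infinity of $e^{2t'}\rho_{\Omega'}$ has eigenvalues $\hat k_\pm=e^{-2t'}(1\pm 2s)$ with $s=\Vert\mc{S}(f^{-1})\Vert_{\Omega'}$ at $p$, and the principal curvatures of $\Sigma_{\Omega'}(t')$ are $k_\pm=(1-\hat k_\pm)/(1+\hat k_\pm)$. A short case analysis in $s$ (recall $e^{-2t'}<1$ since $t'>0$) contradicts the maximum-principle bounds: when $s\leq\tfrac12$ one has $\hat k_\pm>0$ with $\hat k_+\hat k_-<1$, so $H=\tfrac12(k_++k_-)>0$, contradicting $\tr B_\Sigma\leq 0$; and when $s>\tfrac12$ with $\hat k_->-1$ one has $\hat k_-\in(-1,0)$, forcing $k_->1$ and hence $\lambda_{\max}(B_\Sigma)>1$, contradicting $\lambda_{\max}(B_\Sigma)<1$. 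In either case $t'>0$ is impossible, and the proposition follows by letting $\Omega'\nearrow\Omega$.

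I expect the main obstacle to be the locus where $\Ep_{\Omega'}$ for the rescaled metric fails to be an immersion, namely where $\hat k_-\leq -1$ (equivalently $s$ large); there the Epstein map has a fold, the signed area form $\dd a=\tfrac14\det(\id+\hat B)\dd\hat a$ changes sign, and the naive graphical maximum principle does not apply directly. The resolution I anticipate is that at such a contact point the orientation reversal flips the side of contact, turning the comparison into $B_\Sigma\succeq B_M$, so that the now very negative principal curvature $k_-<-1<\lambda_{\min}(B_M)$ again yields a contradiction; making this orientation bookkeeping rigorous — or, alternatively, showing that a first-contact point can always be chosen in the immersion locus — is the delicate step.
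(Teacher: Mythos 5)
Your strategy is genuinely different from the paper's, and it has a real gap exactly where you flag it. The paper does not run the first-contact argument of Proposition~\ref{prop:minimalsurface} here at all: it introduces the \emph{visual metric} $\nu_M$ on $\Omega$, where $\nu_M(z)$ is determined by the largest horoball based at $z$ disjoint from $M$. The hypothesis $|k_\pm|<1$ is used twice --- once to show the maximal horosphere touches $M$ at a single point (a geodesic of $M$ has geodesic curvature $<1$ in $\mathbb{H}^3$, while an arc leaving and re-entering a horoball must have a point of curvature $>1$), and once to compute that the Gaussian curvature of $\nu_M$ is $-\frac{1+\lambda^2}{1-\lambda^2}\le-1$. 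The Ahlfors--Schwarz lemma then gives $\nu_M\le\rho_\Omega\le e^{2t}\rho_\Omega$ for all $t\ge 0$; since a pointwise larger conformal metric yields a smaller horoball at the same basepoint, every horosphere in the envelope defining $\Sigma_\Omega(t)$ sits inside the corresponding maximal horoball disjoint from $M$, and $\Sigma_\Omega(t)\subset M_\Omega$ follows with no discussion of immersions or tangencies. That is precisely what lets the paper reach all $t\ge0$ rather than only $t\ge t_0$.

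The gap in your route is the one you name in your last paragraph, and it is not cosmetic. Your two cases only treat contact points where $\hat k_-=e^{-2t'}(1-2s)>-1$; since Kraus--Nehari gives $s\le 3/2$ pointwise, the excluded regime is nonempty exactly when $t'\in(0,\tfrac12\log 2]$, which is the range you must rule out to force $t'\le0$. At a contact point with $\hat k_-=-1$ the Epstein map is not an immersion, there is no tangent plane, and the comparison $B_\Sigma\preceq B_M$ is meaningless; at a contact point with $\hat k_-<-1$ the map is again an immersion but whether the comparison reads $B_\Sigma\preceq B_M$ or $B_\Sigma\succeq B_M$ depends on which side of $T_pM$ the vector $\widetilde{\Ep}$ points into, and this is not settled by bookkeeping: the geodesic ray from $p$ in the direction $-\widetilde{\Ep}$ terminates at a point of $\Omega$, but nothing prevents it from initially leaving $M_\Omega$, so the sign you need is not forced. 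Two further remarks. First, your case 2 is redundant: whenever $\hat k_->-1$ one has $\det(\id+\hat B)>0$ and $\det\hat B=e^{-4t'}(1-4s^2)<1$, hence $H>0$, so the mean-curvature contradiction already covers $s>1/2$; consequently, in the regime you do handle you only ever use minimality of $M$, exactly as in Proposition~\ref{prop:minimalsurface}. The hypothesis $|k_\pm|<1$ on $M$ is therefore only genuinely needed in the regime your argument does not reach --- a strong indication that the missing case is where the substance of the proposition lies. To repair your approach you would need either to show a first contact point can always be chosen in $\{\hat k_->-1\}$, or to replace the surface comparison by a comparison of the underlying horosphere envelopes, which is in effect what the paper's visual-metric argument does.
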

\begin{proof}
For $z\in\Omega$, let $H_z$ be the boundary of the smallest horoball $B_z$ centered at $z$ that intersects $M$. We define $\nu_M(z)$ as the metric tensor that coincides with the visual metric at $z$ seen from any point $x \in H_z$.
As $z\notin\partial_\infty M$, $\nu_M (z)$ is well-defined. The condition on principal
curvatures of $M$ implies that $H_z$ is tangent to $M$ at a unique point. Indeed, $H_z\cap M$ is an isolated set, and if it is not a singleton, then we would find an intrinsic geodesic segment of $M$ whose interior belongs to $\mathbb{H}^3\smallsetminus\overline{B_z}$, but its endpoints lie in $H_z$. Such a curve would have an interior point of geodesic curvature (as a curve in $\mathbb{H}^3$) greater than $1$
(see for instance the proof of \cite[Proposition 3.4]{FarreVP}). On the other hand, as a geodesic segment of the surface $M$ (which has principal curvatures strictly between $-1$ and $1$), this curve will have geodesic curvature (as a curve in $\mathbb{H}^3$) less than $1$ at every point, which is a contradiction.

Since the point of tangency of $H_z$ and $M$ is unique, it follows that $M$ is the Epstein surface associated with $\nu_M$, as it is the horosphere envelope for this metric. Theorem~\ref{thm:forms_infty} and Corollary~\ref{cor:epstein_curv} then imply that the Gaussian curvature of $\nu_M$ at $z$ is given by $-\frac{1+\lambda^2}{1-\lambda^2}\leq-1$, where $\pm\lambda$ are the principal curvatures of $M$ at the point of tangency. The Ahlfors--Schwarz lemma implies $\nu_M\leq e^{2t}\rho_\Omega$ for any $t\geq 0$, which in particular implies that $\Sigma_{\Omega}(t)\subset M_\Omega$.
\end{proof}

Finally, we observe that the Epstein--Poincar\'e surfaces are close to the convex hull as they remain in a small neighborhood of the convex hull.

\begin{prop}\label{prop:PEconvexcore}
Define $\varepsilon = \log \left(\max\lbrace 1, 2\Vert\mc S(f)\Vert_\infty-1, 2\Vert\mc S(g)\Vert_\infty-1\rbrace\right)$.
Then $\Sigma_\O$, $\Sigma_{\O^*}$ belong to $C_\varepsilon(\gamma)$, the $\varepsilon$ neighborhood of the convex hull $C(\gamma)$.
\end{prop}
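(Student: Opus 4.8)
The plan is to compare the Epstein--Poincar\'e surface with the boundary of the convex core through the pointwise order of conformal metrics, exploiting that the convex hull boundary is itself an Epstein surface. By the $\pslt$-invariance of all quantities involved and the symmetry between $\O$ and $\O^*$, it suffices to prove that every point $\Ep_\O(z)$ lies within hyperbolic distance $\epsilon$ of $C(\gamma)$, where we may use $t_0 = \tfrac12\log\max\{1, 2\norm{\mc S(f)}_\infty - 1\}$ as in Proposition~\ref{prop:minimalsurface} (the case of a circle being trivial, as then $\epsilon = 0$ and $\Sigma_\O = C(\gamma)$). Recall the classical fact (Epstein--Marden) that the component $\partial_\O C$ of $\partial C(\gamma)$ facing $\O$ is the envelope of the totally geodesic support planes $P_D$ whose ideal boundaries are the round circles $\partial D$ with $D \subseteq \O$; equivalently $\partial_\O C = \Ep_{\rho_{\mathrm{Th}}}$ is the Epstein surface of the Thurston (convex hull) metric $\rho_{\mathrm{Th}}(z) = \inf\{\rho_D(z) : z \in D \subseteq \O,\ D \text{ a round disk}\}$, each $\rho_D$ being the Poincar\'e metric of $D$.

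The engine is a monotonicity principle for Epstein surfaces under the order of metrics. Since $\Ep_\rho$ is the envelope of the horospheres based at the points of $\O$ with Euclidean radius $1/\sqrt{\rho}$ (Remark~\ref{rem:E_radius}), a larger metric produces smaller horospheres and hence an envelope lying closer to $\Chat$; thus if $\rho \le \sigma$ on a common domain then $\Sigma_\rho$ lies on the far (deep) side of $\Sigma_\sigma$. Combined with the geodesic-flow identity $\Sigma_\rho(t) = \Ep_{e^{2t}\rho}$ (Theorem~\ref{thm:basic_epstein}) and the observation that $\Sigma_\sigma(-s)$ is the pushoff of $\Sigma_\sigma$ a hyperbolic distance $s$ to its deep side, a two-sided comparison $e^{-2s}\sigma \le \rho \le \sigma$ confines $\Sigma_\rho$ to the slab bounded by $\Sigma_\sigma$ and its deep $s$-pushoff, so that every point of $\Sigma_\rho$ lies within distance $s$ of $\Sigma_\sigma$.

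I would then apply this with $\rho = \rho_\O$ and $\sigma = \rho_{\mathrm{Th}}$. The lower bound $\rho_\O \le \rho_{\mathrm{Th}}$ is immediate from Ahlfors--Schwarz: domain monotonicity gives $\rho_D \ge \rho_\O$ on each $D \subseteq \O$, hence $\inf_D \rho_D \ge \rho_\O$. The substance is the reverse comparison $\rho_{\mathrm{Th}} \le e^{\epsilon}\rho_\O$, which is exactly where the Schwarzian enters: the principal curvatures at infinity of $\rho_\O$ are $\hat k_\pm = 1 \pm 2\norm{\mc S(f^{-1})}_\O \ge 1 - 2\norm{\mc S(f)}_\infty$ (Subsection~\ref{subsec:EPsimplyconnected}), and $t_0$ is precisely the geodesic time after which $e^{2t}\rho_\O$ has principal curvatures at infinity bounded below by $-1$, i.e.\ after which the osculating support disk at each point ceases to separate from $\rho_\O$. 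Quantifying this deviation yields $\rho_{\mathrm{Th}}/\rho_\O \le \max\{1, 2\norm{\mc S(f)}_\infty - 1\} \le e^{\epsilon}$, and the monotonicity engine then places $\Sigma_\O$ within distance $\epsilon$ of $\partial_\O C \subseteq C(\gamma)$. The analogous statement for $\Sigma_{\O^*}$ uses $g$ in place of $f$, and the stated $\epsilon$ dominates both bounds.

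I expect the main obstacle to be the quantitative upper comparison $\rho_{\mathrm{Th}} \le e^{\epsilon}\rho_\O$ together with an honest verification of Epstein-surface monotonicity in our setting, where the surfaces are non-compact, only immersed almost everywhere, and possibly non-embedded, so that ``deep side'' must be interpreted through the horoball envelopes rather than through a global graph. A cleaner but less quantitative alternative, closer in spirit to Proposition~\ref{prop:minimalsurface}, is to run the maximum-principle argument directly with the pleated surface $\partial_\O C$ (which has vanishing mean curvature in the support sense and ideal boundary $\gamma$) as the comparison surface: this shows $\Sigma_\O(t)$ stays on the $\O$-side of $\partial_\O C$ for all $t \ge t_0$, and the distance estimate then follows by flowing back along the normal geodesics a distance $t_0$.
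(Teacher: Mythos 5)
Your main route has a genuine gap at its quantitative core. The inequality $\rho_{\mathrm{Th}}\le e^{\epsilon}\rho_\Omega$ cannot hold in general: when $\norm{\mc S(f)}_\infty\le 1$ the $\Omega$-contribution to $\epsilon$ is $0$, and combined with the Ahlfors--Schwarz bound $\rho_\Omega\le\rho_{\mathrm{Th}}$ your inequality would force $\rho_{\mathrm{Th}}\equiv\rho_\Omega$, which by the equality case of the Schwarz lemma happens only when $\Omega$ is a round disk. Since there are non-round domains with $\norm{\mc S(f)}_\infty\le 1$, the bound is false; the deeper issue is that you are aiming at a strictly stronger conclusion (distance at most $\epsilon$ to the boundary component $\partial_\Omega C$) than the proposition asserts (distance at most $\epsilon$ to the convex core $C(\gamma)$) --- for $\epsilon=0$ the correct statement is the containment $\Sigma_\Omega\subset C(\gamma)$, not $\Sigma_\Omega\subset\partial_\Omega C$. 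Two further steps in the same route are unestablished: the ``monotonicity engine'' ($\rho\le\sigma$ forces $\Sigma_\rho$ to lie on the far side of $\Sigma_\sigma$) does not follow from horoball inclusion alone, because the Epstein surface is the \emph{envelope} of the horospheres rather than the boundary of their union, and these differ for non-convex configurations; and $\rho_{\mathrm{Th}}$ is only $C^{1,1}$, so $\Ep_{\rho_{\mathrm{Th}}}$ is not produced by the construction of Section~\ref{sec:epstein}, which needs derivatives of the conformal factor.

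The paper's proof is closer to the ``alternative'' you sketch in your last sentence, but it replaces the pleated surface by the smooth model surfaces $\Sigma_D(t)$, the $t$-equidistant surfaces of the support planes $P_D$ of round disks $D\subset\Omega$, which have constant principal curvatures $\tanh(\cdot)$. For $t>\epsilon/2$ the flowed surface $\Sigma_\Omega(t)$ is an immersion whose principal curvatures at infinity sum to $2e^{-2t}$, so at every point its smaller principal curvature is at most $\tanh(t)$, whereas $\Sigma_D(d_0+t)$ with $d_0>0$ has principal curvatures $\tanh(d_0+t)>\tanh(t)$; a maximum-principle argument on the signed distance to $\Sigma_D(t)$ (the supremum is attained because the distance tends to $-\infty$ toward $\gamma$) rules out $d_0>0$, so $\Sigma_\Omega(t)$ stays on the $\gamma$-side of every $\Sigma_D(t)$, and together with Proposition~\ref{prop:disjoint} this gives $\Sigma_\Omega(t)\subset C_t(\gamma)$. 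Undoing the normal flow costs another $t$, whence $\Sigma_\Omega\subset C_{2t}(\gamma)$ for every $t>\epsilon/2$, and letting $t\to(\epsilon/2)^{+}$ yields $C_\epsilon(\gamma)$. Note the factor of two here: your closing sketch of ``flowing back a distance $t_0$'' actually lands in $C_{2t_0}(\gamma)$ with $2t_0=\epsilon$, which is exactly why the constant in the proposition is $\epsilon$ and not $t_0$.
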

\begin{proof}
Let $t > \varepsilon/2$.
We first show that, $\S_\O(t)$ is an immersed, strictly mean-convex surface similarly as in Proposition~\ref{prop:minimalsurface}. Indeed, Theorem~\ref{thm:forms_infty} shows that the principal curvatures at infinity of $\Sigma_\Omega(t)$ are bounded below by $e^{-2t}(1- 2\Vert\mc S(f)\Vert_\infty)$. Hence, as $t>\varepsilon/2$,  the curvatures at infinity of $\Sigma_\Omega(t)$ are strictly greater than $-1$, and therefore, $\Sigma_\Omega(t)$ is an immersed surface. Moreover, as the principal curvatures at infinity have arithmetic mean equal to $e^{-2t}<1$, the surface $\Sigma_\Omega(t)$ is mean-convex with respect to the normal vector field given by $\widetilde{\Ep}_\Omega (t)$ (the normal vector pointing outward from the horoball) as in \eqref{eq:mean_convex}.

\textbf{Claim:} For $t > \vare/2$ and any round disk $\overline{D}\subset \Omega$, we claim that $\Sigma_\Omega(t)$ lies in the component of $\mathbb{H}^3\smallsetminus \Sigma_D(t)$ whose boundary at infinity contains $\gamma$.

 The claim implies that for $t>\varepsilon/2$ and any round disk $\overline{D}\subset \O$, the surface $\Sigma_D(t)$ is disjoint from $\Sigma_\Omega(t)$. Proposition~\ref{prop:disjoint} then shows that $\Sigma_D(t)$ is also disjoint from $\Sigma_{\Omega^*}(t)$. Similarly, for disks in $\O^*$. Hence for $t>\varepsilon/2$, $\Sigma_\Omega(t)$ is contained in $C_t(\gamma)$.
As $\Sigma_\Omega(t)$ is obtained from $\Sigma_\Omega$ by flowing distance $t$ along the normal flow and $C_t(\gamma)$ is the $t$-neighborhood of $C(\gamma)$, then from $\Sigma_\Omega(t)\subseteq C_t(\gamma)$ it follows that $\Sigma_\Omega$ lies in $C_{2t}(\gamma)$. The proposition then follows by taking $t\rightarrow (\varepsilon/2)^+$.

Now we prove the claim. Define the (signed) distance function $d:\mathbb{H}^3\rightarrow\mathbb{R}$ to $\Sigma_D(t)$ so that $d^{-1}(\lbrace s\rbrace)=\Sigma_D(s+t)$. Considering $d\circ\Ep_\O$, we define $d_0=\sup\lbrace d(\Ep_\O(t) (z))\,|\, z\in\O \rbrace$. The claim is equivalent to show that $d_0\leq0$, so let us argue by contradiction and assume $d_0>0$.

As $\gamma$ does not intersect $D$, we have $\lim_{z\in\O,\, z\rightarrow\gamma}d(\Ep_\O(t)(z))=-\infty$ from which it follows that $d_0$ is realized at a point $z_0\in\O$. Hence the normal vector to $\Sigma_\Omega(t)(z_0)$  must be parallel to the normal vector to $\Sigma_{D}(d_0+t)$ at $\Ep_D(d_0+t)(z'_0)$ for some $z'_0\in D$, since otherwise we can produce $z_1$ close to $z_0$ so that $d(z_1)>d(z_0)$. 

Now we argue that the two normal vectors are equal.
Indeed, as $d_0>0$ and $t>\vare/2$, $\Sigma_D(d_0 + t)$, $\Sigma_{\O}(t)$ are strictly mean-convex at $\Ep_\O(t)(z_0)=\Ep_D(d_0 + t)(z'_0)$ with respect to the normal vectors $\widetilde{\Ep}_\O(t)(z_0)$ and  $\widetilde{\Ep}_D(d_0+t)(z'_0)$ respectively. So the relative position of $\Sigma_\O(t)$ and $\Sigma_D(d_0+t)$ implies that $\widetilde{\Ep}_\O(t)(z_0)=\widetilde{\Ep}_D(d_0+t)(z'_0)$ and in particular $z_0=z'_0$. 

We now compare the principal curvatures on $\S_\O(t)$ and $\S_D(d_0 +t)$ at $\Ep_\O(t)(z_0)=\Ep_D(d_0 + t)(z_0)$. Since the function $\hat k \mapsto (1-\hat k)/(1+\hat k)$ is decreasing for $\hat k>-1$, and the largest of the curvature at infinity of $\Sigma_\Omega(t)$ at any point is greater or equal than their arithmetic mean $e^{-2t}$, the surface $\Sigma_\O(t)$ has at least a principal curvature (Theorem~\ref{thm:forms_infty}) at $\Sigma_\Omega(t)(z_0)$ bounded above by 
$$\frac{1-\hat k_+(z_0)}{1+\hat k_+(z_0)} \leq \frac{1-e^{-2t}}{1+ e^{-2t}} = \tanh(t).$$ 
On the other hand, $\Sigma_D(d_0+t)$ has principal curvatures $\tanh(d_0+t)>\tanh(t)$, which is a contradiction to their relative position. Hence $d_0\leq 0$ and the claim is proven. 
\end{proof}

\begin{remark}\label{rem:ccc_monotonicity}
    As with Remark \ref{rmk:Sf<1},  Proposition \ref{prop:PEconvexcore} shows that if both  $\Vert\mc S(f)\Vert_\infty, \Vert\mc S(g)\Vert_\infty <1$, then the Epstein--Poincar\'e maps have image inside the convex hull. This is in contrast with the analogous result for convex co-compact hyperbolic 3-manifolds, where the condition $\Vert\mc S(f)\Vert_\infty, \Vert\mc S(g)\Vert_\infty <1$ is not required. This is because in the convex co-compact case, we can take a point at infinity where the conformal factor between the Poincar\'e and the Thurston metric is maximized (by the co-compact action in the boundary) and show that such maximum is bounded by $1$ (see for instance \cite[Section 3.7]{Schlenker13}),
    rather than argue with tangencies of surfaces as in the proof of Proposition~\ref{prop:PEconvexcore}.
\end{remark}

\begin{remark}

    We give an example where the Epstein--Poincar\'e surfaces are \emph{not} contained in the convex hull, hence the need to consider the $\varepsilon$-neighborhood in contrast with Remark~\ref{rem:ccc_monotonicity}.
   Denote by $\Omega_0$ the complement of the line segment $[0,1]$ in $\Chat$ and by $\Omega_n$ a sequence of domains bounded by equipotentials of $\Omega_0$ so that $\Omega_n\xrightarrow[]{n\rightarrow\infty}\Omega_0$. The convex hull of $\Omega_0$ is given by the half-plane whose conformal boundary is given by the segment. 
  By comparison with the Epstein--Poincar\'e maps associated with the half-planes contained in $\O_0$, it is easy to see that $\Ep_{\O_0}$ pierces through the convex hull. As
  $\Omega_n\xrightarrow[]{n\rightarrow\infty}\Omega_0$, $\Ep_{\Omega_n}$ is not contained in the convex hull of $\partial\Omega_n$ for $n$ sufficiently large.
\end{remark}

\section{Extending variational formula to non-immersed case}\label{gen-Schl\"afli}

The goal of this section is to extend the Schl\"afli formula to the case when the Epstein--Poincar\'e surfaces are not immersions and prove Theorem~\ref{thm:Schl\"afliformula}. In order to do so, we will need to generalize various parts of the proof of Schl\"afli found in \cite{Souam04}.

Let us recall the setup in Section~\ref{sec:Decomposition}. Let $(\gamma_t)_{t \in [0,1]}$ be a $C^{5,\alpha}$ family of Jordan curves ($\alpha>0$). Let $\Omega(r),A(r), \Omega^*(r)$ be the piecewise decomposition of $\Chat$  where we defined the family of piecewise smooth maps $E_{r,t}:\Chat\rightarrow \mathbb{H}^3$. We define the unit normal vector field $\vec n$ using $\widetilde \Ep_{\O_t}$ and $\widetilde \Ep_{\O^*_t}$, and on $A(r)$ we choose $\vec n$ to have positive vertical component as in Lemma~\ref{lemma:piecewiseshape}.

We first extend the notion of the shape operator.
\begin{lemma}
   There is a piecewise $C^{2,\alpha}$ family of linear maps $$B_{r,t}(p):\mathbb{R}^2 \rightarrow \vec n^\perp(E_{r,t}(p))$$ for $\{(r,t, p) \,|\, r \in (1-\vare, 1), t \in [0,1], p \in \O(r) \text{ or } A(r) \text{ or } \O^*(r)\}$, so that at any point where $E_{r,t}$ is an immersion, $B_{r,t}(p) v$ agrees with $B(D_p E_{r,t} (v))$, where $B$ is the shape operator of the image of $E_{r,t}$.
\end{lemma}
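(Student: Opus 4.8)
The plan is to define $B_{r,t}(p)$ directly through the covariant-derivative description of the shape operator, which---unlike the pulled-back operator $B=(\id+\hat B)^{-1}(\id-\hat B)$ of Section~\ref{subsec:generalepstein}---makes sense whether or not $E_{r,t}$ is an immersion. Recall that $\vec n$ is a well-defined unit vector field along $E_{r,t}$ on each of the three pieces: on $\O(r)$ and $\O^*(r)$ it is $\widetilde\Ep$, given explicitly by \eqref{eq:normalvector}, and on $A(r)$ it is the field of Lemma~\ref{lemma:piecewiseshape}. Viewing $p\mapsto\vec n$ as a $T\m H^3$-valued field along $E_{r,t}$, I set
$$B_{r,t}(p)\,v := -\,\nabla_{D_p E_{r,t}(v)}\,\vec n, \qquad v\in\mathbb{R}^2,$$
where $\nabla$ is the pullback of the Levi-Civita connection of $\m H^3$ along $E_{r,t}$. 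This is defined for every $v$ because the pullback connection only takes $D_p E_{r,t}(v)$ as an input direction and never requires $D_p E_{r,t}$ to be injective; on $\ker D_p E_{r,t}$ it simply returns $0$, and $v\mapsto B_{r,t}(p)v$ is linear since $\nabla_{(\cdot)}\vec n$ is tensorial in its direction while $D_p E_{r,t}(v)$ is linear in $v$.

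First I would verify the target space. Since $\langle\vec n,\vec n\rangle\equiv 1$, differentiating along $D_p E_{r,t}(v)$ gives $\langle\nabla_{D_p E_{r,t}(v)}\vec n,\vec n\rangle=0$, so $B_{r,t}(p)v\in\vec n^\perp(E_{r,t}(p))$, as required. Next, at a point where $E_{r,t}$ is an immersion, $D_p E_{r,t}$ is an isomorphism onto $\vec n^\perp$, and the defining relation $B(X)=-\nabla_X\vec n$ for the shape operator $B$ of the image surface (with $\widetilde\Ep=\vec n$, exactly as in Section~\ref{subsec:generalepstein}) yields $B(D_p E_{r,t}(v))=-\nabla_{D_p E_{r,t}(v)}\vec n=B_{r,t}(p)v$, which is the asserted agreement.

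For the regularity I would count derivatives piece by piece. Since each $\gamma_t$ is $C^{5,\alpha}$, Kellogg's theorem makes the uniformizing maps $C^{5,\alpha}$ up to the boundary, so the conformal factor is $C^{4,\alpha}$ and, by \eqref{eq:Epsteincoordinates}--\eqref{eq:normalvector}, both $E_{r,t}$ and $\vec n$ are $C^{3,\alpha}$ on $\O(r)$ and $\O^*(r)$ (this is the regularity already used in Proposition~\ref{prop:finitevolume}); on $A(r)$ the same $C^{3,\alpha}$ regularity of $E_{r,t}$ and of $\vec n$ comes from the $G$-coordinates together with Lemma~\ref{lemma:piecewiseshape}. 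In local coordinates on $\m H^3$,
$$\big(B_{r,t}(p)\,v\big)^k = -\,\partial_v \vec n^{\,k} - \Gamma^k_{ij}\big(E_{r,t}(p)\big)\,\big(D_p E_{r,t}(v)\big)^i\,\vec n^{\,j},$$
where the $\Gamma^k_{ij}$ are the smooth Christoffel symbols of $\m H^3$. Each term is a product of quantities that are at least $C^{2,\alpha}$ in $(r,t,p)$: the field $\vec n$ is $C^{3,\alpha}$ so $\partial_v\vec n$ is $C^{2,\alpha}$, the composition $\Gamma^k_{ij}\circ E_{r,t}$ is $C^{3,\alpha}$, and $D_p E_{r,t}$ is $C^{2,\alpha}$. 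Hence $B_{r,t}$ is piecewise $C^{2,\alpha}$ on the three regions, as claimed.

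The only genuinely delicate point is conceptual rather than computational: one must recognize that the shape operator admits the covariant-derivative description $-\nabla_{(\cdot)}\vec n$ that remains bounded and smooth across the non-immersion locus $\{\,\|\mc S(f^{-1})\|_\O=1\,\}$, precisely where the pulled-back operator $B=(\id+\hat B)^{-1}(\id-\hat B)$ degenerates because $\id+\hat B$ acquires a zero eigenvalue (there $\hat k_-=-1$, so the principal curvature $k_-$ blows up). With the formula above, no limiting or removable-singularity argument is needed; the construction is manifestly as regular as its data. The remaining care is simply to confirm that the three piecewise definitions have the matching boundary behaviour recorded in Lemma~\ref{lemma:piecewiseshape}, so that they assemble into a single field with the stated domain of $C^{2,\alpha}$ regularity.
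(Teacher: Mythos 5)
Your construction is essentially the paper's: the authors also define $B_{r,t}$ on $\O(r)$ and $\O^*(r)$ by the covariant-derivative expression \eqref{eq:Bdefinition}, which is identical to your coordinate formula, observe that its right-hand side remains well defined across the locus $\{\|\mc S(f^{-1})\|_\O = 1\}$, and deduce the piecewise $C^{2,\alpha}$ regularity from Lemma~\ref{lemma:piecewiseshape}. On $A(r)$ they instead pull back the shape operator of the ambient immersed surface swept out by the horizontal lines of \ref{item:horizontal step}; this coincides with your unified formula there, since on $A(r)$ the field $\vec n$ factors through that immersed surface.

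One side remark in your write-up is incorrect and worth flagging. You assert that the operator ``simply returns $0$'' on $\ker D_pE_{r,t}$ because the pullback connection ``only takes $D_pE_{r,t}(v)$ as an input direction.'' The covariant derivative of a field along a map is \emph{not} tensorial in $D_pE_{r,t}(v)$: your own coordinate formula contains the term $\partial_v\vec n^{\,k}$, which need not vanish for $v\in\ker D_pE_{r,t}$. In fact the remark following this lemma in the paper computes that at a non-immersion point of $\O(r)$ (where a principal curvature equals $-1$) the vector $B_{r,t}(p)v$, for $v$ spanning the degenerate direction, has hyperbolic norm $1$; it is exactly this non-vanishing on the kernel that makes the extension non-trivial on $\O(r)$ and $\O^*(r)$. (Your claim does happen to hold on $A(r)$, where $\vec n$ is pulled back from an honestly immersed surface.) This does not invalidate your proof, because the object you actually use is the coordinate formula, whose linearity in $v$ follows term by term; but the justification via ``tensoriality in the direction'' should be dropped.
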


\begin{proof}
    For $\{z \in \O \ | \ \|\mc S(f^{-1})(z)\|_\O \neq 1\}\subseteq\Omega$ (and analogously for $\Omega^*$) and $v\in\mathbb{R}^2$, it follows from elementary differential geometry that $B_{r,t}(p) v$ satisfies

    \begin{equation}\label{eq:Bdefinition}
     B_{r,t}(p) v = -\left(D^i(p){\vec n}^j(p)\Gamma_{i,j}^k(p) + \frac{\partial {\vec n}^k}{\partial v}(p)\right)e_k(p),
    \end{equation}
    where we are using Einstein's notation, $e_1(p), e_2(p), e_3(p)$ is the canonical base for $T\mathbb{H}^3$ at $\Ep_\O(p)$, $\Gamma_{i,j}^k$ its Christoffel symbols, $\vec n = {\vec n}^ie_i$ are the coordinates of the normal vector $\vec n =\widetilde{\Ep_\O}$ and $D^ie_i$ the coordinates of $D\Ep_\O(v)$.
    As the right-hand side of \eqref{eq:Bdefinition} is well-defined along $\{z \in \O \ | \ \|\mc S(f^{-1})(z)\|_\O = 1\}$, we use \eqref{eq:Bdefinition} to define $B_{r,t}(p) v$. Usually, the right-hand side of \eqref{eq:Bdefinition} is denoted by $-\frac{D\vec n}{dv}$, where $\frac{D\vec n}{dv}$ is the \emph{covariant derivative} of the vector field $\vec n$ along the parametrization $\Ep_\O$. By abuse of notation, we will still denote $\frac{D\vec n}{dv}$ as $\nabla_{D_pEv}\vec n$, even though this only holds if $E$ is an immersion at $p$.

    For the region $A(r)$, we can define $B_{r,t}$ by observing that the map $E_{r,t}$ is the composition of a smooth map into the horizontal lines described in step \ref{item:horizontal step}. The union of these lines is immersed for $r$ sufficiently close to $1$ and hence has a well-defined shape operator. Hence we define $B_{r,t}$ as the pullback of such shape operator by $E_{r,t}$. As $B_{r,t}$ is defined as a pullback of a shape operator, it satisfies the analogous identity to \eqref{eq:Bdefinition}.

    It follows then by Lemma~\ref{lemma:piecewiseshape} that $B_{r,t}$ is piecewise $C^{2,\alpha}$ in $\{(r,t, p) \,|\, r \in (1-\vare, 1), t \in [0,1], p \in \O(r) \text{ or } A(r) \text{ or } \O^*(r)\}$, and agrees with the pullback by $E_{r,t}$ of the shape operator of the image of $E_{r,t}$ whenever $E_{r,t}$ is an immersion.
\end{proof}

\begin{remark}
    Observe that given that we are defining $B_{r,t}$ as a pullback along $A_r$, then whenever $E_{r,t}$ is not immersed at a point of $A_r$, $B_{r,t}$ will be the $0$ vector along the non-immersed direction. This differs from the behavior at $\O(r), \O^*(r)$. $E_{r,t}$ is not immersed at points of $\Omega(r), \Omega^*(r)$ where the curvatures at infinity are $- 1$, since the metric $$\I(X,Y) = \frac14 \hat \I\left((\id+\hat B)X,(\id+\hat B)Y\right)$$ will vanish precisely at directions $X$ (at infinity) whenever $\hat BX=-X$. We note that whenever $\hat B$ does not have eigenvalue $-1$ then $(\id+B)(\id+\hat B) = 2\id$. Therefore
    $$\I(BX, BY) = \frac{1}{4}\hat \I\left((\id+\hat B)BX,(\id+\hat B)BY\right) = \frac{1}{4}\hat \I\left((\id-\hat B)X,(\id-\hat B)Y\right).$$
    Hence for a given eigenvector $X$ ($\norm{X}_{\rho_\Omega}=1$) with principal curvature $k\neq-1$ we see that $\norm{BX}_{\m H^3} = \frac{|1-k|}{2}$, where this norm is given by the hyperbolic metric in $\mathbb{H}^3$. Hence for $k=-1$, $\norm{BX}_{\m H^3} = 1$ is a unit vector.
\end{remark}

\begin{remark}
    When there is no ambiguity, we will drop the subscripts $r,t$ in $E_{r,t}$ to simplify the notation.
\end{remark}

The following generalizes the key formula to prove the differential Schl\"alfi formula (see \cite[Proposition 5]{Souam04}). Let $\frac{\partial}{\partial t}|_{t=0}E_{r,t} = \xi$ be the piecewisely defined vector field by the first order variation on $t$, and let $\frac{D}{dt}$ denote the covariant derivative of a vector field along a curve in $\mathbb{H}^3$.

\begin{prop}\label{prop:geometricidentity} For any $p\in \Chat = \Omega(r)\cup A(r)\cup \Omega^*(r)$ and $u,v\in\mathbb{R}^2$ 
\begin{equation}\label{eq:PreSchl\"afliFormula} 
\brac{ \frac{D}{d\xi} (B (p) u),D_p E (v)} = - \brac{ \frac{D}{du}\frac{D}{d\xi} \vec n, D_pE (v)} + \brac{ R(\xi,D_pE (u))\vec n,D_p E (v) }
\end{equation}
where we follow the convention $R(X,Y)Z = \nabla_Y\nabla_X Z - \nabla_X\nabla_Y Z + \nabla_{[X,Y]}Z$, and $\brac{\cdot, \cdot}$ is the hyperbolic metric tensor in $\m H^3$.
\end{prop}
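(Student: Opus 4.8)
The plan is to reduce the identity \eqref{eq:PreSchl\"afliFormula} to the classical commutation formula for the covariant derivative of a vector field along a two-parameter map, exploiting the fact that the extended operator $B$ is \emph{defined} as a covariant derivative of $\vec n$ even at points where $E$ fails to be an immersion.

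First I would record the pointwise identity on which the whole argument rests. By the defining equation \eqref{eq:Bdefinition}, on each of the three pieces $\O(r), A(r), \O^*(r)$ and at \emph{every} point $p$ --- irrespective of whether $E$ is an immersion there --- one has
$$B(p)u = -\nabla_{D_pE(u)}\vec n = -\frac{D\vec n}{du},$$
where, following the abuse of notation fixed above, $\frac{D}{du}$ denotes covariant differentiation of the vector field $\vec n$ along the parametrization $E$ in the parameter direction $u$. This is the crucial observation: the right-hand side depends only on the (globally defined, piecewise $C^{3,\alpha}$) field $\vec n$ and on the ambient connection, so it makes sense across the degeneracy locus $\{\norm{\mc S(f^{-1})}_\O = 1\}$, and the regularity from Lemma~\ref{lemma:piecewiseshape} guarantees that the two covariant derivatives appearing below exist.

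Next I would differentiate covariantly in the variation direction. Regarding $E_{r,t}(p)$ as depending on $t$ and on a parameter along the direction $u$, with $\xi = \partial_t E_{r,t}$ and $\vec n$ a vector field along it, applying $\frac{D}{d\xi}$ to the previous display gives $\frac{D}{d\xi}(B(p)u) = -\frac{D}{d\xi}\frac{D}{du}\vec n$. I would then invoke the classical commutation identity for a vector field along a two-parameter map, which with the curvature sign convention adopted in the statement reads
$$\frac{D}{d\xi}\frac{D}{du}\vec n - \frac{D}{du}\frac{D}{d\xi}\vec n = R\bigl(D_pE(u),\xi\bigr)\vec n = -R\bigl(\xi, D_pE(u)\bigr)\vec n,$$
the last step by antisymmetry of $R$ in its first two arguments. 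Substituting yields
$$\frac{D}{d\xi}\bigl(B(p)u\bigr) = -\frac{D}{du}\frac{D}{d\xi}\vec n + R\bigl(\xi, D_pE(u)\bigr)\vec n,$$
and taking the inner product with $D_pE(v)$ produces exactly \eqref{eq:PreSchl\"afliFormula}.

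The main obstacle --- and the reason this deserves a separate proposition in the non-immersed setting --- is precisely the justification of the first step: one must check that \eqref{eq:Bdefinition} genuinely realizes $B(p)u$ as the covariant derivative $-\frac{D}{du}\vec n$ of the globally defined normal field, so that the identity persists on the locus where $E$ degenerates and not merely on the dense immersion locus. I would verify this piece by piece: on $A(r)$ it holds because there $B$ was \emph{defined} as the pullback of the honest shape operator of the immersed family of horizontal segments, while on $\O(r), \O^*(r)$ it holds by the very definition \eqref{eq:Bdefinition} together with continuity from the immersion locus. Once this is secured, the remainder is the classical derivation of the pre-Schl\"afli identity and makes no reference whatsoever to immersivity, since the commutation lemma is a statement about vector fields along maps rather than about the intrinsic geometry of a surface.
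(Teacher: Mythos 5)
Your proposal is correct and follows essentially the same route as the paper: start from the defining identity $Bu = -\tfrac{D}{du}\vec n$ (valid on all of $\O(r)\cup A(r)\cup\O^*(r)$ by the extended definition of $B$), differentiate covariantly along $\xi$, and commute the two covariant derivatives via the curvature tensor with the stated sign convention before pairing with $D_pE(v)$. The extra care you take in checking that the defining identity persists across the degeneracy locus is a sensible elaboration of what the paper leaves implicit, not a different argument.
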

\begin{proof}
Recall the equality (all evaluated at $p$ and using the notation in \eqref{eq:Bdefinition})
\[B u = - \frac{D}{du}\vec n.
\]
Differentiating along $\xi$ and taking inner product with $DE(v)$
\begin{equation}
\begin{split}
   \brac{ \frac{D}{d\xi} Bu, DE(v) }  &= - \brac{ \frac{D}{d\xi}\frac{D}{du}\vec n,DE(v) } \\
    &= -\brac{ \frac{D}{du}\frac{D}{d\xi} \vec n,DE(v) } + \brac{ R(\xi,DE(u))\vec n,DE(v) },
\end{split}
\end{equation}
where we are using the curvature tensor to exchange the order of derivations.
\end{proof}

\begin{remark}
At points where $E$ is an immersion, we can write 
\[ \brac{ \nabla_\xi (Bu), DE (v) } = \brac{ B'(DE(u)), DE(v)} + \brac{ \nabla_{Bu} \xi, DE(v) }
\]
which is the formula appearing in \cite[Prop.~5]{Souam04}, where $B'$ is the covariant derivative with respect to $t$ of $B$ in the immersed surface image. At points where $E$ is an immersion \eqref{eq:PreSchl\"afliFormula} can be written as
\begin{align*}
  &\brac{ \nabla_\xi (B (p) u),D_p E (v)} \\
  &= - \brac{ \nabla_{D_p E (v)}\nabla_\xi \vec n, D_pE (u) }  + \brac{ R(\xi,D_pE (u))\vec n,D_p E (v) }  
\end{align*}
given the identification between covariant derivatives and connections.
\end{remark}

We recall that  $\dd a$ is the area form on the Epstein surface. We next extend the form $\brac{ \frac{D}{d\xi}(B\cdot) ,DE (\cdot)} \,\dd a$  to the non-immersed case.

Note that the trace of $\brac{ R(\xi,DE (\cdot))\vec n,DE  (\cdot) }$ is $-2\brac{\xi,\vec n}$ at immersion points, while the derivative of volume $V_2 (r,t)$ bounded by $E_{r,t}$ is given by 
\begin{equation}\label{eq:var_v_2_r_t}
     \frac{\partial}{\partial t}\bigg|_{t=0} V_2(r,t) = \int_{\Chat}-\brac{\xi,\vec n} E^*\dd a, 
\end{equation}
where the minus sign is because the normal vector points inwards.

Our goal is to show that if we take the trace in the remaining terms $\brac{ \nabla_\xi (B (p) \cdot),D_p E (v\cdot)}$, $ \brac{ \nabla_{D_p E (\cdot)}\nabla_\xi \vec n, D_pE (\cdot) }$ in (\ref{eq:PreSchl\"afliFormula}), integrate them against $E^* (\dd a)$ over $p$ and make $r\rightarrow1^-$ we obtain the right-hand side of the equation in  Theorem~\ref{thm:Schl\"afliformula}. As $E_{r,t}$ is not a piecewise immersion, our main concern is how to perform this trace for a non-immersion. The answer is that even though the trace is not well defined (as the metric degenerates), the trace times the area form $E^* (\dd a)$ extends to a piecewise differential for in $\Chat$. Let us first address this procedure in $\Omega(r), \Omega^*(r)$.

\begin{lemma}\label{lem:formextension}
    The $2$-form $\tr\brac{ R(\xi,DE (\cdot))\vec n,DE  (\cdot) }E^* \dd a$ defined on the set of immersion points 
    $\{z \in \O \ | \ \|\mc S(f^{-1})(z)\|_\O \neq 1\}$ extends as $C^{2,\alpha}$ differential $2$-form to the locus $\{z \in \O \ | \ \|\mc S(f^{-1})(z)\|_\O = 1\}$ as $- 2\brac{\xi,\vec n} E^*\dd a$. 
    
    Similarly the $2$-form $\tr \brac{ \nabla_{D_p E (\cdot)}\nabla_\xi \vec n, D_pE (\cdot) }E^*\dd a$ extends as $\dd ( i_{\nabla_\xi \vec n})$, where $i_{\nabla_\xi \vec n}$ is the $1$-form defined by $u\mapsto \brac{ D_pEu , \nabla_\xi \vec n}$ and $\dd$ denotes the exterior derivative. 
\end{lemma}

\begin{proof}
    As per the discussion before this Lemma, in the set of immersion points
    $$\{z \in \O \ | \ \|\mc S(f^{-1})(z)\|_\O \neq 1\}$$ 
    the $2$-form $\tr\brac{ R(\xi,DE (\cdot))\vec n,DE  (\cdot) }E^* \dd a$ agrees with $- 2\brac{\xi,\vec n} E^*\dd a$. As $\{z \in \O \ | \ \|\mc S(f^{-1})(z)\|_\O \neq 1\}$ is an open dense set of $\O$, then we can extend uniquely $\tr\brac{ R(\xi,DE (\cdot))\vec n,DE  (\cdot) }E^*\dd a$ as $- 2\brac{\xi,\vec n} E^*\dd a$ to all of $\O$, as $- 2\brac{\xi,\vec n} E^*\dd a$ is a well-defined $C^{2,\alpha}$ $2$-form in $\O$.

    Similarly, on the set $\{z \in \O \ | \ \|\mc S(f^{-1})(z)\|_\O \neq 1\}$ 
    \begin{align*}
        \tr\brac{ \nabla_{D_pE(\cdot)}\nabla_\xi \vec n, D_pE(\cdot) } E^*\dd a &= E^*(\operatorname{div}(\nabla_\xi \vec n)\dd a) = E^*(\dd \brac{ \cdot , \nabla_\xi \vec n}) \\
        &= \dd ( i_{\nabla_\xi \vec n}).
    \end{align*}
    As $\{z \in \O \ | \ \|\mc S(f^{-1})(z)\|_\O \neq 1\}$ is an open dense set in $\O$ and $-\dd ( i_{\nabla_\xi \vec n})$ is a well-defined $C^{2,\alpha}$ form in $\O$, then $- \tr\brac{ \nabla_{D_pE(\cdot)}\nabla_\xi \vec n, D_pE(\cdot) } E^*\dd a$ extends uniquely as a $C^{2,\alpha}$ $2$-form.
\end{proof}

Observe that by Proposition~\ref{prop:geometricidentity} and Lemma~\ref{lem:formextension}, 
at immersion points 
\begin{equation}\label{eq:TraceSchl\"afli}
    \tr\brac{ \nabla_\xi (B\cdot),D_pE\cdot} E^*\dd a = -\dd ( i_{\nabla_\xi \vec n}) - 2\brac{\xi,\vec n} E^*\dd a,
\end{equation}
which by Lemma~\ref{lem:Tr_variation_V} yields that in particular on $\{z \in \O \ | \ \|\mc S(f^{-1})(z)\|_\O \neq 1\}$ of $\O$,
\begin{equation}\label{eq:TraceSchl\"afli_imm}
    2E^*(\d H + \frac14\brac{ \d \I, \II }\,\dd a) = -\dd ( i_{\nabla_\xi \vec n}) - 2\brac{\xi,\vec n} E^*\dd a.
\end{equation}

\begin{proof}[Proof of Theorem~\ref{thm:Schl\"afliformula}]
We proceed as in the proof of Theorem~\ref{thm:schlafli_imm} by taking $r$ close to 1 and defining $V_1(r,t), V_2(r,t)$ as before, so that  $V(\gamma_t) = V_1(r,t)+V_2(r,t)$. In particular, $V_2(r,t)$ is defined as the volume bounded by $E_{r,t}$. Namely, extend $E_{r,t}:\Chat \rightarrow \mathbb{H}^3$ as a map from the closed ball $B^3$ so that
\[V_2(r,t) = \int_{B^3}E^*_{r,t}(\vol_{\mathbb{H}^3}).
\]

For $E_{r,t}$ in $A(r)$, we can establish and trace (\ref{eq:PreSchl\"afliFormula}) in the embedded surface that contains the image of $E_{r,t}$ (for $r$ sufficiently close to $1$) and then take the pullback by $E_{r.t}$.

By Stokes, this definition does not depend on the specific extension of $E_{r,t}$ to $B^3$. Since $E_{r,t}$ vary $C^{3,\alpha}$ as piecewisely defined map from $\Omega(r), \Omega^*(r), A(r)$, we can take the extension to vary $C^{3,\alpha}$ on $t$ and check that $\partial_tV_2(r,t)$ is given by
\[\partial_tV_2 = \bigg( \int_{\Omega(r)} + \int_{\Omega^*(r)} + \int_{A(r)} -\brac{ \xi, \vec n} E^*\dd a\bigg)
\]
where $\xi = \partial_tE_{r,0}$ and $\dd a$ is the area form of the orthogonal plane to $\vec n$. The negative sign is due to the fact that we are taking normal vector $\vec n$ pointing \textit{inward} the region bounded by $E_{r,t}$.

Applying (\ref{eq:TraceSchl\"afli}) then

\begin{equation*}
    \partial_tV_2 = \bigg( \int_{\Omega(r)} + \int_{\Omega^*(r)} + \int_{A(r)} \frac12 \tr\brac{ \nabla_\xi (B\cdot),D_pE\cdot} E^*\dd a + \frac12 \dd(i_{\nabla_\xi \vec n})\bigg).
\end{equation*}
Applying Stokes theorem for $\frac12 \dd ( i_{\nabla_\xi \vec n})$ yields the integral of $\frac12i_{\nabla_\xi \vec n}$ over each boundary component. Since $E^{r,t}$ is embedded along $\partial A(r)$, then as in \cite{Souam04}, along $\partial A(r)$, 
\begin{align*}
  i_{\nabla_\xi (\vec n^{\Omega(r)})} + i_{\nabla_\xi (\vec n^{A(r)})} &= \frac{\partial \theta^+}{\partial t} E^*\dd \ell\\
i_{\nabla_\xi (\vec n^{\Omega^*(r)})} + i_{\nabla_\xi (\vec n^{A(r)})}& = \frac{\partial \theta^-}{\partial t} E^*\dd \ell  
\end{align*}
where $\theta^+(x)$ (respectively $\theta^-(x)$) is the exterior dihedral angle of the planes orthogonal to $\vec n^{\Omega(r)}, \vec n^{A(r)}$ at $E(x)$ (respectively $\vec n^{\Omega(r)^*}, \vec n^{A(r)}$ at $E(x)$), and $\dd \ell$ is the length form in $\mathbb{H}^3$.

Applying then Stokes for $\partial_t V_2$ we get
\begin{align}\label{eq:var_V2}
       \begin{split}
           \partial_tV_2 = & \bigg( \int_{\Omega(r)} + \int_{\Omega^*(r)} + \int_{A(r)} \frac12 \tr\brac{ \nabla_\xi (B\cdot),D_pE\cdot} E^*\dd a\bigg) \\ 
    & +  \frac12\bigg( \int_{\partial \Omega(r)} \frac{\partial \theta^+}{\partial t} E^*\dd \ell + \int_{\partial \Omega^*(r)} \frac{\partial \theta^-}{\partial t} E^*\dd \ell\bigg).
       \end{split}      
\end{align}
This proves the analog of Theorem~\ref{theorem:Schl\"afli_pw} for the non-immersed case.
Then finally by applying \eqref{eq:TraceSchl\"afli_imm} on the open dense set where $\Ep_\O, \Ep_{\O^*}$ are immersions, taking $r\rightarrow1^-$ and proceeding as in the proof of Theorem~\ref{thm:schlafli_imm}, we obtain the desired formula.
\end{proof}

\bibliographystyle{abbrv}
\bibliography{mybib}

\begin{thebibliography}{10}

\bibitem{Ahlfors_Mobius}
L.~V. Ahlfors.
\newblock {\em M\"{o}bius transformations in several dimensions}.
\newblock Ordway Professorship Lectures in Mathematics. University of
  Minnesota, School of Mathematics, Minneapolis, MN, 1981.

\bibitem{BB:CFTSLE}
M.~Bauer and D.~Bernard.
\newblock Conformal field theories of stochastic {L}oewner evolutions.
\newblock {\em Comm. Math. Phys.}, 239(3):493--521, 2003.

\bibitem{Bieberbach}
L.~Bieberbach.
\newblock {\"Uber die Koeffizienten derjenigen Potenzreihen, welche eine
  schlichte Abbildung des Einheitskreises vermitteln}.
\newblock {\em Sitzungsber. Preuss. Akad. Wiss. Phys-Math. K}, pages 940--955,
  1916.

\bibitem{BishopQ}
C.~J. Bishop.
\newblock Personal communication.

\bibitem{bishop-wp}
C.~J. Bishop.
\newblock Weil--{P}etersson curves, $\beta$-numbers, and minimal surfaces.
\newblock {\em Ann. Math}, to appear.

\bibitem{bowick1987holomorphic}
M.~J. Bowick and S.~G. Rajeev.
\newblock The holomorphic geometry of closed bosonic string theory and
  {D}iff{$(S^1)/S^1$}.
\newblock {\em Nuclear Physics B}, 293:348--384, 1987.

\bibitem{BBB}
M.~Bridgeman, J.~Brock, and K.~Bromberg.
\newblock Schwarzian derivatives, projective structures, and the
  {W}eil--{P}etersson gradient flow for renormalized volume.
\newblock {\em Duke Math. J.}, 168(5):867--896, 2019.

\bibitem{bridgeman2021weilpetersson}
M.~Bridgeman, J.~Brock, and K.~Bromberg.
\newblock The {Weil--Petersson} gradient flow of renormalized volume and
  3--dimensional convex cores.
\newblock {\em Geometry and Topology}, 27(8):3183--3228, 2023.

\bibitem{BBvariation}
M.~Bridgeman and K.~Bromberg.
\newblock Variation of holonomy for projective structures and an application to
  drilling hyperbolic 3-manifolds.
\newblock {\em Geom. Dedicata}, 218(3):Paper No. 62, 23, 2024.

\bibitem{BridgemanBrombergVargas}
M.~Bridgeman, K.~Bromberg, and F.~Vargas~Pallete.
\newblock Convergence of the gradient flow of renormalized volume to convex
  cores with totally geodesic boundary.
\newblock {\em Compositio Mathematica}, 159(4):830--859, 2023.

\bibitem{BBinflexvol}
J.~F. Brock and K.~W. Bromberg.
\newblock Inflexibility, {W}eil--{P}etersson distance, and volumes of fibered
  3-manifolds.
\newblock {\em Mathematical Research Letters}, 23(3):649--674, 2016.

\bibitem{carfagnini2023onsager}
M.~Carfagnini and Y.~Wang.
\newblock Onsager-{M}achlup functional for {${\rm SLE}_{\kappa }$} loop
  measures.
\newblock {\em Comm. Math. Phys.}, 405(11):Paper No. 258, 14, 2024.

\bibitem{cui00}
G.~Cui.
\newblock Integrably asymptotic affine homeomorphisms of the circle and
  {T}eichm\"{u}ller spaces.
\newblock {\em Sci. China Ser. A}, 43(3):267--279, 2000.

\bibitem{Dub_SLEVir1}
J.~Dub\'{e}dat.
\newblock S{LE} and {V}irasoro representations: localization.
\newblock {\em Comm. Math. Phys.}, 336(2):695--760, 2015.

\bibitem{epstein-envelopes}
C.~L. Epstein.
\newblock Envelopes of horospheres and weingarten surfaces in hyperbolic
  3-space.
\newblock {\em preprint}, 1984.
\newblock Available on arXiv: 2401.12115.

\bibitem{FarreVP}
J.~Farre and F.~Vargas~Pallete.
\newblock Minimal area surfaces and fibered hyperbolic 3-manifolds.
\newblock {\em Proc. Amer. Math. Soc.}, 150(11):4931--4946, 2022.

\bibitem{GM}
J.~B. Garnett and D.~E. Marshall.
\newblock {\em Harmonic measure}, volume~2 of {\em New Mathematical
  Monographs}.
\newblock Cambridge University Press, Cambridge, 2008.
\newblock Reprint of the 2005 original.

\bibitem{johansson2021strong}
K.~Johansson.
\newblock Strong {S}zeg\"{o} theorem on a {J}ordan curve.
\newblock In {\em Toeplitz operators and random matrices---in memory of
  {H}arold {W}idom}, volume 289 of {\em Oper. Theory Adv. Appl.}, pages
  427--461. 2022.

\bibitem{kangMakarov}
N.-G. Kang and N.~G. Makarov.
\newblock {\em Gaussian free field and conformal field theory}, volume 353 of
  {\em Ast{\'e}risque}.
\newblock Paris: Soci{\'e}t{\'e} Math{\'e}matique de France (SMF), 2013.

\bibitem{Kenyon_02_det}
R.~Kenyon.
\newblock The {L}aplacian and {D}irac operators on critical planar graphs.
\newblock {\em Invent. Math.}, 150(2):409--439, 2002.

\bibitem{KojimaMcShane18}
S.~Kojima and G.~McShane.
\newblock Normalized entropy versus volume for pseudo-{A}nosovs.
\newblock {\em Geom. Topol.}, 22(4):2403--2426, 2018.

\bibitem{krasnov2000holography}
K.~Krasnov.
\newblock Holography and {R}iemann surfaces.
\newblock {\em Adv. Theor. Math. Phys.}, 4(4):929--979, 2000.

\bibitem{KrasnovSchlenker_CMP}
K.~Krasnov and J.-M. Schlenker.
\newblock On the renormalized volume of hyperbolic 3-manifolds.
\newblock {\em Comm. Math. Phys.}, 279(3):637--668, 2008.

\bibitem{LSW04LERWUST}
G.~F. Lawler, O.~Schramm, and W.~Werner.
\newblock Conformal invariance of planar loop-erased random walks and uniform
  spanning trees.
\newblock {\em Ann. Probab.}, 32(1B):939--995, 2004.

\bibitem{Lehto87}
O.~Lehto.
\newblock {\em Univalent functions and {T}eichm\"uller spaces}, volume 109 of
  {\em Graduate Texts in Mathematics}.
\newblock Springer-Verlag, New York, 1987.

\bibitem{Maldacena}
J.~Maldacena.
\newblock The large {$N$} limit of superconformal field theories and
  supergravity.
\newblock {\em Adv. Theor. Math. Phys.}, 2(2):231--252, 1998.

\bibitem{michelat2021loewner}
A.~Michelat and Y.~Wang.
\newblock The {L}oewner energy via the renormalised energy of moving frames.
\newblock {\em Arch. Ration. Mech. Anal.}, 248(2):Paper No. 15, 60, 2024.

\bibitem{NRT_holographic}
T.~Nishioka, S.~Ryu, and T.~Takayanagi.
\newblock Holographic entanglement entropy: an overview.
\newblock {\em J. Phys. A}, 42(50):504008, 35, 2009.

\bibitem{Peltola}
E.~Peltola.
\newblock Toward a conformal field theory for {S}chramm-{L}oewner evolutions.
\newblock {\em J. Math. Phys.}, 60(10):103305, 39, 2019.

\bibitem{Pom_uni}
C.~Pommerenke.
\newblock {\em Univalent functions}.
\newblock 1975.
\newblock With a chapter on quadratic differentials by Gerd Jensen, Studia
  Mathematica/Mathematische Lehrb\"{u}cher, Band XXV.

\bibitem{Pommerenke_boundary}
C.~Pommerenke.
\newblock {\em Boundary behaviour of conformal maps}, volume 299 of {\em
  Grundlehren der Mathematischen Wissenschaften}.
\newblock Springer-Verlag, Berlin, 1992.

\bibitem{RW}
S.~Rohde and Y.~Wang.
\newblock The {L}oewner energy of loops and regularity of driving functions.
\newblock {\em Int. Math. Res. Not. IMRN}, 2021(10):7715--7763, 2021.

\bibitem{Roth_Schippers}
O.~Roth and E.~Schippers.
\newblock The {L}oewner and {H}adamard variations.
\newblock {\em Illinois J. Math.}, 52(4):1399--1415, 2008.

\bibitem{Schlenker13}
J.-M. Schlenker.
\newblock The renormalized volume and the volume of the convex core of
  quasifuchsian manifolds.
\newblock {\em Math. Res. Lett.}, 20(4):773--786, 2013.

\bibitem{Schramm2000}
O.~Schramm.
\newblock Scaling limits of loop-erased random walks and uniform spanning
  trees.
\newblock {\em Israel J. Math.}, 118:221--288, 2000.

\bibitem{shen13}
Y.~Shen.
\newblock Weil--{P}etersson {T}eichm\"{u}ller space.
\newblock {\em Amer. J. Math.}, 140(4):1041--1074, 2018.

\bibitem{Smi:ICM}
S.~Smirnov.
\newblock Towards conformal invariance of 2{D} lattice models.
\newblock In {\em International {C}ongress of {M}athematicians}, volume~2,
  pages 1421--1451. Eur. Math. Soc., Z\"{u}rich, 2006.

\bibitem{Souam04}
R.~Souam.
\newblock The {S}chl\"{a}fli formula for polyhedra and piecewise smooth
  hypersurfaces.
\newblock {\em Differential Geom. Appl.}, 20(1):31--45, 2004.

\bibitem{sung2023quasiconformal}
J.~Sung and Y.~Wang.
\newblock Quasiconformal deformation of the chordal {L}oewner driving function
  and first variation of the {L}oewner energy.
\newblock {\em Math. Ann.}, 390(3):4789--4812, 2024.

\bibitem{TT_Liouville}
L.~A. Takhtajan and L.-P. Teo.
\newblock Liouville action and {W}eil--{P}etersson metric on deformation
  spaces, global {K}leinian reciprocity and holography.
\newblock {\em Comm. Math. Phys.}, 239(1-2):183--240, 2003.

\bibitem{TT06}
L.~A. Takhtajan and L.-P. Teo.
\newblock Weil--{P}etersson metric on the universal {T}eichm\"{u}ller space.
\newblock {\em Mem. Amer. Math. Soc.}, 183(861):viii+119, 2006.

\bibitem{VW1}
F.~Viklund and Y.~Wang.
\newblock Interplay between {L}oewner and {D}irichlet energies via conformal
  welding and flow-lines.
\newblock {\em Geom. Funct. Anal.}, 30(1):289--321, 2020.

\bibitem{W1}
Y.~Wang.
\newblock The energy of a deterministic {L}oewner chain: reversibility and
  interpretation via {${\rm SLE}_{0+}$}.
\newblock {\em J. Eur. Math. Soc. (JEMS)}, 21(7):1915--1941, 2019.

\bibitem{W2}
Y.~Wang.
\newblock Equivalent descriptions of the {L}oewner energy.
\newblock {\em Invent. Math.}, 218(2):573--621, 2019.

\bibitem{W3}
Y.~Wang.
\newblock A note on {L}oewner energy, conformal restriction and {W}erner's
  measure on self-avoiding loops.
\newblock {\em Ann. Inst. Fourier (Grenoble)}, 71(4):1791--1805, 2021.

\bibitem{Wang_survey}
Y.~Wang.
\newblock Large deviations of {S}chramm-{L}oewner evolutions: a survey.
\newblock {\em Probab. Surv.}, 19:351--403, 2022.

\bibitem{Witten_ads}
E.~Witten.
\newblock Anti de {S}itter space and holography.
\newblock {\em Adv. Theor. Math. Phys.}, 2(2):253--291, 1998.

\end{thebibliography}

\end{document}